%% LyX 2.3.6 created this file.  For more info, see http://www.lyx.org/.
%% Do not edit unless you really know what you are doing.
\documentclass[12pt,a4paper,french,english,BCOR8mm, most, usenames, dvipsnames]{article}
\usepackage[T1]{fontenc}
\usepackage[latin9]{inputenc}
\setcounter{secnumdepth}{4}
\setcounter{tocdepth}{2}
\usepackage{color}
\usepackage{babel}
\makeatletter
\addto\extrasfrench{%
   \providecommand{\fg}{\ifdim\lastskip>\z@\unskip\fi~\frqq}%
}

\makeatother
\usepackage{float}
\usepackage{mathrsfs}
\usepackage{amscd}
\usepackage{tcolorbox}
\usepackage{amsmath}
\usepackage{amsthm}
\usepackage{amssymb}
\usepackage{graphicx}
\usepackage{wasysym}
\usepackage[unicode=true,pdfusetitle,
 bookmarks=true,bookmarksnumbered=false,bookmarksopen=false,
 breaklinks=false,pdfborder={0 0 1},backref=false,colorlinks=false]
 {hyperref}

\makeatletter

%%%%%%%%%%%%%%%%%%%%%%%%%%%%%% LyX specific LaTeX commands.
\pdfpageheight\paperheight
\pdfpagewidth\paperwidth

\newcommand{\noun}[1]{\textsc{#1}}
%% Because html converters don't know tabularnewline
\providecommand{\tabularnewline}{\\}
%% A simple dot to overcome graphicx limitations

%%%%%%%%%%%%%%%%%%%%%%%%%%%%%% Textclass specific LaTeX commands.
\numberwithin{equation}{section}
\numberwithin{figure}{section}
\numberwithin{table}{section}
\theoremstyle{plain}
\newtheorem{thm}{\protect\theoremname}[section]
\theoremstyle{remark}
\newtheorem{rem}[thm]{\protect\remarkname}
\theoremstyle{plain}
\newtheorem{cor}[thm]{\protect\corollaryname}
\theoremstyle{remark}
\newtheorem{acknowledgement}[thm]{\protect\acknowledgementname}
\theoremstyle{definition}
\newtheorem{defn}[thm]{\protect\definitionname}
\theoremstyle{plain}
\newtheorem{lem}[thm]{\protect\lemmaname}
\theoremstyle{plain}
\newtheorem{prop}[thm]{\protect\propositionname}

%%%%%%%%%%%%%%%%%%%%%%%%%%%%%% User specified LaTeX commands.

\usepackage{amsmath, amsthm, amssymb}

\let\emph\relax % there's no \RedeclareTextFontCommand
\DeclareTextFontCommand{\emph}{\bfseries\em}

\tcbuselibrary{breakable}

\newcommand{\real}{\mathbb{R}}

\usepackage[a4paper, total={17cm, 27cm}]{geometry}
\usepackage{color}
\usepackage{graphics}

%\usepackage[left,modulo]{lineno}% pour rajouter des numeros de ligne
%\linenumbers

\usepackage{stmaryrd}

\makeatother

\addto\captionsenglish{\renewcommand{\acknowledgementname}{Acknowledgement}}
\addto\captionsenglish{\renewcommand{\corollaryname}{Corollary}}
\addto\captionsenglish{\renewcommand{\definitionname}{Definition}}
\addto\captionsenglish{\renewcommand{\lemmaname}{Lemma}}
\addto\captionsenglish{\renewcommand{\propositionname}{Proposition}}
\addto\captionsenglish{\renewcommand{\remarkname}{Remark}}
\addto\captionsenglish{\renewcommand{\theoremname}{Theorem}}
\addto\captionsfrench{\renewcommand{\acknowledgementname}{Remerciements}}
\addto\captionsfrench{\renewcommand{\corollaryname}{Corollaire}}
\addto\captionsfrench{\renewcommand{\definitionname}{Définition}}
\addto\captionsfrench{\renewcommand{\lemmaname}{Lemme}}
\addto\captionsfrench{\renewcommand{\propositionname}{Proposition}}
\addto\captionsfrench{\renewcommand{\remarkname}{Remarque}}
\addto\captionsfrench{\renewcommand{\theoremname}{Théorème}}
\providecommand{\acknowledgementname}{Acknowledgement}
\providecommand{\corollaryname}{Corollary}
\providecommand{\definitionname}{Definition}
\providecommand{\lemmaname}{Lemma}
\providecommand{\propositionname}{Proposition}
\providecommand{\remarkname}{Remark}
\providecommand{\theoremname}{Theorem}

\begin{document}
\selectlanguage{english}
\title{Fractal Weyl law for the Ruelle spectrum of Anosov flows}
\author{\href{https://www-fourier.ujf-grenoble.fr/~faure/}{Frédéric Faure}{\small{}}\\
{\small{}Institut Fourier, UMR 5582, Laboratoire de Mathématiques}\\
{\small{}Université Grenoble Alpes, CS 40700, 38058 Grenoble cedex
9, France }\\
{\small{}\href{mailto:frederic.faure@univ-grenoble-alpes.fr}{frederic.faure@univ-grenoble-alpes.fr}}\\
\and\\
\href{https://tsujiimasato.wordpress.com/}{Masato Tsujii}{\small{}}\\
{\small{}Department of Mathematics, Kyushu University,}\\
{\small{}Moto-oka 744, Nishi-ku, Fukuoka, 819-0395, JAPAN }\\
{\small{}\href{mailto:tsujii@math.kyushu-u.ac.jp}{tsujii@math.kyushu-u.ac.jp}}\\
}

\maketitle
\selectlanguage{english}
\begin{abstract}
On a closed manifold $M$, we consider a smooth vector field $X$
that generates an Anosov flow. Let $V\in C^{\infty}\left(M;\mathbb{R}\right)$
be a smooth function called potential. It is known that for any $C>0$,
there exists some anisotropic Sobolev space $\mathcal{H}_{C}$ such
that the operator $A=-X+V$ has intrinsic discrete spectrum on $\mathrm{Re}\left(z\right)>-C$
called Ruelle resonances. In this paper, we show a ``Fractal Weyl
law``: the density of resonances is bounded by $O\left(\left\langle \omega\right\rangle ^{\frac{n}{1+\beta_{0}}}\right)$
where $\omega=\mathrm{Im}\left(z\right)$, $n=\mathrm{dim}M-1$ and
$0<\beta_{0}\leq1$ is the Hölder exponent of the distribution $E_{u}\oplus E_{s}$
(strong stable and unstable). We also obtain some more precise results
concerning the wave front set of the resonances and the invertibility
of the transfer operator. Since the dynamical distributions $E_{u},E_{s}$
are non smooth, we use some semi-classical analysis based on wave
packet transform associated to an adapted metric $g$ on $T^{*}M$
and construct some specific anisotropic Sobolev spaces.
\end{abstract}
\footnote{2010 Mathematics Subject Classification:37D20 Uniformly hyperbolic
systems (expanding, Anosov, Axiom A, etc.) 37D35 Thermodynamic formalism,
variational principles, equilibrium states 37C30 Zeta functions, (Ruelle-Frobenius)
transfer operators, and other functional analytic techniques in dynamical
systems 81Q20 Semiclassical techniques, including WKB and Maslov methods
81Q50 Quantum chaos

Keywords: Transfer operator; Ruelle resonances; decay of correlations;
Semi-classical analysis. }

\selectlanguage{french}%

\newtcolorbox{cBoxA}[1][]{enhanced, frame style={purple!80}, interior style={red!0}, #1}

\newtcolorbox{cBoxB}[2][]{enhanced, frame style={teal!80}, interior style={cyan!0}, #2}

\global\long\def\eq#1{\underset{(#1)}{=}}%
\global\long\def\ineq#1{\underset{(#1)}{\leq}}%
\global\long\def\ineqs#1{\underset{(#1)}{\geq}}%

\newpage{}

\tableofcontents{}

\newpage{}

\section{Introduction}

\subsection*{Anosov flow and transfer operator}

In this paper we consider an Anosov flow $\phi^{t}$ on a compact
smooth manifold $M$ generated by a smooth vector field $X$. An Anosov
flow exhibits sensitivity to initial conditions (or hyperbolicity)
and manifests deterministic chaotic behavior. In the 1970's, Rufus
Bowen, David Ruelle and Yakov Sinai have constructed the ergodic theory
of hyperbolic dynamical systems succeeding the pioneering works of
Smale and Anosov. In particular, a functional and spectral approach
has been pursued by David Ruelle. This approach consists in describing
the evolution, not of individual trajectories which appear unpredictable,
but the evolution of functions $u\in C^{\infty}\left(M\right)$ under
the \emph{transfer operator} 
\[
\mathcal{L}^{t}:u\mapsto e^{\int_{0}^{t}V\circ\phi^{-s}ds}\cdot u\circ\phi^{-t}
\]
where $V\in C^{\infty}\left(M\right)$ is a potential function that
changes the amplitude along the transport (i.e. push forward) of $u$.
This evolution of functions appears to be predictable. In particular
it converges towards an equilibrium state in the space of distributions.
This approach has progressed from the 70's. It has been shown \cite{liverani_02,liverani_04,baladi_05,Baladi05,liverani_butterley_07},
\cite{fred-RP-06,fred-roy-sjostrand-07,fred_flow_09}, \cite{dyatlov_guillarmou_2014},
that the generator $A=-X+V$ of the evolution operator $\mathcal{L}^{t}=e^{tA}$
has a discrete spectrum, eigenvalues are called\emph{ Ruelle resonances},
which describes the effective convergence and fluctuations towards
the equilibrium state.

\subsection*{Semi-classical analysis with wave packets}

Due to hyperbolicity of the Anosov flow, the transfer operator $\mathcal{L}^{t}$
sends the information towards small scales and technically it is natural
to use semi-classical analysis which concerns the large frequency
components of distributions. Following the idea of semi-classical
analysis, we consider the flow $\phi^{t}$ lifted to the cotangent
space $\tilde{\phi}^{t}=\left(d\phi^{-t}\right)^{*}:T^{*}M\rightarrow T^{*}M$
that encodes both the localization of a function and its internal
frequency. This lifted flow $\tilde{\phi}^{t}$ is a Hamiltonian flow
and preserves the level sets of the frequency along the flow direction
which are co-dimension one affine sub-bundles of $T^{*}M$. The assumption
of hyperbolicity implies that $\tilde{\phi}^{t}$ restricted to such
a sub-bundle has a compact trapped set (or, non-wandering set) and
that the dynamics scatters on this trapped set \cite{fred-roy-sjostrand-07,fred_flow_09}.
The existence and properties of the discrete Ruelle spectrum follows
from this observation and the uncertainty principle (i.e. effective
discreteness of $T^{*}M$ into symplectic boxes) and rejoins a more
general theory of semi-classical analysis developed in the 1980's
by B. Helffer and J. Sjöstrand called quantum scattering on phase
space \cite{sjostrand_87}.

In the papers \cite{fred-roy-sjostrand-07,fred_flow_09}, the semi-classical
analysis is performed with the Hörmander's theory of pseudo-differential
operators, considering the generator $A=-X+V$ of the transfer operator
$\mathcal{L}^{t}=e^{tA}$. In this paper we adopt a slightly different
approach: in Section \ref{sec:Semiclassical-analysis-with}, we develop
an analysis using wave packets $\Phi_{\rho}$ that are parameterized
by points $\rho\in T^{*}M$ on the cotangent space. The precise structure
of these wave packet is determined by a metric $g$ on $T^{*}M$ that
is compatible with the symplectic structure and adapted to the dynamics.
In other words the metric measures the uncertainty principle. From
these wave packets we define a wave-packet transform $\mathcal{T}:C^{\infty}\left(M\right)\rightarrow\mathcal{S}\left(T^{*}M\right)$
by $\left(\mathcal{T}u\right)\left(\rho\right)=\langle\Phi_{\rho}|u\rangle_{L^{2}\left(M\right)}$.
Then the semi-classical analysis is performed on $T^{*}M$, considering
the transfer operator $\mathcal{L}^{t}$ for some range of time $t\in\left[0,T\right]$
and analyzing the Schwartz kernel of the equivalent lifted operator
on phase space $T^{*}M$ given by $\tilde{\mathcal{L}}_{W}^{t}:=W\mathcal{T}\mathcal{L}^{t}\mathcal{T}^{\dagger}W^{-1}$,
conjugated by some suitable weight $W$. From this analysis we deduce
properties of the resolvent operator $\left(z-A\right)^{-1}$ and
then properties of the spectrum of the generator $A$.

This approach using phase space representation with wave-packet transform
$\mathcal{T}$\footnote{The wave packet transform that we consider in this paper is related
to Anti-Wick quantization, Berezin quantization, FBI transforms, Bargmann-Segal
transforms, Gabor frames and Toeplitz operators \cite[chap.3]{martinez-01},\cite{wunsh-zworski_01},\cite[chap.13]{zworski_book_2012},\cite{sjostrand_density_resonances_96,sjostrand_hitrick_07}.} and a metric $g$ on the phase space $T^{*}M$ is similar to the
Weyl-Hörmander calculus \cite{hormander1979weyl}\cite[Section 2.2]{lerner2011metrics}
and is also similar to the approach taken in \cite{tsujii_08,tsujii_FBI_10,faure-tsujii_anosov_flows_13,faure-tsujii_prequantum_maps_12}
for dynamical systems. It will provide a new proof of Theorem \ref{thm:Discrete_spectrum}
below that shows the existence of a discrete spectrum for $A$ and,
further, enables us to give some new results, Theorems \ref{thm:Weyl law},~\ref{thm:grey-band}
and \ref{thm:WF} in this paper.

Technically one advantage of using micro-local analysis with wave
packets (i.e. Toeplitz quantization) instead of the more usual Weyl
quantization is that it allows to consider symbols on $T^{*}M$ that
are not necessarily smooth functions. This is particularly interesting
in the context of Anosov flows where the stable/unstable foliations
are Hölder continuous. However it may be possible to develop an analysis
similar to the one that we develop here, but using Weyl quantization
and techniques called ``second micro-localization'' and ``exotic
calculus''. We think that another advantage of using the metric $g$
and wave-packets quantization is to provide a better geometric insight
and meaning to these difficult techniques.

One purpose of this paper is to put the basis of this micro-local
analysis using wave-packets in preparation for a more refined analysis
in case of contact Anosov flows. This second step is done in the more
recent paper \cite{faure-tsujii_anosov_flows_16}.

In a recent paper \cite{bonthonneau2020fbi} Yannick Guedes Bonthonneau
and Malo Jézéquel develop FBI transform in Gevrey classes for Anosov
flows in order to analyze the internal Ruelle spectrum. In Gevrey
classes, we can use a stronger escape function than in smooth classes
and this permits to reveal the whole Ruelle spectrum at once.

\subsection*{Discrete spectrum of the generator.}

Using the semi-classical analysis depicted previously, for any $C>0$,
we can design a positive function $W$ on $T^{*}M$ and some \emph{anisotropic
Sobolev space} $\mathcal{H}_{W}$ in which the transfer operator $\mathcal{L}^{t}$
acts as a strongly continuous semi-group and its generator $A=-X+V$
has discrete spectrum on the spectral domain $\mathrm{Re}\left(z\right)>-C$
\cite{liverani_butterley_07,fred_flow_09}. This discrete spectrum
is intrinsic to the flow, \textit{i.e. }does not depend on the choice
of the space $\mathcal{H}_{W}$ (but of course the norm of the resolvent
depends on $\mathcal{H}_{W}$). In this paper our semi-classical approach
using wave packets permits to design anisotropic Sobolev spaces $\mathcal{H}_{W}$
with more accurate properties than previously constructed ones \cite{fred_flow_09}
and this leads to new refined results on the Ruelle spectrum. For
example we show in Theorem \ref{thm:grey-band} that $\mathcal{L}^{t}$
for $t\in\mathbb{R}$ form a strongly continuous group on $\mathcal{H}_{W}$
and not only a semi-group. We also obtain refined estimate on the
density of the eigenvalues in Theorem \ref{thm:Weyl law} and more
precise description of the eigendistributions in terms of their wave
front set in Theorem \ref{thm:WF}.

The general ideas that sustain the analysis performed in this paper,
Theorem \ref{thm:Discrete_spectrum} on the discrete spectrum and
related properties, are summarized as follows: 
\begin{enumerate}
\item Any distribution in $\mathcal{D}'\left(M\right)$ can be decomposed
as a superposition of wave-packets $\Phi_{\rho}$ parameterized by
$\rho\in T^{*}M$ on cotangent space. This family of wave-packets
is determined by an admissible metric $g$ on $T^{*}M$, which is
compatible with the symplectic form asymptotically flat and also adapted
to the lifted dynamics on $T^{*}M$. We define and use the wave-packet
transform $\mathcal{T}:C^{\infty}\left(M\right)\rightarrow\mathcal{S}\left(T^{*}M\right)$
which expresses a function in $C^{\infty}\left(M\right)$ as superposition
of the wave-packets $\Phi_{\rho}$ in Section \ref{subsec:Resolution-of-identity}.
\item The transfer operator $\mathcal{L}^{t}$ transforms a wave packet
$\Phi_{\rho}$, $\rho\in T^{*}M$, into another (deformed) wave packet
at position $\tilde{\phi}^{t}\left(\rho\right)\in T^{*}M$ where $\tilde{\phi}^{t}:T^{*}M\rightarrow T^{*}M$
is the canonical lift of the flow $\phi^{t}$. Precisely the Schwartz
kernel of the lifted operator $\mathcal{T}\mathcal{L}^{t}\mathcal{T}^{\dagger}:\mathcal{S}\left(T^{*}M\right)\rightarrow\mathcal{S}\left(T^{*}M\right)$
decays very fast on the outside of the graph of $\tilde{\phi}^{t}$.
In other terms, $\mathcal{L}^{t}$ is a Fourier integral operator
whose associated canonical map is $\tilde{\phi}^{t}$. This property
is expressed in Theorem \ref{thm:Microlocality-of-the_TO} usually
called ``propagation of singularities''.
\item For an Anosov flow $\phi^{t}$, the trajectories of the lifted flow
$\tilde{\phi}^{t}$ in $T^{*}M$ escape to infinity for $t\mapsto+\infty$
or $t\rightarrow-\infty$, except for points on a trapped set (or
non-wandering set), which is compact for each frequency $\omega$
along the flow direction. See Figure \ref{fig:The-Anosov-flow-on_T*M}.
One can then find an admissible positive escape function (or Lyapunov
function) $W$ on $T^{*}M$ that is ``temperate and moderately varying
with respect to the metric $g$'' and decreases exponentially along
the flow $\tilde{\phi}^{t}$ on the outside of the trapped set. By
considering $W$ as a $L^{2}-$weight on $T^{*}M$, we define the
anisotropic Sobolev space $\mathcal{H}_{W}\left(M\right)$ and show
that the generator $A:\mathcal{H}_{W}\left(M\right)\rightarrow\mathcal{H}_{W}\left(M\right)$
has a discrete spectrum. Appendix \ref{sec:A-simple-model-of resonances}
illustrates the choice of $W$ and the appearance of discrete spectrum
in $\mathcal{H}_{W}$ with a very simple matrix model.
\end{enumerate}

\section{\label{sec:Results}Results}

In this section we present the main results that we obtain in this
paper concerning the Ruelle spectrum of Anosov flows.

We first review the following theorem that defines the discrete spectrum
of Ruelle resonances. We write $\mathcal{D}'\left(M\right)$ for the
space of distributions on $M$ and $H^{r}\left(M\right)\subset\mathcal{D}'\left(M\right)$
for the Sobolev space of order $r\in\mathbb{R}$. We refer to \cite[def. 2.1 page 4]{pazy_semigroups_83},\cite[p.  79]{engel_1999}
for generalities about semi-groups of operators. $\lambda_{\mathrm{min}}>0$
is the exponent of hyperbolicity defined in (\ref{eq:hyperbolicity}).

\begin{cBoxB}{}
\begin{thm}[Discrete spectrum]
\label{thm:Discrete_spectrum} \cite{liverani_butterley_07,fred_flow_09}.
Let $X$ be a smooth Anosov vector field on a closed manifold $M$
(considered as a differential operator). Let $V\in C^{\infty}\left(M;\mathbb{C}\right)$
be a smooth function and let $A:=-X+V$. For any $r\geq0$ there exists
a Hilbert space $\mathcal{H}_{W}\left(M\right)$, called an anisotropic
Sobolev space, satisfying 
\begin{equation}
H^{r}\left(M\right)\subset\mathcal{H}_{W}\left(M\right)\subset H^{-r}\left(M\right)\label{eq:H_W}
\end{equation}
such that the transfer operator $\mathcal{L}^{t}=\exp\left(tA\right)$
for $t\geq0$ extends to a \textbf{strongly continuous semi-group}
on $\mathcal{H}_{W}\left(M\right)$. The generator $A$ has discrete
spectrum (discrete eigenvalues with finite multiplicities) on the
spectral domain $D_{W}=\left\{ z\in\mathbb{C}\mid\mathrm{Re}\left(z\right)>C_{X,V}-r\lambda_{\mathrm{min}}\right\} $,
with $C_{X,V}\in\mathbb{R}$ given in (\ref{eq:C_XV}). These discrete
eigenvalues are called \textbf{Ruelle resonances}. The Ruelle resonances
and the corresponding generalized eigenspaces are intrinsic, i.e.
they do not depend on the choice of the space $\mathcal{H}_{W}\left(M\right)$,
see \cite[Thm 1.5]{fred_flow_09} or \cite[Lemma B.3]{jezequel2020spectral_thesis}.
See Figure \ref{fig:discrete_spectrum} (a).
\end{thm}

\end{cBoxB}

Theorem \ref{thm:Discrete_spectrum}, giving discrete spectrum for
Anosov flows, has been obtained first by O. Butterley and C. Liverani
(for some Banach spaces) in \cite{liverani_butterley_07}. A proof
using semi-classical analysis and anisotropic Sobolev spaces has been
obtained in \cite{fred_flow_09}. A generalization to Axiom A flows
(and some open uniformly hyperbolic dynamics) has been obtained in
\cite{dyatlov_guillarmou_2014},\cite{https://doi.org/10.48550/arxiv.2107.08875}.
\begin{rem}
Notice that in Theorem \ref{thm:Discrete_spectrum}, the set of operators
$\mathcal{L}^{t}=\exp\left(tA\right)$ for $t\geq0$ in $\mathcal{H}_{W}\left(M\right)$
form a semi-group and not a group. Indeed, in the space $\mathcal{H}_{W}\left(M\right)$
proposed in the papers \cite{liverani_butterley_07,fred_flow_09},
the operator $\mathcal{L}^{t}$ is not invertible. For negative time,
we need to construct a different space to get a semi-group. In this
paper, we somehow improve this aspect, in Theorem \ref{thm:grey-band}
below, where we propose a space $\mathcal{H}_{W}\left(M\right)$ in
which the set of operators $\mathcal{L}^{t}=\exp\left(tA\right)$
for $t\in\mathbb{R}$ form a group. This group property (that provides
invertibility) has been used for example in \cite{faure-tsujii_anosov_flows_16}.
\end{rem}

\subsection{Upper bound for density of eigenvalues}
\begin{flushleft}
In the next theorem we obtain an upper bound for the density of resonances
in the limit of high frequencies $\omega=\mathrm{Im}\left(z\right)$.
This bound depends on the Hölder exponent $0<\beta_{0}\leq1$ of the
distribution $E_{u}\oplus E_{s}$ (strong stable and unstable) defined
in (\ref{eq:Holder_exp}). Recall that $n=\dim M-1$. For $s\in\mathbb{R}$,
we set 
\begin{equation}
\langle s\rangle:=\left(1+s^{2}\right)^{1/2}\underset{\left|s\right|\gg1}{\sim}\left|s\right|.\label{eq:def_Japonese_bracket}
\end{equation}
\par\end{flushleft}

\begin{cBoxB}{}
\begin{thm}[Fractal Weyl law: upper bound for density of eigenvalues]
\begin{flushleft}
\label{thm:Weyl law} Let $X$ be a smooth Anosov vector field on
a closed manifold $M$ and $V\in C^{\infty}\left(M;\mathbb{C}\right)$.
Let $\sigma_{+}\left(A\right)\subset\mathbb{C}$ be the discrete Ruelle
spectrum defined in Theorem \ref{thm:Discrete_spectrum} for the operator
$A=-X+V$. Then, for any $\gamma\in\mathbb{R}$, there exists $C>0$
such that
\begin{align}
\sharp\left\{ z\in\sigma_{+}\left(A\right);\quad\mathrm{Re}\left(z\right)>\gamma,\,\mathrm{Im}\left(z\right)\in\left[\omega,\omega+1\right]\right\} \leq C\left\langle \omega\right\rangle ^{\frac{n}{1+\beta_{0}}} & \quad\text{for any}\:\omega\in\mathbb{R}.\label{eq:weyl_upper_bound}
\end{align}
\par\end{flushleft}
\end{thm}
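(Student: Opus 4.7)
The plan is to refine the construction of the anisotropic Sobolev space $\mathcal{H}_W(M)$ from Theorem~\ref{thm:Discrete_spectrum} by choosing a weight $W$ on $T^*M$ that is tuned to the H\"older regularity $\beta_0$ of the distribution $\mathcal{E}(m)$. Concretely, I would design $W$ so that, at frequency $\omega$ along the flow codirection $E_0^*$, it remains bounded only inside a \emph{parabolic} neighborhood of the trapped set $E_u^*$, of transverse thickness $\sim\langle\omega\rangle^{1/(1+\beta_0)}$ in the $n$ momentum directions transverse to $E_u^*$ and to the flow, and grows outside. The exponent $1/(1+\beta_0)$ is forced by a scale-matching argument: at spatial resolution $\rho$, any smooth approximation of $\mathcal{E}$ has angular defect $\sim\rho^{\beta_0}$, which at momentum scale $\omega$ corresponds to a transverse momentum shift $\omega\rho^{\beta_0}$; balancing this against the position--momentum uncertainty $1/\rho$ of a wave packet forces $\rho\sim\langle\omega\rangle^{-1/(1+\beta_0)}$ and a parabolic window of co-width $\langle\omega\rangle^{1/(1+\beta_0)}$.

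With $W$ fixed, the proof proceeds in three steps. First, I would produce an admissible Weyl--H\"ormander metric $g$ on $T^*M$ whose uncertainty balls in the transverse directions have position-radius $\langle\omega\rangle^{-1/(1+\beta_0)}$ and momentum-radius $\langle\omega\rangle^{1/(1+\beta_0)}$, verify slow-variation and temperance, and reprove the wave-packet resolution of identity of Proposition~\ref{lem:resolution_identity_CM} in this geometry. Second, I would adapt the escape-function construction to this parabolic geometry, producing a smooth $W$ that strictly decays along the lifted flow $\tilde{\phi}^t$ outside the parabolic window and is bounded inside it, so that the resulting Hilbert space $\mathcal{H}_W$ still satisfies~\eqref{eq:H_W}. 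Third, using the microlocality of the transfer operator (Lemma~\ref{lem:Microlocality-of-the_TO}), after a microlocal cutoff to the slab $\{\xi_0\in[\omega,\omega+1]\}$ in $T^*M$, which captures exactly the eigenvalues of $A$ with $\mathrm{Im}(z)\in[\omega,\omega+1]$, I would decompose $\mathcal{L}^T=R+K$ for some fixed large $T>0$: $R$ will have operator norm $\leq e^{T\gamma'}$ for any prescribed $\gamma'<\gamma$, while $K$ is a finite-rank operator whose rank counts the wave packets supported in the parabolic window.

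The theorem then reduces to a symplectic volume estimate: in that slab, the Weyl--H\"ormander volume of the parabolic window is of order $\langle\omega\rangle^{n/(1+\beta_0)}$, one factor $\langle\omega\rangle^{1/(1+\beta_0)}$ per transverse momentum direction after pairing with the conjugate position variable. Weyl's inequality for singular values then bounds the number of eigenvalues of $\mathcal{L}^T$ of modulus $>e^{T\gamma}$ in this slab by $\operatorname{rank}(K)\leq C\langle\omega\rangle^{n/(1+\beta_0)}$, and translating via $\lambda=e^{Tz}$ yields~\eqref{eq:weyl_upper_bound}.

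The main obstacle is the construction of the metric $g$ and of the smooth weight $W$ at this parabolic scale when $E_u^*$ is only H\"older continuous. One must verify slow-variation and temperance of $g$ at the non-standard exponent $1/(1+\beta_0)$, construct a genuinely smooth escape function despite the H\"older irregularity of $\mathcal{E}$---typically by frequency-adapted regularisation at the balancing scale $\rho=\langle\omega\rangle^{-1/(1+\beta_0)}$---and verify that the H\"older defect in this approximation is absorbable into the contractive factor $R$. This last point is where $\beta_0$ enters decisively and caps the exponent in the Weyl bound; any attempt to narrow the parabolic window would break the contraction estimate for $R$.
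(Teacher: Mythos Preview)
Your overall architecture matches the paper's: the same metric $g$ on $T^*M$ with transverse exponent $\alpha^\perp=1/(1+\beta_0)$, the same wave-packet resolution of identity, an escape function $W$ adapted to a parabolic neighbourhood of the trapped set, and the same symplectic-volume count yielding the exponent $n/(1+\beta_0)$. The heuristic you sketch for why $\alpha^\perp=1/(1+\beta_0)$ is optimal is exactly the one the paper gives in Section~\ref{subsec:Remarks-about-Fractal}.

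The endgame differs. The paper does \emph{not} decompose $\mathcal{L}^T$ as $R+K$; instead it works on the resolvent side. It sets $A'=A-\Lambda\,\mathrm{Op}(\varpi)$ with $\varpi$ a cutoff near the trapped set in the frequency window around $\omega$, proves by an explicit three-region parametrix (Lemma~\ref{lem:bounded_resolvent_of_A'}) that $(z-A')^{-1}$ is uniformly bounded on the box $R_\omega$, and then writes $(z-A)=(z-A')(1-(z-A')^{-1}\Lambda\,\mathrm{Op}(\varpi))$. Since $\mathrm{Op}(\varpi)$ is trace class with $\|\mathrm{Op}(\varpi)\|_{\mathrm{Tr}}\le C\langle\omega\rangle^{n\alpha^\perp+\alpha^\parallel}$, the eigenvalue count follows from Jensen's inequality applied to a Fredholm determinant, in the style of Sj\"ostrand. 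Frequency localisation comes for free because for $z\in R_\omega$ the factor $(z-i\omega(\rho))^{-1}$ in the parametrix on $\Omega_2$ is small away from the slab.

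Your route via $\mathcal{L}^T=R+K$ is viable but has two soft spots you should tighten. First, a microlocal cutoff to $\{\xi_0\in[\omega,\omega+1]\}$ does \emph{not} exactly capture the eigenvalues of $A$ with $\mathrm{Im}(z)\in[\omega,\omega+1]$: $\mathrm{Op}(\chi_\omega)$ does not commute with $A$, and eigenvalues of a cut-off operator are not those of $A$. You would need either the resolvent parametrix argument the paper uses, or an a posteriori argument that resonant states with $\mathrm{Im}(z)$ near $\omega$ are microlocalised in the slab up to negligible tails (Theorem~\ref{thm:WF}) and then a perturbation estimate. Second, ``Weyl's inequality for singular values'' does not by itself bound the \emph{number} of eigenvalues of a non-normal operator in $\{|\lambda|>e^{T\gamma}\}$ by $\mathrm{rank}(K)$; the Weyl inequalities control products $\prod|\lambda_j|\le\prod s_j$, not counts. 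What actually closes the argument is again Jensen's formula applied to the determinant $\det\big(I-(\lambda-R)^{-1}K\big)$ on $|\lambda|>\|R\|$, which is exactly the Sj\"ostrand mechanism the paper invokes. Once you make these two steps precise, your approach and the paper's converge to the same counting argument; the paper's resolvent formulation simply packages the frequency localisation more cleanly.
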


\end{cBoxB}

The proof of Theorem \ref{thm:Weyl law} is given in Section \ref{subsec:Dynamics-lifted-in}
and relies on an adapted phase space metric $g$ and escape function
$W$ (that defines the Hilbert space $\mathcal{H}_{W}\left(M\right)$)
to the non-smooth trapped set. Note that the exponent $\frac{n}{1+\beta_{0}}$
in the upper bound (\ref{eq:weyl_upper_bound}) depends on the Hölder
exponent $0<\beta_{0}<1$ in (\ref{eq:Holder_exp}). This kind of
upper bound has been called fractal Weyl law after the work of J.
Sjöstrand\cite{sjostrand_90}, see also \cite{Nonnenmacher_Zworski_14}. 

Concerning the upper bound (\ref{eq:weyl_upper_bound}), there are
a few preceding results:
\begin{itemize}
\item For general Anosov flows, i.e. without assumption on $\beta_{0}$,
in \cite[Th 1.8]{fred_flow_09}, the density upper bound $o\left(\left\langle \omega\right\rangle ^{n}\right)$
has been obtained for intervals in $\mathrm{Im}\left(z\right)$ of
width $\left\langle \omega\right\rangle ^{\frac{1}{2}}$.
\item Under the assumption that $m\mapsto E_{u}\left(m\right)\oplus E_{s}\left(m\right)$
is smooth (and therefore $\beta_{0}=1$), in \cite{dyatlov_Ruelle_resonances_2012},
the density upper bound \emph{$O\left(\left\langle \omega\right\rangle ^{\frac{n}{2}}\right)$}
is obtained.
\item For contact Anosov flows (where $E_{u}\oplus E_{s}$ is smooth and
$\beta_{0}=1$), in \cite{faure_tsujii_band_CRAS_2013,faure-tsujii_anosov_flows_13,faure-tsujii_anosov_flows_16},
we obtained the density lower bound $C^{-1}\left\langle \omega\right\rangle ^{\frac{n}{2}}$
under some conditions that guarantee the band structure of the spectrum.
In a more recent work \cite{faure-tsujii_anosov_flows_16} we obtain
the precise asymptotic expression for the density under some pinching
conditions. 
\end{itemize}

\subsubsection{\label{subsec:Heuristic-explanation-of}Heuristic explanation of
the fractal exponent $\frac{n}{1+\beta_{0}}$}

The presence of $\beta_{0}$ in the denominator of the exponent $\frac{n}{1+\beta_{0}}$
in (\ref{eq:weyl_upper_bound}) may look strange at first sight since
a usual treatment of the upper bound in the Weyl law considers minimal
coverings of the trapped set by boxes of size $\delta x=\omega^{-1/2}$,
$\delta\xi=\omega^{1/2}$ for large $\omega\gg1$ and, as explained
below, would give the weaker upper bound $O\left(\omega^{\frac{\mathrm{dim}_{B}\mathscr{A}-1}{2}}\right)=O\left(\omega^{n\left(1-\frac{\beta_{0}}{2}\right)}\right)$
where $\mathrm{dim}_{B}\mathscr{A}=\underbrace{\left(n+1\right)}_{\mathrm{dim}M}+n\left(1-\beta_{0}\right)$
is the fractal box dimension of the graph of the Anosov one form $\mathscr{A}$,
Eq.(\ref{eq:one_form}), see \cite[chap.11]{falconer_03_book}\cite{Nonnenmacher_Zworski_14}.
To obtain the better bound $O\left(\omega^{\frac{n}{1+\beta_{0}}}\right)$,
we consider coverings by boxes of size $\delta x=\omega^{-\alpha}$,
$\delta\xi=\omega^{\alpha}$ where $\frac{1}{2}\leq\alpha<1$ is an
arbitrary parameter (that enters in the metric (\ref{eq:metric_g_in_coordinates})
under the name $\alpha^{\perp}$) and we set $\alpha=\frac{1}{\beta_{0}+1}$
at the end in order to optimize the result. Below we explain this
argument in more detail.

We consider some fixed frequency $\omega$ along the flow and assume
that $\omega$ is large. We will see\footnote{In flow box coordinates we have $-X=\frac{\partial}{\partial z}$,
hence a function that has frequency $\omega$ along the flow writes
$u\left(x,z\right)=u_{0}\left(x\right)e^{i\omega z}$ giving $-Xu=i\omega u$.} that it corresponds with $\omega=\mathrm{Im}\left(z\right)$ on the
spectral domain. It will appear in the proof that, after damped by
a weight function $W$ on $T^{*}M$, Ruelle eigenfunctions at frequency
$\omega$ are micro-locally supported in a vicinity of the graph of
the map $m\in M\mapsto\omega\mathscr{A}\left(m\right)\in T^{*}M$
where $\mathscr{A}$ is the Anosov one form (\ref{eq:one_form}).
We call this graph the \emph{trapped set}. In general it is a fractal
set \cite[Chap.11]{falconer_03_book} because $\mathscr{A}$ is not
smooth. See Figure \ref{fig:fractal_bound}. In order to describe
all the set of resonant states (or eigenfunctions) near frequency
$\omega$, we consider a covering of this graph by symplectic boxes\footnote{In coordinates $\left(y_{j}\right)_{j}$ on $M$ and dual coordinates
$\left(\eta_{j}\right)_{j}$ on $T_{y}^{*}M$, a symplectic box in
$T^{*}M$ has size $\Delta y^{j}=\delta$ and $\Delta\eta_{j}=\delta^{-1}$
for some $\delta>0$, hence symplectic volume 1.} of unit size (corresponding to wave packets) and count how many boxes
$\mathcal{N}\left(\omega\right)$ we need. This number of boxes $\mathcal{N}\left(\omega\right)$
will give an upper bound for the number of eigenvalues. Assume that
each symplectic box has size $\delta x\sim\omega^{-\alpha}$ on the
manifold $M$, transversely to the flow, with some exponent $\frac{1}{2}\leq\alpha<1$.
The symplectic condition (or uncertainty principle) imposes $\delta x.\delta\xi=1$,
i.e. the size $\delta\xi\sim\delta x^{-1}\sim\omega^{\alpha}$ in
the fibers of $T^{*}M$ transversely to the trapped set $\mathbb{R}\mathscr{A}$,
as on Figure \ref{fig:Unity_ball}. Due to its Hölder exponent $\beta_{0}$,
the graph of $\omega\mathscr{A}$ spreads over a range of frequencies
of size {\small{}$\delta\left(\omega\mathscr{A}\right)\sim\omega\left(\delta x\right)^{\beta_{0}}=\omega^{1-\alpha\beta_{0}}$}.
Then there are two cases to consider, see Figure \ref{fig:fractal_bound}:
\begin{enumerate}
\item If $\alpha\leq\frac{1}{1+\beta_{0}}\Leftrightarrow1-\alpha\beta_{0}\geq\alpha$,
then for large frequencies $\omega$, we have $\omega^{1-\alpha\beta_{0}}\geq\omega^{\alpha}\Leftrightarrow\delta\left(\omega\mathcal{A}\right)\geq\delta\xi$,
i.e. the variance $\delta\left(\omega\mathcal{A}\right)$ of $\omega\mathcal{A}$
is larger than the size $\delta\xi$ of the box in the frequency space.
The symplectic volume to be covered by the boxes will be proportional
to $\left(\delta\left(\omega\mathcal{A}\right)\right)^{n}$. This
gives the estimate $\mathcal{N}\left(\omega\right)\asymp\left(\delta\left(\omega\mathcal{A}\right)\right)^{n}=\omega^{n\left(1-\beta_{0}\alpha\right)}$.
\item On the contrary, if $\alpha\geq\frac{1}{1+\beta_{0}}$, the variance
$\delta\left(\omega\mathcal{A}\right)$ of $\omega\mathcal{A}$ is
smaller than the size $\delta\xi$ of the box in the frequency space.
In this case, the symplectic volume to be covered by the boxes will
be proportional to $\left(\delta\xi\right)^{n}$. This gives the estimate
$\mathcal{N}\left(\omega\right)\asymp\left(\delta\xi\right)^{n}=\omega^{n\alpha}$.
\end{enumerate}
The value of $\alpha$ that minimizes $\mathcal{N}\left(\omega\right)$
is $\alpha=\frac{1}{1+\beta_{0}}$, giving $\mathcal{N}\left(\omega\right)\asymp\omega^{\frac{n}{1+\beta_{0}}}$,
that is the upper bound (\ref{eq:weyl_upper_bound}) in Theorem \ref{thm:Weyl law}.

{\footnotesize{}}
\begin{figure}[h]
\begin{centering}
{\footnotesize{}\input{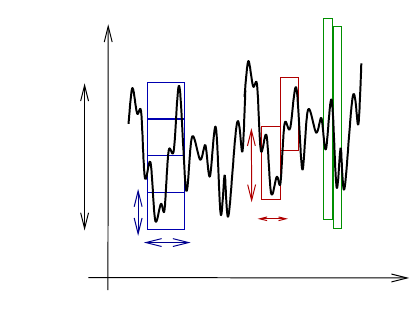tex_t}$\qquad$\input{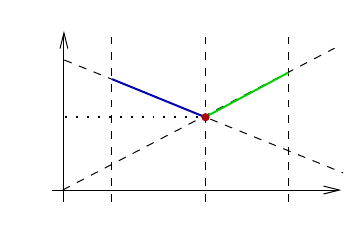tex_t}}{\footnotesize\par}
\par\end{centering}
{\footnotesize{}\caption{\label{fig:fractal_bound}To estimate from above the density of resonances
at frequency $\omega$, we cover the trapped set, i.e. the graph of
the map $m=\left(x,z\right)\in M\protect\mapsto\omega\mathscr{A}\left(m\right)\in T^{*}M$
by symplectic boxes, where $x$ is a coordinate transverse to the
flow direction. This graph is Hölder continuous with exponent $\beta_{0}$.
With the choice $\delta x=\omega^{-\alpha}$ we obtain that the symplectic
volume of this cover is $\omega^{E\left(\alpha\right)}$ with some
exponent $E\left(\alpha\right)$ that is minimum for the choice $\alpha=\frac{1}{1+\beta_{0}}$.
On the picture, the cover in the middle (red boxes) is more efficient
than the left one (blue boxes) or the right one (green boxes).}
}{\footnotesize\par}
\end{figure}
{\footnotesize\par}

\subsection{Parabolic wave front set of the Ruelle eigenfunctions}

Concerning the eigendistributions associated to the Ruelle spectrum,
we obtain in Theorem \ref{thm:WF} and Corollary \ref{cor:Let-us-choose}
below a precise description of their semi-classical wave front set,
that is, the region in the phase space $T^{*}M$ where the Ruelle
eigenfunctions are non negligible. In these results, the wave front
sets are contained in a parabolic vicinity of the unstable affine
distribution $E_{u}^{*}+\omega\mathcal{A}$ uniformly in $\omega$
(where $\mathcal{A}\in\left(E_{u}\oplus E_{s}\right)^{\perp}$ is
the Anosov one form and $\omega$ is the imaginary part of the corresponding
eigenvalue) and this improves the previous results \cite{fred_flow_09}
which claims that the wave front set is contained in an arbitrary
conical vicinity of $E_{u}^{*}$. See Figure \ref{fig:Wave-front-set}
that compares these two results.

We will use the decomposition of $T^{*}M=E_{u}^{*}\oplus E_{s}^{*}\oplus E_{0}^{*}$
dual to (\ref{eq:decomp_TM}), where 
\[
E_{u}^{*}=\left(E_{u}\oplus E_{0}\right)^{\perp},\quad E_{s}^{*}=\left(E_{s}\oplus E_{0}\right)^{\perp},\quad E_{0}^{*}=\left(E_{u}\oplus E_{s}\right)^{\perp}.
\]
We first recall the definition of the wave front set $WF\left(u\right)$
of a distribution $u\in\mathcal{D}'\left(M\right)$ from \cite[p.254]{hormander_1},\cite[p.77]{grigis_sjostrand},\cite[p.27]{taylor_tome2}.
A point $\left(m,\eta\right)\in T^{*}M$ does not belong to $WF\left(u\right)$
if and only if there exist a smooth function $\chi\in C^{\infty}\left(M;\mathbb{R}^{+}\right)$
with $\chi\left(m\right)=1$ and an open cone $\mathbf{C}\subset\mathbb{R}^{n+1}$
with $\eta\in\mathbf{C}$ such that, for any $N>1$, we have
\[
\left|\left(\mathcal{F}\left(\chi u\right)\right)\left(\eta'\right)\right|\leq\frac{C_{N}}{\left|\eta'\right|^{N}}\quad\text{for all }\eta'\in\mathbf{C},
\]
with some constant $C_{N}$, where $\mathcal{F}$ is the Fourier transform
(in a local chart).

The following theorem describes what is known from \cite{fred_flow_09}
concerning the wave front set $WF\left(u\right)$ of a Ruelle generalized
eigenfunction\footnote{A generalized eigenfunction of a linear operator $A$ for an eigenvalue
$z\in\mathbb{{C}}$ is a distribution $u\in\mathcal{D}'\left(M\right)$
such that $(z-A)^{n}u=0$ for some $n\geq1$.}.

\begin{cBoxB}{}
\begin{thm}[Conical wave front set]
\label{thm:conical-WF}\cite{fred_flow_09} Assume that $u\in\mathcal{H}_{W}\left(M\right)$
is a generalized eigenfunction of the generator $A$ for a Ruelle
eigenvalue $z\in\mathbb{C}$. Then we have
\[
WF\left(u\right)\subset E_{u}^{*}=\left(E_{u}\oplus E_{0}\right)^{\perp}.
\]
\end{thm}

\end{cBoxB}

The claim of Theorem \ref{thm:conical-WF} is illustrated on Figure
\ref{fig:Wave-front-set}(a). In Theorem \ref{thm:WF} below, we give
a more precise description of the singularities of Ruelle eigenfunctions.
For this, we first introduce (a simplified version of) the wave packet
transform.

\begin{figure}[h]
\begin{centering}
\scalebox{0.6}[0.6]{\input{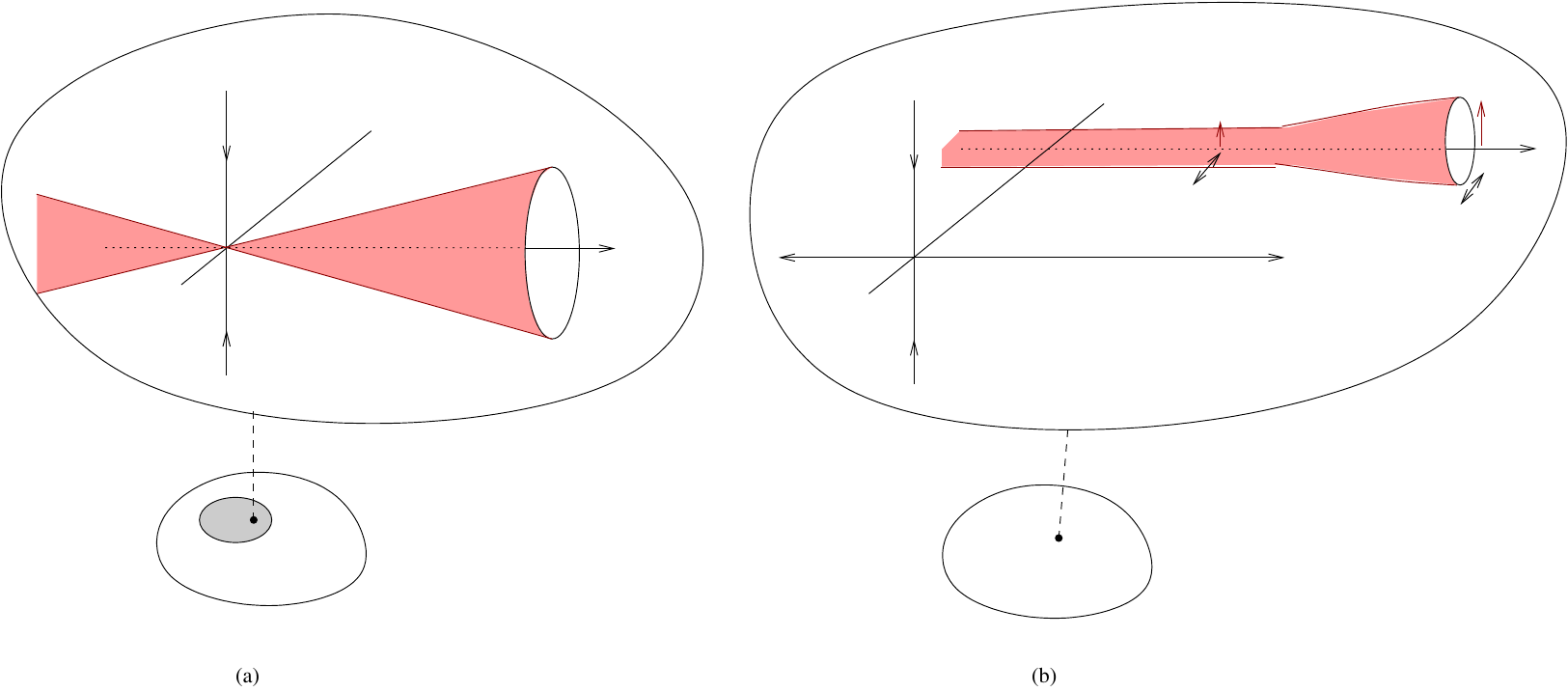tex_t}}
\par\end{centering}
\caption{\label{fig:Wave-front-set}\textbf{(a)} Theorem \ref{thm:conical-WF}
shows that for high frequencies a Ruelle eigendistribution represented
in phase space $T^{*}M$ is negligible outside any conical vicinity
of the linear sub-bundle $E_{u}^{*}\subset T^{*}M$.\textbf{ (b)}
Theorem \ref{thm:WF} improves this description and shows that a Ruelle
eigendistribution with eigenvalue $z=a+i\omega_{0}$, is negligible
outside a \textquotedblleft parabolic domain\textquotedblright{} \emph{$\mathcal{V}_{\omega_{0},\epsilon}$}
of the affine sub-bundle $\omega_{0}\mathscr{A}+E_{u}^{*}\subset T^{*}M$,
uniformly with respect to $\omega_{0}\in\mathbb{R}$. Here $\alpha^{\perp}=\frac{1}{1+\min\left(\beta_{u},\beta_{s}\right)}\in[\frac{1}{2};1[$
and $\epsilon>0$ is arbitrary small. Observe that the cone $C$ contains
the domain $\mathcal{V}_{\omega_{0},\epsilon}$, except for a compact
part of it.}
\end{figure}

\subsubsection{Metric $g$ on $T^{*}M$}

The analysis made in this paper relies on the use of a specific metric
on $T^{*}M$. From this metric $g$ we will define later the wave
packet transform. We consider local flow box coordinates $y=\left(x,z\right)\in\mathbb{R}^{n}\times\mathbb{R}$
for the vector field $X$ on an open set $U\subset M$, which is by
definition a local chart map $\kappa:m\in U\subset M\mapsto y=\left(x,z\right)\in\mathbb{R}^{n}\times\mathbb{R}$,
such that 
\begin{equation}
\kappa_{*}\left(-X\right)=\frac{\partial}{\partial z}.\label{eq:X_ddz}
\end{equation}
We write $\eta=\left(\xi,\omega\right)\in\mathbb{R}^{n}\times\mathbb{R}$
for the dual coordinates on $T_{\left(x,z\right)}^{*}\mathbb{R}^{n+1}\equiv\mathbb{R}^{n}\times\mathbb{R}$.
Let $\tilde{\kappa}:\rho\in T^{*}U\mapsto\left(y,\eta\right)\in T^{*}\mathbb{R}^{n+1}$
be the canonical extension of the local chart map $\kappa$ to the
cotangent bundle.

Let us consider parameters  $\alpha^{\perp},\alpha^{\parallel}\in[0,1[$
such that
\begin{equation}
0\leq\alpha^{\parallel}<\alpha^{\perp}<1,\quad\frac{1}{2}\leq\alpha^{\perp}<1.\label{eq:conditions}
\end{equation}
Let
\begin{equation}
\delta^{\perp}\left(\eta\right):=\left\langle \left|\eta\right|\right\rangle ^{-\alpha^{\perp}},\quad\delta^{\parallel}\left(\eta\right):=\left\langle \left|\eta\right|\right\rangle ^{-\alpha^{\parallel}},\label{eq:def_delta}
\end{equation}
where $\left|\eta\right|$ denotes the Euclidean norm in $\mathbb{R}^{n+1}$
and consider the following metric $g$ on $T^{*}\mathbb{R}^{n+1}$
given at each point 
\[
\varrho=\left(y,\eta\right)=\left(\left(x,z\right),\left(\xi,\omega\right)\right)\in T^{*}\mathbb{R}^{n+1}
\]
by 
\begin{equation}
g_{\varrho}:=\left(\frac{dx}{\delta^{\perp}\left(\eta\right)}\right)^{2}+\left(\delta^{\perp}\left(\eta\right)d\xi\right)^{2}+\left(\frac{dz}{\delta^{\parallel}\left(\eta\right)}\right)^{2}+\left(\delta^{\parallel}\left(\eta\right)d\omega\right)^{2}.\label{eq:metric_g_in_coordinates}
\end{equation}
The metric $g$ is compatible\footnote{Indeed we have $g\left(u,v\right)=\Omega\left(u,Jv\right)$ if we
define an almost complex structure $J$ by

\begin{align*}
J\left(\delta^{\perp}\left(\eta\right)\partial_{x}\right) & =\frac{1}{\delta^{\perp}\left(\eta\right)}\partial_{\xi},\quad J\left(\frac{1}{\delta^{\perp}\left(\eta\right)}\partial_{\xi}\right)=-\delta^{\perp}\left(\eta\right)\partial_{x},\\
J\left(\delta^{\parallel}\left(\eta\right)\partial_{z}\right) & =\frac{1}{\delta^{\parallel}\left(\eta\right)}\partial_{\omega},\qquad J\left(\frac{1}{\delta^{\parallel}\left(\eta\right)}\partial_{\omega}\right)=-\delta^{\parallel}\left(\eta\right)\partial_{z}.
\end{align*}
} \cite{mac_duff_98,da_silva_01} with the canonical symplectic form
on $T^{*}M$: $\Omega=\sum_{k=1}^{n+1}dy_{k}\wedge d\eta_{k}$. The
norm of a vector $v\in\mathbb{R}^{2\left(n+1\right)}$ with respect
to $g$ is denoted by
\begin{equation}
\left\Vert v\right\Vert _{g_{\varrho}}:=\left(g_{\varrho}\left(v,v\right)\right)^{1/2}.\label{eq:def_norm_g}
\end{equation}
The unit ball for the metric is illustrated on Figure \ref{fig:Unity_ball}.

As we will see in Section \ref{subsec:Lipschitz-property-of}, the
conditions (\ref{eq:conditions}) ensure that different choices of
flow box charts give a uniformly equivalent metric on the intersection
and consequently defines an equivalence class of metric on the cotangent
bundle $T^{*}M$. 

\begin{figure}[H]
\centering{}\scalebox{0.8}[0.8]{\input{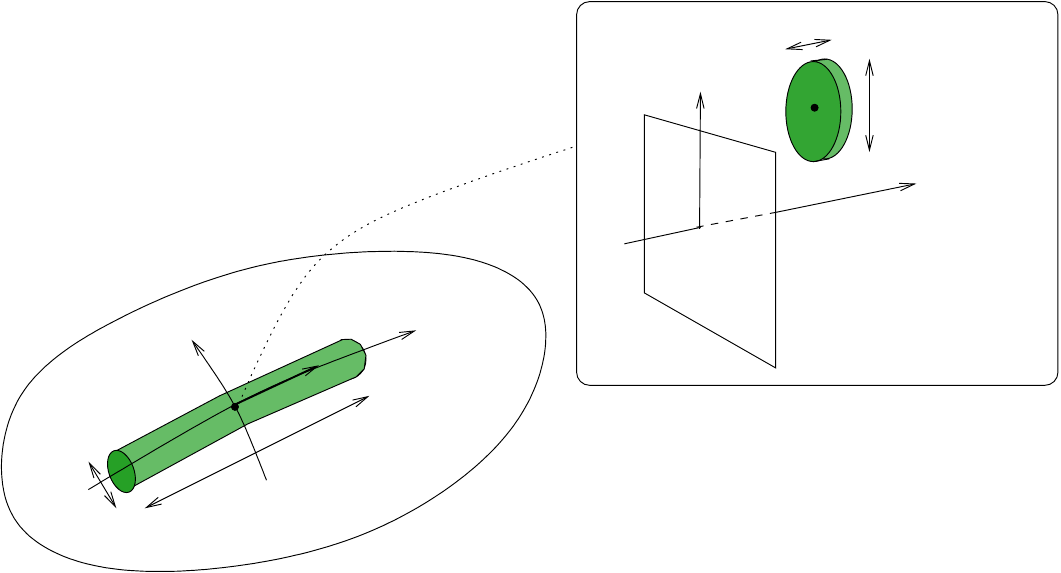tex_t}}\caption{\label{fig:Unity_ball}We use flow box coordinates $y=\left(x,z\right)$
on $M$ i.e. such that the vector field is $\left(-X\right)=\frac{\partial}{\partial z}$
and dual coordinates $\eta=\left(\xi,\omega\right)$ in $T_{m}^{*}M$.
$g$ is a (class of) metric on $T^{*}M$ and the filled cylinders
(in green) represent the unit ball for this metric $g$ at a point
$\rho\in T^{*}M$, projected on the base $M$ and on the fiber $T_{m}^{*}M$
with $m=\pi\left(\rho\right)$. This unit ball has size $\delta^{\parallel}\left(\eta\right)$
along the flow $z$, transverse size $\delta^{\perp}\left(\eta\right)$
along $x$ and sizes $\left(\delta^{\perp}\left(\eta\right)\right)^{-1},\left(\delta^{\parallel}\left(\eta\right)\right)^{-1}$
along dual coordinates $\xi,\omega$ in $T_{m}^{*}M$. These sizes
$\delta^{\perp}\left(\eta\right)\protect\leq\delta^{\parallel}\left(\eta\right)\protect\leq1$
are functions on $T^{*}M$, defined in (\ref{eq:def_delta}), and
decay at infinity.}
\end{figure}

\subsubsection{Wave packet transform}

For a given $\varrho=\left(y,\eta\right)\in T^{*}\mathbb{R}^{n+1}$
we define the Gaussian function $\Phi_{\varrho}\in C_{0}^{\infty}\left(\mathbb{R}^{n+1};\mathbb{C}\right)$
called wave packet:
\begin{align}
\Phi_{\varrho}\left(y'\right) & =a_{\varrho}\chi\left(y'-y\right)\exp\left(i\eta.y'-\left\Vert y'-y\right\Vert _{g_{\varrho}}^{2}\right)\label{eq:wave_packet_1}\\
 & =a_{\varrho}\chi\left(y'-y\right)\exp\left(i\eta.y'-\left|\frac{\left(z'-z\right)}{\delta^{\parallel}\left(\eta\right)}\right|^{2}-\left|\frac{\left(x'-x\right)}{\delta^{\perp}\left(\eta\right)}\right|^{2}\right)\nonumber 
\end{align}
with $y'=\left(x',z'\right)$ and where $\chi\in C_{0}^{\infty}\left(\mathbb{R}^{n+1}\right)$
is some cut-off function with $\chi\equiv1$ near the origin and $a_{\varrho}>0$
is such that $\left\Vert \Phi_{\varrho}\right\Vert _{L^{2}\left(\mathbb{R}^{n+1}\right)}=1$.
We define the wave-packet transform of a distribution $u\in\mathcal{D}'\left(M\right)$
on $T^{*}U$ by\footnote{The wave-packet transform $\mathcal{T}$ defined here is similar to
\href{https://en.wikipedia.org/wiki/Segal\%E2\%80\%93Bargmann_space}{Bargman transform}.}
\begin{equation}
\left(\mathcal{T}_{g}u\right)\left(\rho\right):=\langle\Phi_{\tilde{\kappa}\left(\rho\right)}\circ\kappa|u\rangle_{L^{2}\left(M\right)},\quad\text{with }\rho\in T^{*}U.\label{eq:def_Bg_u}
\end{equation}

\begin{rem}
Later we will introduce in (\ref{eq:wave_packet}) an expression for
wave packets that is slightly different but converges to (\ref{eq:wave_packet_1})
in the high frequency limit. The expression will be more complicated
but will have the advantage of giving an exact resolution of identity.
Since both expressions become essentially equivalent in the high frequency
limits, the definitions and properties given in this section are not
affected by the difference.
\end{rem}

\subsubsection{Conical wave front set}

We first reformulate Theorem \ref{thm:conical-WF} using the wave
packet transform $\mathcal{T}_{g}u$ using the metric $g$ as follows.
Assume that $u\in\mathcal{H}_{W}\left(M\right)$ is a generalized
eigenfunction of the generator $A$ for a Ruelle eigenvalue $z\in\mathbb{C}$
given in Theorem \ref{thm:grey-band}. Then, for any continuous field
of open positive cones $\mathbf{C}:m\in M\mapsto\mathbf{C}\left(m\right)\subset T_{m}^{*}M$
with $\mathbf{C}\left(m\right)\supset E_{u}^{*}\left(m\right)$ and
for any $N>1$, there exists $C_{N}>0$ such that
\begin{equation}
\left|\left(\mathcal{T}_{g}u\right)\left(\rho\right)\right|\leq\frac{C_{N}}{\left|\rho\right|^{N}}\quad\text{for }\rho\in T^{*}M\setminus\mathbf{C}.\label{eq:WF}
\end{equation}
This means that $\mathcal{T}_{g}u$ is negligible on the outside of
any conical vicinity of the sub-bundle $E_{u}^{*}$. See Figure \ref{fig:Wave-front-set}(a).
Theorem \ref{thm:WF} below improves this result.

\subsubsection{Parabolic wave front set}

For a point $m\in M$, a cotangent vector $\rho\in T_{m}^{*}M$ has
a unique decomposition
\[
\rho=\omega\mathcal{A}\left(m\right)+\rho_{u}+\rho_{s}\quad\text{with }\omega\in\mathbb{R},\,\rho_{u}\in E_{u}^{*},\,\rho_{s}\in E_{s}^{*}.
\]
We introduce a function $W\in C\left(T^{*}M;\mathbb{R}^{+}\right)$,
called an escape function, defined for $\rho\in T^{*}M$ by
\begin{equation}
W\left(\rho\right)=\frac{\left\langle \left\Vert \rho_{s}\right\Vert _{g_{\rho}}\right\rangle ^{R_{s}}}{\left\langle \left\Vert \rho_{u}\right\Vert _{g_{\rho}}\right\rangle ^{R_{u}}}\underset{(\ref{eq:metric_g_in_coordinates})}{\asymp}\frac{\left\langle \left\langle \left|\rho\right|\right\rangle ^{-\alpha^{\perp}}\left|\rho_{s}\right|\right\rangle ^{R_{s}}}{\left\langle \left\langle \left|\rho\right|\right\rangle ^{-\alpha^{\perp}}\left|\rho_{u}\right|\right\rangle ^{R_{u}}}\label{eq:W2-2-1}
\end{equation}
where $R_{u},R_{s}>0$ are arbitrary parameters which will be taken
large enough.

\begin{cBoxB}{}
\begin{thm}[Parabolic wave front set of the Ruelle eigenfunction]
\label{thm:WF}Assume that $u\in\mathcal{H}_{W}\left(M\right)$ is
a generalized eigenfunction of the generator $A$ for a Ruelle eigenvalue
$z\in\mathbb{C}$ with $\mathrm{Re}\left(z\right)\geq-C$. Let $\omega_{0}=\mathrm{Im}\left(z\right)$.
Then any $N>1$ there exists a constant $C_{N}>0$ such that for any
$\rho\in T^{*}M$,
\begin{equation}
\left|\left(\mathcal{T}_{g}u\right)\left(\rho\right)\right|\leq\frac{C_{N}}{W\left(\rho\right)\left(\frac{\left\langle \omega-\omega_{0}\right\rangle }{\left\langle \left|\rho\right|\right\rangle ^{\alpha^{\parallel}}}\right)^{N}}\left\Vert u\right\Vert _{\mathcal{H}_{W}\left(M\right)}.\label{eq:WF-1-1}
\end{equation}
\end{thm}

\end{cBoxB}

The claim (\ref{eq:WF-1-1}) implies that a Ruelle eigenfunction (represented
in phase space $T^{*}M$) is micro-localized near the frequency $\omega=\omega_{0}=\mathrm{Im}\left(z\right)$
and bounded by $1/W\left(\rho\right)$ up to some multiplicative constant.
In order to explain what we mean by ''parabolic'' wave front set,
we present the following corollary that is obtained from the theorem
above with a choice of the metric $g$ and the escape function $W\left(\rho\right)$.
See Figure \ref{fig:Wave-front-set}(b).

\begin{cBoxB}{}
\begin{cor}
\label{cor:Let-us-choose} Set $\alpha^{\perp}=\frac{1}{1+\min\left(\beta_{u},\beta_{s}\right)}$
and $\alpha^{\parallel}=0$ in the definition of the metric $g$ in
(\ref{eq:metric_g_in_coordinates}). Then, for any (large) $N>0$
and any (small) $\epsilon>0$, the Ruelle  eigenfunction $u\in\mathcal{H}_{W}\left(M\right)$
for the eigenvalue $z\in\mathbb{C}$, in Theorem \ref{thm:WF}, satisfies
\begin{equation}
\left|\left(\mathcal{T}_{g}u\right)\left(\rho\right)\right|\leq\frac{C_{N,\epsilon}}{\left\langle \rho\right\rangle ^{N}}\left\Vert u\right\Vert _{\mathcal{H}_{W}\left(M\right)}\label{eq:WF-1}
\end{equation}
on the outside of the parabolic vicinity 
\begin{align}
\mathcal{V}_{\omega_{0},\epsilon}: & =\left\{ \rho\in T^{*}M\;\left.\;\left\langle \omega-\omega_{0}\right\rangle \leq\left|\rho\right|^{\epsilon}\text{ and }\left\langle \left|\rho\right|^{-\alpha^{\perp}}\left|\rho_{s}\right|\right\rangle \leq\left|\rho\right|^{\epsilon}\right.\right\} \label{eq:Vu}
\end{align}
 of the affine bundle $E_{u}^{*}+\omega_{0}\mathscr{A}=\left\{ \rho\in T^{*}M\mid\rho_{s}=0,\omega=\omega_{0}\right\} $,
where the constant $C_{N,\epsilon}>0$ depends on $N$ and $\epsilon$
but not on $u$.
\end{cor}

\end{cBoxB}

The proofs of Theorem \ref{thm:WF} and Corollary \ref{cor:Let-us-choose}
are given in Section \ref{sec:Proof-of-Theorem_WF}.
\begin{rem}
We can write (\ref{eq:WF-1}) as
\begin{equation}
\left|\left(\mathcal{T}_{g}u\right)\left(\rho\right)\right|\leq\frac{C_{N,\epsilon}}{\left\langle \left|\rho\right|^{-\epsilon}\mathrm{dist}_{g}\left(\rho,E_{u}^{*}+\omega_{0}\mathscr{A}\right)\right\rangle ^{N}}\left\Vert u\right\Vert _{\mathcal{H}_{W}\left(M\right)}\label{eq:parabolic_WF}
\end{equation}
with 
\[
\mathrm{dist}_{g}\left(\rho,E_{u}^{*}+\omega_{0}\mathscr{A}\right):=\min_{\rho'\in E_{u}^{*}+\omega_{0}\mathscr{A}}\mathrm{dist}_{g}\left(\rho,\rho'\right)\asymp\max\left(\left\Vert \rho_{s}\right\Vert _{g_{\rho}},\left\langle \omega-\omega_{0}\right\rangle \right)
\]
and because $\left\Vert \rho_{s}\right\Vert _{g_{\rho}}=\left|\rho\right|^{-\alpha^{\perp}}\left|\rho_{s}\right|$.
Further, since the Ruelle eigenvalues and the corresponding generalized
eigenspaces are intrinsic to the transfer operators $\mathcal{L}^{t}$
and do not depend on the choice of the weight function $W$, it is
possible (and might be better) to replace (\ref{eq:parabolic_WF})
by an expression that does not depend on $W$:
\[
\left|\left(\mathcal{T}_{g}u\right)\left(\rho\right)\right|\leq\frac{C_{N,\epsilon}}{\left\langle \left|\rho\right|^{-\epsilon}\mathrm{dist}_{g}\left(\rho,E_{u}^{*}+\omega_{0}\mathscr{A}\right)\right\rangle ^{N}}\left(\int_{\mathrm{dist}_{g}\left(\rho,\omega_{0}\mathscr{A}\right)\leq\left|\rho\right|^{\epsilon}}\left|\left(\mathcal{T}_{g}u\right)\left(\rho\right)\right|^{2}\frac{d\rho}{\left(2\pi\right)^{n+1}}\right)^{1/2}.
\]
\end{rem}

~
\begin{rem}
Theorem \ref{thm:WF} is more precise than Theorem \ref{thm:conical-WF}
because the parabolic domain $\mathcal{V}_{\omega_{0},\epsilon}$
is contained in any conical vicinity $\mathbf{C}$ of $E_{u}^{*}$
at high frequencies (compare Figures \ref{fig:Wave-front-set}(a)
and (b)) and because the constant $C_{N,\epsilon}$ in (\ref{eq:WF-1})
and (\ref{eq:parabolic_WF}) does not depend on $\omega_{0}\in\mathbb{R}$
(hence on $u$), whereas the constant $C_{N}$ in (\ref{eq:WF}) depends
on $u$.
\end{rem}

~
\begin{rem}
(Technical) We expect that the exponent $\alpha^{\perp}$ in Corollary
\ref{cor:Let-us-choose} should be $\alpha^{\perp}=\frac{1}{1+\beta_{u}}$
instead of $\alpha^{\perp}=\frac{1}{1+\min\left(\beta_{u},\beta_{s}\right)}$.
The reason that we have $\min\left(\beta_{u},\beta_{s}\right)$ instead
of $\beta_{u}$ is due to the construction of the metric $g$.
\end{rem}

\subsection{Past and future Ruelle spectrum}

We first present some notations. We take a constant $\lambda_{\mathrm{max}}\geq\lambda_{\mathrm{min}}>0$
such that
\begin{equation}
\frac{1}{C}e^{-\lambda_{\mathrm{max}}t}\left\Vert v\right\Vert _{g_{M}}\le\left\Vert d\phi^{t}v\right\Vert _{g_{M}}\leq Ce^{\lambda_{\mathrm{max}}t}\left\Vert v\right\Vert _{g_{M}}\label{eq:l_max}
\end{equation}
for all $v\in TM$ and $t\ge0$ where $g_{M}$ is a smooth metric
on $M$.  For a function $\varphi\in C\left(M;\mathbb{C}\right)$,
we set
\begin{equation}
\overline{\max}\left(\varphi\right):=\lim_{t\rightarrow+\infty}\max_{m\in M}\frac{1}{t}\int_{0}^{t}\varphi\left(\phi^{s}\left(m\right)\right)ds,\label{eq:def_tmax}
\end{equation}
\begin{equation}
\overline{\min}\left(\varphi\right):=\lim_{t\rightarrow+\infty}\min_{x\in M}\frac{1}{t}\int_{0}^{t}\varphi\left(\phi^{s}\left(x\right)\right)ds\label{eq:def_tmin}
\end{equation}
which are called\footnote{Here are other equivalent expressions (or definitions) of $\overline{\max}\left(\varphi\right)$:
\[
\overline{\max}\left(\varphi\right)=\max_{\mathrm{inv.\,prob.\,measure\,}\mu}\int\varphi d\mu=\lim_{\beta\rightarrow+\infty}\frac{1}{\beta}\mathrm{Pr}\left(\beta\varphi\right),
\]
where $\mathrm{Pr}\left(.\right)$ denotes the topological pressure.
We have $\overline{\min}\left(\varphi\right)=-\overline{\max}\left(-\varphi\right)$.} the maximal and minimal ergodic average of $\varphi$ respectively
\cite{jenkinson_2018}.

We fix an (arbitrary) smooth density $dm$ on $M$, consider the space
$L^{2}\left(M,dm\right)$ and the divergence of the vector field $X$
with respect to $dm$ is denoted $\mathrm{div}X$. Using these definitions
we set
\begin{equation}
C_{X,V}:=\overline{\max}\left(\frac{1}{2}\mathrm{div}X+\mathrm{Re}\left(V\right)\right),\label{eq:C_XV}
\end{equation}
\begin{equation}
C'_{X,V}:=\overline{\min}\left(\frac{1}{2}\mathrm{div}X+\mathrm{Re}\left(V\right)\right),\label{eq:C_XV-1}
\end{equation}
so that $C'_{X,V}\leq C_{X,V}$.

In the next theorem we obtain that the transfer operator $\mathcal{L}^{t}:\mathcal{H}_{W}\left(M\right)\rightarrow\mathcal{H}_{W}\left(M\right)$
for $t\in\mathbb{R}$ forms a group of operators, whose generator
$A=-X+V$ has some intrinsic discrete spectrum formed by two separated
sets, which are called the \emph{future and past spectrum} respectively
and represented on Figure \ref{fig:discrete_spectrum}.

\begin{cBoxB}{}
\begin{thm}[Past and future spectrum]
\label{thm:grey-band} For any $\epsilon>0$ and $r\in\mathbb{R}$,
there exists a Hilbert space $\mathcal{H}_{W}\left(M\right)$ with
\[
H^{|r|}\left(M\right)\subset\mathcal{H}_{W}\left(M\right)\subset H^{-|r|}\left(M\right)
\]
such that $\mathcal{L}^{t}=\exp\left(tA\right)$ for $t\in\mathbb{R}$
is a \textbf{strongly continuous group} and the essential spectrum
$\sigma_{ess}\left(A\right)$ of the generator $A:\mathcal{H}_{W}\left(M\right)\rightarrow\mathcal{H}_{W}\left(M\right)$
is contained in the vertical band
\begin{equation}
\begin{cases}
C'_{X,V}-2r\lambda_{\mathrm{max}}-\epsilon\leq\mathrm{Re}\left(z\right)\leq C_{X,V}-r\lambda_{\mathrm{min}}+\epsilon & \text{if }r\ge0\\
C'_{X,V}-r\lambda_{\mathrm{min}}-\epsilon\leq\mathrm{Re}\left(z\right)\leq C_{X,V}-2r\lambda_{\mathrm{max}}+\epsilon & \text{if }r\le0
\end{cases}\label{eq:bound_for_ess_spec}
\end{equation}
Further we have that
\begin{itemize}
\item if $r\geq0$, then $A$ has a uniformly bounded resolvent on 
\[
\left\{ \mathrm{Re}\left(z\right)\leq C'_{X,V}-2r\lambda_{\mathrm{max}}-\epsilon\right\} \cup\left\{ \mathrm{Re}\left(z\right)\geq C_{X,V}+\epsilon\right\} 
\]
 and discrete spectrum $\sigma_{+}\left(A\right)$ (Ruelle resonances
for the future) on the domain 
\[
\{C_{X,V}-r\lambda_{\mathrm{min}}+\epsilon\leq\mathrm{Re}\left(z\right)\leq C_{X,V}+\epsilon\}.
\]
\item if $r\leq0$, then $A$ has a uniformly bounded resolvent on 
\[
\left\{ \mathrm{Re}\left(z\right)\leq C'_{X,V}-\epsilon\right\} \cup\left\{ \mathrm{Re}\left(z\right)\geq C_{X,V}-2r\lambda_{\mathrm{max}}+\epsilon\right\} 
\]
and has discrete spectrum $\sigma_{-}\left(A\right)$ (Ruelle resonances
for the past) on the domain 
\[
\{C'_{X,V}-\epsilon\leq\mathrm{Re}\left(z\right)\leq C'_{X,V}-r\lambda_{\mathrm{min}}+\epsilon\}.
\]
\item On these corresponding domains, the bound of the resolvent is uniform
with respect to $r$.
\end{itemize}
\end{thm}

\end{cBoxB}

The proof of Theorem \ref{thm:grey-band} is given in Section \ref{sec:Proof-of-Theorem_discrete_spectrum}.
In particular the result that the resolvent is uniformly bounded (implies
no spectrum) on the outside domains is a consequence of Lemma \ref{lem:For-the-weight}
that gives bounds for the transfer operator like $\left\Vert \mathcal{L}^{t}\right\Vert _{\mathcal{H}_{W}\left(M\right)}\leq Ce^{\left(C_{X,V}+\epsilon\right)t}$
for $t\geq0$.
\begin{rem}
In Theorem \ref{thm:grey-band}, due to the margin $\epsilon>0$,
we can replace $\lambda_{\mathrm{min}},\lambda_{\mathrm{max}}$ by
respectively the minimal/maximal Lyapunov exponents that are the sup/inf
of $\lambda_{\mathrm{min}},\lambda_{\mathrm{max}}$ defined in eq.(\ref{eq:hyperbolicity}),
eq.(\ref{eq:l_max}).
\end{rem}

~
\begin{rem}
The upper estimate (\ref{eq:bound_for_ess_spec}) on the vertical
band of essential spectrum has been improved by Alexander Adam and
Viviane Baladi \cite[eq.(52)]{https://doi.org/10.48550/arxiv.1809.04062}
and by Semyon Dyatlov \cite[Theorem 2]{dyatlov_2021_pollicott_ruelle_sobolev}.
\end{rem}

~
\begin{rem}
If we set $\mathcal{H}_{W}\left(M\right)=L^{2}\left(M\right)$ in
the theorem above, the spectral set $\sigma\left(A\right)$ of the
generator $A$ is contained in the vertical band 
\begin{equation}
\{C'_{X,V}\le\mathrm{Re}\left(z\right)\le C_{X,V}\}.\label{eq:L2-spectrum}
\end{equation}
This conclusion for the special case $r=0$ can be deduced directly
from (\ref{eq:L*t*Lt}). If we move the parameter $r\rightarrow+\infty$,
then the vertical band \eqref{eq:bound_for_ess_spec} moves to the
left like a ``theater blackout curtain'' revealing the resonances
for the future, if we move the parameter $r\rightarrow-\infty$ then
the vertical band moves to the right revealing the resonances for
the past. See Figure \ref{fig:discrete_spectrum}.
\end{rem}

{\footnotesize{}}
\begin{figure}[H]
\begin{centering}
{\footnotesize{}\scalebox{0.9}[0.9]{\input{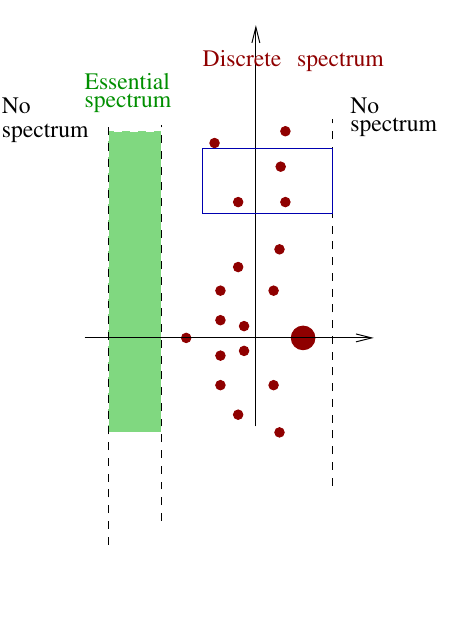tex_t}}$\qquad$\scalebox{0.9}[0.9]{\input{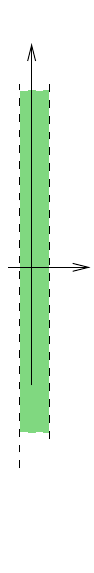tex_t}}$\qquad\qquad$\scalebox{0.9}[0.9]{\input{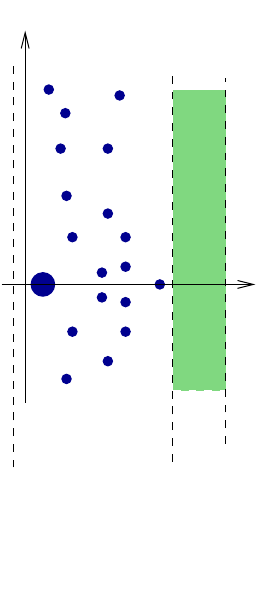tex_t}}}{\footnotesize\par}
\par\end{centering}
{\footnotesize{}\caption{\label{fig:discrete_spectrum}Spectrum of $A=-X+V$ in $\mathcal{H}_{W}\left(M\right)$,
depending on $r\in\mathbb{R}$, with Anosov vector field $X$ and
potential $V\in C\left(M;\mathbb{C}\right)$.\textbf{ (a):} For $r\protect\geq0$.
$A$ has intrinsic discrete spectrum on $\mathrm{Re}\left(z\right)>C_{X,V}-r\lambda_{\mathrm{min}}$
and a bounded resolvent on $\mathrm{Re}\left(z\right)>C_{X,V}$ and
$\mathrm{Re}\left(z\right)<C'_{X,V}-2r\lambda_{\mathrm{max}}$. Theorem
\ref{thm:Weyl law} gives an upper bound for the number of resonances
in the dashed rectangle, for $\omega\gg1$. As $r\rightarrow\infty$,
the vertical band containing essential spectrum (in green) moves to
the left and may reveal some new resonances. If $V$ is real valued
then the rightmost (leading) eigenvalue is given by a topological
pressure: $\gamma_{\mathrm{Gibbs}}^{+}=\mathrm{Pr}\left(V+\mathrm{div}X_{/E_{s}}\right)\in\mathbb{R}$.\textbf{
(b):} For $r=0$. Then $\mathcal{H}_{W}\left(M\right)=L^{2}\left(M\right)$.
The spectrum is contained in a vertical band (\ref{eq:L2-spectrum}).\textbf{
(c):} For $r\protect\leq0$. Letting $r\rightarrow-\infty$ pushes
the band containing the essential spectrum to the right, revealing
Ruelle resonances for the past dynamics. If $V$ is real valued then
the leftmost (leading) eigenvalue is $-\gamma_{\mathrm{Gibbs}}^{-}=-\mathrm{Pr}\left(-V-\mathrm{div}X_{/E_{u}}\right)$.}
}{\footnotesize\par}
\end{figure}
{\footnotesize\par}
\begin{rem}
If $V$ is real-valued, the operator $A$ in (\ref{eq:generator_A})
commutes with the conjugation operator, hence the Ruelle spectrum
is symmetric with respect to the real axis, as shown in Figure \ref{fig:discrete_spectrum}.
In this case, there is a leading real eigenvalue $\gamma_{\mathrm{Gibbs}}^{+}=\mathrm{Pr}\left(V+\mathrm{div}X_{/_{E_{s}}}\right)$
called Perron eigenvalue, where $\mathrm{div}X_{/_{E_{s}}}<0$ is
the divergence of $X$ measured on $E_{s}$ with respect to some (arbitrary)
volume form. We observe this eigenvalue if we choose sufficiently
large $r>0$ in the theorem above. Similarly, by considering the time-reversed
system, we also find the leading real eigenvalue $-\gamma_{\mathrm{Gibbs}}^{-}$
with $\gamma_{\mathrm{Gibbs}}^{-}=\mathrm{Pr}\left(-V-\mathrm{div}X_{/E_{u}}\right)$,
which we may call the Perron eigenvalue for the past and observe by
letting $r<0$ be large. See Figure \ref{fig:discrete_spectrum}.
\end{rem}

~
\begin{rem}
\textbf{``Relation between future and past spectrum''}. From the
relation $\langle v|\mathcal{L}^{t}u\rangle_{L^{2}\left(M\right)}=\langle\left(\mathcal{L}^{t}\right)^{\dagger}v|u\rangle_{L^{2}\left(M\right)}$
between 
\[
\mathcal{L}^{t}=e^{t\left(-X+V\right)}\quad\text{and}\quad\left(\mathcal{L}^{t}\right)^{\dagger}=e^{t\left(X+\mathrm{div}X+\overline{V}\right)},
\]
we can make the following observation. Assume that two potential functions
$V,V'\in C^{\infty}\left(M;\mathbb{C}\right)$ satisfy the relation
$V'+\overline{V}+\mathrm{div}X=0$. Then the future spectrum $\sigma_{+}\left(A\right)$
of $A=-X+V$ is in one-to-one correspondence with the past spectrum
$\sigma_{-}\left(A'\right)$ of $A'=-X+V'$ by the relation that $\lambda\in\sigma_{+}\left(A\right)$
if and only if $\lambda':=-\bar{\lambda}\in\sigma_{-}\left(A'\right)$.
Further the respective eigenprojectors, $\Pi_{\lambda}^{+}$ for $A$
and $\Pi_{\lambda'}^{-}$ for $A'$, are related by $\Pi_{\lambda'}^{-}=(\Pi_{\lambda}^{+})^{\dagger}$.
In particular, if a potential function $V$ satisfies the condition
$\mathrm{Re}\left(V\right)=-\frac{1}{2}\mathrm{div}X$, called ``half-density
correction'', equivalently if $\mathcal{L}^{t}:L^{2}\left(M\right)\rightarrow L^{2}\left(M\right)$
is unitary from (\ref{eq:L*t*Lt}), then we have $A=A'=-X-\frac{1}{2}\mathrm{div}X+i\mathrm{Im}\left(V\right)$
and hence $\sigma_{+}\left(A\right)=-\overline{\sigma_{-}\left(A\right)}$.
\end{rem}

\begin{acknowledgement}
This work has been supported by ANR-13-BS01-0007-01, JSPS KAKENHI
JP 15H03627 and PICS n 7475.
\end{acknowledgement}

\section{\label{subsec:Anosov_def}Anosov vector field}

\subsection{Definition of Anosov vector field}

Let $M$ be a $C^{\infty}$ compact connected manifold without boundary
and let $n=\dim M-1$. Let $X$ be a $C^{\infty}$ non-vanishing vector
field on $M$. The flow on $M$ generated by the vector field $X$
is denoted by 
\begin{equation}
\phi^{t}:=\exp\left(tX\right):\quad M\rightarrow M,\quad t\in\mathbb{R}.\label{eq:flow_map}
\end{equation}
We make the assumption that $X$ is an Anosov vector field on $M$.
This means that we have a continuous splitting of the tangent bundle
\begin{equation}
TM=E_{u}\oplus E_{s}\oplus\underbrace{\mathbb{R}X}_{E_{0}},\label{eq:decomp_TM}
\end{equation}
that is invariant by the flow $\phi^{t}$ and there exist $\lambda_{\mathrm{min}}>0$,
$C>0$ and a smooth metric $g_{M}$ on $M$ such that

\begin{equation}
\left\Vert d\phi_{/E_{u}\left(m\right)}^{-t}\right\Vert _{g_{M}}\leq Ce^{-\lambda_{\mathrm{min}}t}\quad\mbox{and}\quad\left\Vert d\phi_{/E_{s}\left(m\right)}^{t}\right\Vert _{g_{M}}\leq Ce^{-\lambda_{\mathrm{min}}t}\quad\mbox{for any }t\geq0,m\in M.\label{eq:hyperbolicity}
\end{equation}
See Figure \ref{fig:Anosov-flow.}. The linear subspace $E_{u}\left(m\right),E_{s}\left(m\right)\subset T_{m}M$
are unique and called the unstable and stable space respectively.
We set $E_{0}\left(m\right):=\mathbb{R}X\left(m\right)$ and call
it the neutral direction or flow direction. In general, the maps $m\mapsto E_{u}\left(m\right)$,
$m\mapsto E_{s}\left(m\right)$ and $m\mapsto E_{u}\left(m\right)\oplus E_{s}\left(m\right)$
are only Hölder continuous. We will write 
\begin{equation}
\beta_{u},\beta_{s},\beta_{0}\in]0,1]\label{eq:Holder_exp}
\end{equation}
for the respective Hölder exponent. We have\footnote{We may expect that $\beta_{0}=\min\left(\beta_{u},\beta_{s}\right)$
for generic Anosov flows. But this equality is not true in general.
For instance, we have $\beta_{0}=1$ for contact Anosov flows, but
$\min\left(\beta_{u},\beta_{s}\right)$ will be smaller than $1$
in most of the cases because the (un)stable subspaces $E_{u}\left(m\right)$
and $E_{s}\left(m\right)$ will not be smooth.}
\begin{equation}
\beta_{0}\geq\min\left(\beta_{u},\beta_{s}\right).\label{eq:def_beta_*}
\end{equation}
See \cite{hurder-90,hasselblatt1994regularity} for estimates on $\beta_{0},\beta_{u},\beta_{s}$.

\begin{figure}[h]
\centering{}\input{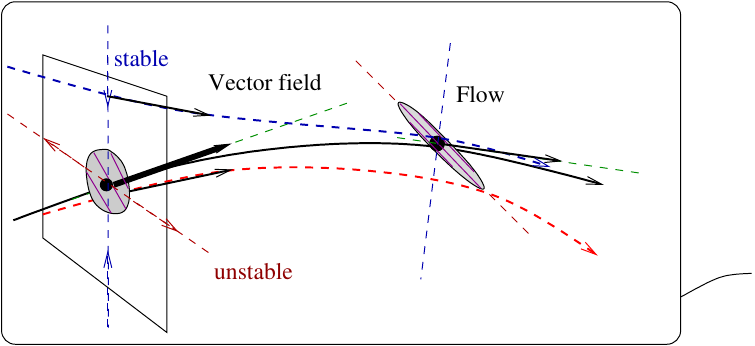tex_t}\caption{\label{fig:Anosov-flow.}Anosov flow $\phi^{t}$ generated by a vector
field $X$ on a compact manifold $M$.}
\end{figure}

Let $\mathscr{A}\in C^{0}\left(M;T^{*}M\right)$ be the continuous
one form on $M$ called Anosov one form, which is defined for each
$m\in M$ by the conditions
\begin{equation}
\mathscr{A}\left(m\right)\left(X\left(m\right)\right)=1\quad\text{and}\quad\mathrm{Ker}\left(\mathscr{A}\left(m\right)\right)=E_{u}\left(m\right)\oplus E_{s}\left(m\right).\label{eq:one_form}
\end{equation}
By definition the Anosov one form $\mathscr{A}$ is preserved by the
flow $\phi^{t}$ and the map $m\in M\mapsto\mathscr{A}\left(m\right)\in T^{*}M$
is Hölder continuous with exponent $\beta_{0}$.

\subsection{\label{subsec:Transfer-operator}Transfer operator}

Let $V\in C^{\infty}\left(M;\mathbb{C}\right)$ be an arbitrary smooth
function, which is called a potential function. For $t\in\mathbb{R}$,
let us denote the time integral of $V$ along the trajectory of $m\in M$
by 
\begin{equation}
V_{\left[-t,0\right]}\left(m\right):=\int_{-t}^{0}V\left(\phi^{s}\left(m\right)\right)ds.\label{eq:time_averaged}
\end{equation}
For a given function $u\in C^{\infty}\left(M\right)$, $t\in\mathbb{R}$,
$m\in M$, we consider the forward transported and amplitude modulated
function along the trajectory:
\[
u_{t}\left(m\right):=\underbrace{e^{V_{\left[-t,0\right]}\left(m\right)}}_{\mathrm{amplitude}}\cdot\underbrace{\left(u\left(\phi^{-t}\left(m\right)\right)\right)}_{\mathrm{transport}}.
\]

\begin{center}
\input{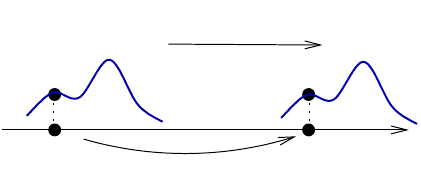tex_t}
\par\end{center}

Then we have $\frac{du_{t}}{dt}=\left(-X+V\right)u_{t}$ where the
generating vector field $X$ is regarded as a first order differential
operator\footnote{In local coordinates $x=\left(x_{1},\ldots x_{n}\right)$ on $M$
we write $X=\sum_{j=1}^{\mathrm{dim}M}X_{j}\left(x\right)\frac{\partial}{\partial x^{j}}$}. In other words, we have $u_{t}=\mathcal{L}^{t}u$ for the one-parameter
group of operators $\mathcal{L}^{t}=\exp\left(tA\right)$ with generator
$A=-X+V$. This gives the following definition.

\begin{cBoxA}{}
\begin{defn}[Ruelle transfer operator]
\label{def:transfer_operator} The one-parameter group of operators
\begin{equation}
\mathcal{L}^{t}:\begin{cases}
C^{\infty}\left(M\right) & \rightarrow C^{\infty}\left(M\right)\\
u & \mapsto e^{tA}u=e^{V_{\left[-t,0\right]}}\cdot\left(u\circ\phi^{-t}\right)
\end{cases}\label{eq:def_Transfer_operator}
\end{equation}
is called Ruelle transfer operators. It is generated by the first
order differential operator on $C^{\infty}\left(M\right)$,
\begin{equation}
A:=-X+V.\label{eq:generator_A}
\end{equation}
\end{defn}

\end{cBoxA}

\begin{rem}
The results presented in this paper can be generalized for transfer
operators acting on sections of a general (complex) vector bundle
$E\rightarrow M$. We consider a linear operator acting on sections,
$A:C^{\infty}\left(M;E\right)\rightarrow C^{\infty}\left(M;E\right)$,
satisfying the Leibniz condition that for any function $f\in C^{\infty}\left(M;\mathbb{C}\right)$
and any section $s\in C^{\infty}\left(M;E\right)$, we have
\[
A\left(fs\right)=-X\left(f\right)s+fA\left(s\right).
\]
The group of transfer operators is defined by $\mathcal{L}^{t}:=e^{tA}$
for $t\in\mathbb{R}$. With respect to a local frame $\left(e_{1},\ldots e_{m}\right)$
of the vector bundle $E$ of rank $m$, a section is expressed as
$s\left(x\right)=\sum_{j=1}^{m}u^{j}\left(x\right)e_{j}\left(x\right)$
with components $u^{j}\left(x\right)\in\mathbb{C}$. Then the operator
$A$ is expressed as 
\begin{equation}
\left(As\right)\left(x\right)=\sum_{j=1}^{m}\left(-\left(Xu^{j}\right)\left(x\right)+\sum_{k=1}^{m}V_{k}^{j}\left(x\right)u^{k}\left(x\right)\right)e_{j}\left(x\right)\label{eq:A_bundle}
\end{equation}
with a matrix of potential functions $V_{k}^{j}\left(x\right)\in\mathbb{C}$
defined by $\left(Ae_{k}\right)\left(x\right)=\sum_{j}V_{k}^{j}\left(x\right)e_{j}\left(x\right)$.
The expression (\ref{eq:A_bundle}) generalizes (\ref{eq:generator_A}).
\end{rem}

\subsubsection*{Inverse and $L^{2}$-adjoint transfer operators}

Recall the time-averaged notation (\ref{eq:time_averaged}). The inverse
operator $\mathcal{L}^{-t}$ that satisfies $\mathcal{L}^{-t}\circ\mathcal{L}^{t}=\mathrm{Id}_{/C^{\infty}\left(M\right)}$
is given by 
\[
\mathcal{L}^{-t}v\underset{(\ref{eq:def_Transfer_operator})}{=}e^{-V_{\left[-t,0\right]}\circ\phi^{t}}\left(v\circ\phi^{t}\right)=e^{-V_{\left[0,t\right]}}\left(v\circ\phi^{t}\right).
\]

Using an arbitrary smooth measure $dm$ on $M$ we define the $L^{2}$
scalar product: $\langle u|v\rangle_{L^{2}\left(M,dm\right)}:=\int_{M}\overline{u}vdm$
for $u,v\in C^{\infty}\left(M\right)$ and completion gives the space
$L^{2}\left(M,dm\right)$. We define the formal adjoint operator $\left(\mathcal{L}^{t}\right)^{\dagger}:C^{\infty}\left(M\right)\rightarrow C^{\infty}\left(M\right)$
by 
\[
\langle u|\left(\mathcal{L}^{t}\right)^{\dagger}v\rangle_{L^{2}}=\langle\mathcal{L}^{t}u|v\rangle_{L^{2}},\forall u,v\in C^{\infty}\left(M\right).
\]
We deduce that\footnote{With the change of variables $m'=\phi^{-t}\left(m\right)$ we get
\begin{align*}
\langle u|\left(\mathcal{L}^{t}\right)^{\dagger}v\rangle_{L^{2}} & =\langle\mathcal{L}^{t}u|v\rangle_{L^{2}}=\int e^{\overline{V_{\left[-t,0\right]}\left(m\right)}}\overline{u\left(\phi^{-t}\left(m\right)\right)}v\left(m\right)dm\\
 & =\int e^{\overline{V_{\left[-t,0\right]}\left(\phi^{t}\left(m'\right)\right)}}\overline{u\left(m'\right)}v\left(\phi^{t}\left(m'\right)\right)\left|\mathrm{det}d\phi^{t}\left(m'\right)\right|dm'.
\end{align*}
}
\begin{align}
\left(\mathcal{L}^{t}\right)^{\dagger}v & =e^{\overline{V_{\left[0,t\right]}}}\left|\mathrm{det}d\phi^{t}\right|\left(v\circ\phi^{t}\right)\label{eq:L2-adjoint_Lt}\\
 & =e^{\left(\overline{V}+\mathrm{div}X\right)_{\left[0,t\right]}}\left(v\circ\phi^{t}\right)
\end{align}
where $\mathrm{div}X$ is the divergence of the vector field $X$
with respect to $dm$. So 
\[
\left(\mathcal{L}^{t}\right)^{\dagger}=e^{\left(2\mathrm{Re}\left(V\right)+\mathrm{div}X\right)_{\left[0,t\right]}}\mathcal{L}^{-t}
\]
and
\begin{equation}
\left(\mathcal{L}^{t}\right)^{\dagger}\circ\mathcal{L}^{t}=\mathcal{M}_{\exp\left(2\varphi_{\left[0,t\right]}\right)}\label{eq:L*t*Lt}
\end{equation}
is a multiplication operator by the function $\exp\left(2\varphi_{\left[0,t\right]}\right)\in C^{\infty}\left(M;\mathbb{C}\right)$
with 
\begin{equation}
\varphi=\frac{1}{2}\mathrm{div}X+\mathrm{Re}\left(V\right).\label{eq:def_At}
\end{equation}
This expression explains the result (\ref{eq:L2-spectrum}) that expresses
the spectrum of $A$ in $L^{2}\left(M\right)$ and will be used again
in the paper (in the proof of Lemma \ref{lem:For-the-weight} and
Theorem \ref{thm:decay_outside_trapped_set}).

\section{\label{sec:Semiclassical-analysis-with}Semi-classical analysis with
wave-packets}

In this section, we develop general tools and lemmas for a flow generated
by a \textbf{non vanishing smooth vector field $X$ }on a compact
manifold $M$. So we do not assume that $X$ is Anosov. In Subsection
\ref{subsec:Resolution-of-identity}, we introduce a wave-packet transform
that gives a representation of distributions $u\in\mathcal{D}'\left(M\right)$
as smooth functions on the cotangent space $T^{*}M$. The definition
of our wave-packet transform is based on a metric $g$ on $T^{*}M$
that has a nice property called ``slowly varying''. We will introduce
the metric $g$ in Subsection \ref{subsec:A-metric}. In Subsection
\ref{subsec:Pseudo-differential-operator}, we give the definition
of pseudo-differential operators (PDO) using the wave-packet transform
and prove a useful theorem on compositions of PDO. In Subsection \ref{subsec:Sobolev-space},
we define the Sobolev space $\mathcal{H}_{W}\left(M\right)$ associated
to a weight function $W$ on $T^{*}M$. In Section \ref{subsec:A-matrix-elements},
we give a fundamental ``micro-local property'' of the transfer operator
$\mathcal{L}^{t}$ and prove a version of Egorov's theorem. The former
micro-local property shows that the kernel of the operator induced
on the phase space by $\mathcal{L}^{t}$ decays very fast on the outside
of the graph of the lifted flow $\tilde{\phi}^{t}:T^{*}M\rightarrow T^{*}M$.
In Subsection \ref{subsec:Strongly-continuous-semi-group}, we show
that the transfer operators $\mathcal{L}^{t}$ form a strongly continuous
(semi-)group on $\mathcal{H}_{W}\left(M\right)$ if the weight function
$W$ satisfies some reasonable conditions with respect to the lifted
flow $\tilde{\phi}^{t}:T^{*}M\rightarrow T^{*}M$ .

The key results of this section are the resolution of identity in
$C^{\infty}\left(M\right)$ given in Lemma \ref{prop:resolution_identity_CM}
and micro-locality of the transfer operator given in Lemma \ref{thm:Microlocality-of-the_TO}.

\subsection{\label{subsec:Resolution-of-identity}Wave packets transform and
resolution of identity in $T^{*}M$}

\subsubsection{Flow box coordinates}

In this section we first introduce charts on the manifold $M$ and
a partition of unity in order to decompose each function on $M$ into
those supported on a single chart. Below we write $\mathbb{R}_{x}^{n}$
to indicate that we use the variable name $x$ for points on $\mathbb{R}^{n}$
and write $\mathbb{B}_{x}^{n}\left(c\right):=\left\{ x\in\mathbb{R}^{n},\left|x\right|<c\right\} $
for the open ball of radius $c>0$ in $\mathbb{R}_{x}^{n}$.
\begin{lem}[Flow box coordinates]
\label{lem:charts}\cite[p.33]{taylor_tome1}. For $c>0$ and $l>0$
small enough, there exist open subsets $U_{j}\subset M$ such that
$M=\bigcup_{j=1}^{J}U_{j}$ and $C^{\infty}$ local charts diffeomorphism
\begin{equation}
\kappa_{j}:\begin{cases}
U_{j}\subset M & \rightarrow V_{j}=\mathbb{B}_{x}^{n}\left(c\right)\times\mathbb{B}_{z}^{1}\left(l\right)\subset\mathbb{R}_{x}^{n}\times\mathbb{R}_{z}\\
m & \to y=\left(x,z\right)
\end{cases}\label{eq:def_kappa_j}
\end{equation}
such that 
\begin{align}
\left(\kappa_{j}\right)_{*}\left(-X\right) & =\frac{\partial}{\partial z}.\label{eq:X}
\end{align}
and the $\kappa_{j}$ can be extended to a small neighborhood of $U_{j}$
(such coordinates are said to be admissible). 
\end{lem}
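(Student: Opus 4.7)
The plan is to invoke the classical rectification (straightening) theorem for a non-vanishing smooth vector field at each point of $M$, and then to use compactness of $M$ to extract a finite cover with a common size of flow boxes.

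First, fix $m_0 \in M$. Since $X$ is smooth and non-vanishing, $X(m_0) \neq 0$, so there is a smooth codimension-one submanifold $\Sigma \subset M$ passing through $m_0$ that is transverse to $X$ at $m_0$ (take, for instance, the image of a small $n$-disk in any hyperplane of $T_{m_0}M$ transverse to $\mathbb{R}X(m_0)$, exponentiated via the metric $g_M$). Choose a smooth parametrization $\sigma:\mathbb{B}_x^n(c_0)\to \Sigma$ with $\sigma(0)=m_0$, and define
\begin{equation*}
\Psi: \mathbb{B}_x^n(c_0)\times (-l_0,l_0)\to M,\qquad \Psi(x,z):=\phi^{-z}(\sigma(x)).
\end{equation*}
A direct computation gives $\Psi_*(\partial/\partial z)=-X\circ \Psi$, because $t\mapsto \phi^{-t}(p)$ is by definition the integral curve of $-X$ through $p$. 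At the point $(0,0)$, the differential $d\Psi_{(0,0)}$ sends $\partial_z$ to $-X(m_0)$ and $\partial_{x^i}$ to $d\sigma_0(\partial_{x^i})\in T_{m_0}\Sigma$; since $T_{m_0}\Sigma \oplus \mathbb{R}X(m_0) = T_{m_0}M$, this differential is an isomorphism.

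By the inverse function theorem, there exist $c(m_0)>0$ and $l(m_0)>0$ such that $\Psi$ restricts to a diffeomorphism from $\mathbb{B}_x^n(c(m_0))\times(-l(m_0),l(m_0))$ onto its open image $U(m_0)\subset M$. Setting $\kappa_{m_0}:=\Psi^{-1}$ gives a chart on $U(m_0)$ with $(\kappa_{m_0})_*(-X)=\partial_z$, which is the required flow-box condition.

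Now apply compactness: the family $\{U(m)\}_{m\in M}$ covers $M$, so extract a finite subcover $U_1,\ldots,U_J$ corresponding to points $m_1,\ldots,m_J$. To obtain a \emph{uniform} common size, set $c:=\tfrac{1}{2}\min_j c(m_j)$ and $l:=\tfrac{1}{2}\min_j l(m_j)$, and shrink each chart to the rectangle $\mathbb{B}_x^n(c)\times(-l,l)$; the shrunken open sets still cover $M$ provided $c,l$ are chosen small enough compared with the Lebesgue number of the original cover (and one may need to add finitely many extra flow boxes at intermediate points to preserve covering after shrinking, which is possible again by compactness). This yields (\ref{eq:def_kappa_j}) and (\ref{eq:X}). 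No real obstacle arises here: the only subtle point is making sure $c,l$ can be taken uniformly small, which is purely a compactness argument combined with refinement of the cover.
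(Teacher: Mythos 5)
Your proof is correct and is essentially the standard argument; the paper itself gives no proof of this lemma, only a citation to Taylor's book, and your rectification-plus-compactness argument is precisely what such a reference would supply. The construction $\Psi(x,z)=\phi^{-z}(\sigma(x))$ from a transversal $\Sigma$, the computation $\Psi_*(\partial_z)=-X\circ\Psi$, the check that $d\Psi_{(0,0)}$ is an isomorphism via $T_{m_0}\Sigma\oplus\mathbb{R}X(m_0)=T_{m_0}M$, and the inverse function theorem are all in order, and they correctly yield $(\kappa_j)_*(-X)=\partial_z$ after setting $\kappa_j=\Psi^{-1}$. The only place worth tightening is the uniformity step at the end: shrinking the finitely many boxes $\mathbb{B}_x^n(c(m_j))\times(-l(m_j),l(m_j))$ to a common size while remaining centered at the original $m_j$ would in general destroy the cover, as you note. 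A clean way to finish is to observe that each finite-size flow box $\Psi_j:\mathbb{B}_x^n(c(m_j))\times(-l(m_j),l(m_j))\to U_j$ contains, around \emph{every} point of its image, a translated sub-box of the form $\mathbb{B}_x^n(c)\times(z_0-l,z_0+l)$ for any fixed $c,l$ below some threshold depending only on $j$; taking $c,l$ below the minimum of these thresholds, every $m\in M$ is then the center of a flow box of the prescribed uniform size, and compactness yields a finite subcover of these. This is what your phrase "add finitely many extra flow boxes at intermediate points" is getting at, so the idea is there, just left implicit.
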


The next lemma introduces the operators $I$ and $I^{*}$ that decompose
and reconstruct functions with respect to the charts (defined in Lemma
\ref{lem:charts}).
\begin{lem}[Quadratic partition of unity]
\label{lem:I-I*}For the local charts $\kappa_{j}:U_{j}\to V_{j}$,
$1\le j\le J$, given in Lemma \ref{lem:charts}, there exist functions
$\chi_{j}\in C_{0}^{\infty}\left(V_{j};\mathbb{R}^{+}\right)$ for
$1\le j\le J$, which give a quadratic partition of unity in the sense
that
\begin{equation}
\sum_{j:\,m\in U_{j}}\left(\left(\chi_{j}\circ\kappa_{j}\right)\left(m\right)\right)^{2}\left|\mathrm{det}d\kappa_{j}\left(m\right)\right|=1\quad\text{for all }m\in M\label{eq:partition_quadratic_unity-1}
\end{equation}
where $\left|\mathrm{det}d\kappa_{j}\left(m\right)\right|=d\kappa_{j}^{*}\left(\mathrm{Leb}\right)/dm$.
For every $j\in\left\{ 1,\ldots J\right\} $, let
\begin{equation}
I_{j}:\begin{cases}
C^{\infty}\left(M\right) & \rightarrow C_{0}^{\infty}\left(V_{j}\right)\\
u & \mapsto v_{j}\left(y\right):=\chi_{j}\left(y\right)\cdot\left(u\circ\kappa_{j}^{-1}\right)\left(y\right),
\end{cases}\label{eq:def_I-2}
\end{equation}
and
\begin{equation}
I:=\left(I_{j}\right)_{j}\quad:C^{\infty}\left(M\right)\rightarrow\bigoplus_{j=1}^{J}C_{0}^{\infty}\left(V_{j}\right).\label{eq:def_I_1}
\end{equation}
Then the $L^{2}$-adjoint of $I:L^{2}\left(M,dm\right)\rightarrow\bigoplus_{j}L^{2}\left(V_{j},dy\right)$
is given by
\begin{equation}
I^{\dagger}:\begin{cases}
\bigoplus_{j}C_{0}^{\infty}\left(V_{j}\right) & \rightarrow C^{\infty}\left(M\right)\\
v=\left(v_{j}\right)_{j} & \mapsto u\left(m\right)=\sum_{j}\left(\chi_{j}\circ\kappa_{j}\right)\left(m\right)\cdot\left(v_{j}\circ\kappa_{j}\right)\left(m\right)\left|\mathrm{det}d\kappa_{j}\left(m\right)\right|
\end{cases}\label{eq:def_I*-2}
\end{equation}
and we have
\begin{equation}
I^{\dagger}\circ I=\mathrm{Id}_{/C^{\infty}\left(M\right)}.\label{eq:I*_I}
\end{equation}
\end{lem}

\begin{proof}
Let us consider functions $\chi_{j}^{\left(0\right)}\in C_{0}^{\infty}\left(V_{j};\mathbb{R}^{+}\right)$
for $1\leq j\leq J$ such that 
\[
S\left(m\right):=\sum_{j:\,m\in U_{j}}\left(\left(\chi_{j}^{\left(0\right)}\circ\kappa_{j}\right)\left(m\right)\right)^{2}\left|\mathrm{det}d\kappa_{j}\left(m\right)\right|>0
\]
 for every $m\in M$. Then $\chi_{j}\left(x\right):=\chi_{j}^{\left(0\right)}\left(x\right)/\sqrt{S\left(\kappa_{j}\left(x\right)\right)}$
satisfies (\ref{eq:partition_quadratic_unity-1}). For $u\in C^{\infty}\left(M\right)$
and $v=(v_{j})\in\bigoplus_{j}C_{0}^{\infty}\left(V_{j}\right)$,
we have
\begin{align*}
\left\langle Iu|v\right\rangle _{\bigoplus_{j}L^{2}\left(V_{j}\right)} & \underset{(\ref{eq:def_I-2})}{=}\sum_{j}\left(\chi_{j}\cdot\left(u\circ\kappa_{j}^{-1}\right),v_{j}\right)_{L^{2}\left(V_{j}\right)}=\sum_{j}\int_{V_{j}}\chi_{j}\cdot\overline{\left(u\circ\kappa_{j}^{-1}\right)}\cdot v_{j}dy\\
 & =\int_{M}\sum_{j}\left(\chi_{j}\circ\kappa_{j}\right)\cdot\overline{u}\cdot\left(v_{j}\circ\kappa_{j}\right)\left|\mathrm{det}d\kappa_{j}\left(m\right)\right|dm\underset{(\ref{eq:def_I*-2})}{=}\left\langle u|I^{\dagger}v\right\rangle _{L^{2}\left(M\right)}
\end{align*}
and
\begin{align*}
\left(I^{\dagger}\circ I\right)u & \underset{(\ref{eq:def_I-2})(\ref{eq:def_I*-2})}{=}\sum_{j}\left(\chi_{j}\circ\kappa_{j}\right)\cdot\left(\left(\chi_{j}\circ\kappa_{j}\right)\cdot u\right)\left|\mathrm{det}d\kappa_{j}\right|\\
 & =u\sum_{j}\left(\chi_{j}\circ\kappa_{j}\right)^{2}\left|\mathrm{det}d\kappa_{j}\right|\underset{(\ref{eq:partition_quadratic_unity-1})}{=}u.
\end{align*}
This completes the proof.
\end{proof}
In the following, we will consider the flow box coordinates $\kappa_{j}$
on the manifold $M$ and the associated quadratic partition of unity
$\chi_{j}$ given by the two lemmas above.
\begin{rem}
\label{rem:Projection}We will find relations similar to (\ref{eq:I*_I})
a few times in the course of the argument below. Here we note that
the relation (\ref{eq:I*_I}) can be interpreted as follows. The operator
$I:L^{2}\left(M,dm\right)\rightarrow\bigoplus_{j}L^{2}\left(V_{j},dy\right)$
is an isometry onto this image because $\left\Vert Iu\right\Vert ^{2}=\langle Iu|Iu\rangle=\langle u|I^{\dagger}Iu\rangle\underset{(\ref{eq:I*_I})}{=}\left\Vert u\right\Vert ^{2}$.
If we define 
\[
P:=I\circ I^{\dagger}\quad:\bigoplus_{j}L^{2}\left(V_{j},dy\right)\rightarrow\bigoplus_{j}L^{2}\left(V_{j},dy\right),
\]
we have $P^{\dagger}=P$ and $P^{2}=P$ meaning that $P$ is the orthogonal
projector on $\mathrm{Im}\left(I\right)$. For the transfer operator
$\mathcal{L}^{t}:C^{\infty}\left(M\right)\rightarrow C^{\infty}\left(M\right)$
(or more generally for any linear operator), we consider the operator
\[
\tilde{\mathcal{L}}^{t}:=I\circ\mathcal{L}^{t}\circ I^{\dagger}.
\]
This is the simplest extension of $\mathcal{L}^{t}$ in the sense
that $\tilde{\mathcal{L}}^{t}$ preserves the decomposition $\oplus_{j}C_{0}^{\infty}\left(V_{j}\right)=\mathrm{Im}\left(I\right)\oplus\mathrm{Ker}\left(P\right)$,
its restriction $\tilde{\mathcal{L}^{t}}_{/\mathrm{Im}\left(I\right)}$
is conjugated to $\mathcal{L}^{t}$ and the restriction $\tilde{\mathcal{L}^{t}}_{/\mathrm{Ker}\left(P\right)}$
is the null operator. In other terms, $\tilde{\mathcal{L}^{t}}=\left(I\circ\mathcal{L}^{t}\circ I^{-1}\right)\oplus0$
where $I^{-1}=I_{/\mathrm{Im}\left(I\right)}^{\dagger}$. We may therefore
regard the operator $\tilde{\mathcal{L}}^{t}$ as a lifted representative
(or a trivial extension) of $\mathcal{L}^{t}$.
\end{rem}

\begin{defn}[Local coordinates on $T^{*}M$]
 \label{def:local_charts_on_T*M}Let $\kappa_{j}:m\in U_{j}\mapsto y=\left(x,z\right)\in V_{j}\subset\mathbb{R}^{n}\times\mathbb{R}$
be the local flow-box coordinates in Lemma \ref{lem:charts}. We write
$\eta=\left(\xi,\omega\right)\in\mathbb{R}^{n}\times\mathbb{R}$ for
the dual coordinates of $y=\left(x,z\right)\in\mathbb{R}^{n}\times\mathbb{R}$.
Then the map $\kappa_{j}$ in (\ref{eq:def_kappa_j}) has a canonical
extension to the cotangent bundle: 
\begin{equation}
\tilde{\kappa_{j}}:\begin{cases}
T^{*}U_{j} & \rightarrow T^{*}V_{j}=\mathbb{B}_{x}^{n}\left(c\right)\times\mathbb{B}_{z}^{1}\left(l\right)\times\mathbb{R}_{\xi}^{n}\times\mathbb{R}_{\omega}\\
\rho & \mapsto\varrho=\left(y,\eta\right)=\left(\left(x,z\right),\left(\xi,\omega\right)\right)
\end{cases}.\label{eq:def_k_tilde_j}
\end{equation}
We will regard these as local coordinates on $T^{*}M$. We will henceforth
use the notation $\rho\in T^{*}U_{j}$ for a point on $T^{*}M$ and
$\varrho=\left(y,\eta\right)\in T^{*}V_{j}$ for the corresponding
point in a local chart as above. The canonical volume form on the
cotangent bundle $T^{*}M$ will be denoted by 
\begin{equation}
d\rho:=\left(\tilde{\kappa_{j}}\right)^{*}\left(d\varrho\right)\qquad\text{with }d\varrho:=\left(\Pi_{k=1}^{n}dx_{k}\wedge d\xi_{k}\right)\wedge dz\wedge d\omega.\label{eq:def_d_rho}
\end{equation}
\end{defn}

\subsubsection{\label{subsec:A-metric}A global metric $g$ on $T^{*}M$}

For a given chart index $j$, using the local charts (\ref{eq:def_k_tilde_j}),
we have already defined the metric $g$ on $T^{*}\mathbb{R}^{n+1}=\mathbb{R}_{y}^{n+1}\times\mathbb{R}_{\eta}^{n+1}$
in (\ref{eq:metric_g_in_coordinates}). We write $g_{j}:=\kappa_{j}^{*}\left(g\right)$
for the metric induced on $T^{*}U_{j}$, i.e. at a given point $\rho\in T^{*}U_{j}$,
it is given by
\[
g_{j,\rho}\left(u_{1},u_{2}\right):=g_{\varrho}\left(d\tilde{\kappa_{j}}\left(u_{1}\right),d\tilde{\kappa_{j}}\left(u_{2}\right)\right)\quad\text{for }u_{1},u_{2}\in T_{\rho}\left(T^{*}M\right).
\]
We will prove below in (\ref{eq:equiv_g_tilde}) that the metric $g_{j}$
and $g_{j'}$ are uniformly equivalent on $U_{j}\cap U_{j'}$. Then
we define the following global metric $g$ on $T^{*}M$ (abusively
we use the same letter $g$)
\begin{equation}
g:=\sum_{j=1}^{J}\left(\chi_{j}^{2}\circ\kappa_{j}\circ\pi\right)\cdot g_{j}\label{eq:global_metric}
\end{equation}
where $\pi:T^{*}M\to M$ denotes the bundle projection. The global
metric $g$ in (\ref{eq:global_metric}) will provide the size of
wave packets to our analysis of the transfer operator over the manifold
$M$. We will write $\mathrm{dist}_{g}\left(\rho,\rho'\right)$ for
the geodesic distance between $\rho,\rho'\in T^{*}M$ defined from
this metric $g$.

We will give now few essential properties of the metric $g$.

\begin{cBoxB}{}
\begin{lem}
\label{thm:The-metric-}The Riemann metric $g$ on $T^{*}M$ is \textbf{geodesically
complete}.
\end{lem}

\end{cBoxB}

\begin{proof}
Let $g_{M}$ be a Riemann metric on $M$. Consider the function $\Gamma:T^{*}M\rightarrow\mathbb{R}^{+}$
given by 
\begin{equation}
\Gamma\left(\rho\right)=\left\Vert \rho\right\Vert _{g_{M}}^{1-\alpha^{\perp}}\asymp\left|\eta\right|^{1-\alpha^{\perp}},\label{eq:def_Gamma}
\end{equation}
using local coordinates $\rho=\left(y,\eta\right)$ with $\left|\eta\right|>1$.
From (\ref{eq:conditions}), we have $1-\alpha^{\perp}>0$ so that
$\Gamma$ increases with $\left\Vert \rho\right\Vert _{g_{M}}$ .
There exists $C,C'>0$ such that for any $v\in TT^{*}M$, with local
components $\left(v_{y},v_{\eta}\right)$, we have, using also $1-\alpha^{\perp}\leq\alpha^{\perp}$
from (\ref{eq:conditions}),
\begin{equation}
\left|d\Gamma\left(v\right)\right|\underset{(\ref{eq:def_Gamma})}{\leq}C'\left(\left|\eta\right|^{1-\alpha^{\perp}}\left|v_{y}\right|+\left|\eta\right|^{-\alpha^{\perp}}\left|v_{\eta}\right|\right)\underset{(\ref{eq:conditions})}{\leq}C'\left(\left|\eta\right|^{\alpha^{\perp}}\left|v_{y}\right|+\left|\eta\right|^{-\alpha^{\perp}}\left|v_{\eta}\right|\right)\underset{(\ref{eq:metric_g_in_coordinates})}{\leq}C\left\Vert v\right\Vert _{g}.\label{eq:bound-3}
\end{equation}
For any $A>0$, the set $R_{A}:=\left\{ \rho\in T^{*}M,\Gamma\left(\rho\right)\in\left[0,A\right]\right\} $
is compact. Take any point $\rho\in T^{*}M$ and consider $\gamma:t\mapsto\gamma\left(t\right)\in T^{*}M$
a unit speed geodesic starting at $\rho$, i.e. $\left\Vert \dot{\gamma}\right\Vert _{g}=1$.
Then $\left|d\Gamma\left(\dot{\gamma}\right)\right|\underset{(\ref{eq:bound-3})}{\leq}C\left\Vert \dot{\gamma}\right\Vert _{g}\leq C$
so for any $t\geq0$, 
\[
\left|\Gamma\left(\gamma\left(t\right)\right)-\Gamma\left(\gamma\left(0\right)\right)\right|=\left|\int_{0}^{t}d\Gamma\left(\dot{\gamma}\right)dt'\right|<Ct
\]
i.e. $\gamma\left(t\right)\in R_{A}$ with $A=\Gamma\left(\gamma\left(0\right)\right)+Ct$.
This shows that the time to reach the boundary of $R_{A}$ tends to
$\infty$ as $A\rightarrow\infty$. This implies that the geodesic
is defined for every $t\in\mathbb{R}$, i.e. the metric $g$ is geodesically
complete.
\end{proof}

\subsubsection{\label{subsec:Lipschitz-property-of}Lipschitz property of $g$ with
respect to the flow map and change of charts.}

The push-forward action of the flow $\phi^{t}=e^{tX}:M\rightarrow M$
on the cotangent space $T^{*}M$ is denoted by
\begin{equation}
\tilde{\phi}^{t}:\begin{cases}
T^{*}M & \rightarrow T^{*}M,\\
\rho & \mapsto\rho\left(t\right):=\left(\left(d_{m}\phi^{t}\right)^{*}\right)^{-1}\rho.
\end{cases}\label{eq:lifted_flow}
\end{equation}
with $m=\pi\left(\rho\right)$. In this section we will prove the
following Lemma.

\begin{cBoxB}{}
\begin{lem}[Lipschitz property of $g$]
\label{lem:Lipschitz_property}From the conditions (\ref{eq:conditions})
we have that for any $t\in\mathbb{R}$, there exists a constant $C_{t}>0$
such that for any $\rho,\rho'\in T^{*}M$,
\begin{equation}
\mathrm{dist}_{g}\left(\tilde{\phi}^{t}(\rho),\tilde{\phi}^{t}(\rho')\right)\le C_{t}\mathrm{dist}_{g}\left(\rho,\rho'\right).\label{eq:invariance_g}
\end{equation}
\end{lem}

\end{cBoxB}

\begin{rem}
Property (\ref{eq:invariance_g}) will be crucial to get Lemma \ref{lem:Description-of-evolving}
below. This property means that the pulled back metric $\left(\tilde{\phi}^{t}\right)^{\circ}g$
is equivalent to the metric $g$ uniformly with respect to $\rho\in T^{*}M$.
Equivalently one can write
\[
\forall t\in\mathbb{R},\exists C_{t}>0,\forall\rho\in T^{*}M,\quad\left\Vert d\tilde{\phi}^{t}\left(\rho\right)\right\Vert _{g}\leq C_{t},
\]
or infinitesimally with the vector field $\tilde{X}$ that is $X$
lifted on $T^{*}M$ (i.e. the generator of $\tilde{\phi}^{t}$) acting
as the Lie derivative $\mathcal{L}_{\tilde{X}}$ on $g$,
\[
\exists C>0,\forall\rho\in T^{*}M,\quad\left\Vert \left(\mathcal{L}_{\tilde{X}}g\right)\left(\rho\right)\right\Vert _{g}\leq C,
\]
and this is true more generally for any local change of flow box coordinate
as given below in (\ref{eq:coordinate_change}) or for a local diffeomorphism
on $M$ that preserves the vector field $X$.
\end{rem}

\begin{proof}
In order to prove Lemma \ref{lem:Lipschitz_property}, remark first
that for any chart indices $j,j'\in\left\{ 1,\ldots J\right\} $ with
$U_{j}\cap U_{j'}\neq\emptyset$, the expression of the metric in
(\ref{eq:metric_g_in_coordinates}) gives different metrics $g_{j}=\kappa_{j}^{*}\left(g\right)$,
$g_{j'}=\kappa_{j'}^{*}\left(g\right)$ on different charts. We will
show that conditions (\ref{eq:conditions}) guaranty that these metrics
are uniformly equivalent in the sense that $\exists C>0,\forall\rho\in T^{*}\left(U_{j}\cap U_{j'}\right)$,
$\forall u\in T_{\rho}\left(T^{*}M\right)$,
\begin{equation}
\frac{1}{C}\left\Vert u\right\Vert _{g_{j}}\leq\left\Vert u\right\Vert _{g_{j'}'}\leq C\left\Vert u\right\Vert _{g_{j}}.\label{eq:equiv_g_tilde}
\end{equation}
\begin{rem}
\label{rem:asympt}We will express the relation (\ref{eq:equiv_g_tilde})
as $\left\Vert u\right\Vert _{g_{j}}\asymp\left\Vert u\right\Vert _{g_{j'}}$
by using the notation $\asymp$.
\end{rem}

Let us consider a local change of coordinates $\left(x,z\right)\mapsto\left(x',z'\right)$
between the flow box coordinates in Lemma \ref{lem:charts}. It is
written in the form
\begin{equation}
x'=f\left(x\right),\quad z'=z+h\left(x\right)\label{eq:coordinate_change}
\end{equation}
using a smooth diffeomorphism $f:\mathbb{R}^{n}\rightarrow\mathbb{R}^{n}$
and a smooth function $h:\mathbb{R}^{n}\rightarrow\mathbb{R}$. By
compactness of $M$, we may and do assume that $f$ and $h$ are bounded
in the $C^{2}$ sense. From \eqref{eq:coordinate_change}, we have
\begin{equation}
dx'=\left(d_{x}f\right)dx,\quad dz'=dz+\left(d_{x}h\right)dx.\label{eq:relation_coordinates}
\end{equation}
Let us write $(x,z,\xi,\omega)\to(x',z',\xi',\omega')$ for the induced
coordinate change on the cotangent bundle. Then the relation
\[
\xi dx+\omega dz=\xi'dx'+\omega'dz'=\left(\left(d_{x}f\right)^{T}\xi'+\left(d_{x}h\right)\omega'\right)dx+\omega'dz
\]
gives
\begin{equation}
\xi=\left(d_{x}f\right)^{T}\xi'+\left(d_{x}h\right)\omega'\quad\text{and}\quad\omega=\omega'.\label{eq:dd}
\end{equation}
Hence we find
\begin{align}
d\xi & =\left(\left(d_{x'}\left(d_{x}f\right)^{T}\right)\xi'+\left(d_{x'}\left(d_{x}h\right)\right)\omega'\right)dx'+\left(d_{x}f\right)^{T}d\xi'+\left(d_{x}h\right)d\omega',\label{eq:relation_coordinates2}\\
d\omega & =d\omega'.\nonumber 
\end{align}
From $C^{2}$ boundedness of $f$ and $h$, we have $\left\langle \eta\right\rangle \asymp\left\langle \eta'\right\rangle $
for $\eta=(\xi,\omega)$ and $\eta'=(\xi',\omega')$ and hence 
\begin{equation}
\delta^{\perp}\left(\eta\right)\asymp\delta^{\perp}\left(\eta'\right),\quad\delta^{\parallel}\left(\eta\right)\asymp\delta^{\parallel}\left(\eta'\right).\label{eq:equivalence_of_delta}
\end{equation}
To compare $g$ and $g'$, we rewrite the relations (\ref{eq:relation_coordinates})
and (\ref{eq:relation_coordinates2}) as
\begin{align*}
\frac{dx'}{\delta^{\perp}\left(\eta\right)} & =d_{x}f\left(\frac{dx}{\delta^{\perp}\left(\eta\right)}\right)\asymp\frac{dx}{\delta^{\perp}\left(\eta\right)},\\
\frac{dz'}{\delta^{\parallel}\left(\eta\right)} & =\frac{dz}{\delta^{\parallel}\left(\eta\right)}+\frac{\delta^{\perp}\left(\eta\right)}{\delta^{\parallel}\left(\eta\right)}\cdot d_{x}h\left(\frac{dx}{\delta^{\perp}\left(\eta\right)}\right),\\
\delta^{\perp}\left(\eta\right)d\xi & =\left((\delta^{\perp}\left(\eta\right))^{2}\left(d_{x'}\left(d_{x}f\right)^{T}(\xi')+d_{x'}\left(d_{x}h(\omega')\right)\right)\right)\cdot\frac{dx'}{\delta^{\perp}\left(\eta\right)}\\
 & \qquad\qquad+\delta^{\perp}\left(\eta\right)\left(d_{x}f\right)^{T}\left(d\xi'\right)+\frac{\delta^{\perp}\left(\eta\right)}{\delta^{\parallel}\left(\eta\right)}\left(d_{x}h\right)\left(\delta^{\parallel}\left(\eta\right)d\omega'\right).
\end{align*}
Therefore, in order that $g_{\varrho}\preceq g'_{\varrho'}$, a necessary
and sufficient condition is that 
\begin{equation}
(\delta^{\perp}\left(\eta\right))^{2}\left|\xi'\right|\leq C\quad\text{and}\quad(\delta^{\perp}\left(\eta\right))^{2}\left|\omega'\right|\leq C\quad\text{and}\quad\frac{\delta^{\perp}\left(\eta\right)}{\delta^{\parallel}\left(\eta\right)}\leq C.\label{eq:requirement}
\end{equation}
Since the former two inequalities are written 
\[
\delta^{\perp}\left(\eta\right)\leq C\min\left\{ \left|\xi'\right|^{-1/2},\left|\omega'\right|^{-1/2}\right\} 
\]
we see that the last condition \eqref{eq:requirement} is equivalent
to the condition
\[
1/2\leq\alpha^{\perp}\quad\text{and}\quad0\leq\alpha^{\parallel}\leq\alpha^{\perp}.
\]
that was assumed in (\ref{eq:conditions}). We have obtained (\ref{eq:equiv_g_tilde}).
Since the flow $\phi^{t}$ viewed in the local charts $\kappa_{j}$
is also written in the form \eqref{eq:coordinate_change} we get Lemma
\ref{lem:Lipschitz_property}.
\end{proof}

\subsubsection{Moderate and temperate properties of the metric $g$}

Now we will discuss so called moderate and temperate properties of
the metric $g$.
\begin{rem}
The metric $g$ that we have introduced is similar to the so-called
symplectic metric $g^{\natural}$ introduced in semi-classical analysis
by Hörmander, see \cite[Def 2.2.19 page 78]{lerner2011metrics}\cite[chap XVIII.]{hormander_3}\cite{nicola_rodino_livre_11}.
Temperate, slowly varying and moderate properties of the metric are
discussed in these books for the purpose of semi-classical analysis.

\begin{cBoxA}{}
\begin{defn}[Distortion function $\Delta:T^{*}M\rightarrow\mathbb{R}^{+}$]
For $\rho\in T^{*}M$, set
\begin{equation}
\Delta\left(\rho\right):=\left\langle \left\Vert \rho\right\Vert _{g_{M}}\right\rangle ^{-\left(1-\alpha^{\perp}\right)}.\label{eq:def_distortion_function_D}
\end{equation}
where $g_{M}$ is a Riemannian metric on $M$.
\end{defn}

\end{cBoxA}
\end{rem}

Observe that $\Delta\left(\rho\right)\rightarrow0$ when $\left\Vert \rho\right\Vert _{g_{M}}\rightarrow\infty$.
For practical purpose we can also define a distortion function in
local chart using the same expression:
\begin{defn}[Distortion function $\Delta:\mathbb{R}^{2\left(n+1\right)}\rightarrow\mathbb{R}^{+}$
in local chart]
For $\varrho=\left(y,\eta\right)\in\mathbb{R}^{2\left(n+1\right)}$,
we set
\begin{equation}
\Delta\left(\varrho\right):=\delta^{\perp}\left(\eta\right)^{\left(1/\alpha^{\perp}\right)-1}\eq{\ref{eq:def_delta}}\left\langle \left|\eta\right|\right\rangle ^{-\left(1-\alpha^{\perp}\right)}.\label{eq:def_distortion_function_D-1}
\end{equation}

From (\ref{eq:equivalence_of_delta}), we have that the equivalence
$\Delta\left(\rho\right)\asymp\Delta\left(\varrho\right)$ for $\varrho=\left(y,\eta\right)=\tilde{\kappa}_{j}\left(\rho\right)$
with any chart index $j$.
\end{defn}

The next lemma shows that the distortion function $\Delta\left(\varrho\right)$
is related to the variation of the metric $g$ on $T^{*}\mathbb{R}^{n+1}=\mathbb{R}^{2\left(n+1\right)}$
with respect to itself. Recall the notation $\left\langle s\right\rangle :=\left(1+s^{2}\right)^{1/2}$
for $s\in\mathbb{R}$ introduced in (\ref{eq:def_Japonese_bracket}).

\begin{cBoxB}{}
\begin{lem}[The metric $g$ is $\Delta^{\gamma}$-moderate and temperate]
\label{lem:temperate_metric} For any $0\leq\gamma<1$, there exist
$N>0$ and $C>0$ such that
\begin{align}
\max\left\{ \frac{\|v\|_{g_{\varrho'}}}{\|v\|_{g_{\varrho}}},\frac{\|v\|_{g_{\varrho}}}{\|v\|_{g_{\varrho'}}}\right\} \le1+C\Delta(\varrho)^{1-\gamma}\left\langle \left(\Delta\left(\varrho\right)\right)^{\gamma}\left\Vert \varrho'-\varrho\right\Vert _{g_{\varrho}}\right\rangle ^{N}\label{eq:g_moderate and temperate}
\end{align}
for any $\varrho,\varrho'\in\mathbb{R}^{2\left(n+1\right)}$ and $v\in\mathbb{R}^{2\left(n+1\right)}$.
Consequently we have
\begin{align}
\frac{1}{C}\left\langle \left\Vert \varrho'-\varrho\right\Vert _{g_{\varrho}}\right\rangle ^{1/N}\leq & \left\langle \left\Vert \varrho'-\varrho\right\Vert _{g_{\varrho'}}\right\rangle \le C\left\langle \left\Vert \varrho'-\varrho\right\Vert _{g_{\varrho}}\right\rangle ^{N}.\label{eq:equivalence_distance}
\end{align}
\end{lem}

\end{cBoxB}

\begin{rem}
The inequality (\ref{eq:g_moderate and temperate}) expresses two
properties about the variation of $g$:
\begin{enumerate}
\item the moderate property (also called slowly varying property): if two
points $\varrho,\varrho'$ are in a distance $\left\Vert \varrho'-\varrho\right\Vert _{g_{\varrho}}\leq\Delta^{\gamma}\left(\varrho\right)^{-1}$
then the ratio between $g_{\varrho},g_{\varrho'}$ is bounded and
close to one at high frequencies. In other words the metric $g$ varies
slowly at the scale of the metric itself. 
\item the temperate property (at infinity) : the metric $g_{\varrho'}$
grows no faster than a power of $\left\Vert \varrho'-\varrho\right\Vert _{g_{\varrho}}$
(modified by the factor $\Delta^{\gamma}\left(\varrho\right)$) when
$\varrho'\to\infty$.
\end{enumerate}
\end{rem}

\begin{proof}[Proof of Lemma \ref{lem:temperate_metric}]
Let $v=\left(\left(v_{x},v_{z}\right),\left(v_{\xi},v_{\omega}\right)\right)\in\mathbb{R}^{2\left(n+1\right)}$
and $\varrho=\left(y,\eta\right),$ $\varrho'=\left(y',\eta'\right)\in\mathbb{R}^{2(n+1)}$.
Recall that $0\leq\alpha^{\parallel}\leq\alpha^{\perp}$ in (\ref{eq:conditions}).
We have
\begin{align*}
\|v\|_{g_{\varrho}}^{2}\underset{(\ref{eq:metric_g_in_coordinates})}{=} & \left(\frac{\left|v_{x}\right|}{\left\langle \left|\eta\right|\right\rangle ^{-\alpha^{\perp}}}\right)^{2}+\left(\left\langle \left|\eta\right|\right\rangle ^{-\alpha^{\perp}}\left|v_{\xi}\right|\right)^{2}+\left(\frac{\left|v_{z}\right|}{\left\langle \left|\eta\right|\right\rangle ^{-\alpha^{\parallel}}}\right)^{2}+\left(\left\langle \left|\eta\right|\right\rangle ^{-\alpha^{\parallel}}\left|v_{\omega}\right|\right)^{2}\\
= & \left(\frac{\left\langle \left|\eta\right|\right\rangle }{\left\langle \left|\eta'\right|\right\rangle }\right)^{2\alpha^{\perp}}\left(\frac{\left|v_{x}\right|}{\left\langle \left|\eta'\right|\right\rangle ^{-\alpha^{\perp}}}\right)^{2}+\left(\frac{\left\langle \left|\eta'\right|\right\rangle }{\left\langle \left|\eta\right|\right\rangle }\right)^{2\alpha^{\perp}}\left(\left\langle \left|\eta'\right|\right\rangle ^{-\alpha^{\perp}}\left|v_{\xi}\right|\right)^{2}\\
 & +\left(\frac{\left\langle \left|\eta\right|\right\rangle }{\left\langle \left|\eta'\right|\right\rangle }\right)^{2\alpha^{\parallel}}\left(\frac{\left|v_{z}\right|}{\left\langle \left|\eta'\right|\right\rangle ^{-\alpha^{\parallel}}}\right)^{2}+\left(\frac{\left\langle \left|\eta'\right|\right\rangle }{\left\langle \left|\eta\right|\right\rangle }\right)^{2\alpha^{\parallel}}\left(\left\langle \left|\eta'\right|\right\rangle ^{-\alpha^{\parallel}}\left|v_{\omega}\right|\right)^{2}\\
\underset{(\ref{eq:conditions})}{\leq} & \max\left\{ \left(\frac{\langle|\eta'|\rangle}{\langle|\eta|\rangle}\right)^{2\alpha^{\perp}},\left(\frac{\langle|\eta|\rangle}{\langle|\eta'|\rangle}\right)^{2\alpha^{\perp}}\right\} \|v\|_{g_{\varrho'}}^{2}.
\end{align*}
We deduce that
\begin{equation}
\max\left\{ \frac{\|v\|_{g_{\varrho'}}}{\|v\|_{g_{\varrho}}},\frac{\|v\|_{g_{\varrho}}}{\|v\|_{g_{\varrho'}}}\right\} \underset{(\ref{eq:metric_g_in_coordinates})}{\leq}\max\left\{ \left(\frac{\langle|\eta'|\rangle}{\langle|\eta|\rangle}\right)^{\alpha^{\perp}},\left(\frac{\langle|\eta|\rangle}{\langle|\eta'|\rangle}\right)^{\alpha^{\perp}}\right\} .\label{eq:comparison}
\end{equation}
Below we estimate the ratios $\langle|\eta'|\rangle/\langle|\eta|\rangle$
and $\langle|\eta|\rangle/\langle|\eta'|\rangle$ to get the required
estimate (\ref{eq:g_moderate and temperate}). For the former, we
have
\begin{align}
\frac{\langle|\eta'|\rangle}{\langle|\eta|\rangle}\underset{(\ref{eq:D1p})}{\leq}1+\frac{\left|\eta'-\eta\right|}{\langle|\eta|\rangle} & \underset{(\ref{eq:metric_g_in_coordinates})}{\leq}1+\frac{\langle\left|\eta\right|\rangle^{\alpha^{\perp}}\|\varrho'-\varrho\|_{g_{\varrho}}}{\left\langle \left|\eta\right|\right\rangle }\eq{\ref{eq:def_distortion_function_D-1}}1+\Delta\left(\varrho\right)\|\varrho'-\varrho\|_{g_{\varrho}}\label{eq:res1-1}\\
 & \ineq{\ref{eq:prod}}1+\Delta(\varrho)^{1-\gamma}\cdot\langle\Delta(\varrho)^{\gamma}\|\varrho'-\varrho\|_{g_{\varrho}}\rangle.
\end{align}
For the latter, if $|\eta'|\ge\frac{1}{2}|\eta'-\eta|$ then $\langle|\eta|\rangle\leq\langle|\eta'|\rangle+|\eta'-\eta|\le3\langle|\eta'|\rangle$
and we can proceed similarly to the previous case:
\[
\frac{\langle|\eta|\rangle}{\langle|\eta'|\rangle}\leq1+\frac{\left|\eta'-\eta\right|}{\langle|\eta'|\rangle}\leq1+3\frac{\left|\eta'-\eta\right|}{\langle|\eta|\rangle}=1+3\Delta\left(\varrho\right)\|\varrho'-\varrho\|_{g_{\varrho}}\le1+3\Delta(\varrho)^{1-\gamma}\cdot\langle\Delta(\varrho)^{\gamma}\|\varrho'-\varrho\|_{g_{\varrho}}\rangle.
\]
Otherwise if $\left|\eta'\right|<\frac{1}{2}\left|\eta'-\eta\right|$
then $|\eta'-\eta|\leq\left|\eta'\right|+\left|\eta\right|\leq\frac{1}{2}\left|\eta'-\eta\right|+\left|\eta\right|$
and
\[
\frac{1}{2}|\eta'-\eta|\leq\left|\eta\right|=|\eta-\eta'+\eta'|\leq|\eta'-\eta|+|\eta'|\leq\frac{3}{2}|\eta'-\eta|,
\]
i.e. $|\eta|$ is comparable with $|\eta'-\eta|$. Hence, for any
$N>0$, there exists a constant $C_{N}>0$ such that 
\[
\begin{aligned}\frac{\langle|\eta|\rangle}{\langle|\eta'|\rangle} & \leq1+|\eta'-\eta|\leq1+C_{N}|\eta|^{-\mu_{N}}\cdot\left\langle \langle|\eta|\rangle^{-(1-\alpha^{\perp})\gamma}\langle|\eta|\rangle^{-\alpha^{\perp}}|\eta'-\eta|\right\rangle ^{N}\\
 & \leq1+C_{N}|\eta|^{-\mu_{N}}\cdot\left\langle \Delta\left(\varrho\right)^{\gamma}\|\varrho'-\varrho\|_{g_{\varrho}}\right\rangle ^{N}
\end{aligned}
\]
where $\mu_{N}=N\left(1-\alpha^{\perp}\right)\left(1-\gamma\right)-1.$
Letting $N$ be sufficiently large so that $\left\langle \left|\eta\right|\right\rangle ^{-\mu_{N}}\leq\Delta(\varrho)^{1-\gamma}$
we obtain 
\[
\frac{\langle|\eta|\rangle}{\langle|\eta'|\rangle}\le1+C_{N}\Delta(\varrho)^{1-\gamma}\langle\Delta\left(\varrho\right)^{\gamma}\|\varrho'-\varrho\|_{g_{\varrho}}\rangle^{N}.
\]
Summarizing the estimates above, we obtain (\ref{eq:g_moderate and temperate}).
In order to get \eqref{eq:equivalence_distance}, we set $v=\varrho'-\varrho$
and $\gamma=0$ in (\ref{eq:g_moderate and temperate}).
\end{proof}
In the previous lemma we have used $\left\Vert \varrho'-\varrho\right\Vert _{g_{\varrho}}$
that makes sense in a local chart. This quantity will appear later
in the proof of Theorem \ref{thm:Microlocality-of-the_TO} from some
integration by parts. However to express the results, we would like
to use the more geometrical quantity that is the geodesic distance
$\mathrm{dist}_{g}\left(\rho',\rho\right)$ for the global metric
$g$ in (\ref{eq:global_metric}). The next lemma shows that both
quantities are equivalent.

\begin{cBoxB}{}
\begin{lem}
\label{lem:There-exist-}There exist $N>0$ and $C>0$ such that for
every $\varrho=\tilde{\kappa_{j}}\left(\rho\right),\varrho'=\tilde{\kappa}_{j}\left(\rho'\right)$,
\begin{equation}
\frac{1}{C}\left\langle \left\Vert \varrho'-\varrho\right\Vert _{g_{\varrho}}\right\rangle ^{1/N}\leq\left\langle \mathrm{dist}_{g}\left(\rho',\rho\right)\right\rangle \leq C\left\langle \left\Vert \varrho'-\varrho\right\Vert _{g_{\varrho}}\right\rangle ^{N}.\label{eq:log_log}
\end{equation}
\end{lem}

\end{cBoxB}

\begin{proof}
To show the second inequality in (\ref{eq:log_log}), we consider
the straight path $t\in\left[0,1\right]\mapsto\varrho\left(t\right)=\left(1-t\right)\varrho+t\varrho'$
and get that $\left\langle \mathrm{dist}_{g}\left(\rho',\rho\right)\right\rangle \leq C\left\langle \left\Vert \varrho'-\varrho\right\Vert _{g_{\varrho}}\right\rangle ^{N}$
for some $C>0,N>0$. To show the first inequality in (\ref{eq:log_log}),
let us assume that $\rho:t\in\left[0,1\right]\mapsto\rho\left(t\right)$
is a geodesic from $\rho=\rho\left(0\right)$ to $\rho'=\rho\left(1\right)$.
In a local chart $\varrho\left(t\right)=\tilde{\kappa_{j}}\left(\rho\left(t\right)\right)=\left(y\left(t\right),\eta\left(t\right)\right)\in\mathbb{R}^{n+1}\times\mathbb{R}^{n+1}$.
We will use that for some $C>0$,
\begin{align}
\mathrm{dist}_{g}\left(\rho',\rho\right) & =\int_{0}^{1}\left\Vert \dot{\rho}\left(t\right)\right\Vert _{g_{\rho\left(t\right)}}dt\nonumber \\
 & \geq C\left(\int_{0}^{1}\left\Vert \dot{y}\left(t\right)\right\Vert _{g_{\varrho\left(t\right)}}dt+\int_{0}^{1}\left\Vert \dot{\eta}\left(t\right)\right\Vert _{g_{\varrho\left(t\right)}}dt\right).\label{eq:bound-2}
\end{align}
 We may assume without loss of generality that $\left|\eta'\right|=\left|\eta\left(1\right)\right|\geq\left|\eta\left(0\right)\right|=\left|\eta\right|$
. Let us consider a few complementary cases.
\begin{enumerate}
\item If $\exists t'\in\left[0,1\right]$ such that $\left|\eta\left(t'\right)\right|<\frac{1}{2}\left|\eta\right|$,
then using the second integral in (\ref{eq:bound-2}), we get that
for some $C,N,C',N'>0$ we have
\[
\left\langle \mathrm{dist}_{g}\left(\rho',\rho\right)\right\rangle \ineqs{\ref{eq:equivalence_distance}}C\left\langle \frac{1}{2}\left\Vert \eta\right\Vert _{g_{\varrho}}\right\rangle ^{1/N}+C\left\langle \frac{1}{2}\left\Vert \eta'\right\Vert _{g_{\varrho}}\right\rangle ^{1/N}\geq C'\left\langle \left\Vert \varrho'-\varrho\right\Vert _{g_{\varrho}}\right\rangle ^{1/N'}.
\]
\item Otherwise $\forall t\in\left[0,1\right],$ $\left|\eta\left(t\right)\right|\geq\frac{1}{2}\left|\eta\right|$
and we consider two subcases:
\begin{enumerate}
\item If $\left\Vert y'-y\right\Vert _{g_{\varrho}}\geq\left\Vert \eta'-\eta\right\Vert _{g_{\varrho}}$
then $\left\Vert y'-y\right\Vert _{g_{\varrho}}\geq\frac{1}{2}\left\Vert \varrho'-\varrho\right\Vert _{g_{\varrho}}$.
Using the first integral in (\ref{eq:bound-2}), then for some $C,N>0$
we have
\begin{align*}
\left\langle \mathrm{dist}_{g}\left(\rho',\rho\right)\right\rangle  & \geq\left\langle \left\Vert y'-y\right\Vert _{g_{\left(y,\frac{1}{2}\eta\right)}}\right\rangle \ineqs{\ref{eq:equivalence_distance}}\frac{1}{C}\left\langle \left\Vert y'-y\right\Vert _{g_{\varrho}}\right\rangle ^{1/N}\\
 & \geq\frac{1}{C}\left\langle \left\Vert \varrho'-\varrho\right\Vert _{g_{\varrho}}\right\rangle ^{1/N}.
\end{align*}
\item Otherwise $\left\Vert \eta'-\eta\right\Vert _{g_{\varrho}}\geq\left\Vert y'-y\right\Vert _{g_{\varrho}}$
then $\left\Vert \eta'-\eta\right\Vert _{g_{\varrho}}\geq\frac{1}{2}\left\Vert \varrho'-\varrho\right\Vert _{g_{\varrho}}$
and for some $C,N>0$ we have
\begin{align*}
\left\langle \mathrm{dist}_{g}\left(\rho',\rho\right)\right\rangle  & \geq\frac{1}{C}\left\langle \left\Vert \eta'-\eta\right\Vert _{g_{\varrho'}}\right\rangle ^{1/N}\geq\frac{1}{C}\left\langle \left\Vert \varrho'-\varrho\right\Vert _{g_{\varrho}}\right\rangle ^{1/N}.
\end{align*}
\end{enumerate}
\end{enumerate}
We have finished the proof of Lemma \ref{lem:There-exist-}.
\end{proof}

\subsubsection{\label{subsec:Bargmann-transform-on}Wave packet transform in local
charts.}

In this section, from the given metric $g$ in (\ref{eq:metric_g_in_coordinates}),
we construct a family of wave packets functions $\varphi_{y,\eta}\left(.\right)$
and define a wave packet transform on $\mathbb{R}^{n+1}$ that gives
an exact resolution of identity. As before, we write $y=\left(x,z\right)\in\mathbb{R}^{n+1}$
for the coordinates on the local charts, we write $\eta=\left(\xi,\omega\right)\in\mathbb{R}^{n+1}$
for the dual coordinates and $\varrho=\left(y,\eta\right)$.

\paragraph{Definition of wave packets $\varphi_{\varrho}$}

We begin with considering the Gaussian function $\check{\varphi}_{\eta}^{\left(0\right)}:\mathbb{R}^{n+1}\to\mathbb{R}$
defined for each $\eta\in\mathbb{R}^{n+1}$ by
\begin{align}
\check{\varphi}_{\eta}^{\left(0\right)}\left(\eta'\right): & =\exp\left(-\frac{1}{2}\left\Vert \eta'-\eta\right\Vert _{g_{\eta}}^{2}\right)\nonumber \\
 & \underset{\eqref{eq:metric_g_in_coordinates}}{=}\exp\left(-\frac{1}{2}\left|\delta^{\perp}\left(\eta\right)\left(\xi'-\xi\right)\right|^{2}-\frac{1}{2}\left|\delta^{\parallel}\left(\eta\right)\left(\omega'-\omega\right)\right|^{2}\right).\label{eq:def_phi_0}
\end{align}
Then we set
\begin{equation}
\check{\varphi}_{\eta}\left(\eta'\right):=\left(m\left(\eta'\right)\right)^{-1/2}\check{\varphi}_{\eta}^{\left(0\right)}\left(\eta'\right)\label{eq:def_phi_xi}
\end{equation}
where
\begin{equation}
m\left(\eta'\right):=\int\left|\check{\varphi}_{\eta}^{\left(0\right)}\left(\eta'\right)\right|^{2}d\eta>0,\label{eq:def_m}
\end{equation}
that will play an essential role in the proof of the ``resolution
of identity'' Lemma \ref{lem:Wave_packets} below. For $\varrho=\left(y,\eta\right)\in T^{*}\mathbb{R}_{y}^{n+1}=\mathbb{R}_{\varrho}^{2n+2}$,
we define the wave packet $\varphi_{\varrho}\in\mathcal{S}\left(\mathbb{R}_{y'}^{n+1}\right)$
as the inverse Fourier transform of $\check{\varphi}_{\eta}$ shifted
by $y$:
\begin{eqnarray}
\varphi_{\varrho}\left(y'\right) & := & \left(\mathcal{F}^{-1}\check{\varphi}_{\eta}\right)\left(y'-y\right):=\frac{1}{\left(2\pi\right)^{\left(n+1\right)/2}}\int_{\mathbb{R}^{n+1}}e^{i\eta'\left(y'-y\right)}\check{\varphi}_{\eta}\left(\eta'\right)d\eta'.\label{eq:wave_packet}
\end{eqnarray}
Here are some uniform estimates on $\left(\varphi_{\varrho}\right)_{\varrho}$
for later use.

\begin{cBoxB}{}
\begin{lem}[Norm of wave packets]
\textbf{\label{lem:norm_wave_packets}}We have that $\forall\epsilon>0,\exists C_{\epsilon}>0$,
$\forall\varrho=\left(y,\eta\right)\in\mathbb{R}^{2n+2}$,
\begin{equation}
\left|\left\Vert \varphi_{\varrho}\right\Vert _{L^{2}\left(\mathbb{R}^{n+1}\right)}^{2}-1\right|\leq C_{\epsilon}\Delta\left(\varrho\right)^{1-\epsilon},\label{eq:norme_wave_packet}
\end{equation}
with the distortion function $\Delta\left(\varrho\right)$ defined
in (\ref{eq:def_distortion_function_D}).
\end{lem}

\end{cBoxB}

\begin{proof}
Let $0<\epsilon<1-\alpha^{\perp}$. We denote $B^{n}\left(\xi',r\right)$
the ball of center $\xi'$ and radius $r>0$ in $\mathbb{R}^{n}$.
For $\left|\eta'\right|\gg1$, writing $\eta=\left(\xi,\omega\right)\in\mathbb{R}^{n+1}$,
we have that $\forall N>0$,
\begin{align}
m\left(\eta'\right) & \eq{\ref{eq:def_m},\ref{eq:def_phi_0}}\int\exp\left(-\left\langle \eta\right\rangle ^{-2\alpha^{\perp}}\left|\xi'-\xi\right|^{2}-\left\langle \eta\right\rangle ^{-2\alpha^{\parallel}}\left|\omega'-\omega\right|^{2}\right)d\eta\label{eq:expression_m}\\
 & =\left(\int_{\xi\in B^{n}\left(\xi',\left|\eta'\right|^{\alpha^{\perp}+\epsilon}\right)}\exp\left(-\left|\eta\right|^{-2\alpha^{\perp}}\left|\xi'-\xi\right|^{2}\right)d\xi\right)\nonumber \\
 & \left(\int_{\omega\in B^{1}\left(\omega',\left|\eta'\right|^{\alpha^{\parallel}+\epsilon}\right)}\exp\left(-\left|\eta\right|^{-2\alpha^{\parallel}}\left|\omega'-\omega\right|^{2}\right)d\omega\right)+O_{N}\left(\left|\eta'\right|^{-N}\right).
\end{align}
For $\xi\in B^{n}\left(\xi',\left|\eta'\right|^{\alpha^{\perp}+\epsilon}\right)$
and $\omega\in B^{1}\left(\omega',\left|\eta'\right|^{\alpha^{\parallel}+\epsilon}\right)$,
we have $\left|\eta-\eta'\right|\leq C\left|\eta'\right|^{\alpha^{\perp}+\epsilon}$
and
\[
\left|\left|\eta\right|^{-2\alpha^{\perp}}-\left|\eta'\right|^{-2\alpha^{\perp}}\right|=O\left(\left|\eta'\right|^{-2\alpha^{\perp}-1}\left|\eta-\eta'\right|\right)=O\left(\left|\eta'\right|^{-\alpha^{\perp}-1+\epsilon}\right)
\]
hence
\begin{align*}
\left|\left|\eta\right|^{-2\alpha^{\perp}}\left|\xi'-\xi\right|^{2}-\left|\eta'\right|^{-2\alpha^{\perp}}\left|\xi'-\xi\right|^{2}\right| & =O\left(\left|\eta'\right|^{-1+\alpha^{\perp}+3\epsilon}\right).
\end{align*}
Similarly
\begin{align*}
\left|\left|\eta\right|^{-2\alpha^{\parallel}}\left|\omega'-\omega\right|^{2}-\left|\eta'\right|^{-2\alpha^{\parallel}}\left|\omega'-\omega\right|^{2}\right| & =O\left(\left|\eta'\right|^{-1+\alpha^{\perp}+3\epsilon}\right).
\end{align*}
On the other hand
\[
\int_{\mathbb{R}^{n+1}}\exp\left(-\left\langle \eta'\right\rangle ^{-2\alpha^{\perp}}\left|\xi'-\xi\right|^{2}-\left\langle \eta'\right\rangle ^{-2\alpha^{\parallel}}\left|\omega'-\omega\right|^{2}\right)d\eta\eq{\ref{eq:def_phi_0}}\left\Vert \check{\varphi}_{\eta'}^{\left(0\right)}\right\Vert _{L^{2}}^{2}=\pi^{\frac{n+1}{2}}\left(\delta^{\perp}\left(\eta'\right)\right)^{n}\delta^{\parallel}\left(\eta'\right).
\]
Hence
\[
m\left(\eta'\right)=\left\Vert \check{\varphi}_{\eta'}^{\left(0\right)}\right\Vert _{L^{2}}^{2}\left(1+O\left(\left|\eta'\right|^{-1+\alpha^{\perp}+3\epsilon}\right)\right).
\]
We deduce that
\[
\left\Vert \varphi_{\varrho}\right\Vert _{L^{2}\left(\mathbb{R}^{n+1}\right)}^{2}\eq{\ref{eq:wave_packet}}\left\Vert \check{\varphi}_{\eta}\right\Vert _{L^{2}}^{2}\eq{\ref{eq:def_phi_xi}}1+O\left(\left|\eta\right|^{-1+\alpha^{\perp}+3\epsilon}\right).
\]
Using that $\Delta\left(\eta\right)\eq{\ref{eq:def_distortion_function_D-1}}\left\langle \left|\eta\right|\right\rangle ^{-1+\alpha^{\perp}}$,
this gives (\ref{eq:norme_wave_packet}).
\end{proof}

\paragraph{Wave packet transform}

~

\begin{cBoxA}{}
\begin{defn}[Wave packet transform on local coordinates]
We define the wave packet transform on $\mathbb{R}^{n+1}$ (or on
local coordinates) by:
\begin{equation}
\mathcal{B}:\begin{cases}
\mathcal{S}\left(\mathbb{R}_{y}^{n+1}\right) & \rightarrow\mathcal{S}\left(\mathbb{R}_{\varrho}^{2\left(n+1\right)}\right)\\
u & \mapsto v\left(\varrho\right)=\left\langle \varphi_{\varrho}|u\right\rangle _{L^{2}\left(\mathbb{R}^{n+1}\right)}
\end{cases}.\label{eq:def_Bargamnn_B}
\end{equation}
\end{defn}

\end{cBoxA}

Its formal adjoint is

\begin{equation}
\mathcal{B}^{\dagger}:\begin{cases}
\mathcal{S}\left(\mathbb{R}_{\varrho}^{2\left(n+1\right)}\right) & \rightarrow\mathcal{S}\left(\mathbb{R}_{y'}^{n+1}\right)\\
v & \mapsto u\left(y'\right)=\int_{\mathbb{R}^{2\left(n+1\right)}}v\left(\varrho\right)\varphi_{\varrho}\left(y'\right)\frac{d\varrho}{\left(2\pi\right)^{n+1}}
\end{cases}.\label{eq:def_Bargamnn_*}
\end{equation}

\begin{rem}
In the special case of $\alpha^{\perp}=0$ and $\alpha^{\parallel}=0$
where $\delta^{\perp}\left(\eta\right)$ and $\delta^{\parallel}\left(\eta\right)$
are constant, the wave packet transform (\ref{eq:def_Bargamnn_B})
corresponds to the well known Fock-Bargman representation \cite[chap.1]{folland-88}.
However note that the condition $\alpha^{\perp}\geq1/2$ in (\ref{eq:conditions})
is not satisfied in such a case.
\end{rem}

~
\begin{rem}
\label{rem:As-in-}As in \cite[Prop. 3.1.6 page 76]{martinez-01}
one can show that $\mathcal{B}$ maps continuously $\mathcal{S}\left(\mathbb{R}_{y}^{n+1}\right)$
to $\mathcal{S}\left(\mathbb{R}_{\varrho}^{2\left(n+1\right)}\right)$.
\end{rem}

The next lemma is fundamental in our analysis, since it will permit
to perform the analysis in cotangent space (here $T^{*}\mathbb{R}^{n+1}$)
instead on the manifold itself (here $\mathbb{R}^{n+1}$).

\begin{cBoxB}{}
\begin{lem}[Resolution of identity on $C^{\infty}\left(\mathbb{R}^{n+1}\right)$]
\label{lem:Wave_packets} We have the resolution of identity on $C^{\infty}\left(\mathbb{R}^{n+1}\right)$:
\begin{equation}
\mathrm{Id}_{/C^{\infty}\left(\mathbb{R}^{n+1}\right)}=\mathcal{B}^{\dagger}\mathcal{B}=\int_{\mathbb{R}^{2\left(n+1\right)}}\pi\left(\varrho\right)\frac{d\varrho}{\left(2\pi\right)^{n+1}}\label{eq:resol_identity_B}
\end{equation}
where $\pi\left(\varrho\right)$ denotes the rank one self-adjoint
operator 
\begin{equation}
\pi\left(\varrho\right):L^{2}\left(\mathbb{R}^{n+1}\right)\rightarrow L^{2}\left(\mathbb{R}^{n+1}\right),\quad\quad\pi\left(\varrho\right)u:=\varphi_{\varrho}\langle\varphi_{\varrho}|u\rangle_{L^{2}\left(\mathbb{R}^{n+1}\right)},\label{eq:def_pi_wave_packet}
\end{equation}
that satisfies $\forall\epsilon>0$ 
\begin{equation}
\mathrm{Tr}\left(\pi\left(\varrho\right)\right)=\left\Vert \varphi_{\varrho}\right\Vert _{L^{2}\left(\mathbb{R}^{n+1}\right)}^{2}\underset{(\ref{eq:norme_wave_packet})}{=}1+O\left(\Delta\left(\varrho\right)^{1-\epsilon}\right).\label{eq:Tr_pi_rho}
\end{equation}
\end{lem}

\end{cBoxB}

\begin{rem}
From (\ref{eq:def_pi_wave_packet}),(\ref{eq:Tr_pi_rho}),(\ref{eq:def_distortion_function_D-1}),
in the limit $\left|\varrho\right|\rightarrow+\infty$ the operator
$\pi\left(\varrho\right)$ tends to be an orthogonal projector of
rank one onto the complex line $\mathbb{C}\varphi_{\rho}$.
\end{rem}

\begin{proof}
Note that
\begin{equation}
\int\left|\check{\varphi}_{\eta}\left(\eta'\right)\right|^{2}d\eta=\int\frac{\left|\check{\varphi}_{\eta}^{\left(0\right)}\left(\eta'\right)\right|^{2}}{m\left(\eta'\right)}d\eta=1.\label{eq:phi_eta_1}
\end{equation}
We write the operator $\mathcal{B}$ and and $\mathcal{B}^{\dagger}$
respectively as
\[
\left(\mathcal{B}u\right)\left(y,\eta\right)\underset{(\ref{eq:def_Bargamnn_B})}{=}\left\langle \varphi_{\varrho}|u\right\rangle _{L^{2}\left(\mathbb{R}^{n+1}\right)}=\int_{\mathbb{R}^{n+1}}\overline{\left(\mathcal{F}^{-1}\check{\varphi}_{\eta}\right)\left(y'-y\right)}u\left(y'\right)dy'=\left(\mathcal{F}^{-1}\check{\varphi}_{\eta}\right)\ast u
\]
and 
\begin{align*}
\begin{aligned}\left(\mathcal{B}^{\dagger}v\right)\left(y\right)\end{aligned}
 & =\left(2\pi\right)^{-(n+1)}\int\int\varphi_{y',\eta}\left(y\right)v\left(y',\eta\right)dy'd\eta\\
 & =\int\int\left(\mathcal{F}^{-1}\check{\varphi}_{\eta}\right)\left(y-y'\right)v\left(y',\eta\right)dy'd\eta=\int\left(\left(\mathcal{F}^{-1}\check{\varphi}_{\eta}\right)\ast_{y}v\right)\left(y,\eta\right)d\eta
\end{align*}
where we write $\ast_{y}$ for the convolution operator in the variable
$y$ with $\eta$ fixed. Hence

\begin{eqnarray*}
\left(\mathcal{B}^{\dagger}\mathcal{B}u\right)\left(y\right) & = & \frac{1}{\left(2\pi\right)^{n+1}}\int\left(\left(\mathcal{F}^{-1}\check{\varphi}_{\eta}\right)\ast\left(\mathcal{F}^{-1}\check{\varphi}_{\eta}\right)\ast u\right)\left(y\right)d\eta\\
 & = & \frac{1}{\left(2\pi\right)^{\left(n+1\right)/2}}\int\left(\left(\mathcal{F}^{-1}\left|\check{\varphi}_{\eta}\right|^{2}\right)\ast u\right)\left(y\right)d\eta\\
 & = & \frac{1}{\left(2\pi\right)^{\left(n+1\right)/2}}\left(\left(\mathcal{F}^{-1}\int\left|\check{\varphi}_{\eta}\right|^{2}d\eta\right)\ast u\right)\left(y\right)\\
 & \underset{(\ref{eq:phi_eta_1})}{=} & \frac{1}{\left(2\pi\right)^{\left(n+1\right)/2}}\left(\left(\mathcal{F}^{-1}1\right)\ast u\right)\left(y\right)=u\left(y\right),
\end{eqnarray*}
giving (\ref{eq:resol_identity_B}). Eq.(\ref{eq:Tr_pi_rho}) is a
consequence of (\ref{eq:norme_wave_packet}).
\end{proof}

\paragraph{Evolution of wave packets}

In this section, we describe the evolution of a wave packet $\varphi_{\varrho}$
under the push-forward operator by the flow $\left(\phi^{t}\right)_{t\in\mathbb{R}}$.
The next lemma is fundamental in our analysis, in particular the fact
that estimate (\ref{eq:size_wave_packet}) and (\ref{eq:F_phi_t})
are uniform w.r.t. $\varrho$.

\begin{cBoxB}[\foreignlanguage{french}{breakable}]{}

\begin{lem}[Description of evolving wave packets]
\textbf{\label{lem:Description-of-evolving}}Let $t\in\mathbb{R}$
and chart indices $j,j'\in\left\{ 0,1,\ldots J\right\} $. We assume
that $\phi^{t}\left(U_{j}\right)\cap U_{j'}\neq\emptyset$ and denote
\[
V:=\kappa_{j}\left(U_{j}\cap\phi^{-t}\left(U_{j'}\right)\right),\quad V':=\kappa_{j'}\left(\phi^{t}\left(U_{j}\right)\cap U_{j'}\right)\subset\mathbb{R}^{n+1},
\]
\[
\phi:=\kappa_{j'}\circ\phi^{t}\circ\kappa_{j}^{-1}\quad:V\rightarrow V'
\]
and similarly
\[
\tilde{\phi}:=\tilde{\kappa}_{j'}\circ\tilde{\phi}^{t}\circ\tilde{\kappa}_{j}^{-1}\quad:T^{*}V\rightarrow T^{*}V'.
\]
Then $\forall N>0,\exists C_{N,t}>0,\forall\varrho=\left(y,\eta\right)\in T^{*}V,\forall y''\in V',$
\begin{equation}
\left|\left(\varphi_{\varrho}\circ\phi^{-1}\right)\left(y''\right)\right|\leq\left(\mathrm{det}\delta\left(\eta\right)\right)^{-1/2}C_{N,t}\left\langle \left\Vert \left(y',\eta'\right)-\left(y'',\eta'\right)\right\Vert _{g_{\varrho}}\right\rangle ^{-N}\label{eq:size_wave_packet}
\end{equation}
with $\left(y',\eta'\right):=\tilde{\phi}\left(\varrho\right)$ and
\begin{equation}
\delta\left(\eta\right):=\mathrm{Diag}\left(\delta^{\perp}\left(\eta\right),\delta^{\parallel}\left(\eta\right)\right).\label{eq:def_delta-1}
\end{equation}
For the Fourier transform, with the conditions (\ref{eq:conditions})
that give property (\ref{eq:invariance_g}), we have $\forall\eta''\in\mathbb{R}^{n+1}$,
\begin{equation}
\left|\left(\mathcal{F}\left(\varphi_{\varrho}\circ\phi^{-1}\right)\right)\left(\eta''\right)\right|\leq C_{N,t}\left(\mathrm{det}\delta\left(\eta\right)\right)^{1/2}\left\langle \left\Vert \left(y',\eta'\right)-\left(y',\eta''\right)\right\Vert _{g_{\varrho}}\right\rangle ^{-N}.\label{eq:F_phi_t}
\end{equation}
This lemma is true if one replaces $\phi^{t}$ by any local diffeomorphism
that preserves the vector field $X$.
\end{lem}

\end{cBoxB}

\begin{rem}
The statement of Lemma \ref{lem:Description-of-evolving} is illustrated
on Figure \ref{fig:wave_packet} and Figure \ref{fig:Unity_ball}.
It shows that the effective size of $\varphi_{\varrho}$ in phase
space $T\mathbb{R}^{n+1}$ is comparable to the unit ball for the
metric $g_{\varrho}$. For this reason we call $\varphi_{\varrho}$
a wave packet.
\end{rem}

{\footnotesize{}}
\begin{figure}[H]
\begin{centering}
{\footnotesize{}\scalebox{0.9}[0.9]{\input{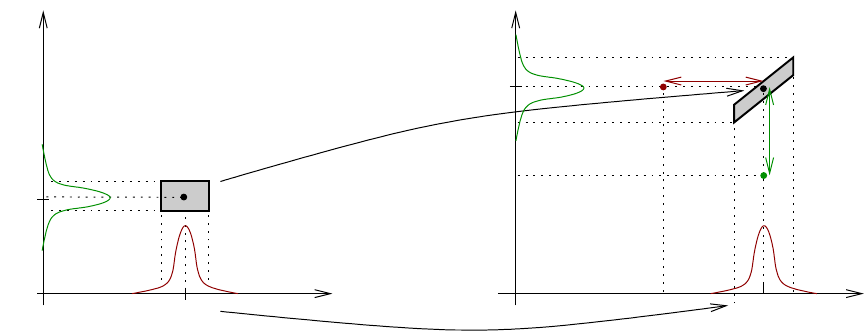tex_t}}}{\footnotesize\par}
\par\end{centering}
{\footnotesize{}\caption{\label{fig:wave_packet}Evolution of a wave packet $\varphi_{\varrho}$
in space and Fourier space. In Lemma \ref{lem:Description-of-evolving},
the decay of the functions $\left(\varphi_{\varrho}\circ\phi^{-1}\right)\left(y''\right)$
and $\left(\mathcal{F}\left(\varphi_{\varrho}\circ\phi^{-1}\right)\right)\left(\eta''\right)$
is controlled respectively from the distances $\left\Vert \left(y',\eta'\right)-\left(y'',\eta'\right)\right\Vert _{g_{\varrho}}$
and $\left\Vert \left(y',\eta'\right)-\left(y',\eta''\right)\right\Vert _{g_{\varrho}}$.}
}{\footnotesize\par}
\end{figure}
{\footnotesize\par}
\begin{proof}
We use the notations introduced in Lemma \ref{lem:Description-of-evolving}.
We have
\begin{align*}
\left(\varphi_{\varrho}\circ\phi^{-1}\right)\left(y''\right)\eq{\ref{eq:wave_packet},\ref{eq:def_phi_xi},\ref{eq:def_phi_0}}\frac{1}{\left(2\pi\right)^{\left(n+1\right)/2}} & \int_{\mathbb{R}^{n+1}}e^{i\eta''\left(\phi^{-1}\left(y''\right)-y\right)}\left(m\left(\eta''\right)\right)^{-1/2}\\
 & \qquad\exp\left(-\frac{1}{2}\left|\delta\left(\eta\right)\left(\eta''-\eta\right)\right|^{2}\right)d\eta''.
\end{align*}
Let us consider the change of variable $\eta''\mapsto\tilde{\eta}''$
with
\[
\tilde{\eta}'':=\delta\left(\eta\right)\left(\eta''-\eta\right),
\]
giving
\begin{align}
\left(\varphi_{\varrho}\circ\phi^{-1}\right)\left(y''\right) & =\mathrm{det}\delta\left(\eta\right)^{-1}\left(2\pi\right)^{-\left(n+1\right)/2}e^{i\eta\left(\phi^{-1}\left(y''\right)-y\right)}\label{eq:wp-1}\\
 & \int_{\mathbb{R}^{n+1}}e^{i\tilde{\eta}''\delta\left(\eta\right)^{-1}\left(\phi^{-1}\left(y''\right)-y\right)}\left(m\left(\eta''\right)\right)^{-1/2}\exp\left(-\frac{1}{2}\left|\tilde{\eta}''\right|^{2}\right)d\tilde{\eta}''.\nonumber 
\end{align}
From the following estimates that follows from (\ref{eq:expression_m}),
$\forall\alpha,\exists C_{\alpha}>0,\forall\eta\in\mathbb{R}^{n+1},$
\[
\partial_{\tilde{\eta}''}^{\alpha}\left(\left(m\left(\eta''\right)\right)^{-1/2}\exp\left(-\frac{1}{2}\left|\tilde{\eta}''\right|^{2}\right)\right)\leq C_{\alpha}\mathrm{det}\delta\left(\eta\right)^{1/2},
\]
and noticing that (\ref{eq:wp-1}) is a usual Fourier transform of
a Schwartz function \cite[p.222]{taylor_tome1}, we deduce that $\forall N>0,\exists C_{N,t}>0,\forall\eta\in\mathbb{R}^{n+1}$,
\begin{align*}
\left|\left(\varphi_{\varrho}\circ\phi^{-1}\right)\left(y''\right)\right| & \leq\left(\mathrm{det}\delta\left(\eta\right)\right)^{-1/2}C_{N,t}\left\langle \left|\delta\left(\eta\right)^{-1}\left(\phi\left(y\right)-y''\right)\right|\right\rangle ^{-N}\\
 & =\left(\mathrm{det}\delta\left(\eta\right)\right)^{-1/2}C_{N,t}\left\langle \left\Vert \tilde{\phi}\left(\varrho\right)-\left(y'',\eta'\right)\right\Vert _{g_{\varrho}}\right\rangle ^{-N}.
\end{align*}
This gives (\ref{eq:size_wave_packet}). To show (\ref{eq:F_phi_t}),
we write
\begin{align}
\left(\mathcal{F}\left(\varphi_{\varrho}\circ\phi^{-1}\right)\right)\left(\eta''\right) & =\left(2\pi\right)^{-\left(n+1\right)/2}\int_{\mathbb{R}^{\left(n+1\right)}}e^{-i\eta''y''}\left(\varphi_{\varrho}\circ\phi^{-1}\right)\left(y''\right)dy''.\label{eq:exp1}
\end{align}
We consider the change of variable $y''\mapsto\tilde{y}''$ with
\begin{equation}
\tilde{y}'':=\delta\left(\eta\right)^{-1}\left(y''-\phi\left(y\right)\right),\label{eq:y_2tilde}
\end{equation}
giving
\begin{align*}
\left(\mathcal{F}\left(\varphi_{\varrho}\circ\phi^{-1}\right)\right)\left(\eta''\right) & \eq{\ref{eq:wp-1}}\left(2\pi\right)^{-\left(n+1\right)}\int_{\mathbb{R}^{2\left(n+1\right)}}e^{if\left(\tilde{\eta}''',\tilde{y}''\right)}\left(m\left(\eta'''\right)\right)^{-1/2}\exp\left(-\frac{1}{2}\left|\tilde{\eta}'''\right|^{2}\right)d\tilde{\eta}'''d\tilde{y}'',
\end{align*}
with phase function
\begin{align}
f\left(\tilde{\eta}''',\tilde{y}''\right) & =-\eta''y''+\eta'''\left(\phi^{-1}\left(y''\right)-y\right).\label{eq:f_function}
\end{align}
In order to integrate by parts later, we first extract the linear
part of the function $f$ at $\left(0,0\right)$, writing (beware
that in our notations, partial derivative have multi-components $\partial_{\tilde{y}''}\equiv\left(\partial_{\tilde{x}_{j}''},\partial_{\tilde{z}''}\right)$,
$\partial_{\tilde{\eta}'''}\equiv\left(\partial_{\tilde{\xi}_{j}'''},\partial_{\tilde{\omega}'''}\right)$
with $j=1\ldots n$)
\[
f\left(\tilde{\eta}''',\tilde{y}''\right)=f\left(0,0\right)+\tilde{\eta}'''.\partial_{\tilde{\eta}'''}f\left(0,0\right)+\tilde{y}''.\partial_{\tilde{y}''}f\left(0,0\right)+f_{\mathrm{non-lin}}\left(\tilde{\eta}''',\tilde{y}''\right),
\]
with

\[
\partial_{\tilde{\eta}'''}f\left(\tilde{\eta}''',\tilde{y}''\right)\eq{\ref{eq:f_function}}\left(\phi^{-1}\left(y''\right)-y\right)\delta\left(\eta\right)^{-1}.
\]
\[
\partial_{\tilde{y}''}f\left(\tilde{\eta}''',\tilde{y}''\right)\eq{\ref{eq:f_function}}\left(-\eta''+\eta'''\left(d\phi^{-1}\right)\left(y''\right)\right)\delta\left(\eta\right)
\]
hence
\[
\partial_{\tilde{\eta}'''}f\left(0,0\right)=0,\quad\partial_{\tilde{y}''}f\left(0,0\right)=\left(-\eta''+\eta\left(d\phi_{\phi\left(y\right)}^{-1}\right)\right)\delta\left(\eta\right).
\]
We get
\begin{equation}
\left(\mathcal{F}\left(\varphi_{\varrho}\circ\phi^{-1}\right)\right)\left(\eta''\right)=\left(2\pi\right)^{-\left(n+1\right)}\mathrm{det}\delta\left(\eta\right)^{1/2}e^{if\left(0,0\right)}\int_{\mathbb{R}^{2\left(n+1\right)}}e^{i\tilde{y}''\partial_{\tilde{y}''}f\left(0,0\right)}u_{\varrho}\left(\tilde{\eta}''',\tilde{y}''\right)d\tilde{\eta}'''d\tilde{y}''\label{eq:FexptX}
\end{equation}
with
\[
u_{\varrho}\left(\tilde{\eta}''',\tilde{y}''\right)=e^{if_{\mathrm{non-lin}}\left(\tilde{\eta}''',\tilde{y}''\right)}\mathrm{det}\delta\left(\eta\right)^{-1/2}\left(m\left(\eta'''\right)\right)^{-1/2}\exp\left(-\frac{1}{2}\left|\tilde{\eta}'''\right|^{2}\right).
\]
We have

\[
\partial_{\tilde{\eta}'''}^{2}f\left(\tilde{\eta}''',\tilde{y}''\right)=0,\quad\partial_{\tilde{\eta}'''}\partial_{\tilde{y}''}f\left(\tilde{\eta}''',\tilde{y}''\right)=\left(d\phi^{-1}\right)\left(y''\right),\quad\partial_{\tilde{y}''}^{2}f\left(\tilde{\eta}''',\tilde{y}''\right)=\left(\eta'''\left(dd\phi^{-1}\right)\left(y''\right)\right)\delta\left(\eta\right)^{2}.
\]
In the flow direction $d_{z}\phi^{-1}=1$ so $dd_{z}\phi^{-1}=0$.
Hence only $d_{x}d_{x}\phi^{-1}$ matters and we have
\[
\left\Vert \partial_{\tilde{y}''}^{2}f\left(0,\tilde{y}''\right)\right\Vert \eq{\ref{eq:def_delta-1}}\left\Vert \left(d_{x}d_{x}\phi^{-1}\right)\left(y''\right)\eta\left\langle \eta\right\rangle ^{-2\alpha^{\perp}}\right\Vert 
\]
that is uniformly bounded with respect to $\varrho=\left(y,\eta\right)$
if and only if $\alpha^{\perp}\geq1/2$, that is condition (\ref{eq:conditions}).
We check that higher derivatives of $f$ are always uniformly bounded
so we get that $u_{\varrho}\left(\tilde{\eta}''',\tilde{y}''\right)\in\mathcal{S}\left(\mathbb{R}^{2\left(n+1\right)}\right)$
is uniformly bounded with respect to $\varrho$. Since the Fourier
transform sends the Schwartz space to itself continuously \cite[p.222]{taylor_tome1}
we deduce that $\forall N>0,\exists C_{N,t}>0$,
\begin{align*}
\left|\mathcal{F}\left(\varphi_{\varrho}\circ\phi^{-1}\right)\left(\eta''\right)\right| & \ineq{\ref{eq:FexptX}}\left(\mathrm{det}\delta\left(\eta\right)\right)^{1/2}C_{N,t}\left\langle \left|\left(-\eta''+\eta\left(d\phi_{\phi\left(y\right)}^{-1}\right)\right)\delta\left(\eta\right)\right|\right\rangle ^{-N}\\
 & =C_{N,t}\left(\mathrm{det}\delta\left(\eta\right)\right)^{1/2}\left\langle \left\Vert \tilde{\phi}\left(\varrho\right)-\left(y',\eta''\right)\right\Vert _{g_{\varrho}}\right\rangle ^{-N}.
\end{align*}
This gives (\ref{eq:F_phi_t}).
\end{proof}

\subsubsection{\label{subsec:Resolution-of-identity-2}Global wave packet transform}

We have defined the wave packet transform in a local chart. We next
define a global wave packet transform on $C^{\infty}(M)$. Recall
the notation $y=\left(x,z\right)\in\mathbb{R}^{n+1}$, $\eta=\left(\xi,\omega\right)\in\mathbb{R}^{n+1}$
and $\varrho=\left(y,\eta\right)\in\mathbb{R}^{2\left(n+1\right)}$
in (\ref{eq:def_k_tilde_j}). For a chart index $j$, we write $\mathcal{B}_{j}$
for the wave packet transform $\mathcal{B}$ defined in (\ref{eq:def_Bargamnn_B})
acting on $C_{0}^{\infty}(V_{j})$ and set
\[
\mathcal{B}:=\oplus_{j=1}^{J}\mathcal{B}_{j}:\oplus_{j=1}^{J}C_{0}^{\infty}(V_{j})\rightarrow\oplus_{j=1}^{J}\mathcal{S}\left(\mathbb{R}_{\varrho}^{2\left(n+1\right)}\right).
\]
The global wave packet transform that we will consider is essentially
the composition
\begin{equation}
\mathcal{B}\circ I=\oplus_{j=1}^{J}\mathcal{B}_{j}\circ I_{j}:C^{\infty}\left(M\right)\rightarrow\oplus_{j=1}^{J}\mathcal{S}\left(\mathbb{R}_{\varrho}^{2\left(n+1\right)}\right).\label{eq:B-I}
\end{equation}
But, in order to get a more geometric expression, we would like that
each wave packet corresponds to a point on $T^{*}M$ (rather than
a point in $\mathbb{R}_{\varrho}^{2\left(n+1\right)}$) and that the
global wave packet transform sends a function on $M$ to a function
on $T^{*}M$. Simultaneously, we would like to have an exact resolution
of identity \eqref{eq:resol_ident_Pi_rho}. Thus we define the global
wave packet transform as follows.

Notice that for $u\in C^{\infty}\left(M\right)$, the component $\left(\mathcal{B}_{j}I_{j}\right)(u)$
will not be supported on a bounded subset in $\mathbb{R}_{\varrho}^{2\left(n+1\right)}$
though they decay rapidly on the outside of $\mathrm{supp}\left(\chi_{j}\right)\times\mathbb{R}^{n+1}$.
This is problematic for the purpose of getting the geometric expression
mentioned above. Our solution to this problem is to consider a bijection
$\tilde{\psi}$ from the space $T^{*}V_{j}$ to $\mathbb{R}_{\varrho}^{2\left(n+1\right)}=T^{*}\mathbb{R}^{n+1}$
so that it restricts to the identity map on a small neighborhood of
$\mathrm{supp}\left(\chi_{j}\right)\Subset V_{j}$. Recall the notation
for local charts in (\ref{eq:def_kappa_j}) with the constant $c>0$.
Let $\epsilon>0$ and 
\[
V_{j,\epsilon}:=\text{\ensuremath{\mathbb{B}_{x}^{n}(c-\epsilon)}\ensuremath{\times}}\ensuremath{\mathbb{B}_{z}^{n}(l-\epsilon)}=\left\{ \left(x,z\right)\in\mathbb{R}^{n}\times\mathbb{R};\,\left|x\right|<c-\epsilon\text{ and }\left|z\right|<l-\epsilon\right\} \subset V_{j}.
\]
We assume $\epsilon$ small enough so that $\mathrm{supp}\left(\chi_{j}\right)\subset V_{j,\epsilon}$
for any $j$. We take a (surjective) diffeomorphism
\begin{equation}
\psi:V_{j}\rightarrow\mathbb{R}_{y}^{n+1}\label{eq:def_psi}
\end{equation}
such that
\begin{equation}
\psi\left(y\right)=y\qquad\text{for }y\in V_{j,\epsilon/2}.\label{eq:psi_y_y}
\end{equation}
We may and will assume that the Jacobian of $\psi$ has temperate
growth, that is, for some $N_{0}>0$ and $C>0$, 
\[
\left|\det d_{y}\psi\right|\leq C\left|\psi\left(y\right)\right|^{N_{0}}\quad\text{for all }y\in V_{j}
\]
and also that $\psi$ is expanding, that is, 
\begin{equation}
\left\Vert \psi\left(y\right)-\psi\left(y'\right)\right\Vert \geq\left\Vert y-y'\right\Vert \quad\text{for all }y,y'\in V_{j}.\label{eq:expand}
\end{equation}
Then we consider the trivial extension of the map $\psi$ to $T^{*}V_{j}$,
which is defined by
\begin{equation}
\tilde{\psi}:\varrho=\left(y,\eta\right)\in T^{*}V_{j}\mapsto\varrho'=\left(\psi\left(y\right),\eta\right)\in T^{*}\mathbb{R}^{n+1}.\label{eq:def_psi_tilde}
\end{equation}
Beware that this map $\tilde{\psi}$ differs from the canonical extension
$d^{*}\psi^{-1}$ of $\psi$ to $T^{*}V_{j}$.

\begin{cBoxA}{}
\begin{defn}[Global wave packet transform $\mathcal{T}$]
\label{def:Phi_j_rho}For $1\leq j\leq J$ and $\rho\in T^{*}U_{j}$,
we define a wave packet function $\Phi_{\rho,j}\in C^{\infty}\left(U_{j}\right)\subset C^{\infty}\left(M\right)$
on the local chart $U_{j}$ by
\begin{equation}
\Phi_{\rho,j}(m):=\left|\mathrm{det}\left(d\psi\right)(\kappa_{j}(m))\right|^{1/2}\cdot\left(I_{j}^{\dagger}\varphi_{\tilde{\psi}\left(\tilde{\kappa}_{j}\left(\rho\right)\right)}\right)(m).\label{eq:def_wave_packet_Phi_j_rho}
\end{equation}
Then we define the wave packet transform
\begin{equation}
\mathcal{T}:\begin{cases}
C^{\infty}\left(M;\mathbb{C}\right) & \rightarrow\mathcal{S}\left(T^{*}M;\mathbb{C}^{J}\right)\\
u\left(m\right) & \mapsto v\left(\rho\right)=\left(\langle\Phi_{\rho,j}|u\rangle_{L^{2}\left(M\right)}\right)_{j\in\left\{ 1,2,\ldots J\right\} }
\end{cases}\label{eq:def_T_in-1}
\end{equation}
where we set $\Phi_{\rho,j}=0$ for $\rho\notin T^{*}U_{j}$.
\end{defn}

\end{cBoxA}

With the definitions above, we obtain the next proposition.

\begin{cBoxB}{}
\begin{prop}[Resolution of identity on $C^{\infty}\left(M\right)$]
\label{prop:resolution_identity_CM} The $L^{2}$-adjoint operator
of $\mathcal{T}$ is given by
\begin{equation}
\mathcal{T}^{\dagger}:\begin{cases}
\mathcal{S}\left(T^{*}M;\mathbb{C}^{J}\right) & \rightarrow C^{\infty}\left(M;\mathbb{C}\right)\\
v\left(\rho\right) & \mapsto u\left(m\right)=\int_{\rho\in T^{*}M}\sum_{j}v_{j}\left(\rho\right)\Phi_{\rho,j}\left(m\right)\frac{d\rho}{\left(2\pi\right)^{n+1}}.
\end{cases}\label{eq:def_T_*}
\end{equation}
We have the following resolution of identity on $C^{\infty}\left(M\right)$:
\begin{align}
\mathrm{Id}_{/C^{\infty}\left(M\right)} & =\mathcal{T}^{\dagger}\mathcal{T}=\int_{T^{*}M}\Pi\left(\rho\right)\frac{d\rho}{\left(2\pi\right)^{n+1}}\label{eq:resol_ident_Pi_rho}
\end{align}
where
\begin{equation}
\Pi\left(\rho\right):=\sum_{j=1}^{J}\Phi_{\rho,j}\langle\Phi_{\rho,j}|.\rangle_{L^{2}}.\label{eq:Pi_(rho)}
\end{equation}
The last operator $\Pi\left(\rho\right)$ is a finite rank operator,
self-adjoint and non-negative on $L^{2}\left(M,dm\right)$ and satisfies,
$\forall\epsilon>0$,
\begin{equation}
\left|\mathrm{Tr}\left(\Pi\left(\rho\right)\right)-1\right|=O\left(\Delta^{1-\epsilon}\left(\rho\right)+\delta^{\parallel}\left(\eta\right)\right).\label{eq:Tr(Pi_rho)}
\end{equation}
and $\left\Vert \Pi\left(\rho\right)\right\Vert _{\mathrm{Tr}_{L^{2}}}=\mathrm{Tr}\left(\Pi\left(\rho\right)\right)$.
\end{prop}

\end{cBoxB}

\begin{rem}
Eq.(\ref{eq:resol_ident_Pi_rho}) shows that $\mathcal{T}:L^{2}\left(M;\mathbb{C}\right)\rightarrow L^{2}\left(T^{*}M;\mathbb{C}^{J}\right)$
is an isometry, see remark \ref{rem:Projection}. A drawback of the
previous construction is that $\Pi\left(\rho\right)$ is not rank
one and $\Pi\left(\rho\right)^{2}\neq\Pi\left(\rho\right)$. This
is inevitable because the wave packets for $\rho\in T^{*}M$ on different
charts are not equal and the differences are not negligible.
\end{rem}

~
\begin{rem}
If $\alpha^{\parallel}<1-\alpha^{^{\perp}}$ then the right hand side
of (\ref{eq:Tr(Pi_rho)}) is simply $O\left(\delta^{\parallel}\left(\eta\right)\right)$,
because we have $\Delta\left(\rho\right)\asymp\left\langle \left|\eta\right|\right\rangle ^{-\left(1-\alpha^{\perp}\right)}$
and $\delta^{\parallel}\left(\eta\right)\asymp\left\langle \left|\eta\right|\right\rangle ^{-\alpha^{\parallel}}$.
\end{rem}

\begin{proof}
We have
\begin{align}
\mathrm{Id}_{/C^{\infty}\left(M\right)} & \underset{(\ref{eq:I*_I}),(\ref{eq:resol_identity_B})}{=}\left(\mathcal{B}\circ I\right)^{\dagger}\circ\left(\mathcal{B}\circ I\right)\label{eq:res_id-1}\\
 & \underset{(\ref{eq:resol_identity_B})}{=}\sum_{j}\int_{\mathbb{R}^{2n+2}}I_{j}^{\dagger}\circ\pi\left(y',\eta'\right)\circ I_{j}\frac{dy'd\eta'}{\left(2\pi\right)^{n+1}}\nonumber \\
 & \underset{(\ref{eq:def_psi})}{=}\sum_{j}\int_{\left(y,\eta\right)\in T^{*}V_{j}}I_{j}^{\dagger}\circ\pi\left(\psi\left(y\right),\eta\right)\circ I_{j}\left|\mathrm{det}\left(d\psi\left(y\right)\right)\right|\frac{dyd\eta}{\left(2\pi\right)^{n+1}}\nonumber \\
 & =\int_{T^{*}M}\Pi\left(\rho\right)\frac{d\rho}{\left(2\pi\right)^{n+1}}\eq{\ref{eq:Pi_(rho)},\ref{eq:def_T_in-1},\ref{eq:def_T_*}}\mathcal{T}^{\dagger}\mathcal{T}\nonumber 
\end{align}
where, for $\rho\in T^{*}M$, $m=\pi\left(\rho\right)\in M$, we set
\begin{equation}
\Pi\left(\rho\right):=\sum_{j;y\in U_{j}}\left|\mathrm{det}\left(d\psi\left(\kappa_{j}\left(m\right)\right)\right)\right|\cdot I_{j}^{\dagger}\circ\pi\left(\tilde{\psi}\left(\tilde{\kappa}_{j}\left(\rho\right)\right)\right)\circ I_{j}\underset{(\ref{eq:def_wave_packet_Phi_j_rho})}{=}\sum_{j}|\Phi_{\rho,j}\rangle\langle\Phi_{\rho,j}|\cdot\rangle.\label{eq:Pi_rho}
\end{equation}
The last operator $\Pi\left(\rho\right)$ is of finite rank and $L^{2}$-self-adjoint.
Its trace is
\begin{equation}
\mathrm{Tr}\left(\Pi\left(\rho\right)\right)\eq{\ref{eq:Pi_rho},\ref{eq:def_pi_wave_packet}}\sum_{j;y\in U_{j}}\left|\mathrm{det}\left(d\psi\left(\kappa_{j}\left(m\right)\right)\right)\right|\langle\varphi_{\tilde{\psi}\left(\tilde{\kappa}_{j}\left(\rho\right)\right)}|I_{j}I_{j}^{\dagger}\varphi_{\tilde{\psi}\left(\tilde{\kappa}_{j}\left(\rho\right)\right)}\rangle.\label{eq:eq}
\end{equation}
On the right hand side, we have
\begin{align*}
\langle\varphi_{\tilde{\psi}\left(\tilde{\kappa}_{j}\left(\rho\right)\right)}|I_{j}I_{j}^{\dagger}\varphi_{\tilde{\psi}\left(\tilde{\kappa}_{j}\left(\rho\right)\right)}\rangle & \underset{(\ref{eq:def_I*-2})}{=}\int_{y'\in\mathbb{R}^{n+1}}\chi_{j}^{2}\left(y'\right)\left|\mathrm{det}d\kappa_{j}\left(\kappa_{j}^{-1}\left(y'\right)\right)\right|\left|\varphi_{\tilde{\psi}\left(\tilde{\kappa}_{j}\left(\rho\right)\right)}\left(y'\right)\right|^{2}dy',\\
 & =\chi_{j}^{2}\left(\kappa_{j}\left(m\right)\right)\left|\mathrm{det}d\kappa_{j}\left(m\right)\right|\left\Vert \varphi_{\tilde{\psi}\left(\tilde{\kappa}_{j}\left(\rho\right)\right)}\right\Vert ^{2}+O\left(\delta^{\parallel}\left(\eta\right)\right)\\
 & \eq{\ref{eq:partition_quadratic_unity-1},\eqref{eq:norme_wave_packet}}1+O\left(\Delta^{1-\epsilon}\left(\rho\right)+\delta^{\parallel}\left(\eta\right)\right).
\end{align*}
The remainder $O\left(\delta^{\parallel}\left(\eta\right)\right)$
in the second line comes from the size of wave packets along the flow
direction and Taylor expansion of the smooth functions in the integral.
We deduce \eqref{eq:Tr(Pi_rho)}.
\end{proof}
\begin{cBoxB}{}
\begin{prop}[Wave packet projector]
\label{lem:Bargmann projector}The operator
\begin{equation}
\mathcal{P}:=\mathcal{T}\mathcal{T}^{\dagger}:L^{2}\left(T^{*}M;\mathbb{C}^{J},\frac{d\rho}{\left(2\pi\right)^{n+1}}\right)\rightarrow L^{2}\left(T^{*}M;\mathbb{C}^{J},\frac{d\rho}{\left(2\pi\right)^{n+1}}\right)\label{eq:BargmannProjector}
\end{equation}
is the orthogonal projector on the image of $\mathcal{T}$. It is
called \textbf{wave packet projector}. Its Schwartz kernel 
\begin{equation}
\langle\delta_{\rho',j'},|\mathcal{P}\delta_{\rho,j}\rangle_{L^{2}\left(T^{*}M\right)}=\langle\Phi_{j',\rho'}|\Phi_{j,\rho}\rangle_{L^{2}\left(M\right)}\label{eq:kernel_T_T*}
\end{equation}
decays rapidly on the outside the diagonal: for any $N\text{\ensuremath{\ge}0}$,
there exists $C_{N}>0$ such that 
\begin{equation}
\left|\langle\delta_{\rho',j'}|\mathcal{P}\delta_{\rho,j}\rangle_{L^{2}\left(T^{*}M\right)}\right|=\left|\langle\Phi_{j',\rho'}|\Phi_{j,\rho}\rangle_{L^{2}\left(M\right)}\right|\leq C_{N}\left\langle \mathrm{dist}_{g}\left(\rho',\rho\right)\right\rangle ^{-N}\label{eq:estimate_Bergman_kernel}
\end{equation}
for any $\rho,\rho'\in T^{*}M$.
\end{prop}

\end{cBoxB}

\begin{proof}
For the former claim, we refer to Remark \ref{rem:Projection}. The
estimate (\ref{eq:estimate_Bergman_kernel}) is a consequence of \eqref{eq:size_wave_packet}.
(We will show a more general statement in Lemma \ref{thm:Microlocality-of-the_TO}.)
\end{proof}

\subsection{\label{subsec:Pseudo-differential-operator}Pseudo-differential operators}

\subsubsection{Pseudo-differential operators (PDO)}

In this section we use the family of operators $\Pi\left(\rho\right)$
in (\ref{eq:Pi_(rho)}) to define pseudo-differential operators on
$C^{\infty}\left(M\right)$.

\begin{cBoxA}{}
\begin{defn}[Pseudo-differential operator $\mathrm{Op}\left(a\right)$]
\label{def:Op}Let $a\in L_{\mathrm{loc}}^{\infty}\left(T^{*}M\right)$
whose growth at infinity is temperate in the sense that, for some
constant $C>0$ and $N\ge0$, we have
\begin{equation}
\left|a\left(\rho\right)\right|\le C\langle\left|\rho\right|\rangle^{N}\quad\text{for all }\rho\in T^{*}M.\label{eq:symbol_temperate}
\end{equation}
For such a function $a$, we define the pseudo-differential operator
(PDO) with the symbol $a$
\[
\mathrm{Op}\left(a\right):C^{\infty}\left(M\right)\to C^{\infty}\left(M\right)
\]
 by 
\begin{align}
\mathrm{Op}\left(a\right): & =\int_{T^{*}M}a\left(\rho\right)\Pi\left(\rho\right)\frac{d\rho}{\left(2\pi\right)^{n+1}}\eq{\ref{eq:Pi_(rho)},\ref{eq:def_T_in-1},\ref{eq:def_T_*}}\mathcal{T}^{\dagger}\mathcal{M}_{a}\mathcal{T}\label{eq:def_operator}
\end{align}
where $\mathcal{M}_{a}:\mathcal{S}\left(T^{*}M;\mathbb{C}^{J}\right)\rightarrow\mathcal{S}\left(T^{*}M;\mathbb{C}^{J}\right)$
denotes the component-wise multiplication by $a$, that is, $\mathcal{M}_{a}v\left(\rho\right)=a\left(\rho\right)v\left(\rho\right)$.
We have $\mathrm{Op}\left(1\right)\underset{(\ref{eq:resol_ident_Pi_rho})}{=}\mathrm{Id}$.
\end{defn}

\end{cBoxA}

\begin{rem}
The quantization formula (\ref{eq:def_operator}) is usually called
anti-Wick quantization \cite[Def. 1.7.3]{nicola_rodino_livre_11}
or Toeplitz quantization \cite[chap 13.4]{zworski_book_2012} (or
Wick quantization in \cite[chap 2.4.1]{lerner2011metrics}) or coherent
states quantization. The function $a$ is called the (anti-Wick) symbol
of the operator $\mathrm{Op}\left(a\right)$.
\end{rem}

~
\begin{rem}
From Remark \ref{rem:As-in-}, for $u,v\in C^{\infty}\left(M\right)$,
we have $\mathcal{T}u,\mathcal{T}v\in\mathcal{S}\left(T^{*}M\right)$
and
\[
\langle u|\mathrm{Op}\left(a\right)v\rangle_{L^{2}\left(M\right)}\eq{\ref{eq:def_operator}}\langle\left(\mathcal{T}u\right)\overline{\left(\mathcal{T}v\right)}|a\rangle_{L^{2}\left(T^{*}M\right)},
\]
we deduce that the map $\mathrm{Op}:a\in L_{\mathrm{loc}}^{\infty}\left(T^{*}M\right)\mapsto\mathrm{Op}\left(a\right)$
defined in (\ref{eq:def_operator}) can be extended to symbols $a$
that are a tempered distribution:
\[
\mathrm{Op}:\begin{cases}
\mathcal{S}'\left(T^{*}M\right) & \rightarrow L\left(\mathcal{S}\left(M\right),\mathcal{S}'\left(M\right)\right)\\
a & \mapsto\mathrm{Op}\left(a\right)
\end{cases},
\]
where $L\left(\mathcal{S}\left(M\right),\mathcal{S}'\left(M\right)\right)$
stands for linear operators from $\mathcal{S}\left(M\right)$ to $\mathcal{S}'\left(M\right)$ 
\end{rem}

\begin{cBoxB}{}
\begin{prop}[Basic properties of PDO]
\label{prop:We-havewhere-}If $a\in L^{\infty}(T^{*}M)$, the operator
$\mathrm{Op}\left(a\right)$ extends to a bounded operator on $L^{2}(M)$
and we have
\begin{equation}
\left\Vert \mathrm{Op}\left(a\right)\right\Vert _{L^{2}\left(M\right)}\underset{}{\leq}\left|a\right|_{L^{\infty}\left(T^{*}M\right)}.\label{eq:norme_symbol}
\end{equation}
If $a\in L^{1}(T^{*}M)$, the operator $\mathrm{Op}\left(a\right)$
extends to a trace class operator on $L^{2}(M)$ whose trace is given
by 
\begin{equation}
\mathrm{Tr}\left(\mathrm{Op}\left(a\right)\right)\underset{(\ref{eq:def_operator})}{=}\int a\left(\rho\right)\mathrm{Tr}\left(\Pi\left(\rho\right)\right)\frac{d\rho}{\left(2\pi\right)^{n+1}}\label{eq:Tr_Opa}
\end{equation}
and whose trace norm is bounded as $\forall\epsilon>0$,
\begin{equation}
\left\Vert \mathrm{Op}\left(a\right)\right\Vert _{\mathrm{Tr}}\underset{(\ref{eq:Tr(Pi_rho)})}{\leq}\left\Vert a\left(\rho\right)\left(1+O\left(\Delta^{1-\epsilon}\left(\rho\right)+\delta^{\parallel}\left(\rho\right)\right)\right)\right\Vert _{L^{1}\left(T^{*}M\right)}.\label{eq:norm_Tr_Op}
\end{equation}
\end{prop}

\end{cBoxB}

\begin{proof}
The former claim on the operator norm follows from Lemma \ref{lem:Bargmann projector}
and the expression \eqref{eq:Prop_Opa}. (See the proof of Theorem
\ref{thm:Composition-of-PDO.} where we detail this and prove a more
general statement.) The latter claims on the trace norm are consequences
of the expression \eqref{eq:def_operator} and the estimate \eqref{eq:Tr(Pi_rho)}.
\end{proof}
\begin{rem}
From (\ref{eq:norme_symbol}) it is natural to consider the norm $L^{\infty}\left(T^{*}M\right)$
on symbols space so that the linear operator $\mathrm{Op}:L^{\infty}\left(T^{*}M\right)\rightarrow\mathcal{L}\left(L^{2}\left(M\right),L^{2}\left(M\right)\right)$
is bounded. We will use this to derive expressions as in (\ref{eq:res1}).
\end{rem}

~
\begin{rem}
We can write the PDO $\mathrm{Op}\left(a\right)$ more explicitly
by using local charts as 
\begin{equation}
\mathrm{Op}\left(a\right)=\left(\mathcal{B}I\right)^{\dagger}\mathcal{M}_{\tilde{a}}\left(\mathcal{B}I\right)\label{eq:Prop_Opa}
\end{equation}
where $\mathcal{M}_{\tilde{a}}$ denotes the component-wise multiplication
on $\oplus_{j=1}^{J}\mathbb{R}_{\varrho}^{2\left(n+1\right)}$ by
functions $\tilde{a}=\left(\tilde{a}_{j}\right)_{1\le j\le J}$ where
$\tilde{a}_{j}:T^{*}\mathbb{R}^{n+1}\to\mathbb{C}$ is the function
$a$ viewed in the local chart $\tilde{\psi}\circ\tilde{\kappa}_{j}:T^{*}U_{j}\to T^{*}\mathbb{R}^{n+1}$:
\begin{equation}
\tilde{a}_{j}\left(\varrho'\right)=a\left(\tilde{\kappa}_{j}^{-1}\left(\tilde{\psi}^{-1}\left(\varrho'\right)\right)\right)\quad\text{for }\varrho'\in T^{*}\mathbb{R}^{n+1}.\label{eq:def_a_tilde_j}
\end{equation}
Indeed, recalling the definitions of the operator $\mathcal{B}$ in
\eqref{eq:def_Bargamnn_B}, we have
\end{rem}

\begin{align}
\mathrm{Op}\left(a\right) & =\int_{T^{*}M}a\left(\rho\right)\Pi\left(\rho\right)\frac{d\rho}{\left(2\pi\right)^{n+1}}\label{eq:Op_a}\\
 & \underset{(\ref{eq:Pi_(rho)})}{=}\sum_{j}\int a\left(\tilde{\kappa}_{j}^{-1}\left(\varrho\right)\right)\left|\mathrm{det}\left(d\psi\left(y\right)\right)\right|I_{j}^{\dagger}\pi\left(\tilde{\psi}\left(\varrho\right)\right)I_{j}\frac{d\varrho}{\left(2\pi\right)^{n+1}}\nonumber \\
 & \underset{(\ref{eq:def_psi_tilde})}{=}\sum_{j}\int_{\mathbb{R}^{2n+2}}a\left(\tilde{\kappa}_{j}^{-1}\left(\tilde{\psi}^{-1}\left(\varrho'\right)\right)\right)I_{j}^{\dagger}\pi\left(\varrho'\right)I_{j}\frac{dy'd\eta}{\left(2\pi\right)^{n+1}}\nonumber \\
 & =\left(\mathcal{B}I\right)^{\dagger}\mathcal{M}_{\tilde{a}}\left(\mathcal{B}I\right)\nonumber 
\end{align}
where $\varrho=(y,\eta)\in T^{*}V_{j}$ and $\varrho'=(y',\eta)=\tilde{\psi}\left(\varrho\right)\in T^{*}\mathbb{R}^{n+1}$.

\subsubsection{\label{subsec:Sobolev-space}The Sobolev spaces $\mathcal{H}_{W}\left(M\right)$}

Next we define generalized Sobolev spaces $\mathcal{H}_{W}\left(M\right)$
from a weight function $W\in C\left(T^{*}M;\mathbb{R}^{+}\right)$
for which we put the following assumption.

\begin{cBoxA}{}
\begin{defn}[Temperate weight function]
\label{def:temperate_W-1}We consider a family of strictly positive
continuous functions $W_{h_{0}}\in C\left(T^{*}M;\mathbb{R}^{+}\right)$
that depends on parameter $h_{0}>0$. This family is said to be \textbf{$\left(C_{W},N_{W}\right)$-temperate}
for some constants $C_{W}\geq1,N_{W}>0$ if for every $h_{0}>0$ small
enough, we have

\begin{equation}
\frac{W_{h_{0}}\left(\rho'\right)}{W_{h_{0}}\left(\rho\right)}\leq C_{W}\left\langle h_{0}\,\mathrm{dist}_{g}\left(\rho',\rho\right)\right\rangle ^{N_{W}}\quad\text{for every }\rho,\rho'\in T^{*}U_{j}\text{ and }1\le j\le J.\label{eq:temperate_property-W-2}
\end{equation}
For simplicity we will write $W=W_{h_{0}}$. Let us write the unit
co-sphere bundle for the metric $g_{M}$ on $M$ by $S^{*}M:=\left\{ \rho\in T^{*}M;\,\left\Vert \rho\right\Vert _{g_{M}}=1\right\} $.
Then the \textbf{order function} $r_{W}:S^{*}M\to\mathbb{R}$ of $W$
is the function that expresses the (upper) growth rate of the function
$W$ in each direction and defined precisely by
\begin{equation}
r_{W}\left(\rho\right):=\inf\left\{ r\in\mathbb{R}\:\mid\exists C>0,\forall\alpha>1,W\left(\alpha\rho\right)\leq C\alpha^{r}\right\} .\label{eq:def_order-2}
\end{equation}
\end{defn}

\end{cBoxA}

\begin{rem}
The temperate property of $W$ implies that the function $r_{W}$
is bounded.
\end{rem}

~
\begin{rem}
\label{rem:h0-model}The role of the small parameter $h_{0}$ in (\ref{eq:temperate_property-W-2})
is somehow to assume that the function $W$ is flat with respect to
the metric (or with respect to the size of wave packets). Here is
a simple toy model to show how we will use this parameter $h_{0}$
technically. Let $x\in\mathbb{R}$, $A\left(x\right)=e^{-x^{2}}$,
$W_{h_{0}}\left(x\right):=\left\langle h_{0}x\right\rangle ^{N_{W}}$
and 
\[
I\left(h_{0},N_{W}\right):=\int_{\mathbb{R}}A\left(x\right)W_{h_{0}}\left(x\right)dx.
\]
We have for fixed $h_{0}>0$ that $I\left(h_{0},N_{W}\right)\underset{N_{W}\rightarrow+\infty}{\rightarrow}+\infty$.
However, from the estimate $\forall N,\exists C_{N},\forall x\left|A\left(x\right)\right|\leq C_{N}\left\langle x\right\rangle ^{-N}$
and $\left\Vert A\left(x\right)\right\Vert _{L^{1}}\leq C$, we deduce
that
\begin{equation}
\exists C>0,\forall N_{W}>0,\exists h_{0}>0,\quad\left|I\left(h_{0},N_{W}\right)\right|\leq C.\label{eq:est}
\end{equation}
\begin{proof}
To show (\ref{eq:est}), we split the integral in two parts: (1) $\left|x\right|\leq h_{0}^{-1}$
where $W_{h_{0}}\left(x\right)\asymp1$ and $\int_{\left|x\right|\leq h_{0}^{-1}}A\left(x\right)W\left(x\right)dx\leq C$
independent on $N_{W},h_{0}$ and (2) $\left|x\right|\geq h_{0}^{-1}$,
where 
\begin{align*}
\int_{\left|x\right|\geq h_{0}^{-1}}A\left(x\right)W\left(x\right)dx & \leq C_{N}h_{0}^{N_{W}}\int_{\left|x\right|\geq h_{0}^{-1}}\left|x\right|^{N_{W}-N}dx\\
 & \leq\frac{C_{N}h_{0}^{N_{W}}}{N-N_{W}-1}h_{0}^{-N_{W}+N-1}=\frac{C_{N}}{N-N_{W}-1}h_{0}^{N-1},
\end{align*}
for some large $N>N_{W}+1$. We finally take $h_{0}$ small enough.
\end{proof}
\end{rem}

~

We define the Sobolev spaces $\mathcal{H}_{W}\left(M\right)$ as follows.
This definition is similar to the definition of Sobolev spaces in
micro-local analysis given in \cite[Def.2.6.1]{lerner2011metrics}
or \cite[Section 1.7.4]{nicola_rodino_livre_11}.

\begin{cBoxA}{}
\begin{defn}[The Sobolev space $\mathcal{H}_{W}\left(M\right)$]
\label{def:Anisotropic_Sob_space}Let $W\in C\left(T^{*}M;\mathbb{R}^{+}\right)$
be a temperate function (\ref{eq:temperate_property-W-2}) called
weight function. For $u,v\in C^{\infty}\left(M\right)$, we define
the $\mathcal{H}_{W}$-scalar product by
\begin{align}
\left\langle u|v\right\rangle _{\mathcal{H}_{W}}: & =\left\langle u|\mathrm{Op}\left(W^{2}\right)v\right\rangle _{L^{2}\left(M,dm\right)}\underset{(\ref{eq:Op_a})}{=}\langle\mathcal{M}_{W}\mathcal{T}u|\mathcal{M}_{W}\mathcal{T}v\rangle_{L^{2}\left(T^{*}M,\frac{d\rho}{\left(2\pi\right)^{n+1}}\right)}.\label{eq:def_H_W_M}
\end{align}
The associated $\mathcal{H}_{W}$-norm is defined by
\begin{align}
\left\Vert u\right\Vert _{\mathcal{H}_{W}}^{2}:=\langle u|u\rangle_{\mathcal{H}_{W}} & =\left\Vert \mathcal{M}_{W}\mathcal{T}u\right\Vert _{L^{2}\left(T^{*}M\right)}^{2}.\label{eq:def_Sobolev_norm}
\end{align}
The Sobolev space $\mathcal{H}_{W}\left(M\right)$ is defined as the
Hilbert space obtained by completion of $C^{\infty}\left(M\right)$
with respect to the norm (\ref{eq:def_Sobolev_norm}): 
\begin{equation}
\mathcal{H}_{W}\left(M\right):=\overline{C^{\infty}\left(M\right)}^{\left\Vert .\right\Vert _{\mathcal{H}_{W}}}.\label{eq:def_H_W}
\end{equation}
\end{defn}

\end{cBoxA}

By definition we have the isometric embedding
\begin{equation}
\mathcal{M}_{W}\mathcal{T}\quad:\mathcal{H}_{W}\left(M\right)\rightarrow L^{2}\left(T^{*}M;\mathbb{C}^{J}\right).\label{eq:isometry}
\end{equation}
We have
\[
H^{r_{\mathrm{max}}}\left(M\right)\subset\mathcal{H}_{W}\left(M\right)\subset H^{r_{\mathrm{min}}}\left(M\right)
\]
where $H^{r}\left(M\right)$ denotes the usual Sobolev space of constant
order $r\in\mathbb{R}$ \cite[chap.3]{taylor_tome1} and $r_{\mathrm{max}}:=\max_{S^{*}M}r_{W}$,
$r_{\mathrm{min}}:=-\max_{S^{*}M}r_{W^{-1}}$ with the order function
$r_{W}$ defined in Definition \ref{def:temperate_W-1}. In particular,
if $W\equiv1$, we have $\left\Vert u\right\Vert _{W}=\left\Vert u\right\Vert _{L^{2}\left(M\right)}$
and $\mathcal{H}_{W}\left(M\right)=L^{2}\left(M\right)$ from (\ref{eq:res_id-1}).

\subsubsection{How to estimate the operator norm using Schur Lemma}

We give here a general remark about a way (very common in microlocal
analysis) to estimate the operator norm $\left\Vert B\right\Vert _{\mathcal{H}_{W}}$
of a given operator $B:\mathcal{H}_{W}\left(M\right)\rightarrow\mathcal{H}_{W}\left(M\right)$.
We consider the commutative diagram:
\begin{equation}
\begin{CD}\mathcal{H}_{W}\left(M\right)@>{B}>>\mathcal{H}_{W}\left(M\right)\\
@V{\mathcal{M}_{W}\mathcal{T}}VV@V{\mathcal{M}_{W}\mathcal{T}}VV\\
L^{2}\left(T^{*}M;\mathbb{C}^{J}\right)@>{B_{W}}>>L^{2}\left(T^{*}M;\mathbb{C}^{J}\right)
\end{CD}\label{eq:lift_diagram-1}
\end{equation}
where $B_{W}$ denotes the lifted operator defined by
\begin{equation}
B_{W}:=\mathcal{M}_{W}\mathcal{T}B\mathcal{T}^{\dagger}\mathcal{M}_{W^{-1}}.\label{eq:def_BW}
\end{equation}
From (\ref{eq:isometry}), $\mathcal{M}_{W}\mathcal{T}$ is an isometric
embedding and $\mathcal{M}_{W}^{\dagger}=\mathcal{M}_{W^{-1}}$, hence
$\left\Vert \mathcal{M}_{W}\mathcal{T}\right\Vert =1$, $\left\Vert \mathcal{T}^{\dagger}\mathcal{M}_{W^{-1}}\right\Vert =\left\Vert \left(\mathcal{M}_{W}\mathcal{T}\right)^{\dagger}\right\Vert =1$
and the operator norm of $B$ on $\mathcal{H}_{W}\left(M\right)$
is equal to that of $B_{W}$ on $L^{2}\left(T^{*}M;\mathbb{C}^{J}\right)$
(recall Remark \ref{rem:Projection}.):
\begin{equation}
\left\Vert B\right\Vert _{\mathcal{H}_{W}}=\left\Vert B_{W}\right\Vert _{L^{2}\left(T^{*}M;\mathbb{C}^{J}\right)}.\label{eq:compare_norms}
\end{equation}
We can estimate $\left\Vert B_{W}\right\Vert _{L^{2}\left(T^{*}M;\mathbb{C}^{J}\right)}$
from its Schwartz kernel
\[
\langle\delta_{\rho',j'}|B_{W}\delta_{\rho,j}\rangle=\langle\delta_{\rho',j'}|\mathcal{T}B\mathcal{T}^{\dagger}\delta_{\rho,j}\rangle\frac{W\left(\rho'\right)}{W\left(\rho\right)}
\]
 and using Schur Lemma:

\begin{cBoxB}{}
\begin{lem}[Schur Lemma]
\label{lem:Schur-Lemma-.}\cite[Lemma 2.8.4 p.50]{martinez-01}.
\begin{align}
\left\Vert B_{W}\right\Vert _{L^{2}\left(T^{*}M;\mathbb{C}^{J}\right)} & \leq\label{eq:schur_lemma}\\
 & \left(\sup_{\rho',j'}\sum_{j}\int\left|\langle\delta_{\rho',j'}|B_{W}\delta_{\rho,j}\rangle\right|d\rho\right)^{1/2}\left(\sup_{\rho,j}\sum_{j'}\int\left|\langle\delta_{\rho',j'}|B_{W}\delta_{\rho,j}\rangle\right|d\rho'\right)^{1/2}.\nonumber 
\end{align}
\end{lem}

\end{cBoxB}

\begin{rem}
\label{rem:To-estimate-the}To estimate the trace norm $\left\Vert B\right\Vert _{\mathrm{Tr}_{\mathcal{H}_{W}}}=\left\Vert B_{W}\right\Vert _{\mathrm{Tr}_{L^{2}}}$
we note that the operator $B$ is written as an integral of rank one
operators: 
\begin{align}
B\eq{\ref{eq:def_BW}} & \mathcal{T}^{\dagger}W^{-1}B_{W}W\mathcal{T}\label{eq:expression_integral_of_rankone}\\
= & \sum_{j,j'}\int\langle\delta_{\rho',j'}|B_{W}\delta_{\rho,j}\rangle\cdot\frac{W\left(\rho\right)}{W\left(\rho'\right)}\cdot|\Phi_{\rho',j'}\rangle\langle\Phi_{\rho,j}|.\rangle_{L^{2}}\frac{d\rho'}{\left(2\pi\right)^{n+1}}\frac{\text{d\ensuremath{\rho}}}{\left(2\pi\right)^{n+1}}\nonumber 
\end{align}
and $\left\Vert |\Phi_{\rho',j'}\rangle\langle\Phi_{\rho,j}|.\rangle_{L^{2}}\right\Vert _{\mathrm{Tr_{\mathcal{H}_{W}}}}=\left\Vert \Phi_{\rho,j}\right\Vert _{\mathcal{H}_{W}}^{2}\leq C$
uniformly.
\end{rem}

\subsubsection{\label{subsec:Description-of-the}Description of the vector field
$X$ by a PDO}

We first introduce a definition that will be used to express that
some operator $R$ is ``under control'' or ``negligible'' in our
analysis. It will means that the Schwartz kernel of $\mathcal{T}R\mathcal{T}^{\dagger}$
decays very fast outside the diagonal graph of $\tilde{\phi}^{t}$
defined in (\ref{eq:lifted_flow}) and moreover that on the graph,
the Schwartz kernel is bounded by a given positive function $h\left(\rho\right)$.

\begin{cBoxA}{}
\begin{defn}
\label{def:Let--be-1}Let $t\in\mathbb{R}$ and a positive function
$h:\rho\in T^{*}M\rightarrow h\left(\rho\right)\in\mathbb{R}^{+}$.
We define $\Psi_{\tilde{\phi}^{t}}\left(h\right)$ as the set of operators
$R:\mathcal{S}\left(M\right)\rightarrow\mathcal{S}'\left(M\right)$
such that for any $N>0$, there exists a constant $C_{N,t}>0$ such
that for any $\rho,\rho'\in T^{*}M$
\begin{align}
\left|\langle\delta_{\rho'}|\mathcal{T}R\mathcal{T}^{\dagger}\delta_{\rho}\rangle_{L^{2}\left(T^{*}M\right)}\right| & \leq C_{N,t}\left\langle \mathrm{dist}_{g}\left(\rho',\tilde{\phi}^{t}\left(\rho\right)\right)\right\rangle ^{-N}h\left(\rho\right).\label{eq:microl_estimate-2}
\end{align}
In particular for $t=0$, we simply write $\Psi\left(h\right):=\Psi_{\tilde{\phi}^{t}}\left(h\right)$.
\end{defn}

\end{cBoxA}

\begin{rem}
$h$ is defined up to constant scaling i.e. for any $C>0,$$\Psi_{\tilde{\phi}^{t}}\left(h\right)=\Psi_{\tilde{\phi}^{t}}\left(Ch\right).$
\end{rem}

\begin{cBoxB}{}
\begin{prop}[Principal symbol of the vector field $X$]
\label{prop:We-havewith-a}We have
\begin{equation}
-X=\mathrm{Op}\left(i\boldsymbol{\omega}\right)+R\label{eq:symbol_X}
\end{equation}
with a remainder $R\in\Psi\left(\left\langle \left|\rho\right|\right\rangle ^{\alpha^{\parallel}}\right)$,
where $\boldsymbol{\omega}$ denotes the frequency function (\ref{eq:omega_function})
and $0\leq\alpha^{\parallel}<1$ is a parameter for the metric $g$
along the flow direction in (\ref{eq:conditions}).
\end{prop}

\end{cBoxB}

\begin{proof}
For $\rho'\in T^{*}M$, we put $\omega'=\boldsymbol{\omega}\left(\rho'\right)$.
Since $\left(-X\right)=\frac{\partial}{\partial z}$ in flow box coordinates,
we check from (\ref{eq:def_wave_packet_Phi_j_rho}) and (\ref{eq:wave_packet})
(or or the asymptotic expression (\ref{eq:wave_packet_1})) that $\forall N>0,\exists C_{N}>0$,$\forall\rho,\rho'$,$\forall j,j'$,
\begin{equation}
\left|\langle\Phi_{\rho',j'}|\left(-X-i\omega'\right)\Phi_{\rho,j}\rangle\right|=\left|\langle\left(-X-i\omega'\right)\Phi_{\rho',j'}|\Phi_{\rho,j}\rangle\right|\leq\delta^{\parallel}\left(\rho\right)^{-1}C_{N}\left\langle \mathrm{dist}_{g}\left(\rho',\rho\right)\right\rangle ^{-N},\label{eq:X_omega}
\end{equation}
where $\delta^{\parallel}\left(\rho\right)^{-1}\eq{\ref{eq:def_delta}}\left\langle \left|\rho\right|\right\rangle ^{\alpha^{\parallel}}$.
\end{proof}
\begin{rem}
The error term $R$ in (\ref{eq:symbol_X}) dominates the principal
term on the domain $\rho=\left(\xi,\omega\right)\in T^{*}M\text{ s.t. }\left|\xi\right|^{\alpha^{\parallel}}\geq\omega$. 
\end{rem}

\subsubsection{Continuity theorem for PDO}

The next theorems are a variant of standard theorem for PDO, called
continuity theorem and composition theorem. We state and prove them
here for the PDO's that we have defined in (\ref{eq:def_operator}).

Recall that in the definition (\ref{eq:temperate_property-W-2}) of
temperate property of the weight $W$, there are two parameters $N_{W},h_{0}$.
In the next Theorem, we obtain a bound for the operator norm $\left\Vert \mathrm{Op}\left(a\right)\right\Vert _{\mathcal{H}_{W}\left(M\right)\rightarrow\mathcal{H}_{W}\left(M\right)}$.
This bound can be uniform with respect to $N_{W}$ if $h_{0}>0$ is
chosen small enough accordingly. 

\begin{cBoxB}{}
\begin{thm}[Continuity theorem]
\label{thm:Continuity-of-PDO}Suppose that $W$ is $\left(C_{W},N_{W}\right)$-temperate
according to property (\ref{eq:temperate_property-W-2}). Then there
exists a constant $C>0$ such that for any bounded measurable function
$a\in L^{\infty}\left(T^{*}M\right)$, the PDO $\mathrm{Op}\left(a\right)$
extends to a bounded operator on $\mathcal{H}_{W}\left(M\right)$
with the following estimate
\[
\left\Vert \mathrm{Op}\left(a\right)\right\Vert _{\mathcal{H}_{W}\left(M\right)\rightarrow\mathcal{H}_{W}\left(M\right)}\leq C\|a\|_{L^{\infty}}.
\]
Moreover, $C$ may depend on $C_{W}$ but not on $N_{W}$ if $h_{0}$
is taken small enough.
\end{thm}

\end{cBoxB}

\begin{proof}
We write
\[
\left(\mathrm{Op}\left(a\right)\right)_{W}\eq{\ref{eq:def_BW}}W\mathcal{P}\mathcal{M}_{a}\mathcal{P}W^{-1}
\]
So
\[
\langle\delta_{\rho'}|\left(\mathrm{Op}\left(a\right)\right)_{W}\delta_{\rho}\rangle=\frac{W\left(\rho'\right)}{W\left(\rho\right)}\int\langle\delta_{\rho'}|\mathcal{P}\delta_{\rho_{1}}\rangle a\left(\rho_{1}\right)\langle\delta_{\rho_{1}}|\mathcal{P}\delta_{\rho}\rangle\frac{d\rho_{1}}{\left(2\pi\right)^{n+1}},
\]
and $\forall N>0,\exists C_{N}>0,$
\begin{align*}
\left|\langle\delta_{\rho'}|\left(\mathrm{Op}\left(a\right)\right)_{W}\delta_{\rho}\rangle\right|\underset{(\ref{eq:temperate_property-W-2},\ref{eq:estimate_Bergman_kernel})}{\leq} & C_{W}\left\langle h_{0}\,\mathrm{dist}_{g}\left(\rho',\rho\right)\right\rangle ^{N_{W}}C_{N}\\
 & \qquad\int\left\langle \mathrm{dist}_{g}\left(\rho',\rho_{1}\right)\right\rangle ^{-N}\left|a\left(\rho_{1}\right)\right|\left\langle \mathrm{dist}_{g}\left(\rho_{1},\rho\right)\right\rangle ^{-N}\frac{d\rho_{1}}{\left(2\pi\right)^{n+1}}\\
\leq & C_{W}\left\langle h_{0}\,\mathrm{dist}_{g}\left(\rho',\rho\right)\right\rangle ^{N_{W}}C_{N}\|a\|_{L^{\infty}}\left\langle \mathrm{dist}_{g}\left(\rho',\rho\right)\right\rangle ^{-N}
\end{align*}
 If $h_{0}\mathrm{dist}_{g}\left(\rho',\rho\right)\leq1$ then $\forall N>0,\exists C_{N}>0,$

\[
\left|\langle\delta_{\rho'}|\left(\mathrm{Op}\left(a\right)\right)_{W}\delta_{\rho}\rangle\right|\leq C_{W}C_{N}\|a\|_{L^{\infty}}\left\langle \mathrm{dist}_{g}\left(\rho',\rho\right)\right\rangle ^{-N}
\]
with $C_{N}$ independent on $N_{W}$. If $h_{0}\mathrm{dist}_{g}\left(\rho',\rho\right)\geq1$
then
\begin{align}
\left\langle h_{0}\,\mathrm{dist}_{g}\left(\rho',\rho\right)\right\rangle ^{N_{W}}C_{N}\left\langle \mathrm{dist}_{g}\left(\rho',\rho\right)\right\rangle ^{-N} & \asymp C_{N}h_{0}^{N_{W}}\left\langle \mathrm{dist}_{g}\left(\rho',\rho\right)\right\rangle ^{-N+N_{W}}\label{eq:aide}\\
 & =C_{N'}\left\langle \mathrm{dist}_{g}\left(\rho',\rho\right)\right\rangle ^{-N'},\nonumber 
\end{align}
with $N'=N-N_{W}$ and $C_{N'}=C_{N}h_{0}^{N_{W}}$. If we take $h_{0}$
small enough with respect to $N_{W}$ we get again $\forall N'>0,\exists C_{N'}>0,$

\[
\left|\langle\delta_{\rho'}|\left(\mathrm{Op}\left(a\right)\right)_{W}\delta_{\rho}\rangle\right|\leq C_{W}C_{N'}\|a\|_{L^{\infty}}\left\langle \mathrm{dist}_{g}\left(\rho',\rho\right)\right\rangle ^{-N'}
\]
with $C_{N'}$ independent on $N_{W}$. Finally, by Schur Lemma \ref{lem:Schur-Lemma-.}
we conclude that the operator norm of $\mathrm{Op}\left(a\right)_{W}$
in $L^{2}$ is bounded by $C\|a\|_{L^{\infty}}$ and the same holds
true for $\mathrm{Op}\left(a\right)$ in $\mathcal{H}_{W}\left(M\right)$,
where $C$ depends on $C_{W}$ but does not depend on $N_{W}$, and
$h_{0}>0$ has been taken small enough depending on $N_{W}$.
\end{proof}

\subsubsection{Composition theorem for PDO}

\begin{cBoxA}{}
\begin{defn}
\label{def:Let--be}Let $h\in C\left(T^{*}M;\mathbb{R}^{+}\right)$
be some continuous function, let $N_{0}\geq0$ and $0<h_{0}<1$. A
bounded measurable function $b\in L^{\infty}\left(T^{*}M\right)$
is  said to be \textbf{$\left(h,N_{0},h_{0}\right)$-slowly varying
symbol with respect to the metric $g$} if
\begin{equation}
\left|b\left(\rho'\right)-b\left(\rho\right)\right|\leq h\left(\rho\right)\,\left\langle h_{0}\,\mathrm{dist}_{g}\left(\rho',\rho\right)\right\rangle ^{N_{0}}\quad\text{for all }\rho,\rho'\in T^{*}M.\label{eq:slow_variations}
\end{equation}
\end{defn}

\end{cBoxA}

\begin{rem}
The small parameter $h_{0}$ will be used in Lemma \ref{lem:For-the-weight}.
Recall Remark \ref{rem:h0-model} about the usefulness of a similar
small parameter $h_{0}$.
\end{rem}

The following lemma will be useful in the proof of the composition
theorem \ref{thm:Composition-of-PDO.} below. Recall $\mathcal{P}=\mathcal{T}\mathcal{T}^{\dagger}$
defined in (\ref{eq:BargmannProjector}). 

\begin{cBoxB}{}
\begin{lem}[Basic Lemma for slow varying symbols]
\label{lem:Basic_Lemma}Assume that $b\in C\left(T^{*}M\right)$
satisfies the slow variation property (\ref{eq:slow_variations})
with a function $h$ and some parameters $N_{0},h_{0}>0$. Then the
Schwartz kernel of the operator $\left[\mathcal{M}_{b},\mathcal{P}\right]=\mathcal{M}_{b}\mathcal{P}-\mathcal{P}\mathcal{M}_{b}$
satisfies $\forall N,\exists C_{N}>0,\exists h_{0}>0$, $\forall\rho',\rho\in T^{*}M,\forall j,j'\in\left\{ 1,\ldots J\right\} $,
\begin{equation}
\left|\langle\delta_{\rho',j'}|\left[\mathcal{M}_{b},\mathcal{P}\right]\delta_{\rho,j}\rangle\right|\leq C_{N}h\left(\rho\right)\left\langle \mathrm{dist}_{g}\left(\rho',\rho\right)\right\rangle ^{-N}.\label{eq:commut_P}
\end{equation}
Using the notation of Definition \ref{def:Let--be-1}, we can write
that $\left[\mathcal{M}_{b},\mathcal{P}\right]\in\Psi\left(h\right).$
\end{lem}

\end{cBoxB}

\begin{proof}
We have
\[
\langle\delta_{\rho',j'}|\left[\mathcal{M}_{b},\mathcal{P}\right]\delta_{\rho,j}\rangle=\langle\delta_{\rho',j'}|\mathcal{P}\delta_{\rho,j}\rangle\left(b\left(\rho'\right)-b\left(\rho\right)\right)
\]
Hence $\forall N>0,\exists C_{N}>0$,
\begin{align*}
\left|\langle\delta_{\rho',j'}|\left[\mathcal{M}_{b},\mathcal{P}\right]\delta_{\rho,j}\rangle\right|\underset{(\ref{eq:estimate_Bergman_kernel},\ref{eq:slow_variations})}{\leq} & C_{N}h\left(\rho\right)\left\langle \mathrm{dist}_{g}\left(\rho',\rho\right)\right\rangle ^{-N}\left\langle h_{0}\,\mathrm{dist}_{g}\left(\rho',\rho\right)\right\rangle ^{N_{0}}\\
\leq & C_{N}h\left(\rho\right)\left\langle \mathrm{dist}_{g}\left(\rho',\rho\right)\right\rangle ^{-N}.
\end{align*}
For the last line we proceed as in (\ref{eq:aide}) with $h_{0}$
small enough.
\end{proof}
\begin{rem}
In following Lemma and Theorems we will still have ``uniform estimates
w.r.t. $N_{W}$,$N_{0}$'' in the sense of letting $h_{0}>0$ be
small depending on $N_{W},N_{0}>0$ (as in Theorem \ref{thm:Continuity-of-PDO}),
and for this, we will use the shorter notation ``$\exists C>0,\forall N_{W},N_{0}>0,\exists h_{0}>0$''.
\end{rem}

\begin{cBoxA}{}
\begin{thm}[Composition theorem]
\label{thm:Composition-of-PDO.}

For a bounded measurable function $a\in L^{\infty}\left(T^{*}M\right)$
and slowly varying symbol $b\in L^{\infty}\left(T^{*}M\right)$ as
(\ref{eq:slow_variations}) with function $h$ and parameters $N_{0},h_{0}>0$,
let us consider the operator
\[
B:=\mathrm{Op}\left(a\right)\mathrm{Op}\left(b\right)-\mathrm{Op}\left(ab\right).
\]
For the Schwartz kernel of the corresponding lifted operator $B_{W}=\mathcal{M}_{W}\mathcal{T}B\mathcal{T}^{\dagger}\mathcal{M}_{W^{-1}}$
as defined in (\ref{eq:def_BW}) with parameter $N_{W}$ in (\ref{eq:temperate_property-W-2}),
we have $\forall N>0,\exists C_{N}>0,\forall N_{W},N_{0}>0,\exists h_{0}>0$,
$\forall\rho',\rho\in T^{*}M,\forall j,j'\in\left\{ 1,\ldots J\right\} $,
\[
\left|\langle\delta_{\rho',j'}|B_{W}\delta_{\rho,j}\rangle\right|\leq C_{N}\left|a\left(\rho\right)\right|h\left(\rho\right)\left\langle \mathrm{dist}_{g}\left(\rho',\rho\right)\right\rangle ^{-N}.
\]
Using the notation of Definition \ref{def:Let--be-1}, we can write
that $B_{W}\in\Psi\left(\left|a\right|h\right).$ Consequently $\exists C>0,\forall N_{W},N_{0}>0,\exists h_{0}>0$,
\begin{enumerate}
\item If $\left\Vert ah\right\Vert _{L^{\infty}}<\infty$, we have
\begin{equation}
\left\Vert B\right\Vert _{\mathcal{H}_{W}}\leq C\left\Vert ah\right\Vert _{L^{\infty}}.\label{eq:norm_Compos_PDO}
\end{equation}
\item If $\left\Vert ah\right\Vert _{L^{1}}<\infty$, the difference $\mathrm{Op}\left(a\right)\circ\mathrm{Op}\left(b\right)-\mathrm{Op}\left(ab\right)$
is a trace class operator on $\mathcal{H}_{W}\left(M\right)$ and
we have 
\[
\left\Vert B\right\Vert _{\mathrm{Tr}_{\mathcal{H}_{W}}}\leq C\left\Vert ah\right\Vert _{L^{1}}.
\]
\end{enumerate}
These claims hold if we exchange $\mathrm{Op}\left(a\right)$ and
$\mathrm{Op}\left(b\right)$ in the definition of $B$.
\end{thm}

\end{cBoxA}

\begin{proof}
We write
\[
B_{W}\eq{\ref{eq:def_BW}}W\mathcal{P}\mathcal{M}_{a}\left(\mathcal{P}\mathcal{M}_{b}-\mathcal{M}_{b}\mathcal{P}\right)\mathcal{P}W^{-1}
\]
So (we omit chart indices $j,j'$ for simplicity)
\begin{align*}
\langle\delta_{\rho'}|B_{W}\delta_{\rho}\rangle & =\frac{W\left(\rho'\right)}{W\left(\rho\right)}\int\langle\delta_{\rho'}|\mathcal{P}\delta_{\rho_{1}}\rangle a\left(\rho_{1}\right)\langle\delta_{\rho_{1}}|\left[\mathcal{P},\mathcal{M}_{b}\right]\delta_{\rho_{2}}\rangle\\
 & \qquad\qquad\qquad\langle\delta_{\rho_{2}}|\mathcal{P}\delta_{\rho}\rangle\frac{d\rho_{1}}{\left(2\pi\right)^{n+1}}\frac{d\rho_{2}}{\left(2\pi\right)^{n+1}},
\end{align*}
and
\begin{align*}
\left|\langle\delta_{\rho'}|B_{W}\delta_{\rho}\rangle\right|\underset{(\ref{eq:temperate_property-W-2},\ref{eq:estimate_Bergman_kernel},\ref{eq:slow_variations},\ref{eq:commut_P})}{\leq} & C_{W}\left\langle h_{0}\,\mathrm{dist}_{g}\left(\rho',\rho\right)\right\rangle ^{N_{W}}C_{N}\int\left\langle \mathrm{dist}_{g}\left(\rho',\rho_{1}\right)\right\rangle ^{-N}\\
 & \left|a\left(\rho_{1}\right)\right|h\left(\rho_{1}\right)\left\langle \mathrm{dist}_{g}\left(\rho_{1},\rho_{2}\right)\right\rangle ^{-N}\left\langle \mathrm{dist}_{g}\left(\rho_{2},\rho\right)\right\rangle ^{-N}\frac{d\rho_{1}}{\left(2\pi\right)^{n+1}}\frac{d\rho_{2}}{\left(2\pi\right)^{n+1}},\\
\leq & C_{W}C_{N}\left|a\left(\rho\right)\right|h\left(\rho\right)\left\langle \mathrm{dist}_{g}\left(\rho',\rho\right)\right\rangle ^{-N}
\end{align*}
In the last line, $h_{0}$ is chosen small enough with respect to
$N_{W},N_{0}$. We obtain Item 1 by Schur Lemma \ref{lem:Schur-Lemma-.}
and Item 2 using Remark \ref{rem:To-estimate-the}.
\end{proof}
The following corollary will be used in Section \ref{par:Contribution}
and can be skipped for the moment.

\begin{cBoxB}{}
\begin{cor}
\label{cor:B3}Let $\Omega_{R}$ be a compact subset on $T^{*}M$
which depends on a parameter $R\ge1$. Let $\boldsymbol{1}_{\Omega_{R}}:T^{*}M\rightarrow\left\{ 0,1\right\} $
the characteristic function of $\Omega_{R}$. Assume that $b_{R}:T^{*}M\rightarrow\mathbb{C}$
is a measurable function that takes constant value $1$ on the $R-$neighborhood
of $\Omega_{R}$, that is, 
\[
b_{R}\left(\rho'\right)=1\quad\text{whenever \ensuremath{\mathrm{dist}_{g}\left(\rho',\rho\right)\leq}R for \ensuremath{\rho\in\Omega_{R}} and \ensuremath{\rho'\in T^{*}M}.}
\]
Assume further that the growth of $b_{R}$ on the outside of $\Omega_{R}$
is temperate uniformly in $R$ in the sense that there exists $N_{0}>0$
and $C_{0}>0$ independent of the parameter $R$ such that $\forall\rho',\rho\in T^{*}M$,
\[
\left|b_{R}\left(\rho'\right)-b_{R}\left(\rho\right)\right|\leq C_{0}\left\langle \mathrm{dist}_{g}\left(\rho',\rho\right)\right\rangle ^{N_{0}}.
\]
Then, for arbitrarily large $N>0$, there exists a constant $C_{N,W}>0$
such that for any $R\geq1$,
\begin{equation}
\left\Vert \mathrm{Op}\left(b_{R}\right)\circ\mathrm{Op}\left(\boldsymbol{1}_{\Omega_{R}}\right)-\mathrm{Op}\left(\boldsymbol{1}_{\Omega_{R}}\right)\right\Vert _{\mathcal{H}_{W}}\leq C_{N,W}R^{-N}.\label{eq:corollary_compos}
\end{equation}
\end{cor}

\end{cBoxB}

\begin{proof}
Let us check first that, for arbitrarily large $N>0$, we have 
\[
\left|b_{R}\left(\rho'\right)-b_{R}\left(\rho\right)\right|\leq C_{0}R^{-N}\mathrm{dist}_{g}\left(\rho',\rho\right)^{N_{0}+N}\quad\text{for all \ensuremath{\rho\in\Omega_{R}} and }\rho'\in T^{*}M.
\]
This is trivial if $\mathrm{dist}_{g}\left(\rho',\rho\right)\leq R$
because $b_{R}\left(\rho'\right)=b_{R}\left(\rho\right)=1$. If $\mathrm{dist}_{g}\left(\rho',\rho\right)\geq R\geq1$,
we have 
\[
\left|b_{R}\left(\rho'\right)-b_{R}\left(\rho\right)\right|\leq C_{0}\mathrm{dist}_{g}\left(\rho',\rho\right)^{N_{0}}\leq C_{0}R^{-N}\mathrm{dist}_{g}\left(\rho',\rho\right)^{N+N_{0}},
\]
so this is true again. Now we apply Theorem \ref{thm:Composition-of-PDO.}
with setting $a=\boldsymbol{1}_{\Omega_{R}}$, $b=b_{R}$ and $h\equiv R^{-N}$.
Since $\mathrm{Op}\left(ab\right)=\mathrm{Op}\left(\boldsymbol{1}_{\Omega_{R}}b_{R}\right)=\mathrm{Op}\left(\boldsymbol{1}_{\Omega_{R}}\right)$
and $\left\Vert ah\right\Vert _{L^{\infty}}=\left\Vert \boldsymbol{1}_{\Omega_{R}}h\right\Vert _{L^{\infty}}=R^{-N}$,
we obtain the conclusion (\ref{eq:corollary_compos}).
\end{proof}

\subsection{Properties of the transfer operator}

In this section we consider the transfer operator $\mathcal{L}^{t}=\exp\left(tA\right)$
with $A=-X+V$ defined in (\ref{eq:def_Transfer_operator}). As we
noted at the beginning of this section, we will assume only that $X$
is a smooth non-singular vector field on $M$ and $V$ is some smooth
complex valued function on $M$.

\subsubsection{\label{subsec:A-matrix-elements}The matrix elements of the transfer
operator between wave packets}

The next theorem is rather a direct consequence of Lemma \ref{lem:Description-of-evolving}.
It shows that the Schwartz kernel of the lifted operator $\mathcal{T}\mathcal{L}^{t}\mathcal{T}^{\dagger}$
(i.e. the matrix elements of the transfer operator $\mathcal{L}^{t}$
expressing transformation between wave-packets)
\begin{equation}
\langle\delta_{\rho',j'}|\mathcal{T}\mathcal{L}^{t}\mathcal{T}^{\dagger}\delta_{\rho,j}\rangle_{L^{2}\left(T^{*}M\right)}\underset{(\ref{eq:def_T_*})}{=}\langle\Phi_{\rho',j'}|\mathcal{L}^{t}\Phi_{\rho,j}\rangle_{L^{2}\left(M\right)}\label{eq:Schwartz_kernel_transfer_op}
\end{equation}
is (micro-)localized around the graph of the flow map $\tilde{\phi}^{t}$
in $T^{*}M$, Eq.(\ref{eq:lifted_flow}), with respect to the distance
$\mathrm{dist}_{g}\left(\cdot,\cdot\right)$.

\begin{cBoxB}{}
\begin{thm}[Propagation of singularities by the transfer operator $\mathcal{L}^{t}$]
\label{thm:Microlocality-of-the_TO} For any $t\in\mathbb{R}$ and
any $N>0$, there exists a constant $C_{N,t}>0$ such that
\begin{align}
\left|\langle\delta_{\rho',j'}|\mathcal{T}\mathcal{L}^{t}\mathcal{T}^{\dagger}\delta_{\rho,j}\rangle_{L^{2}\left(T^{*}M\right)}\right| & \leq C_{N,t}\left\langle \mathrm{dist}_{g}\left(\rho',\tilde{\phi}^{t}\left(\rho\right)\right)\right\rangle ^{-N}\label{eq:microl_estimate}
\end{align}
for any $\rho,\rho'\in T^{*}M$ and $0\leq j,j'\leq J$. Using the
notation of Definition \ref{def:Let--be-1}, we can write that $\mathcal{L}^{t}\in\Psi_{\tilde{\phi}^{t}}\left(1\right).$
\end{thm}

\end{cBoxB}

\begin{proof}
Let $t\geq0$ and $\rho,\rho'\in T^{*}M$. We take $0\leq j,j'\leq J$
such that $m=\pi\left(\rho\right)\in U_{j}$ and $m'=\pi\left(\rho'\right)\in U_{j'}$
respectively. In the following we will sometimes omit the indices
$j$ and $j'$ from the notation. Set 
\[
\varrho=\left(y,\eta\right)=\tilde{\psi}\left(\tilde{\kappa}_{j}\left(\rho\right)\right),\quad\varrho'=\left(y',\eta'\right)=\tilde{\psi}\left(\tilde{\kappa}_{j'}\left(\rho'\right)\right)\quad\in T^{*}\mathbb{R}^{n+1}
\]
where $\tilde{\kappa}_{j}$ is defined in (\ref{eq:def_k_tilde_j})
and $\tilde{\psi}$ is defined in (\ref{eq:def_psi_tilde}). We write
the Schwartz kernel \eqref{eq:Schwartz_kernel_transfer_op} as 
\begin{align}
\langle\delta_{\rho',j'}|\mathcal{T}\mathcal{L}^{t}\mathcal{T}^{\dagger}\delta_{\rho,j}\rangle_{L^{2}\left(T^{*}M\right)} & \underset{(\ref{eq:def_wave_packet_Phi_j_rho})}{=}\left|\mathrm{det}\left(d\psi\left(\kappa_{j'}\left(m'\right)\right)\right)\right|^{1/2}\left|\mathrm{det}\left(d\psi\left(\kappa_{j}\left(m\right)\right)\right)\right|^{1/2}\langle\varphi_{\varrho'}|\left(I_{j'}\mathcal{L}^{t}I_{j}^{\dagger}\right)\varphi_{\varrho}\rangle_{L^{2}}.\label{eq:kernel}
\end{align}
Below we estimate the last term
\begin{equation}
K\left(\varrho',\varrho\right):=\langle\varphi_{\varrho'}|\left(I_{j'}\mathcal{L}^{t}I_{j}^{\dagger}\right)\varphi_{\varrho}\rangle_{L^{2}}.\label{eq:def_K}
\end{equation}
We assume that $\phi^{t}\left(U_{j}\right)\cap U_{j'}\neq\emptyset$
because the Schwartz kernel vanishes otherwise. Let
\[
\phi:=\kappa_{j'}\circ\phi^{t}\circ\kappa_{j}^{-1}\quad:\kappa_{j}\left(U_{j}\cap\phi^{-t}\left(U_{j'}\right)\right)\rightarrow\kappa_{j'}\left(\phi^{t}\left(U_{j}\right)\cap U_{j'}\right).
\]
For $u\in\mathcal{S}\left(\mathbb{R}_{y}^{n+1}\right)$, we have for
$y''\in\mathbb{R}^{n+1}$,
\begin{equation}
\left(I_{j'}\mathcal{L}^{t}I_{j}^{\dagger}u\right)\left(y''\right)\underset{(\ref{eq:def_Transfer_operator})}{=}Y\left(y''\right)u\left(\phi^{-1}\left(y''\right)\right)\label{eq:def_Ltilde-1}
\end{equation}
where $Y:\mathbb{R}^{n+1}\to\mathbb{R}$ is a $C^{\infty}$ function
written precisely as
\[
Y\left(y''\right)=e^{V_{\left[-t,0\right]}\left(\kappa_{j'}^{-1}\left(y''\right)\right)}\chi_{j'}\left(y''\right)\chi_{j}\left(\phi^{-1}\left(y''\right)\right).
\]
The definition of the function $Y\left(.\right)$ is rather complicated
but we only need the property that it is $C^{\infty}$ function supported
on a compact subset $\mathrm{supp}\chi_{j'}$. We have
\begin{align}
K\left(\varrho',\varrho\right) & \eq{\ref{eq:def_K},\ref{eq:def_Ltilde-1}}\int_{\mathbb{R}^{n+1}}\overline{\varphi_{\varrho'}\left(y''\right)}\varphi_{\varrho}\left(\phi^{-1}\left(y''\right)\right)\,Y\left(y''\right)dy''.\label{eq:integrale}
\end{align}
We first bound this integral ``in space'' and use (\ref{eq:size_wave_packet}),
getting that $\forall N>0,\exists C_{N,t}>0,\forall\varrho,\varrho'\in T^{*}\mathbb{R}^{n+1}$,
\begin{align}
\left|K\left(\varrho',\varrho\right)\right| & \ineq{\ref{eq:integrale}}\int_{\mathbb{R}^{n+1}}\left|\overline{\varphi_{\varrho'}\left(y''\right)}\right|\left|\varphi_{\varrho}\left(\phi^{-1}\left(y''\right)\right)\right|\left|Y\left(y''\right)\right|dy''\label{eq:integrale-1}\\
 & \ineq{\ref{eq:size_wave_packet}}\left(\mathrm{det}\delta\left(\eta'\right)\right)^{-1/2}\left(\mathrm{det}\delta\left(\eta\right)\right)^{-1/2}C_{N,t}\nonumber \\
 & \qquad\int_{\mathbb{R}^{n+1}}\left\langle \delta\left(\eta'\right)^{-1}\left(y'-y''\right)\right\rangle ^{-N}\left\langle \delta\left(\eta\right)^{-1}\left(\phi^{t}\left(y\right)-y''\right)\right\rangle ^{-N}dy''\nonumber 
\end{align}
Let $\delta_{\mathrm{max}}:=\max\left\{ \delta\left(\eta\right),\delta\left(\eta'\right)\right\} $
and
\[
q:=\delta_{\mathrm{max}}^{-1}y'',a:=\delta_{\mathrm{max}}^{-1}y',b:=\delta_{\mathrm{max}}^{-1}\phi^{t}\left(y\right)
\]

giving
\[
\left|K\left(\varrho',\varrho\right)\right|\leq\left(\mathrm{det}\delta\left(\eta'\right)\right)^{-1/2}\left(\mathrm{det}\delta\left(\eta\right)\right)^{-1/2}\left(\mathrm{det}\delta_{\mathrm{max}}\right)C_{N,t}\int_{\mathbb{R}^{n+1}}\left\langle \left|q-a\right|\right\rangle ^{-N}\left\langle \left|q-b\right|\right\rangle ^{-N}dq
\]
We split the integral:
\begin{align*}
\int_{\mathbb{R}^{n+1}}\left\langle \left|q-a\right|\right\rangle ^{-N}\left\langle \left|q-b\right|\right\rangle ^{-N}dq\\
\leq\int_{q\in\mathbb{R}^{n+1},\left|q-a\right|\leq\left|q-b\right|} & \left\langle \left|q-b\right|\right\rangle ^{-N}dq+\int_{q\in\mathbb{R}^{n+1},\left|q-a\right|\geq\left|q-b\right|}\left\langle \left|q-a\right|\right\rangle ^{-N}dq.
\end{align*}
Let $D:=\frac{1}{2}\left|a-b\right|$. Since $\left\{ q\in\mathbb{R}^{n+1},\left|q-a\right|\leq\left|q-b\right|\right\} \subset\left\{ q\in\mathbb{R}^{n+1},\left|q-b\right|\geq D\right\} $,
we have for $D\geq1$, $\exists C,C',C''>0$$\forall N$,
\begin{align*}
\int_{q\in\mathbb{R}^{n+1},\left|q-a\right|\leq\left|q-b\right|}\left\langle \left|q-b\right|\right\rangle ^{-N}dq & \leq\int_{q\in\mathbb{R}^{n+1},\left|q-b\right|\geq D}\left\langle \left|q-b\right|\right\rangle ^{-N}dq=\int_{q\in\mathbb{R}^{n+1},\left|q\right|\geq D}\left\langle \left|q\right|\right\rangle ^{-N}dq\\
 & \leq C\int_{r>D}r^{-N+n}dr\leq C'D^{-N+n+1}\leq C''\left\langle \left|a-b\right|\right\rangle ^{-N+n+1}.
\end{align*}

Hence 
\begin{align*}
\int_{\mathbb{R}^{n+1}}\left\langle \left|q-a\right|\right\rangle ^{-N}\left\langle \left|q-b\right|\right\rangle ^{-N}dq & \leq C\left\langle \left|a-b\right|\right\rangle ^{-N+n+1}\\
 & \leq C\left\langle \left|\delta{}_{\mathrm{max}}^{-1}\left(y'-\phi^{t}\left(y\right)\right)\right|\right\rangle ^{-N+n+1}.
\end{align*}
Let $\left(y_{t},\eta_{t}\right):=\tilde{\phi}^{t}\left(\varrho\right)$.
Using (\ref{eq:equivalence_distance}) we deduce that $\forall N>0,\exists C_{N,t}>0,\forall\varrho,\varrho'\in T^{*}\mathbb{R}^{n+1}$,
\begin{equation}
\left|K\left(\varrho',\varrho\right)\right|\leq C_{N,t}\left\langle \left\Vert \tilde{\phi}^{t}\left(\varrho\right)-\left(y',\eta_{t}\right)\right\Vert _{g_{\varrho}}\right\rangle ^{-N}.\label{eq:K1}
\end{equation}

Secondly we can write and bound the same integral (\ref{eq:integrale})
in Fourier space:
\[
K\left(\varrho',\varrho\right)\eq{\ref{eq:def_K},\ref{eq:def_Ltilde-1}}\int_{\mathbb{R}^{n+1}}\overline{\left(\mathcal{F}\varphi_{\varrho'}\right)\left(\eta''\right)}\mathcal{F}\left(Y.\varphi_{\varrho}\circ\phi^{-1}\right)\left(\eta''\right)d\eta''
\]
and using (\ref{eq:F_phi_t}), we deduce similarly that $\forall N>0,\exists C_{N,t}>0,\forall\varrho,\varrho'\in T^{*}\mathbb{R}^{n+1}$,
\begin{equation}
\left|K\left(\varrho',\varrho\right)\right|\leq C_{N,t}\left\langle \left\Vert \tilde{\phi}^{t}\left(\varrho\right)-\left(y_{t},\eta'\right)\right\Vert _{g_{\varrho}}\right\rangle ^{-N}.\label{eq:K2}
\end{equation}
Finally we write 
\begin{align*}
\tilde{\phi}^{t}\left(\varrho\right)-\varrho' & =\tilde{\phi}^{t}\left(\varrho\right)-\left(y',\eta_{t}\right)+\left(y',\eta_{t}\right)-\left(y',\eta'\right)\\
 & =\tilde{\phi}^{t}\left(\varrho\right)-\left(y',\eta_{t}\right)+\left(y_{t},\eta_{t}\right)-\left(y_{t},\eta'\right)\\
 & =\left(\tilde{\phi}^{t}\left(\varrho\right)-\left(y',\eta_{t}\right)\right)+\left(\tilde{\phi}^{t}\left(\varrho\right)-\left(y_{t},\eta'\right)\right)
\end{align*}
and from (\ref{eq:K1}), (\ref{eq:K2}) we deduce that $\forall N>0,\exists C_{N,t}>0,\forall\varrho,\varrho'\in T^{*}\mathbb{R}^{n+1}$,
\[
\left|K\left(\varrho',\varrho\right)\right|\leq C_{N,t}\left\langle \left\Vert \tilde{\phi}^{t}\left(\varrho\right)-\varrho'\right\Vert _{g_{\varrho}}\right\rangle ^{-N}.
\]
Finally, from (\ref{eq:kernel}), properties of the map $\tilde{\psi}$
in (\ref{eq:def_psi_tilde}) and using (\ref{eq:log_log}), we obtain
(\ref{eq:microl_estimate}).
\end{proof}

\subsubsection{Egorov's Theorem on evolution of PDO.}

The results of this section are direct consequences of Theorem \ref{thm:Microlocality-of-the_TO}.
This first basic lemma will be used later.

\begin{cBoxB}{}
\begin{lem}[Basic Egorov's Lemma for slow varying symbols]
\label{lem:Basic_Egorov_lemma}Assume that $a\in C\left(T^{*}M;\mathbb{C}\right)$
satisfies the slow variation property (\ref{eq:slow_variations})
with function $h$ and parameters $N_{0},h_{0}>0$. Then the Schwartz
kernel of the operator
\[
R:=\mathcal{T}\mathcal{L}^{t}\mathcal{T}^{\dagger}\mathcal{M}_{a\circ\tilde{\phi}^{t}}-\mathcal{M}_{a}\mathcal{T}\mathcal{L}^{t}\mathcal{T}^{\dagger}
\]
satisfies $\forall N,\exists C_{N,t}>0,\forall N_{0}>0,\exists h_{0}>0,\forall\rho',\rho\in T^{*}M,\forall j,j'\in\left\{ 1,\ldots J\right\} ,$
\begin{equation}
\left|\langle\delta_{\rho',j'}|R\delta_{\rho,j}\rangle\right|\leq h\left(\rho'\right)C_{N,t}\left\langle \mathrm{dist}_{g}\left(\rho',\tilde{\phi}^{t}\left(\rho\right)\right)\right\rangle ^{-N}.\label{eq:Egorov_Lemma}
\end{equation}
\end{lem}

\end{cBoxB}

\begin{proof}
We have
\begin{align*}
\langle\delta_{\rho',j'}|R\delta_{\rho,j}\rangle & =\langle\delta_{\rho',j'}|\mathcal{T}\mathcal{L}^{t}\mathcal{T}^{\dagger}\delta_{\rho,j}\rangle\left(a\left(\tilde{\phi}^{t}\left(\rho\right)\right)-a\left(\rho'\right)\right),
\end{align*}
hence
\begin{align*}
\left|\langle\delta_{\rho',j'}|R\delta_{\rho,j}\rangle\right| & \underset{(\ref{eq:microl_estimate},\ref{eq:slow_variations})}{\leq}h\left(\rho'\right)C_{N,t}\left\langle \mathrm{dist}_{g}\left(\rho',\tilde{\phi}^{t}\left(\rho\right)\right)\right\rangle ^{-N}\left\langle h_{0}\mathrm{dist}_{g}\left(\rho',\tilde{\phi}^{t}\left(\rho\right)\right)\right\rangle ^{N_{0}}\\
 & \leq h\left(\rho'\right)C'_{N,t}\left\langle \mathrm{dist}_{g}\left(\rho',\tilde{\phi}^{t}\left(\rho\right)\right)\right\rangle ^{-N}
\end{align*}
For the last line we proceed as in (\ref{eq:aide}) with $h_{0}$
small enough.
\end{proof}
The next theorem concerns the operator $e^{-tX}$ that is the transport
part of the transfer operator $\mathcal{L}^{t}=e^{t\left(-X+V\right)}$.
We will use this theorem later in the proof of Lemma \ref{lem:bounded_resolvent_of_A'}.

\begin{cBoxB}{}
\begin{thm}[Egorov's Theorem]
\label{thm:Egorov.-For-any} Assume that $a\in C\left(T^{*}M;\mathbb{C}\right)$
is a slowly varying symbol (Eq. (\ref{eq:slow_variations}) with parameters
$N_{0},h_{0}>0$). Let us consider the operator
\[
B:=e^{-tX}\mathrm{Op}\left(a\circ\tilde{\phi}^{t}\right)-\mathrm{Op}\left(a\right)e^{-tX}.
\]
For the Schwartz kernel of the corresponding lifted operator $B_{W}=\mathcal{M}_{W}\mathcal{T}B\mathcal{T}^{\dagger}\mathcal{M}_{W^{-1}}$
as defined in (\ref{eq:def_BW}) with parameter $N_{W}$ in (\ref{eq:temperate_property-W-2}),
we have $\forall t\in\mathbb{R},\forall N>0,\exists C_{N,t}>0,\forall N_{W},N_{0}>0,\exists h_{0}>0$,
$\forall\rho',\rho\in T^{*}M,\forall j,j'\in\left\{ 1,\ldots J\right\} $,
\[
\left|\langle\delta_{\rho',j'}|B_{W}\delta_{\rho,j}\rangle\right|\leq C_{N,t}\frac{W\left(\tilde{\phi}^{t}\left(\rho\right)\right)}{W\left(\rho\right)}h\left(\tilde{\phi}^{t}\left(\rho\right)\right)\left\langle \mathrm{dist}_{g}\left(\rho',\tilde{\phi}^{t}\left(\rho\right)\right)\right\rangle ^{-N}
\]
Using the notation of Definition \ref{def:Let--be-1}, we can write
that $B\in\Psi_{\tilde{\phi}^{t}}\left(h\right).$ Consequently $\forall t\in\mathbb{R},\exists C_{t}>0,\forall N_{W},N_{0}>0,\exists h_{0}>0$,
\begin{equation}
\left\Vert B\right\Vert _{\mathcal{H}_{W}}\leq C_{t}\left\Vert \frac{W\circ\tilde{\phi}^{t}}{W}\cdot h\right\Vert _{L^{\infty}}\label{eq:norm_Egorov_PDO}
\end{equation}
and
\begin{equation}
\left\Vert B\right\Vert _{\mathrm{Tr}_{\mathcal{H}_{W}}}\leq C_{t}\left\Vert \frac{W\circ\tilde{\phi}^{t}}{W}\cdot h\right\Vert _{L^{1}}\label{eq:Egorov_Trace}
\end{equation}
provided that the norm on the right-hand side is bounded in the respective
inequality.
\end{thm}

\end{cBoxB}

\begin{proof}
We set
\[
R:=\mathcal{T}e^{-tX}\mathcal{T}^{\dagger}\mathcal{M}_{a\circ\tilde{\phi}^{t}}-\mathcal{M}_{a}\mathcal{T}e^{-tX}\mathcal{T}^{\dagger},
\]
and
\[
B_{W}:=\left(e^{-tX}\mathrm{Op}\left(a\circ\tilde{\phi}^{t}\right)-\mathrm{Op}\left(a\right)e^{-tX}\right)_{W}\eq{\ref{eq:def_BW}}W\mathcal{P}R\mathcal{P}W^{-1}
\]
We will omit chart indices $j,j'$ for simplicity. We have
\begin{align*}
\langle\delta_{\rho'}|B_{W}\delta_{\rho}\rangle & =\frac{W\left(\rho'\right)}{W\left(\tilde{\phi}^{t}\left(\rho\right)\right)}\frac{W\left(\tilde{\phi}^{t}\left(\rho\right)\right)}{W\left(\rho\right)}\int\langle\delta_{\rho'}|\mathcal{P}\delta_{\rho_{1}}\rangle\langle\delta_{\rho_{1}}|R\delta_{\rho_{2}}\rangle\langle\delta_{\rho_{2}}|\mathcal{P}\delta_{\rho}\rangle\frac{d\rho_{1}}{\left(2\pi\right)^{n+1}}\frac{d\rho_{2}}{\left(2\pi\right)^{n+1}}
\end{align*}
and
\begin{align*}
\left|\langle\delta_{\rho'}|B_{W}\delta_{\rho}\rangle\right|\underset{(\ref{eq:temperate_property-W-2},\ref{eq:estimate_Bergman_kernel},\ref{eq:slow_variations},\ref{eq:Egorov_Lemma})}{\leq} & \frac{W\left(\tilde{\phi}^{t}\left(\rho\right)\right)}{W\left(\rho\right)}C_{W}\left\langle h_{0}\,\mathrm{dist}_{g}\left(\rho',\tilde{\phi}^{t}\left(\rho\right)\right)\right\rangle ^{N_{W}}C_{N,t}\int\left\langle \mathrm{dist}_{g}\left(\rho',\rho_{1}\right)\right\rangle ^{-N}\\
 & h\left(\rho_{1}\right)\left\langle \mathrm{dist}_{g}\left(\rho_{1},\tilde{\phi}^{t}\left(\rho_{2}\right)\right)\right\rangle ^{-N}\left\langle h_{0}\,\mathrm{dist}_{g}\left(\rho_{1},\tilde{\phi}^{t}\left(\rho_{2}\right)\right)\right\rangle ^{N_{0}}\\
 & \left\langle \mathrm{dist}_{g}\left(\rho_{2},\rho\right)\right\rangle ^{-N}\frac{d\rho_{1}}{\left(2\pi\right)^{n+1}}\frac{d\rho_{2}}{\left(2\pi\right)^{n+1}}\\
\leq C_{W} & C_{N,t}\frac{W\left(\tilde{\phi}^{t}\left(\rho\right)\right)}{W\left(\rho\right)}h\left(\tilde{\phi}^{t}\left(\rho\right)\right)\left\langle \mathrm{dist}_{g}\left(\rho',\tilde{\phi}^{t}\left(\rho\right)\right)\right\rangle ^{-N}.
\end{align*}
In the last line, $h_{0}$ is chosen small enough with respect to
$N_{W},N_{0}$. We deduce \eqref{eq:norm_Egorov_PDO} using Schur
Lemma \ref{lem:Schur-Lemma-.} and also \eqref{eq:Egorov_Trace} from
Remark \ref{rem:To-estimate-the}.
\end{proof}

\subsubsection{\label{subsec:Strongly-continuous-semi-group}Strong continuity of
the one-parameter group of transfer operators}

In the next Lemma we show strong continuity of the transfer operator
(this is version 1). Later in Lemma \ref{lem:For-the-weight}, with
additional properties on the escape function $W$ we will get some
improved estimates (version 2).

\begin{cBoxB}{}
\begin{lem}[Strong continuity of transfer operator $\mathcal{L}^{t}$ (version
1)]
\label{lem:semi_group}Assume that $W$ is a temperate weight (according
to Definition \ref{def:temperate_W-1}) and assume that, for some
constant $C>0$ and $\tau>0$, 
\begin{equation}
\frac{W\left(\tilde{\phi}^{t}\left(\rho\right)\right)}{W\left(\rho\right)}\leq C\quad\text{for any }\rho\in T^{*}M\text{ and }t\in\left[0,\tau\right].\label{eq:bound_C}
\end{equation}
Then the transfer operator $\mathcal{L}^{t}$ extends to a strongly
continuous semi-group of bounded operators $\mathcal{L}^{t}:\mathcal{H}_{W}\left(M\right)\rightarrow\mathcal{H}_{W}\left(M\right)$
with generator $A=-X+V$ given in (\ref{eq:generator_A}). Moreover,
we have the uniform estimate w.r.t. $N_{W}$ that writes $\exists C',C''>0,\forall N_{W},\exists h_{0}>0,\forall t\geq0,\left\Vert \mathcal{L}^{t}\right\Vert _{\mathcal{H}_{W}}\leq C'e^{tC''}$.

If in addition, we have that
\begin{equation}
\frac{W\left(\tilde{\phi}^{t}\left(\rho\right)\right)}{W\left(\rho\right)}\geq\frac{1}{C}\quad\text{for any }\rho\in T^{*}M\text{ and }t\in\left[0,\tau\right]\label{eq:estim2}
\end{equation}
then $\mathcal{L}^{t}:\mathcal{H}_{W}\left(M\right)\rightarrow\mathcal{H}_{W}\left(M\right)$
with $t\in\mathbb{R}$ form a strongly continuous group.
\end{lem}

\end{cBoxB}

\begin{proof}
Assume that the condition \eqref{eq:bound_C} holds. Let $t\geq0$
and as in (\ref{eq:def_BW}), consider the lifted operator $\mathcal{L}_{W}^{t}:=\mathcal{M}_{W}\mathcal{T}\mathcal{L}^{t}\mathcal{T}^{\dagger}\mathcal{M}_{W^{-1}}.$
For its Schwartz kernel, we have
\begin{align*}
\left|\langle\delta_{\rho',j'}|\mathcal{L}_{W}^{t}\delta_{\rho,j}\rangle\right| & \eq{\ref{eq:Schwartz_kernel_transfer_op}}\frac{W\left(\rho'\right)}{W\left(\rho\right)}\left|\langle\Phi_{\rho',j'}|\mathcal{L}^{t}\Phi_{\rho,j}\rangle\right|\\
 & \underset{(\ref{eq:microl_estimate})}{\leq}\frac{W\left(\rho'\right)}{W\left(\rho\right)}C_{N,t}\left\langle \mathrm{dist}_{g}\left(\rho',\tilde{\phi}^{t}\left(\rho\right)\right)\right\rangle ^{-N}\\
 & =C_{N,t}\left(\frac{W\left(\tilde{\phi}^{t}\left(\rho\right)\right)}{W\left(\rho\right)}\right)\left(\frac{W\left(\rho'\right)}{W\left(\tilde{\phi}^{t}\left(\rho\right)\right)}\right)\left\langle \mathrm{dist}_{g}\left(\rho',\tilde{\phi}^{t}\left(\rho\right)\right)\right\rangle ^{-N}\\
 & \underset{(\ref{eq:bound_C},\ref{eq:temperate_property-W-2})}{\leq}C_{N,t}CC_{W}\left\langle h_{0}\,\mathrm{dist}_{g}\left(\rho',\tilde{\phi}^{t}\left(\rho\right)\right)\right\rangle ^{N_{W}}\left\langle \mathrm{dist}_{g}\left(\rho',\tilde{\phi}^{t}\left(\rho\right)\right)\right\rangle ^{-N}\\
 & \leq C_{W}C'_{N,t}\left\langle \mathrm{dist}_{g}\left(\rho',\tilde{\phi}^{t}\left(\rho\right)\right)\right\rangle ^{-N}
\end{align*}
For the last line we proceed as in (\ref{eq:aide}) with $h_{0}$
small enough with respect to $N_{W}$. If we let $N$ be large enough
then, by Schur Lemma \ref{lem:Schur-Lemma-.}, we obtain that $\mathcal{L}_{W}^{t}:L^{2}\left(T^{*}M\right)\rightarrow L^{2}\left(T^{*}M\right)$
is bounded for any $t\geq0$ and therefore $\mathcal{L}^{t}:\mathcal{H}_{W}\left(M\right)\rightarrow\mathcal{H}_{W}\left(M\right)$
is bounded. For $u\in C^{\infty}\left(M\right)$, we have $\|\mathcal{L}^{t}u-u\|_{C^{\infty}}\to0$
as $t\to0$ and consequently $\left\Vert \mathcal{L}^{t}u-u\right\Vert _{\mathcal{H}_{W}}\underset{}{\rightarrow}0$.
Since $C^{\infty}\left(M\right)$ is dense in $\mathcal{H}_{W}$ this
is also true for $u\in\mathcal{H}_{W}$. Therefore $\mathcal{L}^{t}$
is strongly continuous. This finishes the proof of the former part.
For the latter part of the theorem, we consider $\mathcal{L}^{-t}=e^{t\left(-A\right)}$
with $t\geq0$ instead of $\mathcal{L}^{t}$. Then the condition (\ref{eq:estim2})
reads $\frac{W\left(\tilde{\phi}^{-t}\left(\rho\right)\right)}{W\left(\rho\right)}\leq C$
and the same procedure as above yields the conclusion.
\end{proof}

\section{\label{sec:Proof-of-Theorem_discrete_spectrum}Proof of Theorems
\ref{thm:Discrete_spectrum} and \ref{thm:Weyl law} on discrete spectrum
and Weyl law}

In this section we will assume that $X$ is a general Anosov vector
field on $M$. We first describe the geometry of the lifted flow $\tilde{\phi}^{t}:T^{*}M\rightarrow T^{*}M$
in Section \ref{subsec:Dynamics-lifted-in}. Then we explain how to
design a suitable escape (or weight) function $W:T^{*}M\rightarrow\mathbb{R}^{+,*}$
in order to obtain Theorems \ref{thm:Discrete_spectrum} and \ref{thm:Weyl law}
for the Sobolev space $\mathcal{H}_{W}\left(M\right)$ in Section
\ref{subsec:Escape-function}. Finally in Section \ref{subsec:Discrete-Spectrum-and}
we provide the proofs of Theorem \ref{thm:Discrete_spectrum} and
\ref{thm:Weyl law}, which gives discrete spectrum and the Fractal
Weyl law.

\subsection{\label{subsec:Dynamics-lifted-in}The lifted flow $\tilde{\phi}^{t}$
in the cotangent bundle $T^{*}M$ and the trapped set $E_{0}^{*}$}

We define the frequency function $\boldsymbol{\omega}\in C^{\infty}\left(T^{*}M;\mathbb{R}\right)$
by\footnote{In micro-local analysis, the function $\boldsymbol{\omega}$ is the
principal symbol of the operator $-iX$, see \cite[footnote on  page 332.]{fred_flow_09}.}
\begin{equation}
\boldsymbol{\omega}\left(\rho\right):=X\left(\rho\right)\quad\text{for }\rho\in T^{*}M.\label{eq:omega_function}
\end{equation}
The dual decomposition of (\ref{eq:decomp_TM}) is\footnote{Beware that the notations of $E_{u}^{*},E_{s}^{*}$ are interchanged
with respect to the natural definition from linear algebra conventions
of duality. The advantage of this choice is that the dynamics is contracting
(stable) on the space $E_{s}^{*}$ and expanding (unstable) on the
space $E_{u}^{*}$, see (\ref{hyperbolicity_xi}).}
\begin{equation}
T_{m}^{*}M=E_{u}^{*}\left(m\right)\oplus E_{s}^{*}\left(m\right)\oplus E_{0}^{*}\left(m\right)\label{eq:decomp*}
\end{equation}
for $m\in M$ with
\begin{equation}
E_{0}^{*}\left(E_{u}\oplus E_{s}\right)=0,\qquad E_{s}^{*}\left(E_{s}\oplus E_{0}\right)=0,\quad E_{u}^{*}\left(E_{u}\oplus E_{0}\right)=0.\label{eq:def_E_u*}
\end{equation}
Since $E_{0}=\mathbb{R}X$, we have $E_{u}^{*}\oplus E_{s}^{*}\eq{\ref{eq:def_E_u*}}\mathrm{Ker}\left(X\right)\eq{\ref{eq:omega_function}}\boldsymbol{\omega}^{-1}\left(0\right)$
and therefore the correspondence $m\mapsto E_{u}^{*}\left(m\right)\oplus E_{s}^{*}\left(m\right)$
is smooth. We have 
\begin{equation}
E_{0}^{*}\underset{(\ref{eq:one_form})}{=}\mathbb{R}\mathscr{A}=\left\{ \omega\mathscr{A}\left(m\right),\quad m\in M,\omega\in\mathbb{R}\right\} \label{eq:def_E_0*}
\end{equation}
and the map $m\mapsto E_{0}^{*}\left(m\right)$ is Hölder continuous
with exponent $\beta_{0}$.

A cotangent vector $\rho\in T_{m}^{*}M$ is decomposed accordingly
to the dual decomposition (\ref{eq:decomp*}) 
\begin{equation}
\rho=\rho_{u}+\rho_{s}+\underbrace{\rho_{0}}_{\omega\mathscr{A}\left(m\right)}\label{eq:decomp_Xi}
\end{equation}
 with components 
\[
\rho_{u}\in E_{u}^{*},\quad\rho_{s}\in E_{s}^{*},\quad\rho_{0}=\omega\mathscr{A}\left(m\right)\in E_{0}^{*},\quad\omega=\boldsymbol{\omega}\left(\rho\right)\in\mathbb{R}.
\]

Recall the lifted flow $\tilde{\phi}^{t}:T^{*}M\rightarrow T^{*}M$
introduced in (\ref{eq:lifted_flow}). If we denote $\rho\left(t\right)=\tilde{\phi}^{t}\left(\rho\right)$
and $m\left(t\right)=\phi^{t}\left(m\right)$, we write $\rho\left(t\right)=\rho_{u}\left(t\right)+\rho_{s}\left(t\right)+\omega\left(t\right)\mathscr{A}\left(m\left(t\right)\right)$
as in \eqref{eq:decomp_Xi}, and the hyperbolicity assumption (\ref{eq:hyperbolicity})
gives that
\begin{equation}
\left|\rho_{u}\left(t\right)\right|\geq\frac{1}{C}e^{\lambda_{\mathrm{min}}t}\left|\rho_{u}\left(0\right)\right|,\qquad\left|\rho_{s}\left(t\right)\right|\leq Ce^{-\lambda_{\mathrm{min}}t}\left|\rho_{s}\left(0\right)\right|,\quad\omega\left(t\right)=\omega\left(0\right)\quad\text{ for }t\geq0,\label{hyperbolicity_xi}
\end{equation}
where $\left|\rho_{u}\left(t\right)\right|=\left\Vert \rho_{u}\left(t\right)\right\Vert _{g_{M}}$
is the norm measured with the dual metric $g_{M}$ on the cotangent
bundle $T^{*}M$ induced by $g_{M}$ on $TM$. See Figure \ref{fig:The-Anosov-flow-on_T*M}.

From (\ref{hyperbolicity_xi}), we see that the set $E_{0}^{*}=\mathbb{R}\mathscr{A}$
defined in (\ref{eq:def_E_0*}) is the trapped set of the flow $\tilde{\phi}^{t}$
in the sense that
\[
E_{0}^{*}=\left\{ \rho\in T^{*}M\,\mid\,\mbox{ the orbit }\{\tilde{\phi}^{t}\left(\rho\right)\in C\mid t\in\mathbb{R}\}\text{ is relatively compact in }T^{*}M\right\} .
\]
$E_{0}^{*}$ is a (Hölder continuous) rank one sub-bundle of $T^{*}M$,
with $\mathrm{dim}E_{0}^{*}=n+2$ and, in terms of dynamical system
theory, $E_{0}^{*}$ is just the non-wandering set of the flow $\tilde{\phi}^{t}:T^{*}M\rightarrow T^{*}M$.

\begin{figure}[h]
\centering{}\scalebox{0.8}[0.8]{\input{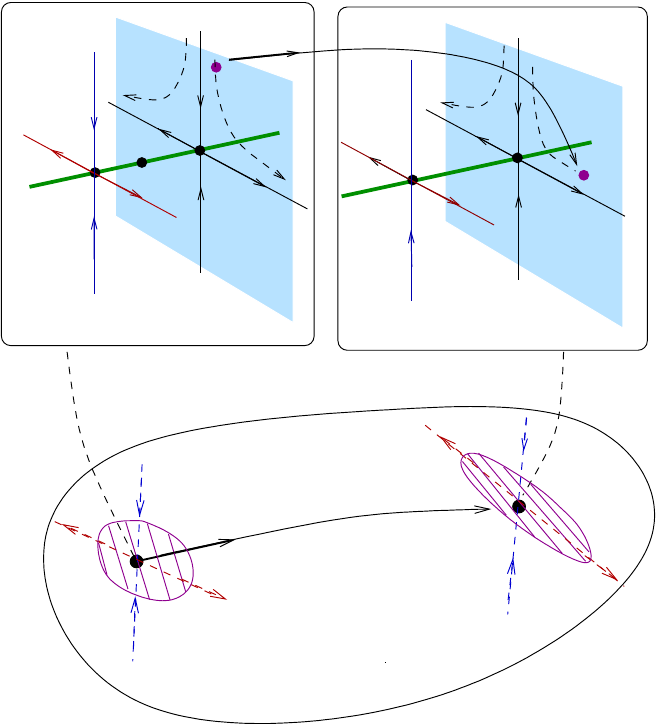tex_t}}\caption{\label{fig:The-Anosov-flow-on_T*M}The Anosov flow $\phi^{t}=\exp\left(tX\right)$
on $M$ is lifted to a Hamiltonian flow $\tilde{\phi}^{t}=\exp\left(t\tilde{X}\right)$
in the cotangent bundle $T^{*}M$. The lines on the base (in magenta)
represent \textquotedblleft internal oscillations\textquotedblright{}
of a function $u$ at point $m\in M$. These oscillations correspond
to a cotangent vector $\rho\in T_{m}^{*}M$. Transported by the flow
$\phi^{t}$, these oscillations become parallel to $E_{u}$, i.e.
the direction of $\tilde{\phi}^{t}\left(\rho\right)$ converges to
$E_{u}^{*}\subset T^{*}M$ and $\tilde{\phi}^{t}\left(\rho\right)$
remains in the frequency level $\Sigma_{\omega}:=\boldsymbol{\omega}^{-1}\left(\omega\right)$
(in blue). The trapped set $E_{0}^{*}$ (in green) of the lifted flow
$\tilde{\phi^{t}}$ is the Hölder continuous rank one vector bundle
$E_{0}^{*}=\mathbb{R}\mathscr{A}$ where $\mathscr{A}$ is the Anosov
one form.}
\end{figure}

\subsection{\label{subsec:Escape-function-}Escape function $W$}

In Lemma \ref{lem:semi_group}, in order to show that the transfer
operators $\mathcal{L}^{t}$ acts on the generalized Sobolev space
$\mathcal{H}_{W}\left(M\right)$ as a strongly continuous (semi-)group,
we required two properties of the weight function $W$, namely the
temperate property (\ref{eq:temperate_property-W-2}) and the boundedness
property (\ref{eq:bound_C}) with respect to the lifted flow $\tilde{\phi}^{t}$.

In the next section, we will show that the generator $A$ has discrete
Ruelle spectrum. For this we need to reinforce the properties of the
weight function $W$ by the slowly varying and temperate property
in Definition \ref{def:temperate_W} and the decay property in Definition
\ref{def:decay_W} below. In Section \ref{subsec:Escape-function}
we also provide an example of a weight function $W$ on $T^{*}M$
satisfying such conditions. Another example of weight function similar
to the weight function used in \cite{fred-roy-sjostrand-07,fred_flow_09}
is given in Appendix \ref{sec:Second-example-of}. (However this latter
example does not satisfies (\ref{eq:estim2}) that provides the group
property of $\mathcal{L}^{t}$).

\subsubsection{Requirements for an escape function $W$ on $T^{*}M$}

For a point $m\in M$, recall the decomposition of a cotangent vector
$\rho=\rho_{u}+\rho_{s}+\rho_{0}\in T_{m}^{*}M$, in Eq.(\ref{eq:decomp_Xi}).
For $0\leq\gamma<1$, we define the continuous function $h_{\gamma}\in C\left(T^{*}M;\mathbb{R}^{+}\right)$
by
\begin{equation}
h_{\gamma}^{\perp}\left(\rho\right):=\left\langle \left\Vert \rho_{u}+\rho_{s}\right\Vert _{g_{\rho}}\right\rangle ^{-\gamma}\label{eq:h_gamma}
\end{equation}
where the vertical vector $\rho_{u}+\rho_{s}\in T_{m}^{*}M$ is naturally
identified with a vector in the tangent space $T_{\rho}\left(T^{*}M\right)$
of $T^{*}M$ at $\rho\in T^{*}M$ in order to get its norm. Remark
that if $\gamma>0$ then $h_{\gamma}^{\perp}\left(\rho\right)$ decays
as $\rho$ gets far from the trapped set $E_{0}^{*}$. But for some
arguments we will sometimes need $\gamma=0$. See Section \ref{subsec:Remarks-about-the}
where we comment on the use of function $h_{\gamma}^{\perp}$ and
parameter $h_{0}$.

\begin{cBoxA}{}
\begin{defn}[slowly varying and $h_{\gamma}^{\perp}$-temperate property for a
weight function $W$]
\label{def:temperate_W}A (family of) continuous function $W\in C\left(T^{*}M;\mathbb{R}^{+}\right)$
that depends on $h_{0}>0$, is said to be slowly varying and $h_{\gamma}^{\perp}$
-temperate, if there exists $0<\mu<1$, $C_{W}>0$ and $N_{W}\geq0$
independent of $h_{0}$ such that
\begin{equation}
\frac{W\left(\rho'\right)}{W\left(\rho\right)}\leq1+C_{W}h_{0}^{\mu}\left\langle h_{\gamma}^{\perp}\left(\rho\right)\,\mathrm{dist}_{g}\left(\rho',\rho\right)\right\rangle ^{N_{W}}\qquad\text{for all }\rho,\rho'\in T^{*}M.\label{eq:slow_variation_W}
\end{equation}
\end{defn}

\end{cBoxA}

\begin{rem}
Definition (\ref{eq:slow_variation_W}) expresses two properties.
First that for small $h_{0}$ and for any two points $\rho,\rho'$
such that $\mathrm{dist}_{g}\left(\rho',\rho\right)\leq h_{\gamma}^{\perp}\left(\rho\right)^{-1}$
where $h_{\gamma}^{\perp}\left(\rho\right)^{-1}$ can be large, then
$W$ has ratio close to 1, i.e. slow variations. Second that, at larger
distances, $W$ has temperate variations which grow at most polynomial
rate in $\left(h_{\gamma}^{\perp}\left(\rho\right)\mathrm{dist}_{g}\left(\rho',\rho\right)\right)$. 
\end{rem}

~
\begin{rem}
Eq.(\ref{eq:slow_variation_W}) can be written as
\begin{equation}
\left|W\left(\rho'\right)-W\left(\rho\right)\right|\leq W\left(\rho\right)C_{W}h_{0}^{\mu}\left\langle h_{\gamma}^{\perp}\left(\rho\right)\mathrm{dist}_{g}\left(\rho',\rho\right)\right\rangle ^{N_{W}},\label{eq:slow_variation_W-1}
\end{equation}
and implies $\forall\rho,\rho'\in T^{*}M$,
\begin{equation}
\frac{W\left(\rho'\right)}{W\left(\rho\right)}\leq C'_{W}\left\langle h_{0}^{\mu/N_{W}}h_{\gamma}^{\perp}\left(\rho\right)\mathrm{dist}_{g}\left(\rho',\rho\right)\right\rangle ^{N_{W}}\leq C'_{W}\left\langle h_{0}^{\mu/N_{W}}\mathrm{dist}_{g}\left(\rho',\rho\right)\right\rangle ^{N_{W}},\label{eq:temperate_property}
\end{equation}
with $C'_{W}>0$ independent of $h_{0}$ and $N_{W}$. In particular
Eq.(\ref{eq:slow_variation_W-1}) shows that according to Definition
\ref{def:Let--be}, the function $W$ is $h,N_{0},h_{0}$-slowly varying
with function $h=C_{W}h_{0}^{\mu}W$. We will use this later in the
proof of Lemma \ref{lem:For-the-weight}. Also, Eq.(\ref{eq:temperate_property})
shows that according to Definition \ref{def:temperate_W-1}, the function
$W$ is temperate so the results of Section \ref{sec:Semiclassical-analysis-with}
apply. We will use (\ref{eq:temperate_property}) later in the proof
of Lemma \ref{lem:For-the-weight} and of Theorem \ref{thm:decay_outside_trapped_set}. 
\end{rem}

~

\begin{cBoxA}{}
\begin{defn}[Decay property for a weight function $W$]
\label{def:decay_W}A continuous function $W\in C\left(T^{*}M;\mathbb{R}^{+}\right)$
has time decay property with rate $\Lambda>0$ with respect to the
flow $\left(\tilde{\phi}^{t}\right)_{t\geq0}$, if there exists a
constant $C>1$ such that, for any $t\geq0$, one can take $C_{W,t}>0$
so that
\begin{align}
\frac{W\left(\tilde{\phi}^{t}\left(\rho\right)\right)}{W\left(\rho\right)}\leq\begin{cases}
C\\
Ce^{-\Lambda t} & \text{ if }\left\Vert \rho_{u}+\rho_{s}\right\Vert _{g\left(\rho\right)}>C_{W,t}\quad\text{for any \ensuremath{\rho\in T^{*}M}.}
\end{cases}\label{eq:decay_property}
\end{align}
Moreover $W$ has a decay controlled from below with rate $\Lambda'\geq0$
if there exists a constant $C>1$ such that 
\begin{align}
\frac{W\left(\tilde{\phi}^{t}\left(\rho\right)\right)}{W\left(\rho\right)}\geq\frac{1}{C}e^{-\Lambda't} & \quad\text{for any \ensuremath{\rho\in T^{*}M}\text{ and }\ensuremath{t\ge0}}.\label{eq:decay_property-1}
\end{align}
\end{defn}

\end{cBoxA}

\begin{rem}
The general bound $W\left(\tilde{\phi}^{t}\left(\rho\right)\right)/W\left(\rho\right)\leq C$
given by the first line of (\ref{eq:decay_property}) together with
the temperate property has been used in the proof of Lemma \ref{lem:semi_group}
in order to show that the transfer operator $\mathcal{L}^{t}:\mathcal{H}_{W}\left(M\right)\rightarrow\mathcal{H}_{W}\left(M\right)$
is bounded and forms a strongly continuous semi-group. The bound from
below (\ref{eq:decay_property-1}) has been used in Lemma \ref{lem:semi_group}
to show that $\mathcal{L}^{t}:\mathcal{H}_{W}\left(M\right)\rightarrow\mathcal{H}_{W}\left(M\right),t\in\mathbb{R}$
forms a strongly continuous group.

The more precise bound given in the second line of (\ref{eq:decay_property})
shows that we have time decay outside the trapped set $E_{0}^{*}$.
It will be used in the proof of Lemma \ref{lem:bounded_resolvent_of_A'}
to show that the transfer operator $\mathcal{L}^{t}$ has small norm
outside the trapped set and to deduce discrete spectrum of its generator.
The fact that the constant $C_{W,t}$ depends on time $t$ in (\ref{eq:decay_property})
is due to the fact that a trajectory close to the stable manifold
of the trapped set $E_{0}^{*}$ can spend a long time close to the
trapped set where there is no decay. During that time, a wave packet
spreads exponentially.
\end{rem}

\subsubsection{\label{subsec:Escape-function}Example of weight function $W$}

We provide here an example of escape function $W$ and we prove that
$W$ satisfies the properties of being temperate, slowly varying and
decay property given above in Definitions \ref{def:temperate_W} and
\ref{def:decay_W}.

Recall from (\ref{eq:def_beta_*}) that the Hölder exponents satisfy
$\min\left(\beta_{u},\beta_{s}\right)\underset{}{\leq}\beta_{0}$
and recall that the parameters $\alpha^{\perp}$,$\alpha^{\parallel}$
that enter in the definition of the metric $g$ in (\ref{eq:def_delta}).

Before giving the definition of $W$ we need, for technical reasons\footnote{The use of this modified metric $\tilde{g}$ instead of $g$ will
permit to provide a simpler proof, compared to a previous proof given
in the \href{https://arxiv.org/pdf/1706.09307v2.pdf\#page=52}{version v2}
of this paper on ArXiv, that uses the metric $g$ itself.}, to use a slightly different norm on $TT_{m}^{*}M$ for $m\in M$,
defined as follows. Let $g_{M}$ be a global smooth metric on $M$.
Let $\tilde{g}_{M}$ be the metric on $T_{m}M,m\in M$, that equals
$g_{M}$ on $E_{\sigma}$ for $\sigma=u,s,0$ and such that the sum
$E_{u}\oplus E_{s}\oplus E_{0}$ is orthogonal. As a consequence $g_{M}$
is Hölder continuous on $M$ and the dual sum $E_{u}^{*}\oplus E_{s}^{*}\oplus E_{0}^{*}$
is also orthogonal for the induced metric on $T_{m}^{*}M$, $m\in M$,
that we still denote by $\tilde{g}_{M}$. Now we define the metric
$\tilde{g}$ on $TT_{m}^{*}M,m\in M,$ as follows. For $\tilde{v}=\left(\tilde{v}_{\xi},\tilde{v}_{\omega}\right)\in T_{\rho}T_{m}^{*}M$,
with $m\in M,\rho\in T_{m}^{*}M$, $\tilde{v}_{\xi}\in E_{u}^{*}\oplus E_{s}^{*}$,
$\tilde{v}_{\omega}\in E_{0}^{*}$, we set
\begin{equation}
\left\Vert \tilde{v}\right\Vert _{\tilde{g}\left(\rho\right)}^{2}=\left(\left\langle \left\Vert \rho\right\Vert _{\tilde{g}_{M}}\right\rangle ^{-\alpha^{\perp}}\left\Vert \tilde{v}_{\xi}\right\Vert _{\tilde{g}_{M}}\right)^{2}+\left(\left\langle \left\Vert \rho\right\Vert _{\tilde{g}_{M}}\right\rangle ^{-\alpha^{\parallel}}\left\Vert \tilde{v}_{\omega}\right\Vert _{\tilde{g}_{M}}\right)^{2}.\label{eq:norm_g-1}
\end{equation}
Let $0<\gamma<1$ . We define the continuous function $\tilde{h}_{\gamma}^{\perp}:T^{*}M\rightarrow\mathbb{R}^{+}$
as follows. For $\rho=\rho_{u}+\rho_{s}+\rho_{0}\in T^{*}M$, we set
\begin{equation}
\tilde{h}_{\gamma}^{\perp}\left(\rho\right)=\left\langle \left\Vert \rho_{u}+\rho_{s}\right\Vert _{\tilde{g}\left(\rho\right)}\right\rangle ^{-\gamma}.\label{eq:h_tilde-1}
\end{equation}
So notice that the norms $g_{M}$ and $\tilde{g}_{M}$ are equivalent,
that the norm of a vertical vector given by $\tilde{g}$ in (\ref{eq:norm_g-1})
is equivalent to the norm given by the metric $g$ defined in (\ref{eq:metric_g_in_coordinates}),
and finally that the function $\tilde{h}_{\gamma}^{\perp}$ defined
in (\ref{eq:h_tilde-1}) looks like the function $h_{\gamma}^{\perp}$
defined in (\ref{eq:h_gamma}), except that we use now the metric
$\tilde{g}$ instead of $g$.

\begin{cBoxA}{}
\begin{defn}[Example of a weight function $W$ with good properties]
\label{def:def_W1}Assume that $\alpha^{\perp}$ and $\alpha^{\parallel}$
satisfy
\begin{equation}
\frac{1}{1+\beta_{0}}\leq\alpha^{\perp}<1,\quad0\leq\alpha^{\parallel}\leq\alpha^{\perp}.\label{eq:ineq1}
\end{equation}
For $\gamma\in[0,1[$ such that
\begin{equation}
1-\frac{\alpha^{\perp}\min\left(\beta_{u},\beta_{s}\right)}{1-\alpha^{\perp}}\leq\gamma<1,\label{eq:gamma_interval}
\end{equation}
and $R_{u}>0$, $R_{s}>0$, $h_{0}>0$, we define the \textbf{escape
function} $W:T^{*}M\rightarrow\mathbb{R}^{+}$ by
\begin{equation}
W\left(\rho\right):=\frac{\left\langle h_{0}\tilde{h}_{\gamma}^{\perp}\left(\rho\right)\left\Vert \rho_{s}\right\Vert _{\tilde{g}\left(\rho\right)}\right\rangle ^{R_{s}}}{\left\langle h_{0}\tilde{h}_{\gamma}^{\perp}\left(\rho\right)\left\Vert \rho_{u}\right\Vert _{\tilde{g}\left(\rho\right)}\right\rangle ^{R_{u}}}.\label{eq:def_W1}
\end{equation}
\end{defn}

\end{cBoxA}

\begin{rem}
In the expression (\ref{eq:def_W1}) for $W$, we can write in a more
concise way $h_{0}h_{\gamma}^{\perp}\left(\rho\right)\left\Vert \rho_{s}\right\Vert _{\tilde{g}\left(\rho\right)}=\left\Vert \rho_{s}\right\Vert _{\tilde{g}_{\gamma,h_{0}}\left(\rho\right)}$
using the metric $\tilde{g}_{\gamma,h_{0}}:=\left(h_{0}h_{\gamma}^{\perp}\right)^{2}\tilde{g}$
that is conformal to $\tilde{g}$.
\end{rem}

~
\begin{rem}
The meaning of Definition \ref{def:def_W1} will be discussed in Section
\ref{rem:Due-to-the}. The way to chose optimally the different parameters
that enter in the construction of $W$ for the analysis of Anosov
flow will be discussed in Section \ref{subsec:Optimal-values-of}.
We first give the following theorem that shows that $W$ has the required
properties.
\end{rem}

\begin{cBoxB}{}
\begin{thm}
\label{thm:W}The weight function $W$ in (\ref{eq:def_W1}) has the
following properties
\begin{enumerate}
\item \label{enu:-satisfies--temperate-1}$W$ satisfies the slowly varying
and $h_{\gamma}^{\perp}$ -temperate property (\ref{eq:slow_variation_W}).
\item \label{enu:-satisfies-decay-1}$W$ satisfies the decay property (\ref{eq:decay_property})
with rate
\begin{equation}
\Lambda=\lambda_{\mathrm{min}}\left(1-\gamma\right)\left(1-\alpha^{\perp}\right)\min\left(R_{s},R_{u}\right).\label{eq:def_Lambda}
\end{equation}
\item \label{enu:-satisfies-decay-below}$W$ satisfies the decay property
controlled from below (\ref{eq:decay_property}) with rate
\begin{equation}
\Lambda'=\lambda_{\mathrm{max}}\left(1-\gamma\right)\left(1-\alpha^{\perp}\right)\left(R_{s}+R_{u}\right).\label{eq:lAMBDA_P}
\end{equation}
\item \label{enu:Its-order-is-1}Its order, defined in (\ref{eq:def_order-2}),
is
\begin{equation}
r\left(\left[\rho\right]\right)=\begin{cases}
0, & \text{along \ensuremath{E_{0}^{*}}; }\\
-\left(1-\gamma\right)\left(1-\alpha^{\perp}\right)R_{u} & \text{along \ensuremath{E_{u}^{*}};}\\
\left(1-\gamma\right)\left(1-\alpha^{\perp}\right)R_{s} & \text{along \ensuremath{E_{s}^{*}};}\\
\left(1-\gamma\right)\left(1-\alpha^{\perp}\right)\left(R_{s}-R_{u}\right) & \text{along all other directions.}
\end{cases}\label{eq:order_of_W1}
\end{equation}
\end{enumerate}
\end{thm}

\end{cBoxB}

The proof of Theorem \ref{thm:W} is given in Section \ref{subsec:Proof-of-Thm}.
From the weight function $W$ in \eqref{eq:def_W1}, we can define
the Sobolev space $\mathcal{H}_{W}\left(M\right)$ from Definition
\ref{def:Anisotropic_Sob_space}. We will use this (anisotropic) Sobolev
space in the next section.
\begin{rem}
One can consider other weight functions that satisfy the required
properties. For example in Appendix \ref{sec:Second-example-of} one
gives a second example of weight function $W_{2}$ that has been introduced
in \cite{fred-roy-sjostrand-07,fred_flow_09} and used many times
since in the literature. We also compare the (dis)advantages of these
two weight functions in Appendix \ref{sec:Second-example-of}.
\end{rem}

\subsubsection{\label{rem:Due-to-the}Remarks on the weight function $W$}

We discuss here the construction of $W$ in (\ref{eq:def_W1}) and
the meaning of the inequalities (\ref{eq:ineq1}) and (\ref{eq:gamma_interval}).
First observe one main feature of (\ref{eq:def_W1}) : $W\left(\rho\right)$
increases with respect to $\left\Vert \rho_{s}\right\Vert _{g_{M}}$
and decreases with respect to $\left\Vert \rho_{u}\right\Vert _{g_{M}}$.
Due to (\ref{hyperbolicity_xi}) this will give the decay property
(\ref{eq:decay_property}) of $W$. However, instead of using $\left\Vert \rho_{s}\right\Vert _{g_{M}}$
directly we use $h_{\gamma}^{\perp}\left(\rho\right)\left\Vert \rho_{s}\right\Vert _{g}$
that measures $\rho_{s}$ at the scale of wave packets or greater
and this will give the slow varying and temperate property of $W$.
In Section \ref{subsec:Heuristic-explanation-of} and in Figure \ref{fig:fractal_bound}
we have explained heuristically that the choice of parameter $\alpha^{\perp}\geq\frac{1}{1+\beta_{0}}$
for the metric $g$ is necessary in order that unit box of the metric
$g$ (i.e. the size or uncertainty of a wave-packet) is greater than
the Hölder fluctuations of the trapped set $E_{0}^{*}$ and therefore
absorb or hide them. This explains (\ref{eq:ineq1}). We should have
similar properties for the directions $E_{s}^{*}$ and $E_{u}^{*}$
that are also Hölder continue and a natural construction would have
been to define a metric $g$ with exponents $\alpha_{s}^{\perp},\alpha_{u}^{\perp}$
that satisfy $\alpha_{u}^{\perp}\geq\frac{1}{1+\beta_{u}},\alpha_{s}^{\perp}\geq\frac{1}{1+\beta_{s}}$.
We are not been able to do this because our specific construction
of the metric $g$ needs the same exponent $\alpha^{\perp}$ in every
transverse directions. In order to overcome this technical problem
we have introduced instead the scaling (or conformal) factor $h_{\gamma}^{\perp}\left(\rho\right)$
 in the construction of $W$, (\ref{eq:def_W1}), with some exponent
$\gamma$ that plays a role similar to $\alpha_{u}^{\perp},\alpha_{s}^{\perp}$.
However this scaling factor in the fibers of $T^{*}M$ is not equivalent
(it is weaker in fact) to choosing a different metric on $T^{*}M$
compatible with the symplectic form.

Let us explain now the range of $\gamma$ in (\ref{eq:gamma_interval}).
At a point $\rho\in T^{*}M$ we consider a unit box of the metric
$g$ that has horizontal transverse size $\delta m\asymp\delta^{\perp}\left(\rho\right)\asymp\left|\rho\right|^{-\alpha^{\perp}}$.
The Hölder exponents $\beta_{s}$ implies that fluctuations of $E_{s}^{*}$
are smaller than $\left|\rho_{s}\right|\left(\delta m\right)^{\beta_{s}}\asymp\left|\rho_{s}\right|\left|\rho\right|^{-\alpha^{\perp}\beta_{s}}$.
Near this direction we have $h_{\gamma}^{\perp}\left(\rho\right)^{-1}\asymp\left(\left|\rho\right|^{-\alpha^{\perp}}\left|\rho_{s}\right|\right)^{\gamma}\asymp\left|\rho_{s}\right|^{\gamma}\left|\rho\right|^{-\alpha^{\perp}\gamma}$.
The unit boxes for the re-scaled metric $\left(h_{\gamma}^{\perp}\right)^{2}g$
have size $\Delta\rho\asymp\left(h_{\gamma}^{\perp}\left(\rho\right)\delta^{\perp}\left(\rho\right)\right)^{-1}\asymp\left|\rho_{s}\right|^{\gamma}\left|\rho\right|^{\alpha^{\perp}\left(1-\gamma\right)}$.
In order that these unit boxes absorb the fluctuations of $E_{s}^{*}$,
we require the condition
\begin{align*}
\Delta\rho & \geq\left|\rho_{s}\right|\left(\delta m\right)^{\beta_{*}}\;\Longleftrightarrow\;\left|\rho_{s}\right|^{\gamma}\left|\rho\right|^{\alpha^{\perp}\left(1-\gamma\right)}\geq\left|\rho_{s}\right|\left|\rho\right|^{-\alpha^{\perp}\beta_{s}}\;\Longleftrightarrow\;\left|\rho\right|^{\alpha^{\perp}\left(1-\gamma\right)+\alpha^{\perp}\beta_{s}}\geq\left|\rho_{s}\right|^{1-\gamma}.
\end{align*}
Since $\left|\rho_{s}\right|\leq\left|\rho\right|$, this follows
if we assume
\[
\alpha^{\perp}\left(1-\gamma\right)+\alpha^{\perp}\beta_{s}\geq1-\gamma\quad\Longleftrightarrow\quad\gamma\geq1-\frac{\alpha^{\perp}\beta_{s}}{1-\alpha^{\perp}}.
\]
With the same consideration for $E_{u}^{*}$, we are led to the condition
on $\gamma$ that appears in (\ref{eq:gamma_interval}). See Figure
\ref{fig:zones-W}.

\begin{figure}[h]
\centering{}\scalebox{0.9}[0.9]{\input{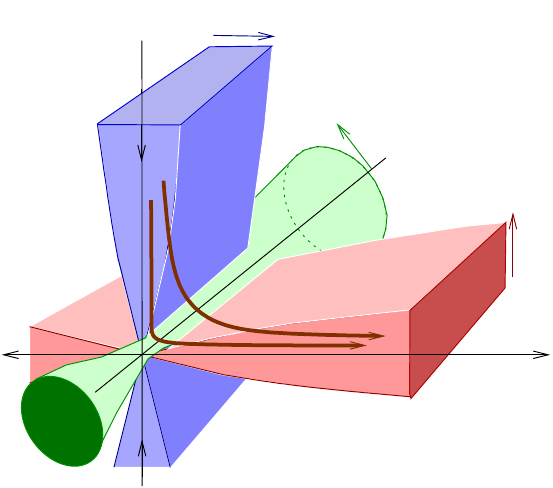tex_t}}\caption{\label{fig:zones-W}Representation of domains associated to the weight
function $W$ defined in (\ref{eq:def_W1}): $\mathcal{V}_{s}$ is
the (blue) the \textquotedblleft parabolic\textquotedblright{} neighborhood
of $E_{s}^{*}$ given by $\mathcal{V}_{s}:=\left\{ \rho;\,\left\langle h_{\gamma}^{\perp}\left(\rho\right)\left\Vert \rho_{u}\right\Vert _{g_{\rho}}\right\rangle \asymp1\right\} =\left\{ \rho;\,\left|\rho_{u}\right|\apprle\min\left(\left|\omega\right|,\left|\rho_{s}\right|\right)^{\alpha^{\perp}+\gamma\left(1-\alpha^{\perp}\right)}\right\} $,
similarly for $\mathcal{V}_{u}$, and $\mathcal{V}_{0}:=\mathcal{V}_{u}\cap\mathcal{V}_{s}=\left\{ \rho;\,\left|\rho_{u}+\rho_{s}\right|\apprle\omega^{\alpha^{\perp}}\right\} $.
Outside this domain $\mathcal{V}_{0}$ (in green), the weight function
$W$ decays along the flow $\tilde{\phi}^{t}$.}
\end{figure}

\subsubsection{\label{subsec:Optimal-values-of}Remarks about optimal values of
parameters for the weight function $W$}

In this section we comment the optimal choice of weight function $W$
in order to get the theorems of this paper. Recall that, in Section
\ref{subsec:A-metric}, the metric $g$ on $T^{*}M$ is defined depending
on the parameters $\alpha^{\perp},\alpha^{\parallel}$ which satisfy
$0\leq\alpha^{\parallel}<\alpha^{\perp}<1,\frac{1}{2}\leq\alpha^{\perp}<1$
from (\ref{eq:conditions}).

The weight function $W$ depends on additional parameters $h_{0}>0$,
$R_{u},R_{s}>0,\gamma\geq0$ and Theorem \ref{thm:W} give their admissible
range of values. Also the admissible range of $\alpha^{\perp}$ is
restricted to $\frac{1}{1+\beta_{0}}\leq\alpha^{\perp}<1$. (See Lemma
\ref{thm:W} and Remark \ref{rem:Due-to-the}.) According to these
constraints we will consider optimal values of these parameters depending
on the purpose.

\paragraph{Choice 1 of $W$ to estimate the density of eigenvalues (Weyl law):}

In order to prove Theorem \ref{thm:Weyl law} about the Weyl law for
the density of eigenvalues, we will take, after Lemma \ref{prop:discrete_spectrum},
the minimal allowed value for $\alpha^{\perp},\alpha^{\parallel}$
and $\gamma$ that are

\[
\alpha^{\perp}=\frac{1}{1+\beta_{0}},\quad\alpha^{\parallel}=0,\quad\gamma\eq{\ref{eq:gamma_interval}}1-\frac{\beta_{*}}{\beta_{0}}\quad\text{with setting }\beta_{*}=\min\left(\beta_{u},\beta_{s}\right)\le\beta_{0}.
\]
that satisfy the assumptions (\ref{eq:ineq1}) and (\ref{eq:gamma_interval}).
The choice of these optimal values were explained in Section \ref{subsec:Heuristic-explanation-of}.
See also Figure \ref{fig:zones-W} where they give that the transverse
size for the (green) region $\mathcal{V}_{0}$ is
\[
\Delta_{0}^{\left(1\right)}=\omega^{\frac{1}{1+\beta_{0}}}.
\]
However, for this choice, the transverse size of the region $\mathcal{V}_{u}$
on Figure \ref{fig:zones-W} is
\begin{equation}
\Delta_{*}^{\left(1\right)}:=\left|\rho_{u}\right|^{\alpha^{\perp}+\gamma\left(1-\alpha^{\perp}\right)}=\left|\rho_{u}\right|^{1-\frac{\beta_{*}}{1+\beta_{0}}}.\label{eq:Delta_1*}
\end{equation}
This choice is not optimal to study the concentration of the wave
front set and one can improve with Choice 2 below.

\paragraph{Choice 2 of $W$ to estimate the concentration of the wave front
set:}

In Corollary \ref{cor:Let-us-choose} that concerns the width exponent
of the parabolic wave front set of Ruelle resonances, we choose the
following values

\begin{equation}
\alpha^{\perp}=\frac{1}{1+\beta_{*}},\quad\alpha^{\parallel}=0,\quad\gamma\eq{\ref{eq:gamma_interval}}0,\quad\text{with setting }\beta_{*}=\min\left(\beta_{u},\beta_{s}\right)\le\beta_{0}.\label{eq:choice_2}
\end{equation}
that satisfy the assumptions (\ref{eq:ineq1}) and (\ref{eq:gamma_interval}).
The reason for this choice is that the transverse size $\Delta_{*}^{\left(2\right)}$
of the region $\mathcal{V}_{u}$ on Figure \ref{fig:zones-W} is 
\begin{equation}
\Delta_{*}^{\left(2\right)}:=\left|\rho_{u}\right|^{\alpha^{\perp}+\gamma\left(1-\alpha^{\perp}\right)}=\left|\rho_{u}\right|^{\frac{1}{1+\beta_{*}}}\label{eq:size_Delta_2*}
\end{equation}
and smaller than $\Delta_{*}^{\left(1\right)}$ in (\ref{eq:Delta_1*}),
(except for the case $\beta_{*}=\beta_{0}$ )
\[
\Delta_{*}^{\left(2\right)}\leq\Delta_{*}^{\left(1\right)}
\]
because 
\[
\frac{1}{1+\beta_{*}}<1-\frac{\beta_{*}}{1+\beta_{0}}\quad\Longleftrightarrow\quad1+\beta_{0}<(1+\beta_{*})(1+\beta_{0}-\beta_{*})=1+\beta_{0}+\beta_{*}(\beta_{0}-\beta_{*}).
\]
However this choice gives the transverse size $\Delta_{0}^{\left(2\right)}$
of the green region $\mathcal{V}_{0}$ that is greater than $\Delta_{0}^{\left(1\right)}$:
\[
\Delta_{0}^{\left(2\right)}=\omega^{\frac{1}{1+\beta_{*}}}\geq\Delta_{0}^{\left(1\right)}.
\]
Therefore this choice is not adequate to deduce Theorem \ref{thm:Weyl law}
about the Weyl law.

\subsubsection{\label{subsec:Remarks-about-the}Remarks about the use of parameter
$h_{0}$ and function $h_{\gamma}^{\perp}\left(\rho\right)$}

The definition (\ref{eq:def_W1}) for the function $W$ makes use
of the product $h_{0}\tilde{h}_{\gamma}^{\perp}\left(\rho\right)$
with a parameter $h_{0}$ and a function $\tilde{h}_{\gamma}^{\perp}\left(\rho\right)=\left\langle \left\Vert \rho_{u}+\rho_{s}\right\Vert _{\tilde{g}\left(\rho\right)}\right\rangle ^{-\gamma}$
that depends on the parameter $\gamma\geq0$. We have already comment
about the meaning and utility of the parameter $\gamma$, in the previous
section \ref{subsec:Optimal-values-of}.

One effect of the product $h_{0}\tilde{h}_{\gamma}^{\perp}\left(\rho\right)\ll1$
when it is small is to assure that the ratio of variations of the
function $W$ is close to one on a large scale $\left(h_{0}h_{\gamma}^{\perp}\right)^{-1}\gg1$
and is used as follows in our micro-local analysis: if we consider
a wave packet (with inital size of order $1$ measured by the metric
$g$) that evolves and get size $e^{\lambda t}$ after a given long
time $t$ where $\lambda$ is the Lyapounov exponent, then the wave
packet still feels the function $W$ as almost constant provided $e^{\lambda t}\ll\left(h_{0}h_{\gamma}^{\perp}\right)^{-1}$.
We use crucially this argument at the end of proof of Lemma \ref{lem:For-the-weight}
and Theorem \ref{thm:decay_outside_trapped_set}, where we take first
$t$ large to get effect of hyperbolicity and after we take $h_{0}$
small. Notice in Theorem \ref{thm:decay_outside_trapped_set} we consider
the outside of the trapped set, where $\left\Vert \rho_{u}+\rho_{s}\right\Vert _{\tilde{g}\left(\rho\right)}$
is large, so we may have used $h_{\gamma}^{\perp}\left(\rho\right)\ll1$
for this purpose but only if we assume $\gamma>0$. However there
are some cases where need to take $\gamma=0$, see (\ref{eq:choice_2}),
giving $h_{\gamma}^{\perp}\left(\rho\right)=1$ that is not small.
So for simplicity we always use $h_{0}$ as the small parameter. 

\subsection{\label{subsec:Discrete-Spectrum-and}Discrete Spectrum and Weyl law
upper bound}

In this section, we prove Theorem \ref{thm:grey-band} about strong
continuity of the transfer operators $\mathcal{L}^{t}$ and Theorem
\ref{thm:Discrete_spectrum} and \ref{thm:Weyl law} about discrete
spectrum and an analogue of the Weyl law. We henceforth assume that
the weight function $W$ is chosen from the family (\ref{eq:def_W1})
and we consider the anisotropic Sobolev space $\mathcal{H}_{W}=\mathcal{H}_{W}\left(M\right)$
defined in (\ref{eq:def_H_W}). Recall that the definition of $W$
depends on the parameters $R_{u}$, $R_{s}$, $\gamma$ and $h_{0}$
besides the parameters $\alpha^{\perp},\alpha^{\parallel}$ used in
the definition of the metric $g$ on $T^{*}M$. From (\ref{eq:order_of_W1}),
the parameters $R_{u}$ and $R_{s}$ determine the order of the function
$W$ at infinity, while the parameters $\gamma$ and $h_{0}$ are
related to the variation of $W$ in smaller scales.

\subsubsection{Strong continuity}

From Lemma \ref{lem:semi_group}, it follows immediately that $\left\Vert \mathcal{L}^{t}\right\Vert _{\mathcal{H}_{W}}\leq Ce^{tC_{X,V,W}}$
for $t\ge0$ with a constant $C_{X,V,W}$ depending on $X,V$ and
$W$. The next lemma shows that, if we choose the parameter $h_{0}$
(that enters in $W$ in (\ref{eq:h_gamma})) depending on the choice
of $R_{u}$, $R_{s}$ and $\gamma$, then we may assume that the constant
$C_{X,V,W}$ is uniform i.e. does not depend on $W$. Let us recall
the constants $C_{X,V}$ and $C'_{X,V}$ defined in (\ref{eq:C_XV})
and (\ref{eq:C_XV-1}).

\begin{cBoxB}{}
\begin{lem}[Strong continuity of transfer operators $\mathcal{L}^{t}$ (version
2)]
\label{lem:For-the-weight}For any $\epsilon>0$, there exists $C>0$,
for any parameters $R_{u},R_{s}>0$, we can choose $h_{0}>0$ in (\ref{eq:h_gamma})
small enough so that, for any $t\geq0$, we have
\begin{equation}
\left\Vert \mathcal{L}^{t}\right\Vert _{\mathcal{H}_{W}}\leq Ce^{t\left(C_{X,V}+\epsilon\right)},\label{eq:bound_C_X,V}
\end{equation}
and
\begin{equation}
\left\Vert \mathcal{L}^{-t}\right\Vert _{\mathcal{H}_{W}}\leq Ce^{-t\left(C'_{X,V}-\Lambda'-\epsilon\right)},\label{eq:bound_C_X,V-1}
\end{equation}
where $\Lambda'$ is given in (\eqref{eq:lAMBDA_P}).
\end{lem}

\end{cBoxB}

\begin{rem}
From Hille-Yosida-Feller-Miyadera-Phillips's Theorem \cite[p.77]{engel_1999},
Eq.(\ref{eq:bound_C_X,V}) implies estimates on the norm of the resolvent
$\left(z-A\right)^{-1}$ of the generator $A=-X+V$ for $\mathrm{Re}\left(z\right)>C_{X,V}$.
\end{rem}

\begin{proof}
We prove (\ref{eq:bound_C_X,V}). We have that
\[
\left\Vert \mathcal{L}^{t}\right\Vert _{\mathcal{H}_{W}}\eq{\ref{eq:compare_norms}}\left\Vert \mathcal{M}_{W}\text{\ensuremath{\mathcal{T}}\ensuremath{\mathcal{L}^{t}}\ensuremath{\mathcal{T}^{\dagger}}}\mathcal{M}_{W^{-1}}\right\Vert _{L^{2}}.
\]
We write
\begin{equation}
\mathcal{M}_{W}\text{\ensuremath{\mathcal{T}}\ensuremath{\mathcal{L}^{t}}\ensuremath{\mathcal{T}^{\dagger}}}\mathcal{M}_{W^{-1}}=\left(\ensuremath{\mathcal{T}}\ensuremath{\mathcal{L}^{t}}\ensuremath{\mathcal{T}^{\dagger}}\right)\mathcal{M}_{\left(W\circ\tilde{\phi}^{t}\right)/W}+R\label{eq:decomp}
\end{equation}
with remainder operator
\begin{align}
R & :=\mathcal{M}_{W}\text{\ensuremath{\mathcal{T}}\ensuremath{\mathcal{L}^{t}}\ensuremath{\mathcal{T}^{\dagger}}}\mathcal{M}_{W^{-1}}-\text{\ensuremath{\mathcal{T}}\ensuremath{\mathcal{L}^{t}}\ensuremath{\mathcal{T}^{\dagger}}}\mathcal{M}_{\left(W\circ\tilde{\phi}^{t}\right)/W}\label{eq:def_R-1}\\
 & =\left(\mathcal{M}_{W}\text{\ensuremath{\mathcal{T}}\ensuremath{\mathcal{L}^{t}}\ensuremath{\mathcal{T}^{\dagger}}}-\text{\ensuremath{\mathcal{T}}\ensuremath{\mathcal{L}^{t}}\ensuremath{\mathcal{T}^{\dagger}}}\mathcal{M}_{W\circ\tilde{\phi}^{t}}\right)\mathcal{M}_{W^{-1}}.\nonumber 
\end{align}
We apply Theorem \ref{thm:Microlocality-of-the_TO} and Theorem \ref{thm:W}
that gives (\ref{eq:slow_variation_W-1}) and we get

\begin{align}
\left|\langle\delta_{\rho',j'}|R\delta_{\rho,j}\rangle\right|\eq{\ref{eq:def_R-1}} & \left|\langle\delta_{\rho',j'}|\text{\ensuremath{\mathcal{T}}\ensuremath{\mathcal{L}^{t}}\ensuremath{\mathcal{T}^{\dagger}}}\delta_{\rho,j}\rangle\right|\left|W\left(\rho'\right)-W\left(\tilde{\phi}^{t}\left(\rho\right)\right)\right|W^{-1}\left(\rho\right)\nonumber \\
\underset{(\ref{eq:microl_estimate},\ref{eq:slow_variation_W-1})}{\leq} & \left(W\left(\rho'\right)C_{W}h_{0}^{\mu}\left\langle h_{\gamma}^{\perp}\left(\tilde{\phi}^{t}\left(\rho\right)\right)\mathrm{dist}_{g}\left(\rho',\tilde{\phi}^{t}\left(\rho\right)\right)\right\rangle ^{N_{W}}\right)\\
 & \left(C_{N,t}\left\langle \mathrm{dist}_{g}\left(\rho',\tilde{\phi}^{t}\left(\rho\right)\right)\right\rangle ^{-N}\right)W^{-1}\left(\rho\right)\\
\leq & C_{N,t}h_{0}^{\mu}C_{W}\left(\frac{W\left(\rho'\right)}{W\left(\tilde{\phi}^{t}\left(\rho\right)\right)}\right)\left(\frac{W\left(\tilde{\phi}^{t}\left(\rho\right)\right)}{W\left(\rho\right)}\right)\left\langle \mathrm{dist}_{g}\left(\rho',\tilde{\phi}^{t}\left(\rho\right)\right)\right\rangle ^{-N+N_{W}}\nonumber \\
\underset{(\ref{eq:decay_property},\ref{eq:temperate_property})}{\leq} & C_{N,t}h_{0}^{\mu}C_{W}C'_{W}\left\langle h_{0}^{\mu/N_{W}}h_{\gamma}^{\perp}\left(\tilde{\phi}^{t}\left(\rho\right)\right)\mathrm{dist}_{g}\left(\rho',\tilde{\phi}^{t}\left(\rho\right)\right)\right\rangle ^{N_{W}}\left\langle \mathrm{dist}_{g}\left(\rho',\tilde{\phi}^{t}\left(\rho\right)\right)\right\rangle ^{-N+N_{W}}\nonumber \\
\leq & C_{N,t}h_{0}^{\mu}C_{W}\left\langle \mathrm{dist}_{g}\left(\rho',\tilde{\phi}^{t}\left(\rho\right)\right)\right\rangle ^{-N+2N_{W}}.\nonumber 
\end{align}
Then by Schur Lemma \ref{lem:Schur-Lemma-.}, we get
\begin{equation}
\left\Vert R\right\Vert _{L^{2}}\le C_{W,N_{W},t}h_{0}^{\mu},\label{eq:bound_R}
\end{equation}
where the constant $C_{W,N_{W},t}$ depends on $t$ and also on the
parameters $R_{u}$, $R_{s},N_{W}$ and $\gamma$ but not on $h_{0}$.
We also have

\[
\left\Vert \mathcal{M}_{W\circ\tilde{\phi}^{t}/W}\right\Vert _{L^{2}}\ineq{\ref{eq:decay_property}}C
\]
with $C$ independent of $t$ and $h_{0}$. Let $0<\epsilon'<\epsilon$.
From the choice of $C_{X,V}$ in \eqref{eq:C_XV}, we have
\[
\left\Vert \ensuremath{\mathcal{T}}\ensuremath{\mathcal{L}^{t}}\ensuremath{\mathcal{T}^{\dagger}}\right\Vert _{L^{2}}\eq{\ref{eq:resol_ident_Pi_rho}}\ensuremath{\left\Vert \mathcal{L}^{t}\right\Vert _{L^{2}}}\underset{(\ref{eq:L*t*Lt})}{\leq}Ce^{\left(C_{X,V}+\epsilon'\right)t}\quad\text{for }t>0
\]
with $C$ independent of $t$. We obtain
\[
\left\Vert \mathcal{M}_{W}\text{\ensuremath{\mathcal{T}}\ensuremath{\mathcal{L}^{t}}\ensuremath{\mathcal{T}^{\dagger}}}\mathcal{M}_{W^{-1}}\right\Vert _{L^{2}}\ineq{\ref{eq:decomp},\ref{eq:bound_R}}Ce^{(C_{X,V}+\epsilon')t}+C_{W,N_{W},t}h_{0}^{\mu}\le e^{(C_{X,V}+\epsilon)t}
\]
where the last inequality is obtained by taking $t$ large enough
and then $h_{0}$ small enough. By iteration, the last inequality
implies (\ref{eq:bound_C_X,V}). To prove \eqref{eq:bound_C_X,V-1},
we proceed similarly for the negative time $t<0$ and use (\ref{eq:decay_property-1})
in the place of \eqref{eq:decay_property}.
\end{proof}
For the next Lemma, let $\sigma>0$ that will be chosen large enough
later. Define the following characteristic function $\Lambda_{\sigma}:T^{*}M\rightarrow\mathbb{R}^{+}$,
such that for every $\rho\in T^{*}M$
\begin{align*}
\Lambda_{\sigma}\left(\rho\right) & :=\begin{cases}
1 & \text{ if }\mathrm{dist}_{g}\left(\rho,E_{0}^{*}\right)\leq\sigma\\
0 & \text{ if }\mathrm{dist}_{g}\left(\rho,E_{0}^{*}\right)>\sigma
\end{cases}.
\end{align*}
Let 
\begin{equation}
\mathrm{Op}\left(1-\Lambda_{\sigma}\right):=\mathcal{T}^{\dagger}\left(1-\Lambda_{\sigma}\right)\mathcal{T}\quad:C^{\infty}\left(M\right)\rightarrow C^{\infty}\left(M\right)\label{eq:def_op_Lambda}
\end{equation}
be the corresponding PDO operator, that removes components near the
trapped set. The next Theorem shows that the transfer operator decays
very fast on the outer part of the trapped set.

\begin{cBoxB}{}
\begin{thm}[Decay outside the trapped set]
\label{thm:decay_outside_trapped_set}For any $\epsilon>0$, there
exists $C>0$, for any parameters $R_{u},R_{s}>0$, we can choose
$h_{0}>0$ in (\ref{eq:h_gamma}) small enough so that, for any $t\geq0$,
we can choose $\sigma_{t}>0$ large enough and we have
\begin{equation}
\left\Vert \mathcal{L}^{t}\mathrm{Op}\left(1-\Lambda_{\sigma_{t}}\right)\right\Vert _{\mathcal{H}_{W}}\leq Ce^{t\left(C_{X,V}-\Lambda+\epsilon\right)},\label{eq:bound_C_X,V-2}
\end{equation}
where $\Lambda$ is given in (\ref{eq:def_Lambda}) and can be arbitrarily
large (if $R_{s},R_{u}$ are taken large).
\end{thm}

\end{cBoxB}

\begin{proof}
We proceed as in Lemma \ref{lem:For-the-weight}. For simplicity we
write $\Lambda_{\sigma}^{c}=1-\Lambda_{\sigma}$. The operator $\ensuremath{\mathcal{L}^{t}}\textrm{Op}\left(\Lambda_{\sigma}^{c}\right)$
on $\mathcal{H}_{W}\left(M\right)$ lifted on $L^{2}\left(T^{*}M;\mathbb{C}^{J}\right)$
is
\begin{align*}
\mathcal{M}_{W}\text{\ensuremath{\mathcal{T}}\ensuremath{\mathcal{L}^{t}}\textrm{Op}\ensuremath{\left(\Lambda_{\sigma}^{c}\right)}\ensuremath{\ensuremath{\mathcal{T}^{\dagger}}}}\mathcal{M}_{W^{-1}} & =\ensuremath{\mathcal{T}}\ensuremath{\mathcal{L}^{t}}\ensuremath{\mathcal{T}^{\dagger}}\mathcal{M}_{\Lambda_{\sigma}^{c}\left(W\circ\tilde{\phi}^{t}\right)/W}Q_{1}+R_{2}
\end{align*}
with
\[
Q_{1}=W\mathcal{P}W^{-1},\quad Q_{2}=W\mathcal{M}_{\Lambda_{\sigma}^{c}}\mathcal{P}W^{-1},
\]
\[
R_{2}=RQ_{2},
\]
and $R=\left(\mathcal{M}_{W}\text{\ensuremath{\mathcal{T}}\ensuremath{\mathcal{L}^{t}}\ensuremath{\mathcal{T}^{\dagger}}}-\text{\ensuremath{\mathcal{T}}\ensuremath{\mathcal{L}^{t}}\ensuremath{\mathcal{T}^{\dagger}}}\mathcal{M}_{W\circ\tilde{\phi}^{t}}\right)\mathcal{M}_{W^{-1}}$
as in (\ref{eq:def_R-1}). We have
\[
\left\Vert \ensuremath{\mathcal{T}}\ensuremath{\mathcal{L}^{t}}\ensuremath{\mathcal{T}^{\dagger}}\right\Vert _{L^{2}}\eq{\ref{eq:resol_ident_Pi_rho}}\ensuremath{\left\Vert \mathcal{L}^{t}\right\Vert _{L^{2}}}\underset{(\ref{eq:L*t*Lt})}{\leq}Ce^{\left(C_{X,V}+\epsilon\right)t}\quad\text{for }t>0.
\]
From the decay property \eqref{eq:decay_property} of the weight function
$W$, we have that if $\sigma$ is large enough depending on $t$
and $W$,

\[
\left\Vert \mathcal{M}_{\Lambda_{\sigma}^{c}\left(W\circ\tilde{\phi}^{t}\right)/W}\right\Vert _{L^{2}}\ineq{\ref{eq:decay_property}}Ce^{-\Lambda t}.
\]
We have
\begin{align*}
\left|\langle\delta_{\rho',j'}|Q_{1}\delta_{\rho,j}\rangle\right| & =\frac{W\left(\rho'\right)}{W\left(\rho\right)}\left|\langle\delta_{\rho',j'}|\mathcal{P}\delta_{\rho,j}\rangle\right|\\
 & \ineq{\ref{eq:temperate_property},\ref{eq:estimate_Bergman_kernel}}C_{W}\left\langle h_{0}^{\mu/N_{W}}\mathrm{dist}_{g}\left(\rho',\rho\right)\right\rangle ^{N_{W}}C_{N}\left\langle \mathrm{dist}_{g}\left(\rho',\rho\right)\right\rangle ^{-N}.
\end{align*}
with $C_{W}$ independent on $N_{W}$, but with $h_{0}$ small enough
w.r.t. $N_{W}$, so that we proceed as in (\ref{eq:aide}), and by
Schur Lemma \ref{lem:Schur-Lemma-.}, we get that
\[
\left\Vert Q_{1}\right\Vert _{L^{2}}\le C_{W}.
\]
Similarly $\left\Vert Q_{2}\right\Vert _{L^{2}}\le C_{W}$. Then
\[
\left\Vert R_{2}\right\Vert _{L^{2}}\le\left\Vert R\right\Vert \left\Vert Q_{2}\right\Vert \ineq{\ref{eq:bound_R}}C_{W,N_{W},t}h_{0}^{\mu}.
\]
We obtain
\[
\left\Vert \mathcal{M}_{W}\text{\ensuremath{\mathcal{T}}\ensuremath{\mathcal{L}^{t}}\textrm{Op}\ensuremath{\left(\Lambda_{\sigma}^{c}\right)}\ensuremath{\ensuremath{\mathcal{T}^{\dagger}}}}\mathcal{M}_{W^{-1}}\right\Vert _{L^{2}}\le C_{W}e^{(C_{X,V}+\epsilon-\Lambda)t}+C_{W,N_{W},t}h_{0}^{\mu},
\]
hence for some large $t>0$, then taking $h_{0}$ small enough and
$\sigma$ large enough, we get
\[
\left\Vert \mathcal{L}^{t}\mathrm{Op}\left(1-\Lambda_{\sigma}\right)\right\Vert _{\mathcal{H}_{W}}=\left\Vert \mathcal{M}_{W}\text{\ensuremath{\mathcal{T}}\ensuremath{\mathcal{L}^{t}}\textrm{Op}\ensuremath{\left(\Lambda_{\sigma}^{c}\right)}\ensuremath{\ensuremath{\mathcal{T}^{\dagger}}}}\mathcal{M}_{W^{-1}}\right\Vert _{L^{2}}\le Ce^{(C_{X,V}-\Lambda+\epsilon)t}.
\]
\end{proof}

\subsubsection{Meromorphic extension of the resolvent}

\begin{cBoxB}{}
\begin{prop}[Discrete spectrum of the generator and upper bound of the density]
\label{prop:discrete_spectrum}The generator $A=-X+V:\mathcal{H}_{W}\rightarrow\mathcal{H}_{W}$
of the semi-group $\left(\mathcal{L}^{t}\right)_{t\geq0}$ has discrete
spectrum on the domain $\mathrm{Re}\left(z\right)\geq C_{X,V}-\Lambda$
with $\Lambda$ given in (\eqref{eq:def_Lambda}). For any $\epsilon>0$,
there exists $C>0$ such that, for any $\omega\in\mathbb{R}$, the
number of the discrete eigenvalues (counted with multiplicities) in
the spectral region 
\begin{equation}
R_{\omega}:=\left\{ z\in\mathbb{C}\;\left|\;\mathrm{Re}\left(z\right)\geq C_{X,V}-\Lambda+\epsilon,\;\mathrm{Im}(z)\in\left[\omega,\omega+1\right]\right.\right\} \label{eq:def_region_R}
\end{equation}
is bounded by $C\left\langle \omega\right\rangle ^{n/(1+\beta_{0})}$.
\end{prop}

\end{cBoxB}

Lemma \ref{lem:For-the-weight} and Proposition \ref{prop:discrete_spectrum}
give Theorem \ref{thm:Discrete_spectrum} and Theorem \ref{thm:Weyl law}.
\begin{proof}
We fix the parameter $\alpha^{\parallel}=0$ that enters in the metric
(\ref{eq:conditions}). Let $\omega\in\mathbb{R}$ and set
\begin{equation}
J_{\omega}:=\left[\omega-1,\omega+1\right].\label{eq:def_J_omega}
\end{equation}
We want to show that $A$ has discrete spectrum on domain $R_{\omega}$.
See Figure \ref{fig:zones}. We take a constant $\sigma\geq1$ and
let it be large enough independently of $\omega$ in the course of
the proof below. Let us consider a continuous function with compact
support $\varpi\in C_{c}\left(T^{*}M;\left[0,1\right]\right)$ such
that, for $\rho\in T^{*}M$, $m=\pi\left(\rho\right)\in M$, 
\begin{align}
\varpi\left(\rho\right) & =\begin{cases}
1 & \text{ if }\left\Vert \rho-\omega\alpha\left(m\right)\right\Vert _{g_{\rho}}\leq\sigma\\
0 & \text{ if }\left\Vert \rho-\omega\alpha\left(m\right)\right\Vert _{g_{\rho}}\geq2\sigma
\end{cases}.\label{eq:def_Omega_0'}
\end{align}
We can and will assume that $\varpi$ is slowly varying, as in Definition
\ref{def:Let--be}, in the sense that 
\begin{equation}
\left|\varpi\left(\rho'\right)-\varpi\left(\rho\right)\right|\leq\frac{1}{\sigma}\left\langle \mathrm{dist}_{g}\left(\rho',\rho\right)\right\rangle \quad\text{for all }\rho,\rho'\in T^{*}M.\label{eq:slow_variation_varpi}
\end{equation}
There exists $C_{\sigma}>0$ such that 
\[
C_{\sigma}^{-1}\left\langle \omega\right\rangle ^{n\alpha^{\perp}}\le\mathrm{Vol}\left(\mathrm{supp}\left(\varpi\right)\right)\le C_{\sigma}\left\langle \omega\right\rangle ^{n\alpha^{\perp}}
\]
and hence, from (\ref{eq:norm_Tr_Op}), it follows 
\begin{equation}
\left\Vert \mathrm{Op}\left(\varpi\right)\right\Vert _{\mathrm{Tr}}\leq C_{\sigma}\mathrm{Vol}\left(\mathrm{supp}\left(\varpi\right)\right)\leq C\left\langle \omega\right\rangle ^{n\alpha^{\perp}}.\label{eq:trace_Omega_0}
\end{equation}
with $C$ independent of $\omega$. Let us consider the modified operator

\begin{equation}
A':=A-\Lambda\cdot\mathrm{Op}\left(\varpi\right)\qquad:\mathcal{D}\left(A\right)\rightarrow\mathcal{H}_{W}\label{eq:def_A'}
\end{equation}
where $\Lambda>0$ is the decay rate of the escape function $W$ in
(\ref{eq:decay_property}).
\begin{rem}
The special role played by $\Lambda$ will appear in (\ref{eq:res6}).
The added term $-\Lambda\cdot\mathrm{Op}\left(\varpi\right)$ in (\ref{eq:def_A'})
is sometimes called ``absorbing potential'' in micro-local analysis.
Its action is somehow to select the component of $A$ that acts micro-locally
on the support of $\varpi$ and push it on the left in the spectral
domain.
\end{rem}

\begin{cBoxB}{}
\begin{lem}
\label{lem:bounded_resolvent_of_A'}If we let $\sigma\geq1$ in the
definition (\ref{eq:def_Omega_0'}) be sufficiently large (independently
on $\omega$), the resolvent $\left(z-A'\right)^{-1}$ extends holomorphically
to the region $z\in R_{\omega}$ defined in (\ref{eq:def_region_R})
and further satisfies $\left\Vert \left(z-A'\right)^{-1}\right\Vert \leq C$
for $z\in R_{\omega}$ with some constant $C$ independent of $\omega\in\mathbb{R}$.
\end{lem}

\end{cBoxB}

The proof of Lemma \ref{lem:bounded_resolvent_of_A'} will be given
in Subsection \ref{subsec:Proof-of-Lemma-1}. We continue the proof
of Proposition \ref{prop:discrete_spectrum}. For $z\in R_{\omega}$,
we write
\[
\left(z-A\right)=\left(z-A'-\Lambda\mathrm{Op}\left(\varpi\right)\right)=\left(z-A'\right)\left(1-\left(z-A'\right)^{-1}\Lambda\mathrm{Op}\left(\varpi\right)\right).
\]
From Lemma \ref{lem:bounded_resolvent_of_A'} and (\ref{eq:trace_Omega_0}),
$\left(z-A'\right)^{-1}\Lambda\mathrm{Op}\left(\varpi\right)$ is
a holomorphic family of trace class operators that depends on $z\in R_{\omega}$,
hence $\left(1-\left(z-A'\right)^{-1}\Lambda\mathrm{Op}\left(\varpi\right)\right)$
is a holomorphic family of Fredholm operators of index $0$. We deduce
that
\[
\left(z-A\right)^{-1}=\left(1-\left(z-A'\right)^{-1}\Lambda\mathrm{Op}\left(\varpi\right)\right)^{-1}\left(z-A'\right)^{-1}
\]
except for a discrete set of points $z\in R_{\omega}$ where $\left(z-A\right)^{-1}$
has a pole of finite order.

To prove the latter claim on the number of eigenvalues of $A$ in
$R_{\omega}$, we use a method due to Sjöstrand \cite[Section 4]{sjostrand_2000}.
(See also \cite[Section 8.2]{faure-tsujii_prequantum_maps_12}). First
of all, the operator $\mathcal{K}(z):=\left(z-A'\right)^{-1}\Lambda\mathrm{Op}\left(\varpi\right)$
is a trace class operator for $z\in R_{\omega}$ and its trace norm
is bounded by $C\left\langle \omega\right\rangle ^{n\alpha^{\perp}}$
from \eqref{eq:trace_Omega_0}. We consider the holomorphic function
\[
\text{\ensuremath{\mathcal{D}}}:z\in R_{\omega}\mapsto\det\left(1-\mathcal{K}(z)\right)\in\mathbb{C}.
\]
We have 
\[
\log|\ensuremath{\mathcal{D}}(z)|\le C\left\Vert \mathcal{K}(z)\right\Vert _{\mathrm{Tr}}\le C\left\langle \omega\right\rangle ^{n\alpha^{\perp}}.
\]
If $\text{Re}(z)>C_{0}$ for sufficiently large $C_{0}$, we have
$\left\Vert \left(z-A\right)^{-1}\right\Vert \le C$ and
\[
\left(1-\mathcal{K}(z)\right)^{-1}=\left(z-A\right)^{-1}\left(z-A'\right)=1-\left(z-A\right)^{-1}\Lambda\mathrm{Op}\left(\varpi\right),
\]
and $\left\Vert \left(z-A\right)^{-1}\Lambda\mathrm{Op}\left(\varpi\right)\right\Vert _{\mathrm{Tr}}\le C'\left\langle \omega\right\rangle ^{n\alpha^{\perp}}$
from \eqref{eq:trace_Omega_0}, so
\[
-\log\left|\ensuremath{\mathcal{D}}\left(z\right)\right|=\log\left|\mathrm{det}\left(1-\mathcal{K}(z)\right)^{-1}\right|\le C'\left\langle \omega\right\rangle ^{n\alpha^{\perp}}.
\]
Now we take a slightly larger scaled neighborhood $\widetilde{R}_{\omega}\Supset R_{\omega}$
and a Riemann mapping $\psi:\widetilde{R}_{\omega}\to\mathbb{D}:=\{|z|<1\}$
so that $\psi(w)=0$ for a point $w\in R_{\omega}$ with $\text{Re}(z)>C_{0}$.
If we apply Jensen's formula to the sub-harmonic function
\[
w\in\mathbb{D}\mapsto\log|\ensuremath{\mathcal{D}}(\psi^{-1}(w))|,
\]
we obtain that the number of eigenvalues of $A$ in the region $R_{\omega}$
is bounded by $C\left\langle \omega\right\rangle ^{n\alpha^{\perp}}$.
This finishes the proof of Proposition \ref{prop:discrete_spectrum}.
\end{proof}

\subsubsection{\label{subsec:Proof-of-Lemma-1}Proof of Lemma \ref{lem:bounded_resolvent_of_A'}}

Let $\omega\in\mathbb{R}$ and $z\in R_{\omega}\subset\mathbb{C}$
where $R_{\omega}$ is defined in (\ref{eq:def_region_R}). We want
to show that $\left(z-A'\right)$ is invertible with uniformly bounded
inverse w.r.t. $z,\omega$. For this, we will construct operators
$R_{r}\left(z\right),R_{l}\left(z\right)$ so that they are uniformly
bounded and are approximate right and left inverses in the sense that
\begin{align}
\left\Vert \left(z-A'\right)R_{r}\left(z\right)-\mathrm{Id}\right\Vert _{\mathcal{H}_{W}} & \leq\frac{1}{2}\quad\text{and }\label{eq:almost_inverse}\\
\left\Vert R_{l}\left(z\right)\left(z-A'\right)-\mathrm{Id}\right\Vert _{\mathcal{H}_{W}} & \leq\frac{1}{2}.
\end{align}
Then the Neumann series
\begin{align*}
\tilde{R}_{r}\left(z\right):= & R_{r}\left(z\right)\sum_{k\geq0}\left(\mathrm{Id}-\left(z-A'\right)R_{r}\left(z\right)\right)^{k}\quad\text{and }\\
\tilde{R}_{l}\left(z\right):= & \sum_{k\geq0}\left(\mathrm{Id}-\left(z-A'\right)R_{r}\left(z\right)\right)^{k}R_{l}\left(z\right)
\end{align*}
give\footnote{Formally if $\left|1-XR\right|\leq1/2$ then $\tilde{R}:=R\sum_{k\geq0}\left(1-XR\right)^{k}$
is convergent and 
\[
X\tilde{R}=\left(XR-1+1\right)\sum_{k\geq0}\left(1-XR\right)^{k}=-\sum_{k\geq1}\left(1-XR\right)^{k}+\sum_{k\geq0}\left(1-XR\right)^{k}=1.
\]
} the exact right and left inverses satisfying $\left(z-A'\right)\tilde{R}_{r}\left(z\right)=\tilde{R}_{l}\left(z\right)\left(z-A'\right)=\mathrm{Id}$,
which are uniformly bounded. Therefore we obtain that the resolvent
$\left(z-A'\right)^{-1}=\tilde{R}_{r}\left(z\right)=\tilde{R}_{r}\left(z\right)\left(z-A'\right)\tilde{R}_{l}\left(z\right)=\tilde{R}_{l}\left(z\right)$
is bounded uniformly.

\begin{figure}[h]
\centering{}\scalebox{0.8}[0.8]{\input{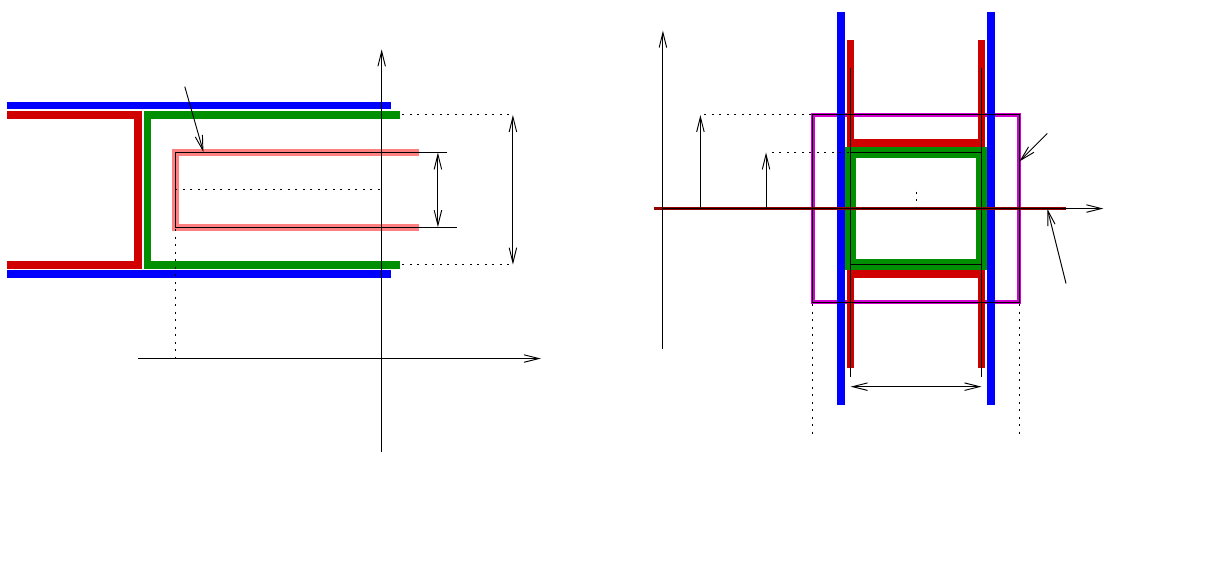tex_t}}\caption{\textbf{\label{fig:zones}Figure (a):} In the proof of Lemma \ref{prop:discrete_spectrum},
we show that a perturbation $A'=A-\Lambda\mathrm{Op}\left(\varpi\right)$
of the generator $A$ has no spectrum on the spectral domain $R_{\omega}\subset\mathbb{C}$
with frequency range $J_{\omega}=\left[\omega-1,\omega+1\right]$.\textbf{}\protect \\
\textbf{Figure (b):} The operator $\mathrm{Op}\left(\varpi\right)$
is constructed from a compact vicinity $\mathrm{supp}(\varpi)$ of
the trapped set $E_{0}^{*}\subset T^{*}M$ for a larger range of frequencies
and with transverse size $\sigma\left\langle \omega\right\rangle ^{\alpha^{\perp}}$.
We introduce a partition $T^{*}M=\Omega_{0}\cup\Omega_{1}\cup\Omega_{2}$
where $\Omega_{0}$ is a compact vicinity of the trapped set $E_{0}^{*}$
for an intermediate range of frequencies $J'_{\omega}$ and with transverse
size $\sigma'\left\langle \omega\right\rangle ^{\alpha{}^{\perp}}$
so that $\Omega_{0}\subset\mathrm{supp}(\varpi)$. The set $\Omega_{1}$
is the outside of $\Omega_{0}$ (away from the trapped set) with the
same frequencies range $J'_{\omega}$ and $\Omega_{2}$ are all the
other points with other frequencies. During the proof, we take $\sigma\gg\sigma'\gg1$.}
\end{figure}

In order to construct $R_{r}\left(z\right)$ and $R_{l}\left(z\right)$,
we take a real number $c>0$ and set 
\begin{equation}
J'_{\omega}:=\left[\omega-\left(1+c\right),\omega+\left(1+c\right)\right]\:\underset{(\ref{eq:def_J_omega})}{\supset}\:J{}_{\omega}.\label{eq:def_J'}
\end{equation}
We take another constant $\sigma'>0$ and set $\Omega_{0}$ as a $K-$ball
around the trapped set at frequency $\omega$:
\begin{equation}
\Omega_{0}:=\left\{ \rho=\rho_{u}+\rho_{s}+\omega\alpha\left(m\right)\in T^{*}M,\,m=\pi\left(\rho\right)\in M,\omega\in J'_{\omega},\,\left\Vert \rho_{u}+\rho_{s}\right\Vert _{g_{\rho}}\leq K\right\} ,\label{eq:def_Omega_0}
\end{equation}
\begin{equation}
\Omega_{1}:=\left\{ \rho=\rho_{u}+\rho_{s}+\omega\alpha\left(m\right)\in T^{*}M,\,\omega\in J'_{\omega},\,\left\Vert \rho_{u}+\rho_{s}\right\Vert _{g_{\rho}}\geq K\right\} \label{eq:def_Omega_1}
\end{equation}
and 
\begin{equation}
\Omega_{2}:=\left\{ \rho\in T^{*}M,\quad\omega\left(\rho\right)\in\mathbb{R}\backslash J'_{\omega}\right\} \label{eq:def_Omega_2}
\end{equation}
so that $T^{*}M=\Omega_{0}\cup\Omega_{1}\cup\Omega_{2}$. These regions
are drawn on Figure \ref{fig:zones} and explained in the caption.
For $j=0,1,2$, let $\chi_{\Omega_{j}}$ be the characteristic function
of $\Omega_{j}$. We have $\sum_{j}\chi_{\Omega_{j}}=1$ and hence
\begin{equation}
\mathrm{Id}\underset{(\ref{eq:resol_ident_Pi_rho})}{=}\mathrm{Op}\left(1\right)=\mathrm{Op}\left(\chi_{\Omega_{0}}\right)+\mathrm{Op}\left(\chi_{\Omega_{1}}\right)+\mathrm{Op}\left(\chi_{\Omega_{2}}\right).\label{eq:ident}
\end{equation}
Correspondingly to the decomposition (\ref{eq:ident}), we will construct
an approximate resolvent 
\[
R_{r}\left(z\right)=R_{r}^{\left(0\right)}\left(z\right)+R_{r}^{\left(1\right)}\left(z\right)+R_{r}^{\left(2\right)}\left(z\right)
\]
with three contributions $R_{r}^{\left(j\right)}\left(z\right)$,
$j=0,1,2$, so that each of them are uniformly bounded for $z\in R_{\omega}$
(and $\omega\in\mathbb{R}$) and satisfies
\begin{equation}
\left\Vert \left(z-A'\right)R_{r}^{\left(j\right)}\left(z\right)-\mathrm{Op}\left(\chi_{\Omega_{j}}\right)\right\Vert _{\mathcal{H}_{W}}\leq\frac{1}{6}.\label{eq:approximate_R_j}
\end{equation}
The estimate (\ref{eq:approximate_R_j}) will give (\ref{eq:almost_inverse})
and finish the proof of Lemma \ref{lem:bounded_resolvent_of_A'}.

\paragraph{\label{par:Contribution}Contribution $R_{r}^{\left(0\right)}\left(z\right)$}

For the function $\varpi$ in (\ref{eq:def_Omega_0'}) and $t\geq0$,
we define
\begin{equation}
\varpi_{\left[0,t\right]}:=\int_{0}^{t}\varpi\circ\tilde{\phi}^{s}ds.\label{eq:def_varpi_0-t}
\end{equation}
Let $T>0$ and\footnote{Let us explain why the expression (\ref{eq:expression_R(0)}) is a
natural guess. Formally a good guess for the resolvent would be $R_{r}\left(z\right):=\int_{0}^{\infty}e^{-t\left(z-A'\right)}dt$
with $A'=A-\Lambda\mathrm{Op}\left(\varpi\right)$. However the operator
$e^{tA'}$ has no clear mathematical sense but using Egorov's formula
(\ref{eq:norm_Egorov_PDO}), for bounded time, the operator $e^{tA'}$
is approximated by $e^{tA}e^{-\mathrm{Op}\left(\varpi_{\left[0,t\right]}\right)}$.
Finally we truncate the integral up to a finite (but large) $T$ and
compose with a final truncation operator $\mathrm{Op}\left(\chi_{\Omega_{0}}\right)$
to get (\ref{eq:expression_R(0)}).}
\begin{equation}
R_{r}^{\left(0\right)}\left(z\right):=\int_{0}^{T}e^{-t\left(z-A\right)}\mathrm{Op}\left(e^{-\Lambda\varpi_{\left[0,t\right]}}\right)\mathrm{Op}\left(\chi_{\Omega_{0}}\right)dt.\label{eq:expression_R(0)}
\end{equation}
We assume that the constant $\sigma$ in (\ref{eq:def_Omega_0'})
is large enough so that we have
\[
\varpi\left(\tilde{\phi}^{t}\left(\rho\right)\right)=1\quad\text{and}\quad\varpi_{\left[0,t\right]}\left(\rho\right)\underset{(\ref{eq:def_varpi_0-t})}{=}t\quad\text{for all }t\in\left[0,T\right]\text{ and }\rho\in\Omega_{0}.
\]
We apply Corollary \ref{cor:B3}, regarding $\sigma\geq1$ as the
parameter, let $t\in\left[0,T\right]$, and see that for every $N>0$,
there exists $C_{N,T}>0$ such that
\[
\left\Vert \mathrm{Op}\left(e^{-\Lambda\varpi_{\left[0,t\right]}}\right)\mathrm{Op}\left(\chi_{\Omega_{0}}\right)-e^{-t\Lambda}\mathrm{Op}\left(\chi_{\Omega_{0}}\right)\right\Vert _{\mathcal{H}_{W}}\underset{(\ref{eq:corollary_compos})}{\leq}C_{N,T}\sigma^{-N}.
\]
Since $\left\Vert \mathrm{Op}\left(\chi_{\Omega_{0}}\right)\right\Vert _{\mathcal{H}_{W}}\leq1$
from (\ref{eq:norme_symbol}), it follows that
\[
\left\Vert \mathrm{Op}\left(e^{-\Lambda\varpi_{\left[0,t\right]}}\right)\mathrm{Op}\left(\chi_{\Omega_{0}}\right)\right\Vert _{\mathcal{H}_{W}}\leq e^{-t\Lambda}+C_{N,T}\sigma^{-N}.
\]
Therefore we have
\begin{align}
\left\Vert e^{-t\left(z-A\right)}\mathrm{Op}\left(e^{-\Lambda\varpi_{\left[0,t\right]}}\right)\mathrm{Op}\left(\chi_{\Omega_{0}}\right)\right\Vert _{\mathcal{H}_{W}} & \underset{(\ref{eq:bound_C_X,V})}{\leq}Ce^{-t\left(\mathrm{Re}\left(z\right)-C_{X,V}+\Lambda\right)}+C_{N,T}\sigma^{-N}\nonumber \\
 & \le Ce^{-t\epsilon}+C_{N,T}\sigma^{-N}\label{eq:res5}
\end{align}
for $t\in[0,T]$ and hence we deduce that, for $z\in R_{\omega}$,
\[
\left\Vert R_{r}^{\left(0\right)}\left(z\right)\right\Vert \underset{(\ref{eq:expression_R(0)})}{\leq}\int_{0}^{T}\left\Vert e^{-t\left(z-A\right)}\mathrm{Op}\left(e^{-\Lambda\varpi_{\left[0,t\right]}}\right)\mathrm{Op}\left(\chi_{\Omega_{0}}\right)\right\Vert _{\mathcal{H}_{W}}dt\underset{(\ref{eq:res5})}{\leq}\frac{C}{\epsilon}+C{}_{N,T}\sigma{}^{-N}.
\]
Since the constant $C_{N,T}$ does not depend on $\sigma$, we can
let the constant $\sigma$ be large for given $T>0$ so that the last
term $C_{N,T}\sigma{}^{-N}$ is small relative to the former term
$C/\epsilon$ and that the operator norm $\left\Vert R_{r}^{\left(0\right)}\left(z\right)\right\Vert $
is uniformly bounded for $z\in R_{\omega}$ (and $\omega\in\mathbb{R}$).

Now we prove the required estimate \eqref{eq:approximate_R_j}. Observe
that
\begin{align}
\frac{d}{dt}\left(e^{-t\left(z-A\right)}\mathrm{Op}\left(e^{-\Lambda\varpi_{\left[0,t\right]}}\right)\right) & =-\left(z-A\right)e^{-t\left(z-A\right)}\mathrm{Op}\left(e^{-\Lambda\varpi_{\left[0,t\right]}}\right)\label{eq:res1}\\
 & \quad\quad-e^{-t\left(z-A\right)}\mathrm{Op}\left(\Lambda\varpi\circ\tilde{\phi}^{t}\cdot e^{-\Lambda\varpi_{\left[0,t\right]}}\right).\nonumber 
\end{align}
For the last term of (\ref{eq:res1}), we apply Theorem \ref{thm:Composition-of-PDO.}
on composition of PDO, Egorov's Theorem \ref{thm:Egorov.-For-any}
and also the slow variation property of $\varpi$ in (\ref{eq:slow_variation_varpi})
and obtain
\begin{align*}
e^{-t\left(z-A\right)} & \mathrm{Op}\left(\Lambda\varpi\circ\tilde{\phi}^{t}\cdot e^{-\Lambda\varpi_{\left[0,t\right]}}\right)\\
\underset{(\ref{eq:norm_Compos_PDO})}{=} & e^{-t\left(z-A\right)}\mathrm{Op}\left(\Lambda\varpi\circ\tilde{\phi}^{t}\right)\circ\mathrm{Op}\left(e^{-\Lambda\varpi_{\left[0,t\right]}}\right)+O_{\mathcal{H}_{W}}\left(C_{t}\sigma^{-1}\right)\\
\underset{(\ref{eq:norm_Egorov_PDO})}{=} & \Lambda\mathrm{Op}\left(\varpi\right)\circ e^{-t\left(z-A\right)}\circ\mathrm{Op}\left(e^{-\Lambda\varpi_{\left[0,t\right]}}\right)+O_{\mathcal{H}_{W}}\left(C_{t}\sigma^{-1}\right)
\end{align*}
where $C_{t}>0$ depends on $t$ and the error term $O_{\mathcal{H}_{W}}\left(C_{t}\sigma^{-1}\right)$
denote an operator whose operator norm on $\mathcal{H}_{W}$ is bounded
by $C_{t}\sigma^{-1}$. Putting this estimate in \eqref{eq:res1},
we get
\begin{align}
\frac{d}{dt}\left(e^{-t\left(z-A\right)}\mathrm{Op}\left(e^{-\Lambda\varpi_{\left[0,t\right]}}\right)\right)= & -\left(z-A+\Lambda\mathrm{Op}\left(\varpi\right)\right)e^{-t\left(z-A\right)}\mathrm{Op}\left(e^{-\Lambda\varpi_{\left[0,t\right]}}\right)\label{eq:res3}\\
 & \quad+O_{\mathcal{H}_{W}}\left(C_{t}\sigma^{-1}\right)\nonumber 
\end{align}
Then we deduce that
\begin{align}
 & \left(z-A'\right)\circ R_{r}^{\left(0\right)}\left(z\right)-\mathrm{Op}\left(\chi_{\Omega_{0}}\right)\nonumber \\
 & \quad\underset{(\ref{eq:def_A'}),(\ref{eq:expression_R(0)})}{=}\left(z-A+\Lambda\mathrm{Op}\left(\varpi\right)\right)\int_{0}^{T}e^{-t\left(z-A\right)}\mathrm{Op}\left(e^{-\Lambda\varpi_{\left[0,t\right]}}\right)\mathrm{Op}\left(\chi_{\Omega_{0}}\right)dt-\mathrm{Op}\left(\chi_{\Omega_{0}}\right)\nonumber \\
 & \qquad\underset{(\ref{eq:res3})}{=}-\int_{0}^{T}\frac{d}{dt}\left(e^{-t\left(z-A\right)}\mathrm{Op}\left(e^{-\Lambda\varpi_{\left[0,t\right]}}\right)\right)\mathrm{Op}\left(\chi_{\Omega_{0}}\right)dt+O_{\mathcal{H}_{W}}\left(C_{T}\sigma^{-1}\right)-\mathrm{Op}\left(\chi_{\Omega_{0}}\right)\nonumber \\
 & \qquad=-e^{-T\left(z-A\right)}\mathrm{Op}\left(e^{-\Lambda\varpi_{\left[0,T\right]}}\right)\mathrm{Op}\left(\chi_{\Omega_{0}}\right)+O_{\mathcal{H}_{W}}\left(C_{T}\sigma^{-1}\right)\label{eq:res}
\end{align}
and hence that
\[
\left\Vert \left(z-A'\right)\circ R_{r}^{\left(0\right)}\left(z\right)-\mathrm{Op}\left(\chi_{\Omega_{0}}\right)\right\Vert _{\mathcal{H}_{W}}\underset{(\ref{eq:res}),(\ref{eq:res5})}{\leq}Ce^{-T\epsilon}+C_{T}\sigma^{-1}.
\]
For a fixed $\epsilon>0$ and some given $N\geq1$ we take $T$ large
enough and then $\sigma$ large enough depending on $T$ so that $\left\Vert \left(z-A'\right)\circ R_{r}^{\left(0\right)}\left(z\right)-\mathrm{Op}\left(\chi_{\Omega_{0}}\right)\right\Vert _{\mathcal{H}_{W}}\leq1/6$.
This proves (\ref{eq:approximate_R_j}) for $j=0$.

\paragraph{Contribution $R_{r}^{\left(1\right)}\left(z\right)$}

We follow a construction similar to what we did for $R_{r}^{\left(0\right)}\left(z\right)$.
Let $T>0$ and
\begin{equation}
R_{r}^{\left(1\right)}\left(z\right):=\int_{0}^{T}e^{-t\left(z-A\right)}\mathrm{Op}\left(e^{-\Lambda\varpi_{\left[0,t\right]}}\right)\mathrm{Op}\left(\chi_{\Omega_{1}}\right)dt.\label{eq:expression_R(1)}
\end{equation}
By Theorem \eqref{thm:decay_outside_trapped_set} applied to function
$\chi_{\Omega_{1}}$ instead of $\left(1-\Lambda_{\sigma}\right)$
(the proof is the same) we get that there exists $C>0$, for any $T>0$,
$t\in\left[0,T\right]$, we can choose $h_{0}$ small enough and $\sigma$
and $K$ large enough so that
\begin{equation}
\left\Vert e^{tA}\mathrm{Op}\left(\chi_{\Omega_{1}}\right)\right\Vert _{\mathcal{H}_{W}}\leq Ce^{\left(C_{X,V}-\Lambda+\epsilon'\right)t}.\label{eq:res6}
\end{equation}
By using Egorov Theorem to permute the operators, we get
\begin{equation}
e^{-t\left(z-A\right)}\mathrm{Op}\left(e^{-\Lambda\varpi_{\left[0,t\right]}}\right)\underset{(\ref{eq:norm_Egorov_PDO})}{=}\mathrm{Op}\left(e^{-\Lambda\varpi_{\left[-t,0\right]}}\right)e^{-t\left(z-A\right)}+O_{\mathcal{H}_{W}}\left(C_{T}\sigma^{-1}\right).\label{eq:egorov_R1}
\end{equation}
for $t\in\left[0,T\right]$. Since $\left\Vert \mathrm{Op}\left(e^{-\Lambda\varpi_{\left[-T,0\right]}}\right)\right\Vert _{\mathcal{H}_{W}}\leq C$,
we have
\begin{align}
\left\Vert \mathrm{Op}\left(e^{-\Lambda\varpi_{\left[-t,0\right]}}\right)e^{-t\left(z-A\right)}\mathrm{Op}\left(\chi_{\Omega_{1}}\right)\right\Vert _{\mathcal{H}_{W}} & \underset{(\ref{eq:res6})}{\leq}Ce^{-t\mathrm{Re}\left(z\right)}e^{\left(C_{X,V}-\Lambda+\epsilon'\right)t}\nonumber \\
 & \underset{(\ref{eq:def_region_R})}{\leq}Ce^{-t\epsilon}.\label{eq:res55}
\end{align}
We deduce 
\begin{align*}
\left\Vert R_{r}^{\left(1\right)}\left(z\right)\right\Vert  & \underset{(\ref{eq:expression_R(1)}),(\ref{eq:egorov_R1})}{\leq}\int_{0}^{T}\left\Vert \mathrm{Op}\left(e^{-\Lambda\varpi_{\left[-t,0\right]}}\right)e^{-t\left(z-A\right)}\mathrm{Op}\left(\chi_{\Omega_{1}}\right)\right\Vert dt+C_{T}\sigma^{-1}\\
 & \leq\frac{C}{\epsilon}+C_{T}\sigma^{-1}.
\end{align*}
Therefore $R_{r}^{\left(1\right)}\left(z\right)$ is bounded uniformly
$z\in R_{\omega}$. Similarly to the case of $R_{r}^{\left(0\right)}\left(z\right)$,
we have 
\begin{align}
\left(z-A'\right)\circ R_{r}^{\left(1\right)}\left(z\right)-\mathrm{Op}\left(\chi_{\Omega_{1}}\right)\underset{}{=} & -e^{-T\left(z-A\right)}\mathrm{Op}\left(e^{-\Lambda\varpi_{\left[0,T\right]}}\right)\mathrm{Op}\left(\chi_{\Omega_{1}}\right)+O_{\mathcal{H}_{W}}\left(C_{T}\sigma^{-1}\right)\nonumber \\
\underset{}{=} & -\mathrm{Op}\left(e^{-\Lambda\varpi_{\left[-T,0\right]}}\right)e^{-T\left(z-A\right)}\mathrm{Op}\left(\chi_{\Omega_{1}}\right)+O_{\mathcal{H}_{W}}\left(C_{T}\sigma^{-1}\right)\label{eq:ress}
\end{align}
and deduce
\[
\left\Vert \left(z-A'\right)\circ R_{r}^{\left(1\right)}\left(z\right)-\mathrm{Op}\left(\chi_{\Omega_{1}}\right)\right\Vert _{\mathcal{H}_{W}}\underset{(\ref{eq:ress}),(\ref{eq:res55})}{\leq}Ce^{-T\epsilon}+C_{T}\sigma^{-1}.
\]
For a fixed $\epsilon>0$, we may take large $T$ and then take $h_{0}$
small and $\sigma$ large enough depending on $T$ so that $\left\Vert \left(z-A'\right)\circ R_{r}^{\left(1\right)}\left(z\right)-\mathrm{Op}\left(\chi_{\Omega_{0}}\right)\right\Vert _{\mathcal{H}_{W}}\leq\frac{1}{6}$.
We have obtained (\ref{eq:approximate_R_j}) for $j=1$.

\paragraph{Contribution $R_{r}^{\left(2\right)}\left(z\right)$}

On the domain $\Omega_{2}$ we will use ``elliptic estimate'' techniques.
The symplectic volume $m$ on $T^{*}M$ naturally induces the conditional
measure $m_{\omega}$ on each of the level set $\boldsymbol{\omega}{}^{-1}\left(\omega\right)$
so that $m=\int m_{\omega}d\omega$. For each $\omega'\in\mathbb{R}$,
we define
\begin{equation}
\Pi\left(\omega'\right):=\int\Pi\left(\rho'\right)\frac{dm_{\omega}}{\left(2\pi\right)^{n+1}}.\label{eq:def_Pi_omega}
\end{equation}
Let
\begin{align*}
R_{r}^{\left(2\right)}\left(z\right): & =\int_{\omega'\notin J'}\frac{1}{z-i\omega'}\Pi\left(\omega'\right)d\omega'\\
 & \eq{\ref{eq:def_Pi_omega},\ref{eq:def_Omega_2}}\int_{\Omega_{2}}\frac{1}{z-i\omega\left(\rho'\right)}\Pi\left(\rho'\right)\frac{d\rho'}{\left(2\pi\right)^{n+1}}\\
 & \eq{\ref{eq:def_operator}}\mathrm{Op}\left(\frac{1}{z-i\omega\left(.\right)}\mathrm{1}_{\Omega_{2}}\right)
\end{align*}

Since $\max_{\omega\notin J'}\left|\frac{1}{z-i\omega}\right|\leq c^{-1}$,
we deduce from Theorem \ref{thm:Continuity-of-PDO} that 
\begin{equation}
\left\Vert R_{r}^{\left(2\right)}\left(z\right)\right\Vert _{\mathcal{H}_{W}}\le Cc^{-1},\label{eq:norm_B}
\end{equation}
where $C$ does not depend on $\omega,z$.
\begin{lem}
\label{lem:bound_norm_D}We have
\begin{equation}
\left\Vert \left(z-A'\right)R_{r}^{\left(2\right)}\left(z\right)-\mathrm{Op}\left(\chi_{\Omega_{2}}\right)\right\Vert _{\mathcal{H}_{W}}^{2}\le Cc^{-1}.\label{eq:norm_D}
\end{equation}
where the constant $C$ depends on $\Lambda$ and $V$.
\end{lem}

Then taking $c$ large enough proves (\ref{eq:approximate_R_j}) for
$j=2$.
\begin{proof}
We write first
\begin{align}
\left(z-A'\right)R_{r}^{\left(2\right)}\left(z\right)-\mathrm{Op}\left(\chi_{\Omega_{2}}\right) & =\int_{\Omega_{2}}\frac{z-A'}{z-i\omega\left(\rho\right)}\Pi\left(\rho\right)\frac{d\rho}{\left(2\pi\right)^{n+1}}-\int_{\Omega_{2}}\Pi\left(\rho\right)\frac{d\rho}{\left(2\pi\right)^{n+1}}\nonumber \\
 & =\int_{\Omega_{2}}\frac{z-A'-\left(z-i\omega\left(\rho\right)\right)}{z-i\omega\left(\rho\right)}\Pi\left(\rho\right)\frac{d\rho}{\left(2\pi\right)^{n+1}}\nonumber \\
 & =-\int_{\Omega_{2}}\frac{A'-i\omega\left(\rho\right)}{z-i\omega\left(\rho\right)}\Pi\left(\rho\right)\frac{d\rho}{\left(2\pi\right)^{n+1}}\nonumber \\
 & =-\int_{\omega'\notin J'}\frac{A'-i\omega'}{z-i\omega'}\Pi\left(\omega'\right)d\omega'.\label{eq:A'}
\end{align}
We have
\[
A'\eq{\ref{eq:def_A'}}A-\Lambda\cdot\mathrm{Op}\left(\varpi\right)\eq{\ref{eq:generator_A}}-X+V-\Lambda\cdot\mathrm{Op}\left(\varpi\right).
\]
We have seen in (\ref{eq:symbol_X}) that $-X=\mathrm{Op}\left(i\omega\right)+R$
with a remainder $R\in\Psi\left(\langle\left|\rho\right|\rangle^{\alpha^{\parallel}}\right)$,
here $\alpha^{\parallel}=0$. Then $R=A'-\mathrm{Op}\left(i\omega\right)\in\Psi\left(1\right)$
as well, since $V,\Lambda\varpi$ are bounded. We write
\[
\int_{\omega'\notin J'}\frac{A'-i\omega'}{z-i\omega'}\Pi\left(\omega'\right)d\omega'=\left(\mathrm{Op}\left(i\omega\left(.\right)\right)+R\right)\mathrm{Op}\left(\frac{1}{z-i\omega\left(.\right)}\mathrm{1}_{\Omega_{2}}\right)+\mathrm{Op}\left(\frac{-i\omega\left(.\right)}{z-i\omega\left(.\right)}\mathrm{1}_{\Omega_{2}}\right).
\]
By the composition theorem \ref{thm:Composition-of-PDO.}, using that
$\omega\left(.\right)$ is slowly varying with function $h\left(\rho\right)=\left(\delta^{\parallel}\left(\rho\right)\right)^{-1}=1$,
and since $\max_{\omega\notin J'}\left|\frac{1}{z-i\omega}\right|\leq c^{-1}$,
we have
\[
\mathrm{Op}\left(i\omega\left(.\right)\right)\mathrm{Op}\left(\frac{1}{z-i\omega\left(.\right)}\mathrm{1}_{\Omega_{2}}\right)=\mathrm{Op}\left(\frac{i\omega\left(.\right)}{z-i\omega\left(.\right)}\mathrm{1}_{\Omega_{2}}\right)+\Psi\left(r_{2}\right)
\]
with $r_{2}=h\left(\rho\right)c^{-1}=c^{-1}.$ Also
\[
R\mathrm{Op}\left(\frac{1}{z-i\omega\left(.\right)}\mathrm{1}_{\Omega_{2}}\right)\in\Psi\left(c^{-1}\right),
\]
hence
\[
\int_{\omega'\notin J'}\frac{A'-i\omega'}{z-i\omega'}\Pi\left(\omega'\right)d\omega'\in\Psi\left(c^{-1}\right).
\]
From Theorem \ref{thm:Continuity-of-PDO} we deduce that $\left\Vert \int_{\omega'\notin J'}\frac{A'-i\omega'}{z-i\omega'}\Pi\left(\omega'\right)d\omega'\right\Vert _{\mathcal{H}_{W}}\leq Cc^{-1}$.
With (\ref{eq:A'}), we deduce (\ref{eq:norm_D}).
\end{proof}

\section{\label{sec:Proof-of-Theorem_WF}Proof of Theorem \ref{thm:WF} and
Corollary \ref{cor:Let-us-choose} about the wave front set}

\subsection{Proof of Theorem \ref{thm:WF}}

Let $g$ be an admissible metric\emph{. }$\mathcal{T}:C^{\infty}\left(M;\mathbb{C}\right)\rightarrow\mathcal{S}\left(T^{*}M;\mathbb{C}^{J}\right)$
denotes the wave-packet transform \eqref{eq:def_T_in-1} constructed
from that metric $g$. Let $W$ be an escape function as defined in
Lemma \ref{thm:W} and $\mathcal{H}_{W}\left(M\right)$ be the anisotropic
Sobolev space defined from $W$ in (\ref{eq:def_H_W}).

Consider a discrete eigenvalue $z_{0}\in\mathbb{C}$ of the generator
$A=-X+V$ with $\text{Re}(z_{0})>C_{X,V}-\Lambda$ and we set $\omega_{0}=\mathrm{Im}\left(z_{0}\right)$.
Assume that $u\in\mathcal{H}_{W}\left(M\right)$ is a generalized
eigenvector for the eigenvalue $z_{0}$, that is to say
\begin{equation}
\left(A-z_{0}\right)^{k}u=0\label{eq:Aku}
\end{equation}
for some $k\in\mathbb{N}$, $k\geq1$. We assume $\mathrm{Re}z_{0}>a$,
for some $a\in\mathbb{R}$. The estimates will be uniform in $\omega_{0}$
but non uniform in $a$.

Let us define a function $f\in C^{\infty}\left(T^{*}M;\mathbb{C}\right)$
\[
f\left(\rho\right)=\left(i\left(\omega-\omega_{0}\right)\right)^{-k},
\]
with $\omega=\boldsymbol{\omega}\left(\rho\right)$, when $\left|\omega-\omega_{0}\right|\geq1$
and bounded by $1$ when $\left|\omega-\omega_{0}\right|\leq1$. Let
$R$ be the operator defined by
\begin{equation}
R:=\mathrm{Id}-\mathrm{Op}\left(f\right)\left(A-z_{0}\right)^{k}.\label{eq:def_R-2}
\end{equation}
Using the notation of Definition \ref{def:Let--be-1} for $\Psi$,
we have the following estimate.
\begin{lem}
For any $m>0$, 
\begin{equation}
R^{m}\in\Psi\left(\left\langle \boldsymbol{\omega}-\omega_{0}\right\rangle ^{-m}\left\langle \left|\rho\right|\right\rangle ^{m\alpha^{\parallel}}\right).\label{eq:Rm}
\end{equation}
\end{lem}

\begin{proof}
Recall from (\ref{eq:symbol_X}) that
\[
\left(A-\mathrm{Op}\left(i\boldsymbol{\omega}\right)\right)\in\Psi\left(\left\langle \left|\rho\right|\right\rangle ^{\alpha^{\parallel}}\right)
\]
Also uniformly in $\omega_{0}$ we have that
\[
\left(\left(A-z_{0}\right)^{k}-\mathrm{Op}\left(\left(i\boldsymbol{\omega}-z_{0}\right)^{k}\right)\right)\in\Psi\left(\left|\boldsymbol{\omega}-\omega_{0}\right|^{k-1}\left\langle \left|\rho\right|\right\rangle ^{\alpha^{\parallel}}\right)
\]
\[
\left(\left(A-z_{0}\right)^{k}-\mathrm{Op}\left(f^{-1}\right)\right)\in\Psi\left(\left|\boldsymbol{\omega}-\omega_{0}\right|^{k-1}\left\langle \left|\rho\right|\right\rangle ^{\alpha^{\parallel}}\right)
\]
From composition theorem:
\[
\mathrm{Op}\left(f\right)\mathrm{Op}\left(f^{-1}\right)-\mathrm{Id}\in\Psi\left(\left|\boldsymbol{\omega}-\omega_{0}\right|^{-1}\left\langle \left|\rho\right|\right\rangle ^{\alpha^{\parallel}}\right)
\]
Thus
\begin{align*}
R & \eq{\ref{eq:def_R-2}}\mathrm{Id}-\mathrm{Op}\left(f\right)\mathrm{Op}\left(f^{-1}\right)+\mathrm{Op}\left(f\right)\left(\mathrm{Op}\left(f^{-1}\right)-\left(A-z_{0}\right)^{k}\right)\\
 & \in\Psi\left(\left|\boldsymbol{\omega}-\omega_{0}\right|^{-1}\left\langle \left|\rho\right|\right\rangle ^{\alpha^{\parallel}}\right).
\end{align*}
and (\ref{eq:Rm}) follows.
\end{proof}
We have $Ru\underset{(\ref{eq:Aku},\ref{eq:def_R-2})}{=}u$ hence
for any $m\geq1$,
\begin{equation}
\left(\mathcal{T}u\right)\left(\rho'\right)=\left(\mathcal{T}R^{m}\mathcal{T}^{\dagger}\mathcal{T}u\right)\left(\rho'\right)=\int\langle\delta_{\rho',j'}|\mathcal{T}R^{m}\mathcal{T}^{\dagger}\delta_{\rho,j}\rangle\left(\mathcal{T}u\right)\left(\rho\right)\frac{d\rho}{\left(2\pi\right)^{n+1}}\label{eq:Tu}
\end{equation}
Therefore for $\rho\in T^{*}M$,
\begin{align*}
W\left(\rho'\right)\left|\left(\mathcal{T}u\right)\left(\rho'\right)\right| & \ineq{\ref{eq:Tu}}W\left(\rho'\right)\sum_{j,j'}\int\left|\langle\delta_{\rho',j'}|\mathcal{T}R^{m}\mathcal{T}^{\dagger}\delta_{\rho,j}\rangle\right|\left|\left(\mathcal{T}u\right)\left(\rho\right)\right|\frac{d\rho}{\left(2\pi\right)^{n+1}}\\
 & \underset{(\ref{eq:Rm},\ref{eq:microl_estimate-2})}{\leq}W\left(\rho'\right)\frac{C_{N,m}}{\left\langle \omega'-\omega_{0}\right\rangle ^{m}}\left\langle \left|\rho'\right|\right\rangle ^{m\alpha^{\parallel}}\int\left\langle \mathrm{dist}_{g}\left(\rho',\rho\right)\right\rangle ^{-N}\left|\left(\mathcal{T}u\right)\left(\rho\right)\right|\frac{d\rho}{\left(2\pi\right)^{n+1}}\\
 & \ineq{\ref{eq:slow_variation_W}}\frac{C_{N,m}\left\langle \left|\rho'\right|\right\rangle ^{m\alpha^{\parallel}}}{\left\langle \omega'-\omega_{0}\right\rangle ^{m}}\int\left\langle \mathrm{dist}_{g}\left(\rho',\rho\right)\right\rangle ^{-N}W\left(\rho\right)\left|\left(\mathcal{T}u\right)\left(\rho\right)\right|\frac{d\rho}{\left(2\pi\right)^{n+1}}\\
 & \ineq{\mathrm{C.S.}}\frac{C_{N,m}\left\langle \left|\rho'\right|\right\rangle ^{m\alpha^{\parallel}}}{\left\langle \omega'-\omega_{0}\right\rangle ^{m}}\left(\int\left\langle \mathrm{dist}_{g}\left(\rho',\rho\right)\right\rangle ^{-2N}\frac{d\rho}{\left(2\pi\right)^{n+1}}\right)^{1/2}\\
 & \qquad\left(\int\left(W\left(\rho\right)\right)^{2}\left|\left(\mathcal{T}u\right)\left(\rho\right)\right|^{2}\frac{d\rho}{\left(2\pi\right)^{n+1}}\right)^{1/2}\\
 & \underset{(\ref{eq:def_Sobolev_norm})}{\leq}\frac{C_{m}\left\langle \left|\rho'\right|\right\rangle ^{m\alpha^{\parallel}}}{\left\langle \omega'-\omega_{0}\right\rangle ^{m}}\left\Vert u\right\Vert _{\mathcal{H}_{W}\left(M\right)}
\end{align*}
or
\begin{equation}
\left|\left(\mathcal{T}u\right)\left(\rho'\right)\right|\leq\frac{C_{m}\left\langle \left|\rho'\right|\right\rangle ^{m\alpha^{\parallel}}}{W\left(\rho'\right)\left\langle \omega'-\omega_{0}\right\rangle ^{m}}\left\Vert u\right\Vert _{\mathcal{H}_{W}\left(M\right)}.\label{eq:res-1-1}
\end{equation}
We have obtained the claim (\ref{eq:WF-1-1}) of the theorem.

\subsection{\label{subsec:Proof-of-Corollary}Proof of Corollary \ref{cor:Let-us-choose}}

We choose a phase space metric $g$ as defined in (\ref{eq:metric_g_in_coordinates})
with the following parameters
\[
\alpha^{\perp}=\frac{1}{1+\min\left(\beta_{u},\beta_{s}\right)},\quad\alpha^{\parallel}=0.
\]
Let $C>0$. We choose a weight function $W$ as in Lemma \ref{thm:W},
with parameter $R_{s},R_{u}>0$ large enough to reveal the discrete
spectrum on $\mathrm{Re}\left(z\right)\geq-C$ and $R_{s}$ will be
taken larger later. We choose parameters $\gamma=0$ . For $\rho\in T^{*}M$,
we write $\rho=\omega\mathscr{A}+\rho_{s}+\rho_{u}$ with $\omega\in\mathbb{R}$,
$\rho_{u}\in E_{u}^{*}$, $\rho_{s}\in E_{s}^{*}$. Then
\begin{align*}
W\left(\rho\right)\left\langle \omega-\omega_{0}\right\rangle ^{m} & \underset{(\ref{eq:def_W1})}{=}\frac{\left\langle h_{0}\left\Vert \rho_{s}\right\Vert _{g_{\rho}}\right\rangle ^{R_{s}}}{\left\langle h_{0}\left\Vert \rho_{u}\right\Vert _{g_{\rho}}\right\rangle ^{R_{u}}}\left\langle \omega-\omega_{0}\right\rangle ^{m}\underset{(\ref{eq:def_norm_g})}{=}\frac{\left\langle h_{0}\left|\rho\right|^{-\alpha^{\perp}}\left|\rho_{s}\right|\right\rangle ^{R_{s}}}{\left\langle h_{0}\left|\rho\right|^{-\alpha^{\perp}}\left|\rho_{u}\right|\right\rangle ^{R_{u}}}\left\langle \omega-\omega_{0}\right\rangle ^{m}\\
 & \geq\left\langle \left|\rho\right|\right\rangle ^{-R_{u}\left(1-\alpha^{\perp}\right)}\left\langle h_{0}\left|\rho\right|^{-\alpha^{\perp}}\left|\rho_{s}\right|\right\rangle ^{R_{s}}\left\langle \omega-\omega_{0}\right\rangle ^{m}.
\end{align*}
Let $\epsilon>0$ and assume $\rho\in T^{*}M\backslash\mathcal{V}_{\omega_{0},\epsilon}$
where $\mathcal{V}_{\omega_{0},\epsilon}$ is defined in (\ref{eq:Vu}).
This means that
\[
\left\langle \omega-\omega_{0}\right\rangle \geq\left|\rho\right|^{\epsilon}\quad\text{ or }\quad\left\langle \left|\rho\right|^{-\alpha^{\perp}}\left|\rho_{s}\right|\right\rangle \geq\left|\rho\right|^{\epsilon}.
\]
Let $N\geq1$. If $\left\langle \omega-\omega_{0}\right\rangle \geq\left|\rho\right|^{\epsilon}$,
we choose $m$ large enough so that $m\epsilon-R_{u}\left(1-\alpha^{\perp}\right)\geq N$
and hence that
\[
W\left(\rho\right)\left\langle \omega-\omega_{0}\right\rangle ^{m}\geq\left\langle \left|\rho\right|\right\rangle ^{-R_{u}\left(1-\alpha^{\perp}\right)}\left\langle \omega-\omega_{0}\right\rangle ^{m}\geq C_{N,\epsilon}\left\langle \left|\rho\right|\right\rangle ^{m\epsilon-R_{u}\left(1-\alpha^{\perp}\right)}\geq C_{N,\epsilon}\left\langle \rho\right\rangle ^{N}.
\]
 If $\left\langle \left|\rho\right|^{-\alpha^{\perp}}\left|\rho_{s}\right|\right\rangle \geq\left|\rho\right|^{\epsilon}$,
we choose $R_{s}$ large enough so that $R_{s}\epsilon-R_{u}\left(1-\alpha^{\perp}\right)\geq N$
and hence that
\[
W\left(\rho\right)\left\langle \omega-\omega_{0}\right\rangle ^{m}\geq\left\langle \left|\rho\right|\right\rangle ^{-R_{u}\left(1-\alpha^{\perp}\right)}\left\langle \left|\rho\right|^{-\alpha^{\perp}}\left|\rho_{s}\right|\right\rangle ^{R_{s}}\geq C_{N,\epsilon}\left\langle \left|\rho\right|\right\rangle ^{R_{s}\epsilon-R_{u}\left(1-\alpha^{\perp}\right)}\geq C_{N,\epsilon}\left\langle \rho\right\rangle ^{N}.
\]
 In both cases, we obtain that
\[
\left|\left(\mathcal{T}u\right)\left(\rho\right)\right|\underset{(\ref{eq:res-1-1})}{\leq}\frac{C_{m}}{W\left(\rho\right)\left\langle \omega-\omega_{0}\right\rangle ^{m}}\left\Vert u\right\Vert _{\mathcal{H}_{W}\left(M\right)}\leq\frac{C_{N,\epsilon}}{\left\langle \rho\right\rangle ^{N}}\left\Vert u\right\Vert _{\mathcal{H}_{W}\left(M\right)}.
\]
This finishes the proof of Corollary \ref{cor:Let-us-choose}.

\appendix

\section{\label{subsec:Proof-of-Thm}Proof of Theorem \ref{thm:W} about properties
of $W$}

\subsection{Definition of the escape function $W$}

The escape function $W:T^{*}M\rightarrow\mathbb{R}^{+}$ has been
defined in (\ref{eq:def_W1}). Here, we will first give an other equivalent
definition of $W$ in (\ref{eq:def_W}) that will be more convenient
for the proof of Theorem \ref{thm:W}. 

\subparagraph{Definition of map $\Phi_{j}$:}

For a point $m\in M$, we have defined the decomposition of a cotangent
vector $\rho\eq{\ref{eq:decomp_Xi}}\rho_{u}+\rho_{s}+\rho_{0}\in T_{m}^{*}M$,
where $\rho_{u}\in E_{u}^{*}\left(m\right),\rho_{s}\in E_{s}^{*}\left(m\right),\rho_{0}\in E_{0}^{*}\left(m\right)$.
Recall from (\ref{eq:def_E_u*}) and (\ref{eq:Holder_exp}) that the
map $m\in M\mapsto E_{\sigma}^{*}\left(m\right)$, $\sigma=u,s,0$,
are Hölder continuous with respective exponent $\beta_{u},\beta_{s},\beta_{0}$.
Consequently the decomposition $\rho=\rho_{u}+\rho_{s}+\rho_{0}$
is continuous but not smooth.

We have taken a global smooth metric $g_{M}$ on $M$ and we denote
$\left\Vert .\right\Vert _{g_{M}}$ the induced norm on $T^{*}M$.
From $g_{M}$ we define a new metric $\tilde{g}_{M}$ on $M$ as follows.
The metric $\tilde{g}_{M}$ equals $g_{M}$ on $E_{\sigma}^{*}$ for
$\sigma=u,s,0$ and the sum $E_{u}^{*}\oplus E_{s}^{*}\oplus E_{0}^{*}$
is orthogonal for $\tilde{g}_{M}$. As a consequence $\tilde{g}_{M}$
is Hölder continuous on $M$.

Let $\kappa_{j}:U_{j}\subset M\rightarrow V_{j}\subset\mathbb{R}_{x,z}^{n+1}$
a local chart diffeomorphism on $M$ as defined in (\ref{eq:def_kappa_j})
and $\tilde{\kappa}_{j}:T^{*}U_{j}\subset T^{*}M\rightarrow T^{*}V_{j}\subset T^{*}\mathbb{R}^{n+1}$
the lifted map on cotangent space defined in (\ref{eq:def_k_tilde_j}).
There exists a continuous linear bundle map over $\mathrm{Id}$ of
the form 
\begin{equation}
\Phi_{j}:\begin{cases}
T^{*}V_{j} & \rightarrow T^{*}\left(\mathbb{R}^{n}\times\mathbb{R}\right)\\
\left(\left(x,z\right),\left(\xi,\omega\right)\right) & \mapsto\left(\left(x,z\right),\left(\tilde{\xi},\omega\right)\right)
\end{cases}\label{eq:def_Phi}
\end{equation}
with
\begin{equation}
\tilde{\xi}=A\left(x\right)\xi+\omega B\left(x\right)\in\mathbb{R}^{n}\label{eq:xi_tilde}
\end{equation}
such that:
\begin{enumerate}
\item $\Phi_{j}\circ\tilde{\kappa}_{j}$ maps $E_{u}^{*}\oplus E_{s}^{*}\oplus E_{0}^{*}$
to $\mathbb{R}^{d_{u}}\oplus\mathbb{R}^{d_{s}}\oplus\mathbb{R}$,
with $d_{u}+d_{s}=n$. For this, $A\left(x\right):\mathbb{R}^{n}\rightarrow\mathbb{R}^{n}$
is a linear invertible map that depends continuously on $x\in\mathbb{R}^{n}$
with Hölder exponent $\beta_{*}=\min\{\beta_{s},\beta_{u}\}$ and
$B\left(x\right)\in\mathbb{R}^{n}$ depends continuously on $x\in\mathbb{R}^{n}$
with Hölder exponent $\beta_{0}$.
\item For any $\rho=\rho_{u}+\rho_{s}+\rho_{0}\in E_{u}^{*}\oplus E_{s}^{*}\oplus E_{0}^{*}=T^{*}M$,
\begin{equation}
\left|\left(\Phi_{j}\circ\tilde{\kappa}_{j}\right)\left(\rho_{\sigma}\right)\right|=\left\Vert \rho_{\sigma}\right\Vert _{\tilde{g}_{M}},\quad\text{for }\sigma=u,s,0,\label{eq:prop_Phi}
\end{equation}
where $\left|.\right|$ denotes the Euclidean canonical norm on $\mathbb{R}^{n+1}$.
\end{enumerate}
From $\tilde{g}_{M}$-orthogonality of the decomposition $E_{u}^{*}\oplus E_{s}^{*}\oplus E_{0}^{*}$
and orthogonality of $\mathbb{R}^{d_{u}}\oplus\mathbb{R}^{d_{s}}\oplus\mathbb{R}$
for the Euclidean metric, we have that
\begin{equation}
\forall\rho\in T^{*}M,\quad\left|\left(\Phi_{j}\circ\tilde{\kappa}_{j}\right)\left(\rho\right)\right|=\left\Vert \rho\right\Vert _{\tilde{g}_{M}}.\label{eq:prop_Phi_1}
\end{equation}

\subparagraph{Definition of the function $\tilde{W}$:}

We define the metric $\tilde{g}$ on $\mathbb{R}^{n+1}$ as follows.
Let $\tilde{\eta}=\left(\tilde{\xi},\omega\right)\in\mathbb{R}^{n}\times\mathbb{R}$.
For $\tilde{v}=\left(\tilde{v}_{\xi},\tilde{v}_{\omega}\right)\in T_{\tilde{\eta}}\left(\mathbb{R}^{n}\times\mathbb{R}\right)$,
\begin{equation}
\left\Vert \tilde{v}\right\Vert _{\tilde{g}\left(\tilde{\eta}\right)}^{2}=\left(\left\langle \left|\tilde{\eta}\right|\right\rangle ^{-\alpha^{\perp}}\left|\tilde{v}_{\xi}\right|\right)^{2}+\left(\left\langle \left|\tilde{\eta}\right|\right\rangle ^{-\alpha^{\parallel}}\left|\tilde{v}_{\omega}\right|\right)^{2}.\label{eq:norm_g}
\end{equation}
Let $0<\gamma<1$ . We define the function $\tilde{h}_{\gamma}^{\perp}:\mathbb{R}^{n+1}\rightarrow\mathbb{R}^{+}$
as follows.
\begin{equation}
\tilde{h}_{\gamma}^{\perp}\left(\tilde{\eta}\right)=\left\langle \left\Vert \tilde{\xi}\right\Vert _{\tilde{g}\left(\tilde{\eta}\right)}\right\rangle ^{-\gamma}\underset{(\ref{eq:norm_g})}{=}\left\langle \left\langle \left|\tilde{\eta}\right|\right\rangle ^{-\alpha^{\perp}}\left|\tilde{\xi}\right|\right\rangle ^{-\gamma}.\label{eq:h_tilde}
\end{equation}
We define the function $\tilde{W}:\mathbb{R}^{n+1}\rightarrow\mathbb{R}^{+}$
as follows. With $\tilde{\xi}=\left(\tilde{\xi}{}_{u},\tilde{\xi}{}_{s}\right)\in\mathbb{R}^{d_{u}}\times\mathbb{R}^{d_{s}}=\mathbb{R}^{n}$,
\begin{equation}
\tilde{W}\left(\tilde{\eta}\right):=\frac{\left\langle h_{0}\tilde{h}_{\gamma}^{\perp}\left(\tilde{\eta}\right)\left\Vert \tilde{\xi}{}_{s}\right\Vert _{\tilde{g}\left(\tilde{\eta}\right)}\right\rangle ^{R_{s}}}{\left\langle h_{0}\tilde{h}_{\gamma}^{\perp}\left(\tilde{\eta}\right)\left\Vert \tilde{\xi}{}_{u}\right\Vert _{\tilde{g}\left(\tilde{\eta}\right)}\right\rangle ^{R_{u}}}.\label{eq:def_W_tilde}
\end{equation}

\begin{rem}
Let us observe from (\ref{eq:prop_Phi_1}) that for $\rho\in T^{*}U_{j}\cap T^{*}U_{j'}$
we have $\tilde{W}\circ\Phi_{j}\circ\tilde{\kappa}_{j}\left(\rho\right)=\tilde{W}\circ\Phi_{j'}\circ\tilde{\kappa}_{j'}\left(\rho\right)$,
i.e. this expression is independent on chart.
\end{rem}

\subparagraph{The global function $W$:}

~

\begin{cBoxB}{}
\begin{lem}[Alternative expression of the escape function $W$]
The function $W:T^{*}U_{j}\subset T^{*}M\rightarrow\mathbb{R}^{+}$
defined in (\ref{eq:def_W1}) can be expressed using the function
$\tilde{W}$ in (\ref{eq:def_W_tilde}) as follows:
\begin{equation}
W=\left(\boldsymbol{1}_{y}\otimes\tilde{W}\right)\circ\Phi_{j}\circ\tilde{\kappa}_{j},\label{eq:def_W}
\end{equation}
where $\boldsymbol{1}_{y}:y=\left(x,z\right)\in\mathbb{R}^{n+1}\rightarrow1\in\mathbb{R}$
is the constant unit function on the variables of position.
\end{lem}

\end{cBoxB}

\begin{proof}
We check that $W$ in (\ref{eq:def_W}) coincides with the function
$W$ given in (\ref{eq:def_W1}). Essentially this is due to Definition
(\ref{eq:norm_g-1}) that we have used for the metric $\tilde{g}\left(\rho\right)$.
\end{proof}

\subsection{The slowly varying and temperate property \eqref{enu:-satisfies--temperate-1}}

Here, we will prove that the function $W$ satisfies (\ref{eq:slow_variation_W}).
For this, we use the expression (\ref{eq:def_W}) that expresses the
function $W$ as a smooth function $\boldsymbol{1}_{y}\otimes\tilde{W}$
constant on the base, composed with the function $\Phi_{j}$. In the
first step, we will prove that the smooth function $\tilde{W}$ satisfies
the slowly varying and temperate property. In a second step, we will
show that $\Phi_{j}$ has the property of being Lipschitz at scale
greater than $1$. We will then deduce that the slowly varying and
temperate property holds true for $W$.

\subsubsection{The slowly varying and temperate property for $\tilde{W}$}

The next proposition gives the temperate property and slow variation
property of $\tilde{W}$ defined in (\ref{eq:def_W_tilde}), that
corresponds to the claim \ref{enu:-satisfies--temperate-1} of Theorem
\ref{thm:W} (but for $\tilde{W}$).

\begin{cBoxB}{}
\begin{prop}
Assume that $\alpha^{\perp},\alpha^{\parallel}$ and $0\leq\gamma<1$
are given and satisfy the relations (\ref{eq:ineq1}) and (\ref{eq:gamma_interval}).
Then there exist constants $N_{\tilde{W}}>0$, $C_{\tilde{W}}$ and
$0<\mu<1$ such that for every $h_{0}>0$,
\begin{align}
\frac{\tilde{W}(\tilde{\eta}')}{\tilde{W}(\tilde{\eta})}\le1+C_{\tilde{W}}h_{0}^{\mu}\langle\tilde{h}_{\gamma}^{\perp}\left(\tilde{\eta}\right)\|\tilde{\eta}'-\tilde{\eta}\|_{\tilde{g}\left(\tilde{\eta}\right)}\rangle^{N_{\tilde{W}}}\qquad\text{for all }\tilde{\eta},\tilde{\eta}'\in\mathbb{R}^{n+1}.\label{eq:w2}
\end{align}
\end{prop}

\end{cBoxB}

\begin{proof}
For simplicity of notation, we define the conformal metric $\tilde{g}_{\gamma}$
on $T\mathbb{R}^{n+1}$ as follows. We write for $\tilde{v}\in T_{\tilde{\eta}}\mathbb{R}^{n+1}$
\begin{equation}
\left\Vert \tilde{v}\right\Vert _{\tilde{g}_{\gamma}\left(\tilde{\eta}\right)}:=\tilde{h}_{\gamma}^{\perp}\left(\tilde{\eta}\right)\left\Vert \tilde{v}\right\Vert _{\tilde{g}\left(\tilde{\eta}\right)}.\label{eq:def_gtilde}
\end{equation}
Consider expression (\ref{eq:def_W_tilde}) of $\tilde{W}$. For the
proof of the proposition, it is enough to show the estimates
\begin{align}
\max\left\{ \frac{\langle h_{0}\|\tilde{\xi}'_{s}\|_{\tilde{g}_{\gamma}\left(\tilde{\eta}'\right)}\rangle}{\langle h_{0}\|\tilde{\xi}_{s}\|_{\tilde{g}_{\gamma}\left(\tilde{\eta}\right)}\rangle},\frac{\langle h_{0}\|\tilde{\xi}_{s}\|_{\tilde{g}_{\gamma}\left(\tilde{\eta}\right)}\rangle}{\langle h_{0}\|\tilde{\xi}'_{s}\|_{\tilde{g}_{\gamma}\left(\tilde{\eta}'\right)}\rangle}\right\} \le1+Ch_{0}^{\mu}\langle\|\tilde{\eta}'-\tilde{\eta}\|_{\tilde{g}_{\gamma}\left(\tilde{\eta}\right)}\rangle^{N}\label{eq:reducedclaim1}
\end{align}
and the corresponding claims with $\tilde{\xi}_{s}$ and $\tilde{\xi}'_{s}$
replaced by $\tilde{\xi}_{u}$ and $\tilde{\xi}'_{u}$ respectively.
We give a proof of the claim \eqref{eq:reducedclaim1}. The proof
of the other claim is parallel. We will choose the small constant
$\mu>0$ in the due course and the choice may be implicit in the following
argument.

Note that, if $\|\tilde{\xi}\|_{\tilde{g}_{\gamma}\left(\tilde{\eta}\right)}\le h_{0}^{-\delta}$
and $\|\tilde{\xi}'\|_{\tilde{g}_{\gamma}\left(\tilde{\eta}'\right)}\le h_{0}^{-\delta}$
with some $0<\delta<1$, then 
\[
h_{0}\|\tilde{\xi}_{s}\|_{\tilde{g}_{\gamma}\left(\tilde{\eta}\right)}\le h_{0}^{1-\delta}<1
\]
and the same estimate for $\|\tilde{\xi}'_{s}\|_{\tilde{g}_{\gamma}\left(\tilde{\eta}\right)}$.
Then the required estimate (\ref{eq:reducedclaim1}) is trivial because
the function $x\mapsto\langle x\rangle$ is almost flat around the
origin $0$. Therefore we may assume either $\|\tilde{\xi}\|_{\tilde{g}_{\gamma}\left(\tilde{\eta}\right)}>h_{0}^{-\delta}$
or $\|\tilde{\xi}'\|_{\tilde{g}_{\gamma}\left(\tilde{\eta}'\right)}>h_{0}^{-\delta}$.
Below we assume 
\begin{equation}
\|\tilde{\xi}\|_{\tilde{g}_{\gamma}\left(\tilde{\eta}\right)}>h_{0}^{-\delta}.\label{eq:hdelta}
\end{equation}
As we will explain at the end, the claim in the other case follows
immediately if we are done with this case. Note that 
\begin{equation}
|\tilde{\eta}|\ge\|\tilde{\eta}\|_{\tilde{g}\left(\tilde{\eta}\right)}\ge\|\tilde{\xi}\|_{\tilde{g}\left(\tilde{\eta}\right)}\ge\|\tilde{\xi}\|_{\tilde{g}_{\gamma}\left(\tilde{\eta}\right)}\underset{(\ref{eq:hdelta})}{>}h_{0}^{-\delta}.\label{eq:eta}
\end{equation}
We have 
\begin{equation}
\max\left\{ \frac{\langle h_{0}\|\tilde{\xi}'_{s}\|_{\tilde{g}_{\gamma}\left(\tilde{\eta}\right)}\rangle}{\langle h_{0}\|\tilde{\xi}_{s}\|_{\tilde{g}_{\gamma}\left(\tilde{\eta}\right)}\rangle},\frac{\langle h_{0}\|\tilde{\xi}_{s}\|_{\tilde{g}_{\gamma}\left(\tilde{\eta}\right)}\rangle}{\langle h_{0}\|\tilde{\xi}'_{s}\|_{\tilde{g}_{\gamma}\left(\tilde{\eta}\right)}\rangle}\right\} \ineq{\ref{eq:D1p}}1+h_{0}\left|\|\tilde{\xi}'_{s}\|_{\tilde{g}_{\gamma}\left(\tilde{\eta}\right)}-\|\tilde{\xi}{}_{s}\|_{\tilde{g}_{\gamma}\left(\tilde{\eta}\right)}\right|\le1+h_{0}\|\tilde{\xi}'_{s}-\tilde{\xi}_{s}\|_{\tilde{g}_{\gamma}\left(\tilde{\eta}\right)}.\label{eq:basic}
\end{equation}
Hence, for the proof of the claim \eqref{eq:reducedclaim1}, we compare
the metric $\tilde{g}_{\gamma}\left(\tilde{\eta}\right)$ and $\tilde{g}_{\gamma}\left(\tilde{\eta}'\right)$.

Recall from Lemma \ref{lem:temperate_metric} that the metric $g$
hence $\tilde{g}$ also, is $\Delta^{\gamma}$-moderate that is, with
the distortion function $\Delta(\tilde{\eta})\eq{\ref{eq:def_distortion_function_D-1}}\langle|\tilde{\eta}|\rangle^{-(1-\alpha^{\perp})}$,
we have that, for arbitrary $0\neq v\in\real^{n+1}$, 
\begin{equation}
\max\left\{ \frac{\|v\|_{\tilde{g}\left(\tilde{\eta}'\right)}}{\|v\|_{\tilde{g}\left(\tilde{\eta}\right)}},\frac{\|v\|_{\tilde{g}\left(\tilde{\eta}\right)}}{\|v\|_{\tilde{g}\left(\tilde{\eta}'\right)}}\right\} \le1+C\Delta(\tilde{\eta})^{1-\gamma}\left\langle \Delta(\tilde{\eta})^{\gamma}\cdot\|\tilde{\eta}'-\tilde{\eta}\|_{\tilde{g}\left(\tilde{\eta}\right)}\right\rangle ^{N}.\label{claim1}
\end{equation}
For $0<\lambda<1$, $x\in\mathbb{R}$, we have
\begin{equation}
\lambda\left\langle x\right\rangle \leq\left\langle \lambda x\right\rangle .\label{eq:ineq-1}
\end{equation}
We have
\begin{equation}
\tilde{h}_{\gamma}^{\perp}(\tilde{\eta})\underset{(\ref{eq:h_tilde})}{\geq}\langle\langle|\tilde{\eta}|\rangle^{-\alpha^{\perp}}|\tilde{\eta}|\rangle^{-\gamma}\underset{(\ref{eq:ineq-1})}{\geq}\langle|\tilde{\eta}|\rangle^{-\gamma\left(1-\alpha^{\perp}\right)}\eq{\ref{eq:def_distortion_function_D-1}}\Delta(\tilde{\eta})^{\gamma},\label{eq:*3}
\end{equation}
and 
\begin{equation}
\Delta(\tilde{\eta})^{1-\gamma}=\langle|\tilde{\eta}|\rangle^{-\left(1-\gamma\right)\left(1-\alpha^{\perp}\right)}\ineq{\ref{eq:eta}}Ch_{0}^{\delta\left(1-\gamma\right)\left(1-\alpha^{\perp}\right)}=Ch_{0}^{\mu}\label{eq:*3b}
\end{equation}
with $\mu=\left(1-\gamma\right)\left(1-\alpha^{\perp}\right)<1$.
We get that, for arbitrary $0\neq v\in\real^{n+1}$, 
\begin{align}
\max\left\{ \frac{\|v\|_{\tilde{g}\left(\tilde{\eta}\right)}}{\|v\|_{\tilde{g}\left(\tilde{\eta}'\right)}},\frac{\|v\|_{\tilde{g}\left(\tilde{\eta}'\right)}}{\|v\|_{\tilde{g}\left(\tilde{\eta}\right)}}\right\} \underset{(\ref{claim1}),(\ref{eq:*3}),(\ref{eq:*3b})}{\leq} & 1+Ch_{0}^{\mu}\cdot\left\langle \|\tilde{\eta}'-\tilde{\eta}\|_{\tilde{g}_{\gamma}\left(\tilde{\eta}\right)}\right\rangle ^{N}.\label{claim3}
\end{align}

To get (\ref{eq:reducedclaim1}), we want to extend the estimate \eqref{claim3}
to the case where the metric $\tilde{g}$ on the left hand side is
replaced by the metric $\tilde{g}_{\gamma}$. To this end, we have
to estimate the ratio between $\tilde{h}_{\gamma}^{\perp}(\tilde{\eta})$
and $\tilde{h}_{\gamma}^{\perp}(\tilde{\eta}')$. We have 
\begin{align}
\max & \left\{ \frac{\tilde{h}_{\gamma}^{\perp}(\tilde{\eta}')}{\tilde{h}_{\gamma}^{\perp}(\tilde{\eta})},\frac{\tilde{h}_{\gamma}^{\perp}(\tilde{\eta})}{\tilde{h}_{\gamma}^{\perp}(\tilde{\eta}')}\right\} \eq{\ref{eq:h_tilde}}\max\left\{ \frac{\langle\|\tilde{\xi}'\|_{\tilde{g}\left(\tilde{\eta}'\right)}\rangle}{\langle\|\tilde{\xi}\|_{\tilde{g}\left(\tilde{\eta}\right)}\rangle},\frac{\langle\|\tilde{\xi}\|_{\tilde{g}\left(\tilde{\eta}\right)}\rangle}{\langle\|\tilde{\xi}'\|_{\tilde{g}\left(\tilde{\eta}'\right)}\rangle}\right\} ^{\gamma}\nonumber \\
 & \le\max\left\{ \frac{\langle\|\tilde{\xi}'\|_{\tilde{g}\left(\tilde{\eta}\right)}\rangle}{\langle\|\tilde{\xi}\|_{\tilde{g}\left(\tilde{\eta}\right)}\rangle}\cdot\frac{\langle\|\tilde{\xi}'\|_{\tilde{g}\left(\tilde{\eta}'\right)}\rangle}{\langle\|\tilde{\xi}'\|_{\tilde{g}\left(\tilde{\eta}\right)}\rangle},\frac{\langle\|\tilde{\xi}\|_{\tilde{g}\left(\tilde{\eta}\right)}\rangle}{\langle\|\tilde{\xi}'\|_{\tilde{g}\left(\tilde{\eta}\right)}\rangle}\cdot\frac{\langle\|\tilde{\xi}'\|_{\tilde{g}\left(\tilde{\eta}\right)}\rangle}{\langle\|\tilde{\xi}'\|_{\tilde{g}\left(\tilde{\eta}'\right)}\rangle}\right\} ^{\gamma}\nonumber \\
 & \le\max\left\{ \frac{\langle\|\tilde{\xi}'\|_{\tilde{g}\left(\tilde{\eta}\right)}\rangle}{\langle\|\tilde{\xi}\|_{\tilde{g}\left(\tilde{\eta}\right)}\rangle},\frac{\langle\|\tilde{\xi}\|_{\tilde{g}\left(\tilde{\eta}\right)}\rangle}{\langle\|\tilde{\xi}'\|_{\tilde{g}\left(\tilde{\eta}\right)}\rangle}\right\} ^{\gamma}\cdot\max\left\{ \frac{\langle\|\tilde{\xi}'\|_{\tilde{g}\left(\tilde{\eta}'\right)}\rangle}{\langle\|\tilde{\xi}'\|_{\tilde{g}\left(\tilde{\eta}\right)}\rangle},\frac{\langle\|\tilde{\xi}'\|_{\tilde{g}\left(\tilde{\eta}\right)}\rangle}{\langle\|\tilde{\xi}'\|_{\tilde{g}\left(\tilde{\eta}'\right)}\rangle}\right\} ^{\gamma}.\label{eq:vg}
\end{align}
And, for the second term on the right-hand side of \eqref{eq:vg},
we apply \eqref{claim3} and see 
\begin{align}
\max\left\{ \frac{\langle\|\tilde{\xi}'\|_{\tilde{g}\left(\tilde{\eta}'\right)}\rangle}{\langle\|\tilde{\xi}'\|_{\tilde{g}\left(\tilde{\eta}\right)}\rangle},\frac{\langle\|\tilde{\xi}'\|_{\tilde{g}\left(\tilde{\eta}\right)}\rangle}{\langle\|\tilde{\xi}'\|_{\tilde{g}\left(\tilde{\eta}'\right)}\rangle}\right\}  & \le1+Ch_{0}^{\mu}\cdot\left\langle \|\tilde{\eta}'-\tilde{\eta}\|_{\tilde{g}_{\gamma}\left(\tilde{\eta}\right)}\right\rangle ^{N}.\label{hgamma1}
\end{align}
For the first term on the right-hand side of \eqref{eq:vg}, we consider
two cases separately:
\begin{enumerate}
\item In the case where $\|\tilde{\xi}'-\tilde{\xi}\|_{\tilde{g}\left(\tilde{\eta}\right)}\leq\frac{1}{2}\|\tilde{\xi}\|_{\tilde{g}\left(\tilde{\eta}\right)}$
we have
\begin{equation}
\|\tilde{\xi}'\|_{\tilde{g}\left(\tilde{\eta}\right)}\ge\|\tilde{\xi}\|_{\tilde{g}\left(\tilde{\eta}\right)}-\|\tilde{\xi}'-\tilde{\xi}\|_{\tilde{g}\left(\tilde{\eta}\right)}\ge\|\tilde{\xi}\|_{\tilde{g}\left(\tilde{\eta}\right)}/2\underset{(\ref{eq:eta})}{>}h_{0}^{-\delta}/2\label{eq:*4}
\end{equation}
and hence, using (D.1), we see 
\begin{align}
\max\left\{ \frac{\langle\|\tilde{\xi}'\|_{\tilde{g}\left(\tilde{\eta}\right)}\rangle}{\langle\|\tilde{\xi}\|_{\tilde{g}\left(\tilde{\eta}\right)}\rangle},\frac{\langle\|\tilde{\xi}\|_{\tilde{g}\left(\tilde{\eta}\right)}\rangle}{\langle\|\tilde{\xi}'\|_{\tilde{g}\left(\tilde{\eta}\right)}\rangle}\right\}  & \ineq{\ref{eq:D1p}}1+\frac{\|\tilde{\xi}'-\tilde{\xi}\|_{\tilde{g}\left(\tilde{\eta}\right)}}{\min\{\langle\|\tilde{\xi}\|_{\tilde{g}\left(\tilde{\eta}\right)}\rangle,\langle\|\tilde{\xi}'\|_{\tilde{g}\left(\tilde{\eta}\right)}\rangle\}}\label{eq:log}\\
 & \ineq{\ref{eq:*4}}1+\frac{2\|\tilde{\xi}'-\tilde{\xi}\|_{\tilde{g}\left(\tilde{\eta}\right)}}{\langle\|\tilde{\xi}\|_{\tilde{g}\left(\tilde{\eta}\right)}\rangle}\label{eq:*10}
\end{align}
Since $\tilde{h}_{\gamma}^{\perp}(\tilde{\eta})=\langle\|\tilde{\xi}\|_{\tilde{g}\left(\tilde{\eta}\right)}\rangle^{-\gamma}$,
we continue 
\begin{align}
\max\left\{ \frac{\langle\|\tilde{\xi}'\|_{\tilde{g}\left(\tilde{\eta}\right)}\rangle}{\langle\|\tilde{\xi}\|_{\tilde{g}\left(\tilde{\eta}\right)}\rangle},\frac{\langle\|\tilde{\xi}\|_{\tilde{g}\left(\tilde{\eta}\right)}\rangle}{\langle\|\tilde{\xi}'\|_{\tilde{g}\left(\tilde{\eta}\right)}\rangle}\right\}  & \ineq{\ref{eq:*10}}1+2\langle\|\tilde{\xi}\|_{\tilde{g}\left(\tilde{\eta}\right)}\rangle^{-(1-\gamma)}\|\tilde{\xi}'-\tilde{\xi}\|_{\tilde{g}_{\gamma}\left(\tilde{\eta}\right)}\label{eq:log3}\\
 & \le1+Ch_{0}^{\mu}\langle\|\tilde{\xi}'-\tilde{\xi}\|_{\tilde{g}_{\gamma}\left(\tilde{\eta}\right)}\rangle.\nonumber 
\end{align}
where $\mu=\delta\left(1-\gamma\right)<1$ because $\langle\|\tilde{\xi}\|_{\tilde{g}\left(\tilde{\eta}\right)}\rangle^{-(1-\gamma)}\ineq{\ref{eq:eta}}h_{0}^{\delta\left(1-\gamma\right)}$.
\item In the case where $\|\tilde{\xi}'-\tilde{\xi}\|_{\tilde{g}\left(\tilde{\eta}\right)}>\frac{1}{2}\|\tilde{\xi}\|_{\tilde{g}\left(\tilde{\eta}\right)}$
we have
\begin{align}
\|\tilde{\xi}'-\tilde{\xi}\|_{\tilde{g}_{\gamma}\left(\tilde{\eta}\right)} & \eq{\ref{eq:def_gtilde}}\tilde{h}_{\gamma}^{\perp}(\tilde{\eta})\|\tilde{\xi}'-\tilde{\xi}\|_{\tilde{g}\left(\tilde{\eta}\right)}=\langle\|\tilde{\xi}\|_{g_{\tilde{\eta}}}\rangle^{-\gamma}\cdot\|\tilde{\xi}'-\tilde{\xi}\|_{\tilde{g}\left(\tilde{\eta}\right)}\nonumber \\
 & \underset{(\mathrm{Hyp.})}{\geq}C^{-1}\langle\|\tilde{\xi}'-\tilde{\xi}\|_{\tilde{g}\left(\tilde{\eta}\right)}\rangle^{1-\gamma}.\label{eq:*5}
\end{align}
Note also that the assumption \eqref{eq:eta} gives 
\begin{equation}
\|\tilde{\xi}'-\tilde{\xi}\|_{\tilde{g}\left(\tilde{\eta}\right)}>\|\tilde{\xi}\|_{\tilde{g}\left(\tilde{\eta}\right)}/2>h_{0}^{-\delta}/2\quad\text{and hence}\quad2h_{0}^{\delta}\|\tilde{\xi}'-\tilde{\xi}\|_{\tilde{g}\left(\tilde{\eta}\right)}>1\label{eq:a}
\end{equation}
From this estimate, we obtain 
\begin{align}
\max\left\{ \frac{\langle\|\tilde{\xi}'\|_{\tilde{g}\left(\tilde{\eta}\right)}\rangle}{\langle\|\tilde{\xi}\|_{\tilde{g}\left(\tilde{\eta}\right)}\rangle},\frac{\langle\|\tilde{\xi}\|_{\tilde{g}\left(\tilde{\eta}\right)}\rangle}{\langle\|\tilde{\xi}'\|_{\tilde{g}\left(\tilde{\eta}\right)}\rangle}\right\}  & \ineq{\ref{eq:D1p}}1+\|\tilde{\xi}'-\tilde{\xi}\|_{\tilde{g}\left(\tilde{\eta}\right)}\label{log2}\\
 & \ineq{\ref{eq:a}}1+2h_{0}^{\delta}\|\tilde{\xi}'-\tilde{\xi}\|_{\tilde{g}\left(\tilde{\eta}\right)}^{2}\nonumber \\
 & \ineq{\ref{eq:*5}}1+2h_{0}^{\delta}\cdot\langle\|\tilde{\xi}'-\tilde{\xi}\|_{\tilde{g}_{\gamma}\left(\tilde{\eta}\right)}\rangle^{2/(1-\gamma)}\nonumber \\
 & \le1+Ch_{0}^{\delta}\langle\|\tilde{\xi}'-\tilde{\xi}\|_{\tilde{g}\left(\tilde{\eta}\right)^{\gamma}}\rangle^{N}\nonumber 
\end{align}
where, in the last inequality, we let $N>2/(1-\gamma)$.
\end{enumerate}
From \eqref{eq:log3} and \eqref{log2} in the two (exhaustive) cases,
we always have 
\[
\max\left\{ \frac{\langle\|\tilde{\xi}'\|_{\tilde{g}\left(\tilde{\eta}\right)}\rangle}{\langle\|\tilde{\xi}\|_{\tilde{g}\left(\tilde{\eta}\right)}\rangle},\frac{\langle\|\tilde{\xi}\|_{\tilde{g}\left(\tilde{\eta}\right)}\rangle}{\langle\|\tilde{\xi}'\|_{\tilde{g}\left(\tilde{\eta}\right)}\rangle}\right\} \le1+Ch_{0}^{\mu}\left\langle \|\tilde{\eta}'-\tilde{\eta}\|_{\tilde{g}_{\gamma}\left(\tilde{\eta}\right)}\right\rangle ^{N}
\]
for some constant $C>0$ and $N>0$. Plugging this inequality and
\eqref{hgamma1} in \eqref{eq:vg}, we obtain 
\[
\max\left\{ \frac{\tilde{h}_{\gamma}^{\perp}(\tilde{\eta}')}{\tilde{h}_{\gamma}^{\perp}(\tilde{\eta})},\frac{\tilde{h}_{\gamma}^{\perp}(\tilde{\eta})}{\tilde{h}_{\gamma}^{\perp}(\tilde{\eta}')}\right\} \le1+C'h_{0}^{\mu}\left\langle \|\tilde{\eta}'-\tilde{\eta}\|_{\tilde{g}_{\gamma}\left(\tilde{\eta}\right)}\right\rangle ^{N'}
\]
for some constant $C'>0$ and $N'>0$. From the last inequality and
\eqref{claim3}, we obtain, for any $0\neq v\in\real^{n+1}$, that
\begin{align}
\max\left\{ \frac{\|v\|_{\tilde{g}_{\gamma}\left(\tilde{\eta}\right)}}{\|v\|_{\tilde{g}_{\gamma}\left(\tilde{\eta}'\right)}},\frac{\|v\|_{\tilde{g}_{\gamma}\left(\tilde{\eta}'\right)}}{\|v\|_{\tilde{g}_{\gamma}\left(\tilde{\eta}\right)}}\right\}  & \le1+C''h_{0}^{\mu}\cdot\left\langle \|\tilde{\eta}'-\tilde{\eta}\|_{\tilde{g}_{\gamma}\left(\tilde{\eta}\right)}\right\rangle ^{N''}\label{claim4}
\end{align}
for some $C''>1$ and $N''>1$.

Now we can conclude the claim \eqref{eq:reducedclaim1}. Indeed, we
have 
\begin{align*}
\frac{\langle h_{0}\|\tilde{\xi}'_{s}\|_{\tilde{g}_{\gamma}\left(\tilde{\eta}'\right)}\rangle}{\langle h_{0}\|\tilde{\xi}_{s}\|_{\tilde{g}_{\gamma}\left(\tilde{\eta}\right)}\rangle} & =\frac{\langle h_{0}\|\tilde{\xi}'_{s}\|_{\tilde{g}_{\gamma}\left(\tilde{\eta}'\right)}\rangle}{\langle h_{0}\|\tilde{\xi}'_{s}\|_{\tilde{g}_{\gamma}\left(\tilde{\eta}\right)}\rangle}\frac{\langle h_{0}\|\tilde{\xi}'_{s}\|_{\tilde{g}_{\gamma}\left(\tilde{\eta}\right)}\rangle}{\langle h_{0}\|\tilde{\xi}_{s}\|_{\tilde{g}_{\gamma}\left(\tilde{\eta}\right)}\rangle}\\
 & \le\frac{\langle\|\tilde{\xi}'_{s}\|_{\tilde{g}_{\gamma}\left(\tilde{\eta}'\right)}\rangle}{\langle\|\tilde{\xi}'_{s}\|_{\tilde{g}_{\gamma}\left(\tilde{\eta}\right)}\rangle}\frac{\langle h_{0}\|\tilde{\xi}'_{s}\|_{\tilde{g}_{\gamma}\left(\tilde{\eta}\right)}\rangle}{\langle h_{0}\|\tilde{\xi}_{s}\|_{\tilde{g}_{\gamma}\left(\tilde{\eta}\right)}\rangle}
\end{align*}
and we can apply \eqref{claim4} for the former term on the right
hand side and \eqref{eq:basic} to the latter to get 
\begin{align*}
\frac{\langle h_{0}\|\tilde{\xi}'_{s}\|_{\tilde{g}_{\gamma}\left(\tilde{\eta}'\right)}\rangle}{\langle h_{0}\|\tilde{\xi}_{s}\|_{\tilde{g}_{\gamma}\left(\tilde{\eta}\right)}\rangle} & \le(1+C''h_{0}^{\mu}\cdot\left\langle \|\tilde{\eta}'-\tilde{\eta}\|_{\tilde{g}_{\gamma}\left(\tilde{\eta}\right)}\right\rangle ^{N''})\cdot(1+h_{0}\|\tilde{\xi}'_{s}-\tilde{\xi}_{s}\|_{\tilde{g}_{\gamma}\left(\tilde{\eta}\right)})\\
 & \le1+C_{*}h_{0}^{\mu}\cdot\left\langle \|\tilde{\eta}'-\tilde{\eta}\|_{\tilde{g}_{\gamma}\left(\tilde{\eta}\right)}\right\rangle ^{N_{*}}
\end{align*}
for some $C_{*},N_{*}>1$. Likewise, we obtain the same inequality
for the reciprocal of the left-hand side, concluding the claim \eqref{eq:reducedclaim1}.

Consider the remaining case where we have $\|\tilde{\xi}'\|_{\tilde{g}_{\gamma}\left(\tilde{\eta}'\right)}>h_{0}^{-\delta}$
instead of $\|\tilde{\xi}\|_{\tilde{g}_{\gamma}\left(\tilde{\eta}\right)}>h_{0}^{-\delta}$
in (\ref{eq:eta}). Then we can follow the argument above with $\tilde{\eta}$
and $\tilde{\eta}'$ exchanged and obtain
\begin{align}
\max\left\{ \frac{\langle h_{0}\|\tilde{\xi}'_{s}\|_{\tilde{g}_{\gamma}\left(\tilde{\eta}'\right)}\rangle}{\langle h_{0}\|\tilde{\xi}_{s}\|_{\tilde{g}_{\gamma}\left(\tilde{\eta}\right)}\rangle},\frac{\langle h_{0}\|\tilde{\xi}_{s}\|_{\tilde{g}_{\gamma}\left(\tilde{\eta}\right)}\rangle}{\langle h_{0}\|\tilde{\xi}'_{s}\|_{\tilde{g}_{\gamma}\left(\tilde{\eta}'\right)}\rangle}\right\} \le1+Ch_{0}^{\mu}\langle\|\tilde{\eta}'-\tilde{\eta}\|_{\tilde{g}_{\gamma}\left(\tilde{\eta}'\right)^{\gamma}}\rangle^{N}.\label{eq:reducedclaim2}
\end{align}
(The difference from \eqref{eq:reducedclaim1} is only that the metric
$\tilde{g}_{\gamma}\left(\tilde{\eta}\right)$ is replaced by $\tilde{g}_{\gamma}\left(\tilde{\eta}'\right)$
on the right-hand side.) Then from the temperate property (\ref{eq:g_moderate and temperate})
in Lemma \ref{lem:temperate_metric}, we obtain the claim \eqref{eq:reducedclaim1}. 
\end{proof}

\subsubsection{The straightening coordinates are Lipschitz at scale greater than
$1$}

In (\ref{eq:norm_g}) we have defined an Euclidean scalar product
$\tilde{g}$ on $T_{\tilde{\eta}}\mathbb{R}^{n+1}\equiv\mathbb{R}^{n+1}$
with coordinates $\tilde{\eta}=\left(\tilde{\xi},\omega\right)\in\mathbb{R}^{n+1}$.
Here we extend this metric $\tilde{g}$ to a metric denoted $\overset{\thickapprox}{g}$
on $\mathbb{R}^{2\left(n+1\right)}$ with coordinates 
\[
\tilde{\varrho}=\left(y,\tilde{\eta}\right)=\left(x,z,\tilde{\xi},\omega\right)\in\mathbb{R}^{2\left(n+1\right)},
\]
by simply adding the same Euclidean scalar product along $\left(x,z\right)$
as in (\ref{eq:metric_g_in_coordinates}):
\begin{equation}
\overset{\thickapprox}{g}:=\left(\left\langle \tilde{\eta}\right\rangle ^{\alpha^{\perp}}dx\right)^{2}+\left(\left\langle \tilde{\eta}\right\rangle ^{\alpha^{\parallel}}dz\right)^{2}+\tilde{g},\label{eq:def_g_double_tilde}
\end{equation}
so that in coordinates $\overset{\thickapprox}{g}$ coincides with
the metric $g$ in (\ref{eq:metric_g_in_coordinates}). As in (\ref{eq:def_gtilde})
we define a conformal (or re-scaled) metric $\overset{\thickapprox}{g}_{\gamma}$
as follows. For $\tilde{v}\in T_{\tilde{\varrho}}\mathbb{R}^{2\left(n+1\right)}$,
\begin{equation}
\left\Vert \tilde{v}\right\Vert _{\overset{\thickapprox}{g}_{\gamma}\left(\tilde{\varrho}\right)}=\tilde{h}_{\gamma}^{\perp}\left(\tilde{\eta}\right)\left\Vert \tilde{v}\right\Vert _{\overset{\thickapprox}{g}\left(\tilde{\varrho}\right)}.\label{eq:def_gtilde-1}
\end{equation}
The following proposition shows that the map $\Phi_{j}$ is Hölder
continuous and also Lipschitz in the scale greater than $1$ for the
metric $\overset{\thickapprox}{g}_{\gamma}$. 

\begin{cBoxB}{}
\begin{prop}
With the assumption (\ref{eq:gamma_interval}) on $0\leq\gamma<1$,
the map $\Phi_{j}$ defined in (\ref{eq:def_Phi}) satisfies the following
estimate: $\exists C>0$, $\forall\varrho,\varrho'\in T^{*}V_{j}\subset\mathbb{R}^{2\left(n+1\right)}$
\begin{align}
\|\Phi_{j}(\varrho')-\Phi_{j}(\varrho)\|_{\overset{\thickapprox}{g}_{\gamma}\left(\Phi_{j}\left(\varrho\right)\right)} & \le C\max\left\{ \|\varrho'-\varrho\|_{\overset{\thickapprox}{g}_{\gamma}\left(\varrho\right)},\|\varrho'-\varrho\|_{\overset{\thickapprox}{g}_{\gamma}\left(\varrho\right)}^{\beta_{*}}\right\} \label{eq:Phi_Lipchitz}\\
 & \le C\langle\|\varrho'-\varrho\|_{\overset{\thickapprox}{g}_{\gamma}\left(\varrho\right)}\rangle.\nonumber 
\end{align}
Consequently, over a chart $U_{j}\subset M$, $\exists C>0$, $\forall\rho,\rho'\in T^{*}U_{j}$
we have
\begin{align}
\|\Phi_{j}\circ\tilde{\kappa}_{j}\left(\rho'\right)-\Phi_{j}\circ\tilde{\kappa}_{j}\left(\rho\right)\|_{\overset{\thickapprox}{g}_{\gamma}\left(\Phi_{j}\circ\tilde{\kappa}_{j}\left(\rho\right)\right)} & \le C\langle h_{\gamma}^{\perp}\left(\rho\right)\|\rho'-\rho\|_{g\left(\rho\right)}\rangle.\label{eq:Phi_Lipchitz-1}
\end{align}
\end{prop}

\end{cBoxB}

\begin{proof}
Take two points
\[
\varrho=(y,\eta)=(x,z,\xi,\omega),\quad\varrho'=(y',\eta')=(x',z',\xi',\omega')\in\mathbb{R}^{2\left(n+1\right)}
\]
that satisfy
\[
d:=\|\varrho'-\varrho\|_{\overset{\thickapprox}{g}_{\gamma}\left(\varrho\right)}>0.
\]
From the definition of $\overset{\thickapprox}{g}_{\gamma}$ in (\ref{eq:def_gtilde-1}),(\ref{eq:def_g_double_tilde})
and (\ref{eq:norm_g}), this implies that 
\begin{align*}
|x'-x|\le d\langle\eta\rangle^{-\alpha^{\perp}},\quad|z'-z|\le d\langle\eta\rangle^{-\alpha^{\parallel}},\\
|\xi'-\xi|\le d\langle\eta\rangle^{\alpha^{\perp}}\cdot h_{\gamma}^{\perp}(\eta)^{-1},\quad|\omega'-\omega|\le d\langle\eta\rangle^{\alpha^{\parallel}}.
\end{align*}
We set
\[
\Phi_{j}(\varrho)=(y,\tilde{\eta})=(x,z,\tilde{\xi},\omega),\quad\Phi_{j}(\varrho')=(y',\tilde{\eta}')=(x',z',\tilde{\xi}',\omega').
\]
with $\tilde{\xi},\tilde{\xi}'$ given as in (\ref{eq:xi_tilde}).
Then we have
\[
|\tilde{\xi}'-\tilde{\xi}|\le C|\xi'-\xi|+C|\xi||x'-x|^{\beta_{*}}+C|\omega||x'-x|^{\beta_{0}}
\]
and also 
\[
C^{-1}\langle\eta\rangle\le\langle\tilde{\eta}\rangle\le C\langle\eta\rangle.
\]
We will use that
\begin{equation}
h_{\gamma}^{\perp}(\eta)\eq{\ref{eq:h_gamma}}\left\langle \left\Vert \xi\right\Vert _{g\left(\eta\right)}\right\rangle ^{-\gamma}=\left\langle \left\langle \left|\eta\right|\right\rangle ^{-\alpha^{\perp}}\left|\xi\right|\right\rangle ^{-\gamma}\leq\left\langle \left|\xi\right|^{1-\alpha^{\perp}}\right\rangle ^{-\gamma}\leq C\left\langle \left|\xi\right|\right\rangle ^{-\gamma\left(1-\alpha^{\perp}\right)}\label{eq:*7}
\end{equation}
and $\gamma<1<\frac{1}{1-\alpha^{\perp}}$ to get that $1-\gamma\left(1-\alpha^{\perp}\right)>0$
hence
\begin{equation}
\left\langle \xi\right\rangle ^{1-\gamma\left(1-\alpha^{\perp}\right)}\leq\left\langle \eta\right\rangle ^{1-\gamma\left(1-\alpha^{\perp}\right)}\label{eq:*8}
\end{equation}

Hence
\begin{align*}
\|\Phi_{j}(\varrho')-\Phi_{j}(\varrho)\|_{\overset{\thickapprox}{g}_{\gamma}\left(\Phi_{j}\left(\varrho\right)\right)} & \le\langle\tilde{\eta}\rangle^{\alpha^{\perp}}|x'-x|+\langle\tilde{\eta}\rangle^{\alpha^{\parallel}}|z'-z|\\
 & \quad+h_{\gamma}^{\perp}(\xi)\cdot\langle\tilde{\eta}\rangle^{-\alpha^{\perp}}\cdot|\tilde{\xi}'-\tilde{\xi}|+\langle\eta\rangle^{-\alpha^{\parallel}}|\omega'-\omega|\\
 & \le Cd+Ch_{\gamma}^{\perp}(\xi)\cdot\langle\eta\rangle^{-\alpha^{\perp}}|\xi|\cdot(d\langle\eta\rangle^{-\alpha^{\perp}})^{\beta_{*}}\\
 & \quad+C\langle\eta\rangle^{-\alpha^{\perp}}|\omega|(d\langle\eta\rangle^{-\alpha^{\perp}})^{\beta_{0}}\\
 & \ineq{\ref{eq:*7}}Cd+C\langle\xi\rangle^{1-\gamma(1-\alpha^{\perp})}\langle\eta\rangle^{-(1+\beta_{*})\alpha^{\perp}}d^{\beta_{*}}\\
 & \quad+C\langle\eta\rangle^{1-\alpha^{\perp}-\alpha^{\perp}\beta_{0}}d^{\beta_{0}}\\
 & \ineq{\ref{eq:*8}}Cd+C\langle\eta\rangle^{1-\gamma(1-\alpha^{\perp})-(1+\beta_{*})\alpha^{\perp}}d^{\beta_{*}}\\
 & \quad+C\langle\eta\rangle^{1-\alpha^{\perp}-\alpha^{\perp}\beta_{0}}d^{\beta_{0}}.
\end{align*}
We have from (\ref{eq:gamma_interval})
\begin{align*}
 & \gamma\underset{(\ref{eq:gamma_interval})}{\geq}\frac{1-\alpha^{\perp}\left(1+\beta_{*}\right)}{1-\alpha^{\perp}}\\
\Leftrightarrow & 1-\gamma\left(1-\alpha^{\perp}\right)-\left(1+\beta_{*}\right)\alpha^{\perp}\leq0,
\end{align*}
and also $1-\alpha^{\perp}(1+\beta_{0})<0$ from (\ref{eq:ineq1}).
We obtain that 
\[
\|\Phi_{j}(\varrho')-\Phi_{j}(\varrho)\|_{\overset{\thickapprox}{g}_{\gamma}\left(\Phi_{j}\left(\varrho\right)\right)}\le C(d+d^{\beta_{0}}+d^{\beta_{*}}).
\]
Then (\ref{eq:def_beta_*}) gives (\ref{eq:Phi_Lipchitz}). Since
the metric $\overset{\thickapprox}{g}$ in (\ref{eq:def_g_double_tilde})
coincides with the metric $g$ in (\ref{eq:metric_g_in_coordinates}),
we deduce (\ref{eq:Phi_Lipchitz-1}), by writing $\varrho=\tilde{\kappa}_{j}\left(\rho\right),\varrho'=\tilde{\kappa}_{j}\left(\rho'\right)\in\mathbb{R}^{2\left(n+1\right)}$.
\end{proof}

\subsubsection{Final step for the proof}

Let $\rho,\rho'\in T^{*}U_{j}$ over a chart $U_{j}\subset M$ and
write $\varrho=\tilde{\kappa}_{j}\left(\rho\right),\varrho'=\tilde{\kappa}_{j}\left(\rho'\right)\in\mathbb{R}^{2\left(n+1\right)}$
and $\tilde{\varrho}=\Phi_{j}\left(\varrho\right)$, $\tilde{\varrho}'=\Phi_{j}\left(\varrho'\right)$.
We have
\[
\frac{W\left(\rho'\right)}{W\left(\rho\right)}\eq{\ref{eq:def_W}}\frac{\tilde{W}\left(\tilde{\eta}'\right)}{\tilde{W}\left(\tilde{\eta}\right)}\ineq{\ref{eq:w2}}1+C_{\tilde{W}}h_{0}^{\mu}\langle\|\tilde{\eta}'-\tilde{\eta}\|_{\tilde{g}_{\gamma}\left(\tilde{\eta}\right)}\rangle^{N_{\tilde{W}}}.
\]
But
\begin{align*}
\|\tilde{\eta}'-\tilde{\eta}\|_{\tilde{g}_{\gamma}\left(\tilde{\eta}\right)}\ineq{\ref{eq:def_g_double_tilde}}\|\tilde{\varrho}'-\tilde{\varrho}\|_{\overset{\thickapprox}{g}_{\gamma}\left(\tilde{\varrho}\right)}\ineq{\ref{eq:Phi_Lipchitz-1}}C\langle h_{\gamma}^{\perp}\left(\rho\right)\|\rho'-\rho\|_{g\left(\rho\right)}\rangle & \ineq{\ref{eq:log_log}}C'\left\langle h_{\gamma}^{\perp}\left(\rho\right)\mathrm{dist}_{g}\left(\rho',\rho\right)\right\rangle ^{C}.
\end{align*}
We deduce that
\[
\frac{W\left(\rho'\right)}{W\left(\rho\right)}\leq1+C_{W}h_{0}^{\mu}\left\langle h_{\gamma}^{\perp}\left(\rho\right)\mathrm{dist}_{g}\left(\rho',\rho\right)\right\rangle ^{N_{W}}
\]
This is property (\ref{eq:slow_variation_W}) for $W$ as claimed
in (\ref{enu:-satisfies--temperate-1}) in Theorem \ref{thm:W}.

\subsection{The decay property \eqref{enu:-satisfies-decay-1}}

We will write $A\preceq B$ if $A\le CB$ for some constant $C>0$
independently of $t\geq0$, $\rho\in T^{*}M$ and write $A\asymp B$
if $A\preceq B$ and $A\succeq B$ simultaneously. Let $\rho\in T^{*}M$
and $t\geq0$. As in \eqref{eq:decomp_Xi} and \eqref{hyperbolicity_xi},
we write
\[
\rho\left(t\right):=\tilde{\phi}^{t}\left(\rho\right)\quad\text{ and }\quad\rho(t)=\rho_{*}(t)+\omega(t)\cdot\mathcal{A}(m(t)),\quad\rho_{*}(t):=\rho_{s}(t)+\rho_{u}(t).
\]
Let us express the weight function $W$ in \eqref{eq:def_W1} as
\begin{equation}
W\left(\rho\right)\underset{(\ref{eq:def_W1})}{=}\frac{\left\langle A\left(\rho\right)\left|\rho_{s}\right|\right\rangle ^{R_{s}}}{\left\langle A\left(\rho\right)\left|\rho_{u}\right|\right\rangle ^{R_{u}}}\label{eq:W2}
\end{equation}
with setting
\begin{equation}
A\left(\rho\right)\underset{(\ref{eq:metric_g_in_coordinates}),(\ref{eq:h_gamma})}{=}h_{0}\left\langle \left\langle \left|\rho\right|\right\rangle ^{-\alpha^{\perp}}\left|\rho_{*}\right|\right\rangle ^{-\gamma}\left\langle \left|\rho\right|\right\rangle ^{-\alpha^{\perp}}.\label{eq:def_A}
\end{equation}
We consider two cases $\left|\rho_{*}(t)\right|\leq\left|\rho_{*}\right|$
and $\left|\rho_{*}(t)\right|>\left|\rho_{*}\right|$ separately.
Let us assume 
\begin{equation}
\left|\rho_{*}(t)\right|\leq\left|\rho_{*}\right|.\label{eq:hyp*}
\end{equation}
 
\begin{lem}
We have
\begin{equation}
\frac{|\rho_{s}(t)|}{|\rho_{s}|}\preceq\frac{|\rho_{*}(t)|}{|\rho_{*}|}\preceq\frac{|\rho(t)|}{|\rho|}\preceq1\preceq\frac{|\rho_{u}(t)|}{|\rho_{u}|}.\label{eq:rationXi}
\end{equation}
\end{lem}

\begin{proof}
Since $\left|\rho_{u}\right|\preceq\left|\rho_{u}\left(t\right)\right|$,
$\left|\rho_{s}\left(t\right)\right|\preceq\left|\rho_{s}\right|$
and $\omega(t)=\omega$, we have
\[
\frac{|\rho_{s}(t)|}{|\rho_{s}|}=\frac{\left|\rho_{s}\left(t\right)\right|+\left|\rho_{u}\right|\frac{\left|\rho_{s}\left(t\right)\right|}{\left|\rho_{s}\right|}}{\left|\rho_{s}\right|+\left|\rho_{u}\right|}\preceq\frac{|\rho_{*}(t)|}{|\rho_{*}|}.
\]
Similarly
\[
\frac{|\rho_{*}(t)|}{|\rho_{*}|}=\frac{\left|\rho_{*}\left(t\right)\right|+\left|\omega\right|\frac{\left|\rho_{*}\left(t\right)\right|}{\left|\rho_{*}\right|}}{\left|\rho_{*}\right|+\left|\omega\right|}\preceq\frac{|\rho(t)|}{|\rho|},
\]
\[
\frac{|\rho(t)|}{|\rho|}\leq\frac{\left|\rho_{*}\left(t\right)\right|+\left|\omega\right|}{\left|\rho_{*}\right|+\left|\omega\right|}\ineq{\ref{eq:hyp*}}1.
\]
\end{proof}
We have $\left\langle \frac{|\rho|}{|\rho(t)|}\right\rangle \underset{\ref{eq:rationXi}}{\asymp}\frac{|\rho|}{|\rho(t)|}\succeq1$
and $\alpha^{\perp}<1$ hence 
\begin{equation}
\left\langle \frac{|\rho|}{|\rho(t)|}\right\rangle ^{\alpha^{\perp}}\preceq\frac{|\rho|}{|\rho(t)|}.\label{eq:*12}
\end{equation}
We get
\begin{equation}
\frac{\left\langle \left\langle \left|\rho(t)\right|\right\rangle ^{-\alpha^{\perp}}\left|\rho_{*}(t)\right|\right\rangle }{\left\langle \left\langle \left|\rho\right|\right\rangle ^{-\alpha^{\perp}}\left|\rho_{*}\right|\right\rangle }\ineq{\ref{eq:prod2}}\left\langle \frac{\left\langle \left|\rho\right|\right\rangle ^{\alpha^{\perp}}\left|\rho_{*}(t)\right|}{\left\langle \left|\rho(t)\right|\right\rangle ^{\alpha^{\perp}}\left|\rho_{*}\right|}\right\rangle \ineq{\ref{eq:prod2}}\left\langle \left\langle \frac{|\rho|}{|\rho(t)|}\right\rangle ^{\alpha^{\perp}}\frac{\left|\rho_{*}(t)\right|}{\left|\rho_{*}\right|}\right\rangle \underset{(\ref{eq:*12}),(\ref{eq:rationXi})}{\preceq}1.\label{eq:*13}
\end{equation}
This implies 
\[
\frac{A\left(\rho(t)\right)\left|\rho{}_{u}(t)\right|}{A\left(\rho\right)\left|\rho{}_{u}\right|}\eq{\ref{eq:def_A}}\frac{\left\langle \left\langle \left|\rho(t)\right|\right\rangle ^{-\alpha^{\perp}}\left|\rho_{*}(t)\right|\right\rangle ^{-\gamma}\left\langle \left|\rho(t)\right|\right\rangle ^{-\alpha^{\perp}}\left|\rho{}_{u}(t)\right|}{\left\langle \left\langle \left|\rho\right|\right\rangle ^{-\alpha^{\perp}}\left|\rho_{*}\right|\right\rangle ^{-\gamma}\left\langle \left|\rho\right|\right\rangle ^{-\alpha^{\perp}}\left|\rho{}_{u}\right|}\underset{(\ref{eq:*13})}{\succeq}\frac{\left\langle \left|\rho\right|\right\rangle ^{\alpha^{\perp}}\left|\rho{}_{u}(t)\right|}{\left\langle \left|\rho(t)\right|\right\rangle ^{\alpha^{\perp}}\left|\rho{}_{u}\right|}\underset{(\ref{eq:rationXi})}{\succeq}1,
\]
and hence 
\begin{equation}
\left\langle A\left(\rho(t)\right)\left|\rho{}_{u}(t)\right|\right\rangle \succeq\left\langle A\left(\rho\right)\left|\rho_{u}\right|\right\rangle .\label{eq:Au}
\end{equation}
Again from \eqref{eq:rationXi}, we get
\[
\frac{\left\langle \rho(t)\right\rangle ^{-\alpha^{\perp}}\left|\rho_{*}(t)\right|}{\left\langle \rho\right\rangle ^{-\alpha^{\perp}}\left|\rho_{*}\right|}\succeq\frac{\left|\rho_{*}(t)\right|}{\left|\rho_{*}\right|}\ge1
\]
and therefore 
\[
\frac{A\left(\rho(t)\right)}{A\left(\rho\right)}\eq{\ref{eq:def_A}}\frac{\left\langle \left\langle \left|\rho(t)\right|\right\rangle ^{-\alpha^{\perp}}\left|\rho_{*}(t)\right|\right\rangle ^{-\gamma}\left\langle \left|\rho(t)\right|\right\rangle ^{-\alpha^{\perp}}}{\left\langle \left\langle \left|\rho\right|\right\rangle ^{-\alpha^{\perp}}\left|\rho_{*}\right|\right\rangle ^{-\gamma}\left\langle \left|\rho\right|\right\rangle ^{-\alpha^{\perp}}}\preceq\left\langle \frac{\left|\rho\right|}{\left|\rho(t)\right|}\right\rangle ^{\alpha^{\perp}}\underset{(\ref{eq:rationXi})}{\preceq}\left\langle \frac{\left|\rho_{*}\right|}{\left|\rho_{*}(t)\right|}\right\rangle ^{\alpha^{\perp}}.
\]
This implies
\[
\frac{A\left(\rho(t)\right)\left|\rho{}_{s}(t)\right|}{A\left(\rho\right)\left|\rho{}_{s}\right|}=\frac{\left\langle \left\langle \left|\rho(t)\right|\right\rangle ^{-\alpha^{\perp}}\left|\rho_{*}(t)\right|\right\rangle ^{-\gamma}\left\langle \left|\rho(t)\right|\right\rangle ^{-\alpha^{\perp}}\left|\rho{}_{s}(t)\right|}{\left\langle \left\langle \left|\rho\right|\right\rangle ^{-\alpha^{\perp}}\left|\rho_{*}\right|\right\rangle ^{-\gamma}\left\langle \left|\rho\right|\right\rangle ^{-\alpha^{\perp}}\left|\rho{}_{s}\right|}\preceq\left\langle \frac{\left|\rho_{*}\right|}{\left|\rho_{*}(t)\right|}\right\rangle ^{\alpha^{\perp}}\frac{\left|\rho{}_{s}(t)\right|}{\left|\rho{}_{s}\right|}\preceq1
\]
and hence 
\begin{equation}
\left\langle A\left(\rho(t)\right)\left|\rho{}_{s}(t)\right|\right\rangle \preceq\left\langle A\left(\rho\right)\left|\rho_{s}\right|\right\rangle .\label{eq:As}
\end{equation}
From (\ref{eq:W2}), \eqref{eq:Au} and \eqref{eq:As}, we see that
\[
W\left(\rho(t)\right)\preceq W\left(\rho\right)
\]
provided $\left|\rho_{*}(t)\right|\leq\left|\rho_{*}\right|$. Further
we can choose a large constant $C_{t}>1$ depending on $t$ so that,
if $\left\Vert \rho_{*}\right\Vert _{g_{\rho}}>C_{t}$, we have for
any $t'\in[0,t]$ that $\left\langle \rho(t')\right\rangle ^{-\alpha^{\perp}}\left|\rho_{*}(t')\right|\succeq1$
and $\left|\rho(t')\right|\succeq1$ and 
\[
\left\langle \left\langle \rho(t')\right\rangle ^{-\alpha^{\perp}}\left|\rho_{*}(t')\right|\right\rangle \asymp\left\langle \rho(t')\right\rangle ^{-\alpha^{\perp}}\left|\rho_{*}(t')\right|\quad\text{and}\quad\left\langle \left|\rho(t')\right|\right\rangle \asymp\left|\rho(t')\right|\quad\text{for }t'\in\left[0,t\right].
\]
We therefore obtain
\[
\frac{A\left(\rho(t)\right)\left|\rho{}_{s}(t)\right|}{A\left(\rho\right)\left|\rho{}_{s}\right|}\preceq\frac{|\rho_{*}(t)|^{-\gamma}\cdot\left|\rho(t)\right|^{-\alpha^{\perp}(1-\gamma)}\left|\rho{}_{s}(t)\right|}{|\rho_{*}|^{-\gamma}\cdot\left|\rho\right|^{-\alpha^{\perp}(1-\gamma)}\left|\rho{}_{s}\right|}\underset{(\ref{eq:rationXi})}{\preceq}\left(\frac{\left|\rho{}_{s}(t)\right|}{\left|\rho{}_{s}\right|}\right)^{(1-\alpha^{\perp})(1-\gamma)}\underset{(\ref{hyperbolicity_xi})}{\preceq}e^{-\lambda_{\mathrm{min}}t(1-\alpha^{\perp})(1-\gamma)}
\]
and

\begin{align*}
\frac{W\left(\rho(t)\right)}{W\left(\rho\right)} & \underset{(\ref{eq:W2},\ref{eq:Au})}{\preceq}e^{-\lambda_{\mathrm{min}}t\left(1-\gamma\right)\left(1-\alpha^{\perp}\right)R_{s}}.
\end{align*}
We have proved the conclusion of the Theorem \ref{thm:W}, claim \ref{enu:-satisfies-decay-1},
in the case where $\left|\rho_{*}\left(t\right)\right|\le\left|\rho_{*}\right|$.
For the other case where $\left|\rho_{*}\left(t\right)\right|\ge\left|\rho_{*}\right|$,
we can argue in a similar manner.

\subsection{The order property \eqref{enu:Its-order-is-1}}

It remains to prove the claim (\ref{enu:Its-order-is-1}) on the order
of the weight function $W$. Let us assume that $\rho_{u}=0$ and
$\rho_{0}=0$, that is, consider the directions in $E_{s}^{*}$. Then
$\left|\rho\right|\asymp\left|\rho_{*}\right|\asymp\left|\rho_{s}\right|$
and 
\begin{align*}
W\left(\rho\right) & \underset{(\ref{eq:W2})}{\asymp}\left|\rho_{s}\right|^{\left(1-\gamma\right)\left(1-\alpha^{\perp}\right)R_{s}}
\end{align*}
hence $W$ has order $r\left(\left[\rho\right]\right)=\left(1-\gamma\right)\left(1-\alpha^{\perp}\right)R_{s}$.
We proceed similarly in other directions.

We have finished the proof of Theorem \ref{thm:W}.

\section{\label{sec:Second-example-of}Second example of escape function}

In this Section we provide another example $W_{2}:T^{*}M\to\mathbb{R}$
of escape function that satisfies the temperate property (\ref{eq:slow_variation_W})
and the decay property (\ref{eq:decay_property}). For the construction
we use the projective space $\mathbb{P}\left(E_{u}^{*}\oplus E_{s}^{*}\right)$
and this weight function has therefore some ``conical shape'' in
opposite to the first example in (\ref{eq:W2-2-1}) or (\ref{eq:def_W1})
that has some ``parabolic shape'' (compare the red and blue domains
in Figure \ref{fig:zones-W} and Figure \ref{fig:zones-1}).

We propose this escape function, because some escape function similar
to $W_{2}$ has been constructed in \cite{fred-roy-sjostrand-07}
for Anosov diffeomorphisms and extended in \cite{fred_flow_09} for
Anosov flows. Later it has been used in the study of Ruelle resonances
in different settings \cite{dyatlov_Ruelle_resonances_2012,dyatlov_zworski_zeta_2013,dyatlov_guillarmou_2014,dyatlov_faure_guillarmou_2014,jin_zworski_local_trace_14,guillarmou_weich_resonances_16,dang_riviere_morse_smale_16}.
In particular this escape function $W_{2}$ is useful to construct
space $\mathcal{H}_{W}\left(M\right)$ that are fixed with respect
to small perturbations of $X$, see \cite{bonthonneau2018flow}. We
could have use it in this paper, except for the proof of Theorem \ref{thm:WF}
that needs a parabolic neighborhood of $E_{u}^{*}$ and not only conical
ans for the proof of Theorem \ref{thm:grey-band} that needs the decay
controlled from below (\ref{eq:decay_property-1}).

Let us consider the bundle $\mathbb{P}\left(\mathcal{E}^{*}\right)\rightarrow M$
where the fiber over $m\in M$ is the real projective space $\mathbb{RP}\left(\mathcal{E}^{*}\left(m\right)\right)$.
Notice that $\tilde{\phi}^{t}:T^{*}M\rightarrow T^{*}M$ in (\ref{eq:lifted_flow})
induces a flow on $\mathbb{P}\left(\mathcal{E}^{*}\right)$ denoted
by $\mathbb{P}\tilde{\phi}^{t}:\mathbb{P}\left(\mathcal{E}^{*}\right)\to\mathbb{P}\left(\mathcal{E}^{*}\right)$,
because $\tilde{\phi}^{t}$ keeps the sub-bundle $\mathcal{E}^{*}$
invariant and is linear in the fibers. For this flow $\mathbb{P}\tilde{\phi}^{t}$,
the unstable direction $\left[E_{u}^{*}\right]\subset\mathbb{P}\left(\mathcal{E}^{*}\right)$
is an attractor and the stable direction $\left[E_{s}^{*}\right]\subset\mathbb{P}\left(\mathcal{E}^{*}\right)$
is a repeller. Let $a_{0}\in C^{\infty}\left(\mathbb{P}\left(\mathcal{E}^{*}\right);\left[-1,+1\right]\right)$
be a smooth function such that
\begin{itemize}
\item $a_{0}\equiv-1$ on a vicinity $\left[\mathcal{V}_{u}\right]\subset\mathbb{P}\left(\mathcal{E}^{*}\right)$
of the unstable direction $\left[E_{u}^{*}\right]\subset\mathbb{P}\left(\mathcal{E}^{*}\right)$
and
\item $a_{0}\equiv+1$ on a vicinity $\left[\mathcal{V}_{s}\right]\subset\mathbb{P}\left(\mathcal{E}^{*}\right)$
of the stable direction $\left[E_{s}^{*}\right]\subset\mathbb{P}\left(\mathcal{E}^{*}\right)$.
\end{itemize}
From the function $a_{0}$ thus defined, we construct a smooth function
$a\in C^{\infty}\left(\mathcal{E}^{*};\left[-1,+1\right]\right)$
by averaging it along finite orbits of $\mathbb{P}\tilde{\phi}^{t}$
as follows. Let $T>0$ be a constant and put
\begin{equation}
a\left(\rho_{*}\right)=\frac{1}{2T}\int_{-T}^{T}a_{0}\left(\mathbb{P}\tilde{\phi}^{t}\rho_{*}\right)dt\label{eq:def_a_T}
\end{equation}
 for $\rho_{*}=\rho_{u}+\rho_{s}\in\mathcal{E}^{*}$ . In the next
lemma, we will assume that $T>0$ is sufficiently large and denote
by $\left\Vert v\right\Vert _{_{g\left(\rho\right)}}$ the norm of
$v\in T_{\rho}\left(T^{*}M\right)$ with respect to the metric $g$
at $\rho\in T^{*}M$ defined in (\ref{eq:def_norm_g}).

\begin{cBoxA}{}
\begin{defn}[Example 2 of a weight function $W_{2}$ with good properties]
\label{lem:W1}Assume that the parameters $\alpha^{\perp}$ and $\alpha^{\parallel}$
satisfy
\[
\frac{1}{1+\beta_{0}}\leq\alpha^{\perp}<1\quad\text{and}\quad0<\alpha^{\parallel}\leq\alpha^{\perp}.
\]
For $r>0$, we define the escape function $W_{2}:T^{*}M\rightarrow\mathbb{R}^{+}$
by
\begin{equation}
W_{2}\left(\rho\right):=\left\langle \left\Vert \rho_{u}+\rho_{s}\right\Vert _{g_{\rho}}\right\rangle ^{\frac{r}{\left(1-\alpha^{\perp}\right)}a\left(\rho_{u}+\rho_{s}\right)}\label{eq:def_W-1}
\end{equation}
where  $\rho=\omega\mathscr{A}+\rho_{s}+\rho_{u}$ with $\omega\in\mathbb{R}$,
$\rho_{u}\in E_{u}^{*}$, $\rho_{s}\in E_{s}^{*}$.
\end{defn}

\end{cBoxA}

\begin{cBoxB}{}
\begin{lem}
The function $W_{2}$ in (\ref{eq:def_W-1}) has the following properties
\begin{enumerate}
\item \label{enu:-satisfies--temperate}$W_{2}$ satisfies $h_{\gamma}^{\perp}$-temperate
property (\ref{eq:slow_variation_W}) for any $0\leq\gamma<1$.
\item \label{enu:-satisfies-decay}$W_{2}$ satisfies decay property (\ref{eq:decay_property})
with rate $\Lambda=\lambda_{\mathrm{min}}r$.
\item \label{enu:Its-order-is}Its order is $r\left(\left[\rho\right]\right)=ra\left(\rho_{u}+\rho_{s}\right)$
along $\mathcal{E}_{*}$ in particular $r\left(\left[\rho\right]\right)=r$
along $E_{s}^{*}$ and $r\left(\left[\rho\right]\right)=-r$ along
$E_{u}^{*}$.
\end{enumerate}
\end{lem}

\end{cBoxB}

See Figure \ref{fig:zones-1}. We can use the escape function $W_{2}$
to get Theorem \ref{thm:Weyl law} about the density of eigenvalues.
For this we choose the optimal values
\[
\alpha^{\perp}=\frac{1}{1+\beta_{0}},\quad\alpha^{\parallel}=0
\]
that give a transverse size $\Delta_{0}=\omega^{\frac{1}{1+\beta_{0}}}$
for the green region $\mathcal{V}_{0}$ on Figure \ref{fig:zones-1}.

\begin{figure}[h]
\centering{}\input{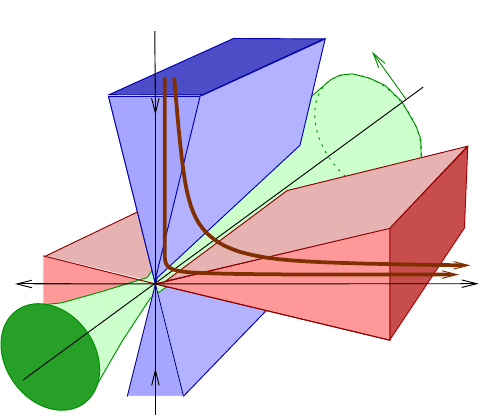tex_t}\caption{\label{fig:zones-1}Representation of domains associated to the escape
function $W_{2}$ defined in (\ref{eq:def_W-1}): the domain \emph{$\mathcal{V}_{s}$}
in blue (respect. $\mathcal{V}_{u}$ in red) is a conical neighborhood
of $E_{s}^{*}$ (respect. of $E_{u}^{*}$) where the exponent is $a\left(\rho_{u}+\rho_{s}\right)=\pm1$.
Outside the domain $\mathcal{V}_{0}:=\left\{ \rho;\,\left\langle \left\Vert \rho_{u}+\rho_{s}\right\Vert _{g_{\rho}}\right\rangle \asymp1\right\} =\left\{ \rho;\,\left|\rho_{u}+\rho_{s}\right|\apprle\omega^{\alpha^{\perp}}\right\} $
in green, the weight functions $W_{2}$ decays along the flow $\tilde{\phi}^{t}$.}
\end{figure}

\section{\label{sec:Hormander}Relation with the class of symbols $S_{\tilde{\rho},\tilde{\delta}}^{m}$
of Hörmander}

Let us give the relation between, on one side the metric $g$ on $T^{*}M$
with parameter $\alpha^{\perp}$, the conformal metric $g_{\gamma}=h_{\gamma}^{2}g$
with parameter $\gamma$ used in this paper and on the other side
the (traditional) class of symbols $S_{\tilde{\rho},\tilde{\delta}}^{m}$
of Hörmander \cite[chap.18]{hormander_3} characterized by some parameters
$m\in\mathbb{R}$, $0\leq\tilde{\delta}\leq\tilde{\rho}\leq1$, $\tilde{\delta}+\tilde{\rho}\geq1$.
In this paper we have a metric in (\ref{eq:metric_g_in_coordinates})
similar to\footnote{The Weyl-Hörmander calculus in principle enables to use metrics of
a more general form than (\ref{eq:g}), \cite{hormander1979weyl}.}
\begin{equation}
g:=\left(\frac{dx}{\delta^{\perp}\left(\xi\right)}\right)^{2}+\left(\delta^{\perp}\left(\xi\right)d\xi\right)^{2}\label{eq:g}
\end{equation}
on $\left(x,\xi\right)\in\mathbb{R}^{2n}$ (variable transverse to
the flow direction) with
\begin{equation}
\delta^{\perp}\left(\xi\right)=\left\langle \left|\xi\right|\right\rangle ^{-\alpha^{\perp}}\label{eq:delta_ex}
\end{equation}
 as defined in (\ref{eq:def_delta}) and parameter $\frac{1}{2}\leq\alpha^{\perp}<1$.
We also have in (\ref{eq:h_gamma}), a function of the form
\begin{equation}
h_{\gamma}\left(\xi\right):=\left\langle \left|\xi\right|\right\rangle ^{-\gamma},\label{eq:def_h-1}
\end{equation}
with $0\leq\gamma<1$. This function $h_{\gamma}\left(\xi\right)$
plays the role of a ``small Planck parameter'' and is associated
to the re-scaled or conformal metric introduced by Hörmander that
is used to measure the variations of symbols on phase space \cite{hormander1979weyl}\cite[chap.18]{hormander_3}\cite[p.22,p.68]{lerner2011metrics}\cite{nicola_rodino_livre_11}
\begin{align*}
g_{\gamma}\left(\xi\right) & :=\left(h_{\gamma}\left(\xi\right)\right)^{2}g\left(\xi\right)\\
 & \underset{(\ref{eq:g})}{\asymp}\left(\frac{h_{\gamma}\left(\xi\right)dx}{\delta^{\perp}\left(\xi\right)}\right)^{2}+\left(h_{\gamma}\left(\xi\right)\delta^{\perp}\left(\xi\right)d\xi\right)^{2}\\
 & \underset{(\ref{eq:delta_ex}),(\ref{eq:def_h-1})}{=}\left(\frac{dx}{\left\langle \left|\xi\right|\right\rangle ^{-\tilde{\delta}}}\right)^{2}+\left(\frac{d\xi}{\left\langle \left|\xi\right|\right\rangle ^{\tilde{\rho}}}\right)^{2}
\end{align*}
with $\tilde{\delta}=\alpha^{\perp}-\gamma$ and $\tilde{\rho}=\alpha^{\perp}+\gamma$
that gives the relations
\begin{equation}
\alpha^{\perp}:=\frac{1}{2}\left(\tilde{\rho}+\tilde{\delta}\right)\geq\frac{1}{2},\quad\gamma:=\frac{1}{2}\left(\tilde{\rho}-\tilde{\delta}\right)\geq0.\label{eq:relations-1}
\end{equation}

\section{\label{sec:A-simple-model-of resonances}How to reveal intrinsic
discrete spectrum (resonances) on a simple model}

In this section we present an elementary model that explains why we
observe discrete spectrum (resonances) in an appropriate Sobolev space
though this is not the case for the usual $L^{2}$ space. At the end
of the section, we will see the analogy between this simple model
and the hyperbolic dynamics considered in this paper.

\subsection{The model}

Let us consider the following bi-infinite matrix $\mathcal{L}=\left(\mathcal{L}_{i,j}\right)_{i,j\in\mathbb{Z}}$
\begin{equation}
\mathcal{L}:=\left(\begin{array}{ccccc}
\ddots\\
\ddots & 0\\
 & 1 & w_{0}\\
 &  & 1 & 0\\
0 &  & -w_{1}^{-1} & 1 & \ddots
\end{array}\right)\label{eq:Matrix_L}
\end{equation}
whose non-vanishing elements are only $\mathcal{L}_{0,0}=w_{0}\in\mathbb{C}\backslash\left\{ 0\right\} $,
$\mathcal{L}_{2,0}=-w_{1}^{-1}\in\mathbb{C}$ and $\mathcal{L}_{j+1,j}=1$
for every $j\in\mathbb{Z}$. Considering these non vanishing elements,
we can associate to $\mathcal{L}$ a ``Markov graph'' that contains
the dynamics of a shift $\tilde{\phi}:j\mapsto j+1$, perturbed by
a finite rank matrix, see Figure \ref{fig:markov}.

\begin{figure}[h]
\centering{}\input{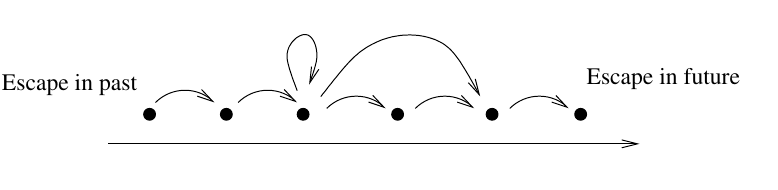tex_t}\caption{\label{fig:markov}Markov graph associated to the infinite matrix
(\ref{eq:Matrix_L}).}
\end{figure}

\begin{rem}
\label{rem:The-inverse-of}The inverse matrix of $\mathcal{L}$ is
\begin{equation}
\mathcal{L}^{-1}:=\left(\begin{array}{ccccc}
\\
\ddots & 1 &  & 0\\
 & 0 & 1 & -w_{0}\\
 &  & 0 & 1\\
0 &  & 0 & w_{1}^{-1} & \ddots\\
 &  &  &  & \ddots
\end{array}\right)\label{eq:Matrix_L-1}
\end{equation}
where $\left(\mathcal{L}^{-1}\right)_{1,1}=w_{1}^{-1}$, $\left(\mathcal{L}^{-1}\right)_{-1,1}=-w_{0}$
and $\left(\mathcal{L}^{-1}\right)_{j,j+1}=1$ for $j\in\mathbb{Z}$.
The matrix $\mathcal{L}$ has the following eigenvectors $U$ and
$V$:
\begin{itemize}
\item $\mathcal{L}U=w_{0}U$ with vector $U=\left(\ldots,0,U_{0},\frac{1}{w_{0}}U_{0},\ldots,\frac{1}{w_{0}^{j}}\left(1-\frac{w_{0}}{w_{1}}\right)U_{0},\ldots\right)\in\mathbb{C}^{\mathbb{Z}}$
with components $U_{j}=0$ for $j<0$, $U_{0}\in\mathbb{C}$, $U_{1}=\frac{1}{w_{0}}U_{0}$
and $U_{j}=\frac{1}{w_{0}^{j}}\left(1-\frac{w_{0}}{w_{1}}\right)U_{0}$
for $j\geq2$. Hence $U\in l^{2}\left(\mathbb{Z}\right)$ if and only
if $\left|w_{0}\right|>1$.
\item $\mathcal{L}V=w_{1}V$ with vector $V=\left(\ldots,w_{1}^{1-j}\left(1-\frac{w_{0}}{w_{1}}\right)V_{1},\ldots,w_{1}V_{1},V_{1},0,\ldots\right)\in\mathbb{C}^{\mathbb{Z}}$
with components $V_{1}\in\mathbb{C}$, $V_{0}=w_{1}V_{1}$, $V_{j}=w_{1}^{1-j}\left(1-\frac{w_{0}}{w_{1}}\right)V_{1}$
for $j\leq-1$ and $V_{j}=0$ for $j\geq2$. Hence $V\in l^{2}\left(\mathbb{Z}\right)$
if and only if $\left|w_{1}\right|<1$.
\end{itemize}
\end{rem}

For some $r\in\mathbb{R}$ called the order, let us consider the following
function $W$ on $\mathbb{Z}$,
\[
W\left(j\right):=e^{-rj},\quad j\in\mathbb{Z}.
\]
For $j\neq0,2$ we have $\frac{W\left(\tilde{\phi}\left(j\right)\right)}{W\left(j\right)}=e^{-r}$
hence if $r>0$, $W$ decreases along the trajectories of $\tilde{\phi}$
and we call $W$ an ``escape function''. Let $\mathrm{Diag}\left(W\right)$
be the diagonal matrix with diagonal elements $W\left(j\right)$.
Let
\begin{align}
\tilde{\mathcal{L}}_{W}: & =\mathrm{Diag}\left(W\right)\circ\mathcal{L}\circ\mathrm{Diag}\left(W\right)^{-1}\underset{(\ref{eq:Matrix_L})}{=}\left(\begin{array}{ccccc}
\\
\ddots & 0 &  & 0\\
 & e^{-r} & w_{0}\\
 &  & e^{-r} & 0\\
0 &  & -e^{-2r}w_{1}^{-1} & e^{-r} & \ddots\\
 &  &  &  & \ddots
\end{array}\right)\label{eq:def_L_tilde_W}
\end{align}
We refer to \cite[chap. 1]{bottcher2012introduction}\cite[p.51]{trefethen_book_05}
for the spectrum of Toeplitz operators.
\begin{lem}
\label{lem:spectrum}The operator $\tilde{\mathcal{L}}_{W}:l^{2}\left(\mathbb{Z}\right)\rightarrow l^{2}\left(\mathbb{Z}\right)$
has essential spectrum on the circle of radius $e^{-r}$ and the following
discrete spectrum elsewhere: $w_{0}$ is an eigenvalue if and only
if $\left|w_{0}\right|>e^{-r}$. $w_{1}$ is an eigenvalue if and
only if $\left|w_{1}\right|<e^{-r}$.
\end{lem}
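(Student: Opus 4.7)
The plan is to write $\tilde{\mathcal{L}}_{W}$ as a compact (in fact rank one) perturbation of a scaled bilateral shift, combine Weyl's theorem on invariance of essential spectrum with the explicit eigenvectors given in Remark~\ref{rem:The-inverse-of}, and then rule out extraneous eigenvalues by a direct analysis of the eigenvalue recurrence. Inspecting (\ref{eq:def_L_tilde_W}) I would decompose
\[
\tilde{\mathcal{L}}_{W}=e^{-r}S+K,\qquad K=|v\rangle\langle e_{0}|,\qquad v=w_{0}\,e_{0}-e^{-2r}w_{1}^{-1}\,e_{2},
\]
where $Se_{j}=e_{j+1}$ is the bilateral right-shift on $l^{2}(\mathbb{Z})$. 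Since $S$ is unitary, $\sigma(S)$ is the unit circle and is purely continuous, so $\sigma_{\mathrm{ess}}(e^{-r}S)=\{|z|=e^{-r}\}$. Because $K$ has rank one it is compact, so Weyl's theorem gives $\sigma_{\mathrm{ess}}(\tilde{\mathcal{L}}_{W})=\{|z|=e^{-r}\}$; moreover $\tilde{\mathcal{L}}_{W}-\lambda$ is Fredholm of index zero for every $|\lambda|\neq e^{-r}$, so outside the critical circle membership in the spectrum is equivalent to being an eigenvalue of finite algebraic multiplicity.

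Next I would locate these eigenvalues via the explicit vectors of Remark~\ref{rem:The-inverse-of}. Since $\tilde{\mathcal{L}}_{W}=\mathrm{Diag}(W)\circ\mathcal{L}\circ\mathrm{Diag}(W)^{-1}$, the conjugates $\tilde{U}:=\mathrm{Diag}(W)U$ and $\tilde{V}:=\mathrm{Diag}(W)V$ are formal eigenvectors of $\tilde{\mathcal{L}}_{W}$ with eigenvalues $w_{0}$ and $w_{1}$. From the Remark, $\tilde{U}_{j}=e^{-rj}w_{0}^{-j}(1-w_{0}/w_{1})U_{0}$ for $j\geq2$ and vanishes for $j<0$, so $\sum|\tilde{U}_{j}|^{2}$ is a geometric series in $|e^{-r}/w_{0}|^{2}$ and $\tilde{U}\in l^{2}(\mathbb{Z})$ iff $|w_{0}|>e^{-r}$. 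Similarly $\tilde{V}_{j}=e^{-rj}w_{1}^{-j}(1-w_{0}/w_{1})V_{1}$ for $j\leq-1$ and vanishes for $j\geq2$, so after setting $k=-j\geq1$ the tail is a geometric series in $(e^{r}|w_{1}|)^{2}$, giving $\tilde{V}\in l^{2}(\mathbb{Z})$ iff $|w_{1}|<e^{-r}$.

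The final step is to show $w_{0}$ and $w_{1}$ exhaust the point spectrum off the critical circle. The eigenvalue equation $\tilde{\mathcal{L}}_{W}\psi=\lambda\psi$ reads $\lambda\psi_{j}=e^{-r}\psi_{j-1}$ for $j\notin\{0,1,2\}$, together with three coupled equations at $j=0,1,2$ encoding the entries $w_{0}$ and $-e^{-2r}w_{1}^{-1}$ of column zero. The generic one-step recurrence forces $\psi_{-k}=(\lambda e^{r})^{k-1}\psi_{-1}$ on the left tail and $\psi_{j}\propto(e^{-r}/\lambda)^{j}$ on the right tail, and $l^{2}$-summability on each side fixes one connection coefficient. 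Solving the three matching relations one finds that a nontrivial $l^{2}$ solution exists precisely when either the right tail terminates at $j=2$, which amounts to $\lambda=w_{1}$ together with the decay condition $|w_{1}|<e^{-r}$ on the surviving left tail, or the backward tail is annihilated at $j=0$, which amounts to $\lambda=w_{0}$ together with $|w_{0}|>e^{-r}$. The step requiring the most care is the spurious case $\psi_{0}=0$, which a priori seems to generate a continuum of eigenvalues in the disk $|\lambda|<e^{-r}$ via the left recurrence; but the row-zero equation $\lambda\psi_{0}=w_{0}\psi_{0}+e^{-r}\psi_{-1}$ then forces $\psi_{-1}=0$, collapsing the left tail entirely and showing that no extra eigenvector appears.
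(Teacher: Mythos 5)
Your proof is correct, and it supplies the self-contained argument that the paper itself leaves to a citation. The paper establishes Lemma~\ref{lem:spectrum} only by pointing to standard Toeplitz/Laurent operator theory and to the explicit eigenvectors of Remark~\ref{rem:The-inverse-of}; your decomposition $\tilde{\mathcal{L}}_{W}=e^{-r}S+K$ with $K=|v\rangle\langle e_{0}|$ rank one, combined with Weyl's theorem, is exactly the mechanism that the cited references would invoke to identify the essential spectrum as the circle $\{|z|=e^{-r}\}$. The $l^{2}$ summability conditions on the conjugated vectors $\tilde U=\mathrm{Diag}(W)U$ and $\tilde V=\mathrm{Diag}(W)V$ reproduce what the paper records in the Remark, and your recurrence analysis at the special rows $j=0,2$ is a welcome addition since it makes rigorous the exhaustiveness of the point spectrum off the critical circle (the paper leaves this implicit in the reference). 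One small point you could make explicit: passing from "Fredholm of index $0$ off the circle" to "the spectrum off the circle consists of finitely many isolated eigenvalues" also uses the existence of a regular point in \emph{each} connected component of $\mathbb{C}\setminus\{|z|=e^{-r}\}$ so that analytic Fredholm theory applies; this holds because $\lambda=0$ is regular, since $\tilde{\mathcal{L}}_{W}=e^{-r}S(\mathrm{Id}+e^{r}S^{-1}K)$ and $e^{r}S^{-1}K$ is rank one with zero trace, hence nilpotent. In any case your direct classification of eigenvectors already renders this step superfluous for the statement at hand.
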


We define
\[
\mathcal{H}_{W}\left(\mathbb{Z}\right):=\mathrm{Diag}\left(W\right)^{-1}\left(l^{2}\left(\mathbb{Z}\right)\right)
\]
meaning that the norm of a vector $u\in\mathcal{H}_{W}\left(\mathbb{Z}\right)$
is
\begin{equation}
\left\Vert u\right\Vert _{\mathcal{H}_{W}\left(\mathbb{Z}\right)}^{2}:=\left\Vert \mathrm{Diag}\left(W\right)u\right\Vert _{l^{2}\left(\mathbb{Z}\right)}^{2}=\sum_{j\in\mathbb{Z}}\left|e^{-rj}u_{j}\right|^{2}\label{eq:norm_W}
\end{equation}
$\mathcal{H}_{W}\left(\mathbb{Z}\right)$ is similar to a anisotropic
Sobolev space with weight $W\left(j\right)=e^{-rj}$. We have the
following commutative diagram
\[
\begin{CD}\mathcal{H}_{W}\left(\mathbb{Z}\right)@>{\mathcal{L}}>>\mathcal{H}_{W}\left(\mathbb{Z}\right)\\
@V{\mathrm{Diag}\left(W\right)}VV@V{\mathrm{Diag}\left(W\right)}VV\\
l^{2}\left(\mathbb{Z}\right)@>{\tilde{\mathcal{L}}_{W}:=\mathrm{Diag}\left(W\right)\circ\mathcal{L}\circ\mathrm{Diag}\left(W\right)^{-1}}>>l^{2}\left(\mathbb{Z}\right)
\end{CD}
\]
where $\mathrm{Diag}\left(W\right):\mathcal{H}_{W}\left(\mathbb{Z}\right)\rightarrow l^{2}\left(\mathbb{Z}\right)$
is an isometry (by definition) and hence Lemma \ref{lem:spectrum}
gives
\begin{lem}
The operator $\mathcal{L}:\mathcal{H}_{W}\left(\mathbb{Z}\right)\rightarrow\mathcal{H}_{W}\left(\mathbb{Z}\right)$
has essential spectrum on the circle of radius $e^{-r}$  and the
following discrete spectrum elsewhere: $w_{0}$ is an eigenvalue with
eigenvector $U\in\mathcal{H}_{W}\left(\mathbb{Z}\right)$ if and only
if $\left|w_{0}\right|>e^{-r}$. $w_{1}$ is an eigenvalue with eigenvector
$V\in\mathcal{H}_{W}\left(\mathbb{Z}\right)$ if and only $\left|w_{1}\right|<e^{-r}$.
\end{lem}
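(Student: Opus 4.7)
The plan is to separate $\tilde{\mathcal{L}}_W$ into a simple ``background'' part whose spectrum is explicit and a finite-rank ``defect'', then to solve the resulting eigenvalue recurrence by hand.

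First I would observe from \eqref{eq:def_L_tilde_W} that
\[
\tilde{\mathcal{L}}_W \;=\; e^{-r}\, S \;+\; K,
\]
where $S$ is the bilateral shift $(Su)_j = u_{j-1}$ on $l^2(\mathbb{Z})$ and $K$ is the rank-one operator defined by $K e_0 = w_0\, e_0 - e^{-2r} w_1^{-1}\, e_2$ and $K e_j = 0$ for $j\neq 0$. Since $S$ is unitary, $e^{-r}S$ is normal with purely continuous spectrum equal to the circle $\{|\lambda|=e^{-r}\}$, so $\sigma_{\mathrm{ess}}(e^{-r}S)$ is exactly that circle. As $K$ is compact, Weyl's theorem on stability of the essential spectrum gives $\sigma_{\mathrm{ess}}(\tilde{\mathcal{L}}_W) = \{|\lambda|=e^{-r}\}$. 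Moreover, for $|\lambda|\neq e^{-r}$ the operator $\lambda - e^{-r}S$ is invertible, so $\lambda - \tilde{\mathcal{L}}_W$ is a compact perturbation of an invertible operator and is therefore Fredholm of index $0$; hence any spectral point off the circle is automatically an eigenvalue of finite multiplicity.

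Next I would transport the eigenproblem back to $\mathcal{L}$ via the isometry $\mathrm{Diag}(W):\mathcal{H}_W(\mathbb{Z})\to l^2(\mathbb{Z})$, which conjugates $\mathcal{L}$ to $\tilde{\mathcal{L}}_W$. Thus $\tilde{\mathcal{L}}_W u = \lambda u$ in $l^2(\mathbb{Z})$ is equivalent to $\mathcal{L} v = \lambda v$ with $v = \mathrm{Diag}(W)^{-1}u \in \mathcal{H}_W(\mathbb{Z})$. Reading row by row from \eqref{eq:Matrix_L}, this becomes
\[
v_{i-1} = \lambda v_i \ (i\notin\{0,2\}), \qquad v_{-1} = (\lambda-w_0)v_0, \qquad \lambda v_2 = v_1 - w_1^{-1} v_0 .
\]
For $\lambda\neq 0$ a direct iteration starting from $v_0$ yields $v_1 = v_0/\lambda$, $v_j = v_0\,(w_1-\lambda)/(w_1\lambda^j)$ for $j\ge 2$, and $v_{-k} = \lambda^{k-1}(\lambda - w_0)v_0$ for $k\ge 1$; the case $\lambda=0$ propagates $v_0=0$ along the recurrence and forces $v\equiv 0$.

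Finally I would impose summability in the norm \eqref{eq:norm_W}. The right tail $\sum_{j\ge 2}e^{-2rj}|v_j|^2$ is geometric with ratio $(e^{-r}/|\lambda|)^2$ and coefficient proportional to $|v_0(w_1-\lambda)|^2$, so it converges iff $|\lambda|>e^{-r}$ or $v_0(w_1-\lambda)=0$. The left tail $\sum_{k\ge 1}e^{2rk}|v_{-k}|^2$ is geometric with ratio $(e^{r}|\lambda|)^2$ and coefficient proportional to $|v_0(\lambda-w_0)|^2$, so it converges iff $|\lambda|<e^{-r}$ or $v_0(\lambda-w_0)=0$. A nonzero eigenvector forces $v_0\neq 0$, and the two open inequalities are mutually exclusive; hence the only possibilities are $\lambda = w_1$ (killing the right tail, with the remaining left tail summable iff $|w_1|<e^{-r}$) or $\lambda = w_0$ (killing the left tail, with the remaining right tail summable iff $|w_0|>e^{-r}$). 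The resulting eigenvectors are exactly $V$ and $U$ of Remark~\ref{rem:The-inverse-of}.

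The main point requiring care is the Fredholm index bookkeeping off the circle: one must know that $\lambda-\tilde{\mathcal{L}}_W$ is Fredholm of \emph{index zero}, so that emptiness of the kernel yields invertibility and rules out extraneous spectrum in either of the two connected regions $|\lambda|<e^{-r}$ and $|\lambda|>e^{-r}$. This is exactly what the decomposition in the first paragraph delivers; the rest of the argument is an explicit computation with geometric series.
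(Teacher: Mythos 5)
Your proof is correct, and it is more self-contained than the paper's treatment. The paper obtains the present Lemma by conjugating with $\mathrm{Diag}(W)$ (exactly your second step) and then appealing to Lemma~\ref{lem:spectrum} on $l^{2}(\mathbb{Z})$, whose proof is not written out but deferred to the Toeplitz-operator references \cite{bottcher2012introduction,trefethen_book_05}. You instead supply that missing content directly: the splitting $\tilde{\mathcal{L}}_W = e^{-r}S + K$ with $S$ the bilateral shift and $K$ rank one (hence compact), together with Weyl's theorem, yields $\sigma_{\mathrm{ess}}(\tilde{\mathcal{L}}_W)=\{|\lambda|=e^{-r}\}$; and the observation that $\lambda-\tilde{\mathcal{L}}_W$ is a compact perturbation of the invertible operator $\lambda - e^{-r}S$ for $|\lambda|\neq e^{-r}$, hence Fredholm of index $0$, converts ``nontrivial kernel'' into ``eigenvalue'' and ``trivial kernel'' into ``resolvent,'' ruling out residual spectrum off the circle. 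Your explicit solution of the three-term recurrence coincides with the eigenvectors $U,V$ of Remark~\ref{rem:The-inverse-of} (up to a harmless factor of $w_1$ in the paper's written formula for $V_j$, $j\le -1$), and the two geometric-tail summability conditions reproduce the stated ``if and only if'' criteria. The only (shared) blind spot is the degenerate case $w_0=w_1$, where both tails vanish identically and the eigenvector is finitely supported so it lies in $\mathcal{H}_W(\mathbb{Z})$ for every $r$; the Lemma's ``if and only if'' implicitly assumes $w_0\neq w_1$, as does your mutual-exclusivity step, but this is a genuine edge case of the model and not a flaw specific to your argument.
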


See Figure \ref{fig:spectrum_matrix}.

\begin{figure}[h]
\centering{}\input{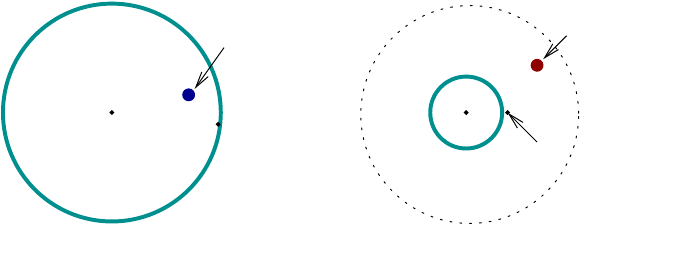tex_t}\caption{\label{fig:spectrum_matrix}In this picture we assume $\left|w_{0}\right|=\left|w_{1}\right|<1$.
The circle of radius $e^{-r}$ (in green) is the essential spectrum
of $\mathcal{L}$ in the space $\mathcal{H}_{W}\left(\mathbb{Z}\right)$
that depends on $r\in\mathbb{R}$. As $r\rightarrow+\infty$ this
circle shrinks to zero and we reveal the intrinsic \textquotedblleft future
discrete spectrum\textquotedblright{} of $\mathcal{L}$, here this
is the eigenvalue $w_{0}$ (in red), as soon as $e^{-r}<\left|w_{0}\right|$.
As $r\rightarrow-\infty$ this circle goes to infinity and we reveal
the intrinsic \textquotedblleft past discrete spectrum\textquotedblright{}
of $\mathcal{L}$, here this is the eigenvalue $w_{1}$ (in blue),
as soon as $\left|w_{1}\right|<e^{-r}$.}
\end{figure}

\paragraph{The conclusion of this simple model is that:}
\begin{enumerate}
\item We observe that the given matrix $\mathcal{L}$ corresponds to a simple
dynamics $\tilde{\phi}$ and outside a compact region, the dynamics
of $\tilde{\phi}$ escapes to/from infinity.
\item We construct an ``escape function'' $W$ for that dynamic $\tilde{\phi}$
that decays with rate $e^{-r}$ and define an anisotropic Sobolev
space $\mathcal{H}_{W}\left(\mathbb{Z}\right):=\mathrm{Diag}\left(W\right)^{-1}\left(l^{2}\left(\mathbb{Z}\right)\right)$.
\item It appears that the matrix $\mathcal{L}$ has essential spectral radius
$r_{\mathrm{ess.}}=e^{-r}$ in $\mathcal{H}_{W}\left(\mathbb{Z}\right)$.
Moreover, by increasing the parameter $r$ we get $r_{\mathrm{ess.}}=e^{-r}\underset{r\rightarrow+\infty}{\rightarrow0}$
and this may reveal new eigenvalues and eigenspaces of $\mathcal{L}$
that do not depend on $W$ (here we have only $w_{0}\in\mathbb{C}$)
that we call ``future discrete spectrum''. If we do $r_{\mathrm{ess.}}=e^{-r}\underset{r\rightarrow-\infty}{\rightarrow+\infty}$
this may reveal new eigenvalues and eigenspaces of $\mathcal{L}$
that do not depend on $W$ (here we have only $w_{1}\in\mathbb{C}$)
that we call ``past discrete spectrum''. The past discrete spectrum
is the future discrete spectrum for $\mathcal{L}^{-1}$ and conversely.
See Figure \ref{fig:spectrum_matrix}.
\end{enumerate}
\begin{rem}
For $j\neq0$, the dynamics $\tilde{\phi}:j\mapsto j+1$ is a translation.
Observe that
\begin{itemize}
\item For $j>0$, if we set $\xi_{u}:=e^{j}$, we get an expanding dynamics
$\tilde{\phi}:\xi_{u}\mapsto e^{1}\xi_{u}$. The escape function is
$W\left(j\right)=e^{-jr}=\xi_{u}^{-r}$ hence the anisotropic Sobolev
space $\mathcal{H}_{W}\left(\mathbb{Z}\right)$ has negative order
$-r$.
\item For $j<0$, if we set $\xi_{s}:=e^{-j}$, we get a contracting dynamics
$\tilde{\phi}:\xi_{s}\mapsto e^{-1}\xi_{s}$. The escape function
is $W\left(j\right)=e^{-jr}=\xi_{s}^{r}$ hence the anisotropic Sobolev
space $\mathcal{H}_{W}\left(\mathbb{Z}\right)$ has positive order
$r$.
\end{itemize}
The fact that the order depends on the sign of $j$  explains the
term ``anisotropic''.
\end{rem}

~
\begin{rem}
Starting from the semi infinite matrix 
\[
\mathcal{L}:=\left(\begin{array}{cccc}
w_{0} &  &  & 0\\
1 & 0\\
 & 1 & 0\\
0 &  & \ddots & \ddots
\end{array}\right)
\]
we would have a similar analysis with the difference that the dynamics
is $\tilde{\phi}\left(j\right)=j+1$ is a semi-shift and the spectrum
in $\mathcal{H}_{W}\left(\mathbb{N}\right)$ is essential on the circle
of radius $e^{-r}$ and residual inside. If one chooses the escape
function $W\left(j\right)=e^{-j^{\alpha}}=e^{-\left(\log\xi\right)^{\alpha}}$
with some $\alpha>1$, we get that $\frac{W\left(j+1\right)}{W\left(j\right)}\rightarrow0$
as $j\rightarrow\infty$ and that $\mathcal{L}$ is Trace class in
$\mathcal{H}_{W}\left(\mathbb{Z}\right)$ with Trace obtained as the
sum over the discrete spectrum (the essential spectrum has shrunk
to $0$ immediately). This model corresponds to the so-called Gevrey
class in $\xi$ variable (and their dual). Even stronger, if one choose
the escape function $W\left(j\right)=e^{-re^{j}}=e^{-r\xi}$ with
some $r>0$, we get that $\frac{W\left(j+1\right)}{W\left(j\right)}=e^{-r\left(e-1\right)e^{j}}\rightarrow0$
as $j\rightarrow\infty$. This model corresponds to ``analytic class''
in $\xi$ variable (and their dual of hyper-functions).
\end{rem}

~
\begin{rem}
We can replace the single element $w_{0}\in\mathbb{C}$ on the diagonal
of $\mathcal{L}$ by a finite rank (or compact) matrix with eigenvalues
$w_{0},w_{1},\ldots$. From the discrete spectrum of $\mathcal{L}$
in $\mathcal{H}_{W}\left(\mathbb{Z}\right)$, we deduce decay of correlations
for $u,v\in\mathcal{H}_{W}\left(\mathbb{Z}\right)$:
\[
\langle u|\mathcal{L}^{t}v\rangle_{\mathcal{H}_{W}\left(\mathbb{Z}\right)}=\sum_{k}w_{k}^{t}\langle u|\mathcal{L}^{t}\pi_{k}v\rangle_{\mathcal{H}_{W}\left(\mathbb{Z}\right)}+O\left(e^{-\lambda rt}\right)\underset{t\rightarrow+\infty}{\sim}w_{0}^{t}\langle u|\mathcal{L}^{t}\pi_{0}v\rangle
\]
where $\pi_{k}$ is the spectral projector associated to the eigenvalue
$w_{k}$ and we have assumed that $\left|w_{0}\right|>\left|w_{j}\right|$
for $j\geq1$.
\end{rem}

\subsection{Analogy with Ruelle resonances for hyperbolic dynamics}

In the following table we put in correspondence the properties of
the matrix $\mathcal{L}$, (\ref{eq:Matrix_L}) and the transfer operator
$\mathcal{L}^{t}=e^{tA}$ studied in this paper that is defined from
an Anosov flow $\phi^{t}$ on $M$.
\begin{center}
\begin{tabular}{|l|l|}
\hline 
Matrix model $\mathcal{L}$ & Hyperbolic dynamics\tabularnewline
\hline 
\hline 
Orthonormal basis & Almost orthogonal basis of wave-packets\tabularnewline
 $\varphi_{j}=\left(0\ldots,\underset{(j)}{1},0\ldots\right)\in l^{2}\left(\mathbb{Z}\right)$,
$j\in\mathbb{Z}$. & \noun{$\varphi_{\rho}\in L^{2}\left(M\right)$, }with $\rho\in T^{*}M$.\tabularnewline
\hline 
The action of $\mathcal{L}$ is described by & The action of $\mathcal{L}^{t}$ is described by\tabularnewline
a dynamics $\tilde{\phi}$ on $\mathbb{Z}$, & the lifted flow $\tilde{\phi}$ on $\rho=\left(x,\xi\right)\in T^{*}M$,\tabularnewline
hyperbolic in $\xi_{u},\xi_{s}=e^{\pm j}$ & hyperbolic in $\xi$.\tabularnewline
\hline 
Escape function $W\left(j\right)$, & Escape function $W$ on $T^{*}M$\tabularnewline
with decay property $\frac{W\left(\tilde{\phi}\left(j\right)\right)}{W\left(j\right)}=e^{-r}$. & with temperate and decay property\tabularnewline
 &  $\frac{W\left(\tilde{\phi}^{t}\left(\rho\right)\right)}{W\left(\rho\right)}\leq Ce^{-\Lambda t}$
outside the trapped set $E_{0}^{*}$.\tabularnewline
\hline 
$\mathcal{H}_{W}\left(\mathbb{Z}\right):=\mathrm{Diag}\left(W\right)^{-1}\left(l^{2}\left(\mathbb{Z}\right)\right)$ & $\mathcal{H}_{W}\left(M\right):=\mathrm{Op}\left(W\right)^{-1}\left(L^{2}\left(M\right)\right)$\tabularnewline
 & with $\mathrm{Op}\left(W\right)$ a PDO.\tabularnewline
 & i.e. almost diagonal in wave-packet basis.\tabularnewline
\hline 
$\mathcal{S}_{\xi}\subset\mathcal{H}_{W}\left(\mathbb{Z}\right)\subset\mathcal{S}_{\xi}'$ & $\mathcal{S}\left(M\right)\subset\mathcal{H}_{W}\left(M\right)\subset\mathcal{S}'\left(M\right)$\tabularnewline
\hline 
Discrete spectrum $w=e^{z}$ on & Discrete spectrum $z$ of the generator $A$\tabularnewline
$\left|w\right|>e^{-1.r}\Leftrightarrow\mathrm{Re}\left(z\right)>-1.r$. &  on $\mathrm{Re}\left(z\right)>-\lambda_{\mathrm{min}}r+\mathrm{Cste}$.\tabularnewline
\hline 
\end{tabular}
\par\end{center}

\section{Relations for the Japanese bracket $\left\langle .\right\rangle $}

For $s\in\mathbb{R}$, we set 
\[
\langle s\rangle:=(1+s^{2})^{1/2}.
\]
Clearly we have 
\[
\max\{1,|s|\}\le\langle s\rangle\le\langle\langle s\rangle\rangle\le2\max\{1,|s|\}
\]
and 
\[
\frac{d}{ds}\langle s\rangle=\frac{s}{\sqrt{1+s^{2}}}\in(-1,1)\quad\text{for any \ensuremath{s\in\mathbb{R}}.}
\]
From the second estimate, it follows 
\begin{equation}
\langle s+t\rangle\le\langle s\rangle+|t|\le\langle s\rangle+\langle t\rangle\label{eq:sum}
\end{equation}
that implies
\begin{equation}
\frac{\left\langle t\right\rangle }{\left\langle s\right\rangle }\leq1+\frac{\left|t-s\right|}{\left\langle s\right\rangle }\leq1+\left|t-s\right|\label{eq:D1p}
\end{equation}

Also, by considering the cases $|s|\le1$ and $|s|>1$ separately,
we can check
\begin{equation}
\langle s\cdot t\rangle\le\max\{1,|s|\}\cdot\langle t\rangle\le\langle s\rangle\cdot\langle t\rangle\label{eq:prod}
\end{equation}
Note that \eqref{eq:prod} implies that, if $s\neq0$, 
\[
\langle t\rangle=\langle s\cdot s^{-1}t\rangle\le\max\{1,|s|\}\cdot\langle s^{-1}t\rangle
\]
and hence 
\begin{equation}
\langle\frac{t}{s}\rangle\ge\max\{1,|s|\}^{-1}\langle t\rangle\ge\frac{\left\langle t\right\rangle }{\left\langle s\right\rangle }.\label{eq:prod2}
\end{equation}

\bibliographystyle{plain}
\bibliography{/home/faure/articles/articles}

\end{document}